\newcommand{\odagger}{\mathbin{\mathpalette\make@circled\dagger}}
\newcommand{\make@circled}[2]{%
  \ooalign{$\m@th#1\smallbigcirc{#1}$\cr\hidewidth$\m@th#1#2$\hidewidth\cr}%
}
\newcommand{\smallbigcirc}[1]{%
  \vcenter{\hbox{\scalebox{1.2}{$\m@th#1\bigcirc$}}}%
}
\theoremstyle{plain}
\newtheorem{theorem}{Theorem}[section]
\newtheorem{proposition}[theorem]{Proposition}
\newtheorem{lemma}[theorem]{Lemma}
\newtheorem{corollary}[theorem]{Corollary}
\theoremstyle{definition}
\newtheorem{definition}[theorem]{Definition}
\newtheorem{assumption}[theorem]{Assumption}
\newtheorem{notanddef}[theorem]{Notation \& Definition}
\theoremstyle{remark}
\newtheorem{remark}[theorem]{Remark}
\newtheorem{example}[theorem]{Example}
\theoremstyle{remark}
\newtheorem*{rem*}{Remark}
\crefname{assumption}{Assumption}{Assumptions}
\numberwithin{equation}{section}
\newcommand{\bd}{\mathbf}
\newcommand{\LtRd}{\mathbf L^2(\mathbb{R}^d)}
\newcommand{\LiRd}{\mathbf L^1(\mathbb{R}^d)}
\newcommand{\Ltw}{\bd L^2_{\sqrt{w}}}
\newcommand{\Ltv}{\bd L^2_{\sqrt{w_0}}}
\newcommand{\AAPlain}{\mathcal{A}}
\newcommand{\AAi}{\mathcal A_1}
\newcommand{\AAm}{\mathcal A_m}
\newcommand{\AAmY}{{\mathcal A_{m,Y}}}
\newcommand{\BBi}{\mathcal B_1}
\newcommand{\BBm}{\mathcal B_m}
\newcommand{\BBmo}{\mathcal B_{m_0}}
\newcommand{\BB}{\mathcal B}
\newcommand{\UU}{\mathcal{U}}
\newcommand{\VV}{\mathcal{V}}
\newcommand{\loc}{\mathrm{loc}}
\newcommand{\RR}{\mathbb{R}}
\newcommand{\R}{\mathbb{R}}
\newcommand{\ZZ}{\mathbb{Z}}
\newcommand{\Z}{\mathbb{Z}}
\newcommand{\NN}{\mathbb{N}}
\newcommand{\N}{\mathbb{N}}
\newcommand{\CC}{\mathbb{C}}
\newcommand{\Eye}{\mathbf{\operatorname{E}}_{\tau,\varepsilon}}
\def\esssup{\mathop{\operatorname{ess~sup}}}
\def\supp{\mathop{\operatorname{supp}}}
\def\sgn{\mathop{\operatorname{sgn}}}
\newcommand{\inner}[2]{\langle #1,#2 \rangle}
\newcommand{\LeftEqNo}{\let\veqno\@@leqno}
\DeclareFontFamily{U}{mathx}{\hyphenchar\font45}
\DeclareFontShape{U}{mathx}{m}{n}{
      <5> <6> <7> <8> <9> <10>
      <10.95> <12> <14.4> <17.28> <20.74> <24.88>
      mathx10
      }{}
\DeclareSymbolFont{mathx}{U}{mathx}{m}{n}
\DeclareMathAccent{\widecheck}{0}{mathx}{"71}
\DeclareMathAccent{\wideparen}{0}{mathx}{"75}
\newcommand{\LtD}{\mathbf L^2(D)}
\newcommand{\LtDF}{\bd L^{2,\mathcal F}(D)}
\newcommand{\Hil}{\mathcal{H}}
\newcommand{\oscVG}{{\mathrm{osc}}_{\VV,\Gamma}}
\newcommand{\oscVGd}{{\mathrm{osc}}_{\VV^\delta,\Gamma}}
\newcommand{\oscVFGd}{{\mathrm{osc}}_{\VV^\delta_\Phi,\Gamma}}
\newcommand{\Indicator}{{\mathds{1}}}
\newcommand{\translation}{\bd{T}}
\newcommand{\lebesgue}{\bd{L}}
\newcommand{\Fourier}{\mathcal{F}}
\newcommand{\CalQ}{\mathcal{Q}}
\newcommand{\CalP}{\mathcal{P}}
\newcommand{\CalR}{\mathcal{R}}
\newcommand{\CalU}{\mathcal{U}}
\newcommand{\CalO}{\mathcal{O}}
\newcommand{\CalG}{\mathcal{G}}
\newcommand{\CalH}{\mathcal{H}}
\newcommand{\CalC}{\mathcal{C}}
\newcommand{\CalV}{\mathcal{V}}
\newcommand{\CalW}{\mathcal{W}}
\newcommand{\CalN}{\mathcal{N}}
\newcommand{\GL}{\mathrm{GL}}
\newcommand{\Schwartz}{\mathcal{S}}
\newcommand{\Co}{\operatorname{Co}}
\newcommand{\dimension}{d}
\newcommand{\identity}{\mathrm{id}}
\newcommand{\eps}{\varepsilon}
\newcommand{\with}{\,:\,}
\newcommand{\BanachOne}{Y}
\newcommand{\BanachTD}{B}
\newcommand{\PhVar}{\lambda}
\newcommand{\PhVarA}{\rho}
\newcommand{\PhVarC}{{\nu}}
\newcommand{\PhSpace}{\Lambda}
\newcommand{\GoodVectors}{\mathcal{H}_v^1}
\newcommand{\Reservoir}{(\GoodVectors)^\urcorner}
\newcommand{\vrho}{\varrho}
\newcommand{\vrhoinv}{\vrho_{\ast}}
\newcommand{\vsig}{\varsigma}
\newcommand{\vsiginv}{\vsig_{\ast}}
\newcommand{\MaxKernel}[1]{\mathrm{M}_{#1}}
\newcommand{\vertiii}[1]{{\left\vert \kern-0.25ex  \left\vert \kern-0.25ex  \left\vert #1\right\vert \kern-0.25ex  \right\vert \kern-0.25ex  \right\vert}}
\renewcommand\upsilon{{\scaleobj{0.65}{\Upsilon}}}
\let\emptyset\varnothing
\newcommand{\nicki}[1]{#1}  
\begin{document}

\begin{Frontmatter}

\title[Coorbit theory of warped time-frequency systems]
       {Coorbit theory of warped time-frequency systems in $\RR^d$}

\author[1]{Nicki Holighaus}\orcid{0000-0003-3837-2865}
\author[2]{Felix Voigtlaender}\orcid{0000-0002-5061-2756}

\authormark{Nicki Holighaus and Felix Voigtlaender}

\address[1]{\orgname{Acoustics Research Institute, Austrian Academy of Sciences}, \orgaddress{\street{Wohllebengasse 12--14}, \postcode{1040} \city{Vienna}, \country{Austria}};\email{nicki.holighaus@oeaw.ac.at}}
\address[2]{\orgdiv{Lehrstuhl \emph{Reliable Machine Learning}}, \orgname{Katholische Universität Eichstätt-Ingolstadt}, \orgaddress{\street{Ostenstraße 26}, \postcode{85072} \city{Eichstätt}, \country{Germany}};\email{felix.voigtlaender@ku.de}}
\received{02 August 2022}

 \keywords{time-frequency representations,
           frequency-warping,
           anisotropic function systems,
           integral transforms,
           coorbit spaces,
           discretization,
           sampling,
           Banach frames,
           atomic decompositions,
           mixed-norm spaces}

\keywords[MSC Codes]{\codes[Primary]{42B35, 42C15}; \codes[Secondary]{46F05, 46F12, 94A20}}

\abstract{Warped time-frequency systems have recently been introduced as a
  class of structured continuous frames for functions on the real line.
  Herein, we generalize this framework to the setting of functions of
  arbitrary dimensionality.
  After showing that the basic properties of warped time-frequency
  representations carry over to higher dimensions, we determine
  conditions on the warping function which guarantee that
  the associated Gramian is well-localized, so that
  associated families of coorbit spaces can be constructed.
  We then show that discrete
  Banach frame decompositions for these coorbit spaces can be obtained
  by sampling the continuous warped time-frequency systems.
  In particular, this implies that sparsity of a given function $f$
  in the discrete warped time-frequency dictionary is equivalent
  to membership of $f$ in the coorbit space.
  We put special emphasis on the case of radial warping functions,
  for which the relevant assumptions simplify considerably.}

\end{Frontmatter}

\localtableofcontents

\vspace*{14pt}
\section{Introduction}

Time-frequency representations%
\footnote{The term \emph{time-frequency representation} is used in a wide sense here,
also covering time-scale representations like wavelets.}
(TF representations) are versatile tools for the analysis and decomposition of general functions
(or signals) with respect to simpler, structured building blocks.
They provide rich and intuitive information about a function's time-varying spectral
behavior in settings where both time-series and stationary Fourier transforms are insufficient.

Important fields relying on time-frequency representations include
signal processing~\cite{portnoff1981time,adler2012audio,ma08-3,boashash2015time}
and image processing~\cite{CAI2008131,chan2006total,ma08-3,taubman2012jpeg2000},
medical imaging~\cite{li1995multisensor,zhao2000x},
the numerical treatment of PDEs~\cite{harbrecht2015adaptive,cohen2004adaptive},
and quantum mechanics~\cite{perelomov2012generalized}.
In particular, short-time Fourier transforms \cite{gr01}
and wavelet transforms~\cite{da92} are widely and successfully used in these fields.

Yet, the limitations of such rigid schemes, considering only translations and modulations
(resp.~simple scalar dilations) of a single prototype function, are often considered detrimental
to their representation performance.
Therefore, numerous more flexible time-frequency representations
have been proposed and studied in the last decades.
As the most prominent of such systems,
we mention curvelets~\cite{candes2005continuous,CandesSecondGenerationCurvelets},
shearlets~\cite{kula12-alt,dakustte09}, ridgelets~\cite{candes1999ridgelets},
and $\alpha$-modulation systems
\cite{fefo06,cofe78,hona03,daforastte08,han2014alpha,speckbacher2016alpha}.

In the present article, we consider a more flexible scheme for constructing time-frequency representations,
namely the framework of \emph{warped time-frequency systems}
that was recently introduced for dimension $d=1$ in \cite{howi14,bahowi15}.
To motivate this construction, note that the systems mentioned above
are all examples of so-called \emph{generalized translation-invariant (GTI) systems}
\cite{helawe02,jakobsen2016reproducing,rosh04},
i.e., each of these systems is of the form
$(\translation_{x}\psi_{i})_{i\in I, x\in Z_{i}}$
for certain generators $\psi_{i} \in \bd L^{2}(\RR^{d})$ and
subgroups ${Z_{i} \subset \RR^{d}}$.
Here, $\translation_x \psi (y) = \psi(y - x)$ denotes the translation of $\psi$ by $x$.
Although it is not required that the $Z_i$ are discrete, they are often taken to be lattices,
i.e., $Z_i = T_{i} \ZZ^{d}$, with $T_{i} \in{\rm GL} (\RR^{d})$. 
The various systems differ in the way in which the generators $\psi_{i}$
and the lattices $Z_{i}$ are chosen.
But in each case there is a \emph{finite} set of prototypes, often a single prototype, such that each $\psi_{i}$ is a
certain dilated and/or modulated version of one of the prototypes.
Here, the dilations might be anisotropic, as is the case for shearlets.

As two canonical examples, we note that for a Gabor system, we have
$\psi_{k}(x) = e^{2\pi i \alpha \langle k,x \rangle } \cdot \psi(x)$
for $k \in I = \ZZ^{d}$, while for a (homogeneous) wavelet system, we
have $\psi_{j}(x) = 2^{dj / 2} \cdot \psi(2^{j} x)$ for $j \in I = \ZZ$.
Thus, the two systems differ with respect to the frequency localization
of the generators $\psi_{i}$:
For a Gabor system, the (essential) frequency supports of the generators
$\psi_{k}$ form a \emph{uniform} covering of the frequency space $\RR^{d}$---%
in contrast to the case of wavelets,
where the (essential) frequency supports form a \emph{dyadic} covering.

Warped time-frequency systems are motivated by
the crucial observation that the dyadic covering corresponds to
a uniform covering \emph{with respect to a logarithmic scaling of the frequency space}.
This suggests the following general construction:
Starting from a \emph{warping function} $\Phi$---i.e., a diffeomorphism
$\Phi : D \subset \RR^{d} \to \RR^{d}$---and a \emph{prototype function}
$\theta \in \bd L^{2}(\RR^{d})$,
we consider the associated \emph{warped time-frequency system}
$\mathcal{G}(\theta,\Phi) = \left(g_{y,\omega}\right)_{y\in\RR^{d},\omega\in D}$ given by
\begin{equation}
    g_{y,\omega}=\translation_{y}\left[\Fourier^{-1}g_{\omega}\right]
  \quad\text{ with }\quad
  g_{\omega}= c_\omega \cdot \left(\translation_{\Phi\left(\omega\right)}\theta\right)\circ\Phi
  \qquad \text{ for } \qquad (y,\omega) \in \RR^d \times D.
    \label{eq:IntroductionWarpedSystemIntuition}
\end{equation}
Here, the function
\(
  c_\omega \cdot \left(\translation_{\Phi(\omega)}\theta\right)\circ\Phi :
  D \subset \RR^{d} \to \CC
\)
is extended trivially to a map defined on all of $\RR^{d}$ before
applying the (inverse) Fourier transform $\Fourier^{-1}$ to it,
and the constant $c_\omega > 0$ is chosen such that
the resulting family $(g_{y,\omega})_{y \in \RR^d ,\omega \in D}$
forms a \emph{tight} frame for the space $ \LtDF = \Fourier^{-1} (\lebesgue^2 (D))$
of all $\bd L^{2}$ functions with Fourier transform vanishing outside of $D$.

At first sight, this construction might seem intimidating, but it
can be unraveled as follows:
The warping function $\Phi$ provides a map from the frequency space $D$
to the warped frequency space $\R^{d}$.
Thus, $\theta$ serves as a prototype for the Fourier transform of the GTI generators
$\mathcal{F}^{-1}g_{\omega}$, but in warped coordinates.
In that sense, $g_{\omega}$ can be understood as a shifted version of $\theta$,
but the shift is performed in warped (frequency) coordinates. 
In order to build further intuition for this construction, it is helpful to consider
the case in which $\theta$ is (essentially) concentrated at $0$,
so that $\translation_{\Phi(\omega)}\theta$ is concentrated at $\Phi(\omega)$,
whence $g_{\omega}$ is concentrated at $\omega$.
Put briefly, the warping function $\Phi$ determines the frequency scale and,
with it, the frequency-bandwidth relationship of the resulting warped time-frequency system.

As a further illustration, let us explain how wavelet systems fit into the above construction.
Define $D := (0,\infty)$ and $\Phi : D \to \R, x \mapsto \ln (x)$.
Then
\[
    \big(
      [\, \translation_{\Phi(\omega)} \theta \,]    \circ \Phi
    \big)(\xi)
  = \theta \big( \ln(\xi) - \ln(\omega) \big)  
  = [\theta\circ\ln] \bigl(\xi / \omega\bigr) ,
\]
and hence, with $\psi = \mathcal F^{-1} (\theta\circ\ln)$, it holds that
\(
  \mathcal{F}^{-1} g_\omega
  = c_\omega \cdot \omega \cdot \left[\mathcal{F}^{-1} (\theta\circ\ln)\right](\omega \bullet)
  = c_\omega \, \omega \cdot \psi (\omega \bullet) ,
\)
so that
\(
  (g_{y,\omega})_{y \in \R, \omega \in D}
  = \bigl(
      c_\omega \, \omega \cdot \translation_y [\psi (\omega \bullet)]
    \bigr)_{y \in \R, \omega \in D}
\)
is a continuous wavelet system, for an appropriate choice of $c_\omega$.
Finally, since translations in frequency domain correspond to modulations
in the time domain, continuous Gabor systems can be obtained by choosing $\Phi : \R^d \to \R^d$
to be the identity function.

\subsection{Contribution}\label{subsec:contributions}

The overall goal of the present article is to start an in-depth study
of the properties of warped time-frequency systems on $\RR^d$.
Some essential, basic characteristics of warped time-frequency systems on $\RR^d$
are obtained analogously to the one-dimensional case treated in \cite{bahowi15}.
In particular, $\mathcal{G}(\theta,\Phi)$ forms, under mild assumptions on $\theta$ and $\Phi$,
a \emph{continuous} tight frame for $\LtDF$.
However, our main objective, verifying the applicability of \emph{general coorbit theory}
(as developed in \cite{fora05,rauhut2011generalized,kempka2015general})
to the continuous frame $\mathcal{G}(\theta,\Phi)$ is \emph{decidedly} more
involved in the higher-dimensional case that we consider here than in the case $d = 1$.
Therefore, the main results presented in this work are concerned with establishing
a set of assumptions on $\theta$ and $\Phi$,
such that the rich discretization theory for the coorbit spaces
associated with $\mathcal{G}(\theta,\Phi)$ is accessible.
%
%

To make the latter point more precise, let us briefly recall the main points
of coorbit theory related to the present setting.
The main tenet of coorbit theory is to quantify the regularity of a
function $f$ using a certain norm
\(
  \left\Vert f\right\Vert_{{\rm Co} \left( \BanachOne \right)}
  := \left\Vert V_{\theta,\Phi} f \right\Vert_{\BanachOne}
\)
of the \emph{voice transform} $V_{\theta,\Phi}f(y,\omega) = \langle f,\,g_{y,\omega}\rangle$.
The \emph{coorbit space} associated with a Banach space
${\BanachOne \subset \lebesgue_{{\rm loc}}^{1}(\RR^d \times D)}$ is then given by
\({
  {\rm Co}_{\theta,\Phi} \left(\BanachOne\right)
  = \big\{
      f
      \,:\,
      V_{\theta,\Phi} f \in \BanachOne
    \big\}
  .
}\)

Of course, the general theory of coorbit spaces as developed
in \cite{fora05,rauhut2011generalized,kempka2015general}
does not consider the special frame $\mathcal{G}(\theta,\Phi)$,
but a general continuous frame $\Psi = (\psi_{\PhVar})_{\PhVar \in \PhSpace}$.
Coorbit theory then provides (quite technical) conditions concerning
the frame $\Psi$ which ensure that the associated coorbit spaces
${\rm Co}_{\Psi}(\BanachOne)$ are indeed well-defined Banach spaces.
We will verify these conditions in the setting of the warped time-frequency systems
$\mathcal{G}(\theta,\Phi)$.
Precisely, we shall derive verifiable conditions concerning $\theta$ and $\Phi$ which ensure
that coorbit theory is applicable.

Additionally, coorbit spaces come with a powerful \emph{discretization theory}: 
Under suitable conditions on the frame $\Psi = (\psi_{\PhVar})_{\PhVar\in\PhSpace}$,
taken from an appropriate test function space,
and on the discrete set $\PhSpace_{d} \subset \PhSpace$, coorbit theory shows that
the sampled frame $\Psi_{d} = (\psi_{\PhVar})_{\PhVar\in\PhSpace_{d}}$
forms a \emph{Banach frame decomposition} for the coorbit space ${\rm Co}_{\Psi} (\BanachOne)$.
The precise definition of this concept will be given later.
Here, we just note that it implies the existence of sequence spaces
$\BanachOne_d^\flat \subset \CC^{\PhSpace_d}$ and $\BanachOne_d^\sharp \subset \CC^{\PhSpace_d}$
such that
\[
    \left\Vert f \right\Vert_{{\rm Co}\left(\BanachOne\right)}
    \asymp \big\Vert
              \bigl(
                \left\langle f,\psi_{\PhVar}\right\rangle
              \bigr)_{\PhVar \in \PhSpace_{d}}
           \big\Vert_{\BanachOne_d^\flat}
    \asymp \inf
           \bigg\{
              \big\Vert
                  (c_{\PhVar})_{\PhVar\in\PhSpace_{d}}
              \big\Vert_{\BanachOne_d^{\sharp}}
              \,:\,
              f = \sum_{\PhVar\in\PhSpace_{d}}
                    c_{\PhVar} \cdot \psi_{\PhVar}
           \bigg\} .
\]
Hence, for a generalized notion of \emph{sparsity}, membership of $f$
in ${\rm Co}_{\Psi}(\BanachOne)$ is \emph{simultaneously} equivalent to \emph{analysis sparsity}
and \emph{synthesis sparsity} of $f$ with respect to the discretized frame $\Psi_{d}$.
Specifically, a sequence $c$ is considered sparse if $c \in \BanachOne_d^\flat$
or $c \in \BanachOne_d^\sharp$.
This is most closely related to classical sparsity if $\BanachOne_d^{\flat}$ and
$\BanachOne_d^\sharp$ coincide with certain (weighted) $\ell^p$ spaces.

We indeed show under suitable conditions concerning $\theta$ and $\Phi$
that the discretization theory applies to $\mathcal{G}\left(\theta,\Phi\right)$.
Therefore, the coorbit spaces ${\rm Co}_{\theta,\Phi}\left(\BanachOne\right)$
characterize sparsity with respect to the (suitably discretized) warped
time-frequency system $\mathcal{G}\left(\theta,\Phi\right)$.
As a byproduct, we also show that the space ${\rm Co}_{\theta,\Phi}(\BanachOne)$
is essentially independent of the choice of \emph{appropriate (sufficiently regular)} $\theta$.

\subsection{Related work: Warped time-frequency systems}%
\label{sub:RelatedWorkWarpedTF}

Warped time-frequency systems have already been considered before,
though only for the one-dimensional case $d=1$. 
In particular, in \cite{bahowi15}, the authors essentially obtain the results
that we just outlined, i.e., that warped time-frequency systems form tight frames
and that the assumptions of generalized coorbit theory can be satisfied,
at least for coorbit spaces associated to the (weighted) Lebesgue spaces
${\BanachOne = \lebesgue_{\kappa}^{p}(\R \times D)}$.
We generalize these results to higher dimensions $d > 1$ and to
the weighted \emph{mixed} Lebesgue spaces $\lebesgue_{\kappa}^{p,q}(\R^d \times D)$,
equipped with the norm
\(
  \Vert F \Vert_{\lebesgue_{\kappa}^{p,q}}
  = \big\Vert \,
      \omega \mapsto \Vert
                       (\kappa \cdot F)(\bullet, \omega)
                     \Vert_{\lebesgue^{p}(\RR^{d})}
    \big\Vert_{\lebesgue^{q}(D)}
  .
\)
Furthermore, we relax some of the assumptions imposed in \cite{bahowi15}.
The generalization to higher dimensions is, as we will see, by no means trivial.
The extension to the spaces $\lebesgue_{\kappa}^{p,q}(\R^d \times D)$
relies on our recent work~\cite{hovo20_algebra}.

Hilbert space frames obtained by sampling warped time-frequency systems were
examined in~\cite{howi14}, where different necessary or sufficient 
frame conditions similar to those for Gabor and wavelet frames were obtained.
In the same paper, the authors also derive readily verifiable conditions
under which the sampled warped time-frequency system satisfies the local integrability condition,
thereby providing access to useful results from the theory of GTI systems.

\subsection{Related work: GTI systems}
\label{subsec:RelatedWorkGTI}

Warped time-frequency representations are GTI systems~\cite{rosh04,helawe02,jakobsen2016reproducing}, 
and they could be analyzed within this abstract framework.
However, fully general GTI systems include a considerable number of---usually undesired---%
pathological cases \cite{FuehrLemvigSystemBandwidth,VelthovenLCI};
these can be excluded by imposing additional structure-enforcing conditions.
The most general and well-known such condition is the \emph{local integrability condition} (LIC)
of Hernandez et al.~\cite{helawe02}, further investigated in~\cite{jakobsen2016reproducing,VelthovenLCI}. 

In practice, GTI systems are mostly generated from one (or few) prototype
functions through the application of a family of operators---like modulations or dilations---%
that promote a given frequency-bandwidth relationship,
such as the constant frequency/bandwidth ratio for classical wavelet systems.
Naturally, such systems are well suited for representing functions
with certain frequency-domain properties.

In our case, structure is imposed by the choice of the prototype and warping function
that determine the frequency-bandwidth relation and the distribution of GTI generators
in the frequency domain.
In this sense, warped time-frequency systems provide a unified framework for studying structured
time-frequency representations.
We will see that warped time-frequency systems, despite their generality,
satisfy many beneficial properties that are not simply trivial consequences of them being GTI systems.

As other related time-frequency systems, we mention dictionaries obtained by
combining multiple TF dictionaries, either globally~\cite{akch97,zezi97,bachkrokro14},
or locally in \emph{weaved} phase space covers~\cite{do11,doro14,ro11}.
Furthermore, nonstationary Gabor systems~\cite{badohojave11,doma14-1,doma14-2,ho14-1}
are closely related to GTI systems via the Fourier transform.

\subsection{Related work: Function space theory}
\label{subsec:RelatedWorkFST}

The joint study of integral transforms and appropriate (generalized) function spaces
is a classical topic in Fourier- and harmonic analysis.
In particular, localization and smoothness properties
of functions and their Fourier transforms have received much attention.
Indeed, from the distribution theory of Laurent Schwartz~\cite{sc57,sc57-1}
to Paley-Wiener spaces~\cite{boasentire}, Sobolev spaces
\cite{adams2003sobolev,LeoniSobolevSpaces,TriebelTheoryOfFunctionSpaces}
and Besov spaces~\cite{TriebelTheoryOfFunctionSpaces,triebel2010theory,besov59},
a large number of classical function spaces can be meaningfully characterized through their
Fourier transform properties.
Other examples include the family of modulation spaces~\cite{gr01,feichtinger1983modulation}---%
defined through the short-time Fourier transform~\cite{ga46,gr01}---as well as
spaces of (poly-)analytic functions~\cite{balk1991polyanalytic,abreu2010sampling}
and the Bargmann~\cite{bargmann1961hilbert,bargmann1967hilbert}
and Bergman transforms~\cite{abreu2012super}.

A powerful general framework for studying function spaces associated with a certain transform
is provided by coorbit theory, originally introduced by Feichtinger
and Gröchenig~\cite{fegr89,fegr89-1,gr91}.
As described above, the underlying idea for this theory
is to measure the regularity of a function or distribution in terms of
growth or decay properties of an abstract \emph{voice transform}.
In the original approach of Feichtinger and Gröchenig, the voice transform is defined through
an integrable group representation acting on a suitable prototype function.
Prime examples of different transforms and the associated coorbit spaces are
the short-time Fourier transform~\cite{ga46,gr01}
and modulation spaces,
associated with the (reduced) Heisenberg group,
and the wavelet transform~\cite{da92} and (homogeneous) Besov spaces~\cite{triebel2010theory,besov59},
associated with the $ax+b$ group.

Fornasier and Rauhut~\cite{fora05} realized that the group structure on which classical
coorbit theory relies can be discarded completely.
Instead, one can consider the voice transform associated with a general
\emph{continuous frame}~\cite{alanga93,alanga00},
the Gramian kernel of which is required to satisfy certain integrability and oscillation conditions.
Since the introduction of this \emph{general coorbit theory}, these results have been
improved and expanded~\cite{rauhut2011generalized,kempka2015general,ournote},
as well as successfully applied, e.g., to Besov
and Triebel-Lizorkin spaces~\cite{TriebelTheoryOfFunctionSpaces,triebel2010theory,tr06}
or $\alpha$-modulation spaces~\cite{gr92-2};
see e.g.~\cite{rauhut2011generalized,ullrich2012continuous}
and \cite{daforastte08,speckbacher2016alpha}.

\subsection{Structure of the paper}
\label{subsec:structure}

We begin with a brief introduction to general coorbit theory in Section~\ref{sec:prelims}.
We then formally introduce warped time-frequency systems in Section~\ref{sec:warpedsystems},
in which we also discuss several concrete examples.
Section~\ref{sec:coorbits} is concerned with conditions on the warping function $\Phi$
and the prototype $\theta$ which ensure that the continuous frame $\mathcal{G}(\theta, \Phi)$
satisfies the assumptions of (general) coorbit theory.

To show that the continuous frame $\mathcal{G}(\theta, \Phi)$ can be sampled to obtain discrete
Banach frame decompositions of the associated coorbit spaces, we will need certain coverings
of the phase space $\PhSpace = \RR^d \times D$ associated with the warping function $\Phi$.
These coverings are studied in Section~\ref{sec:coverings}.
In Section~\ref{sec:discretewarped}, we prove the existence of
discrete Banach frame decompositions for the coorbit spaces $\Co_{\theta,\Phi} (\BanachOne)$.
Finally, in Section~\ref{sec:RadialWarping} we investigate warped time-frequency systems generated
by \emph{radial} warping functions on $\RR^d$.
In particular, we show that admissible symmetric warping functions on $\RR$ give rise to
admissible radial warping functions on $\RR^d$.

\subsection{Notation and fundamental definitions}
\label{subsec:notation}

We use the notation $\underline{n}:=\{1,\ldots,n\}$ for $n\in\NN$.
We write $\R^+ = (0,\infty)$ for the set of positive real numbers,
and $S^1 := \{ z \in \CC \colon |z| = 1 \}$.
For the composition of functions $f$ and $g$ we use the notation $f \circ g$
defined by $f \circ g (x)= f(g(x))$.
For a subset $M \subset X$ of a fixed base set $X$
(which is usually understood from the context), we use the indicator function
$\Indicator_{M}$ of the set $M$, where $\Indicator_M (x) = 1$ if $x \in M$
and $\Indicator_M (x) = 0$ otherwise.

The (topological) dual space of a (complex) topological vector space $X$
(i.e., the space of all continuous linear functions $\varphi : X \to \CC$)
is denoted by $X'$, while the (topological) \emph{anti}-dual of a Banach space $X$
(i.e., the space of all \emph{anti-linear} continuous functionals on $X$)
is denoted by $X^\urcorner$.
A superscript asterisk ($^*$) is used to denote the adjoint of an operator between Hilbert spaces.

We use the convenient short-hand notations $\lesssim$ and $\asymp$,
where $A \lesssim B$ means $A \leq C \cdot B$, for some constant $C > 0$ that depends
on quantities that are either explicitly mentioned or clear from the context.
$A \asymp B$ means $A \lesssim B$ and $B \lesssim A$.

\subsubsection{Norms and related notation}

We write $|x|$ for the Euclidean norm of a vector $x \in \RR^d$,
and we denote the operator norm of a linear operator $T : X \to Y$
by $\| T \|_{X \to Y}$, or by $\| T \|$, if $X,Y$ are clear from the context.
In the expression $\| A \|$, a matrix $A \in \RR^{n \times d}$ is interpreted
as a linear map $(\RR^d,|\bullet|) \to (\RR^n,|\bullet|)$.
The open (Euclidean) ball around $x \in \R^d$ of radius $r > 0$ is denoted by $B_r (x)$.

\subsubsection{Fourier-analytic notation}

The Lebesgue measure of a (measurable) subset $M \subset \RR^d$ is denoted by $\mu(M)$.
The Fourier transform is given by
$\widehat{f}(\xi) = \Fourier f(\xi) = \int_{\RR^d} f(x) \, e^{-2 \pi i \langle x, \xi\rangle} \, dx$,
for all $f \in \LiRd$.
It is well-known that $\Fourier$ extends to a unitary automorphism of $\LtRd$.
The inverse Fourier transform is denoted by $\widecheck{f} := \Fourier^{-1}f$.
We write $\bd L^{2,\mathcal F}(D) := \mathcal F^{-1}(\bd L^2(D))$ for the space of
square-integrable functions whose Fourier transform vanishes (a.e.) outside of $D \subset \R^d$.
In addition to the Fourier transform, the \emph{modulation} and \emph{translation operators}
$\bd{M}_\omega f = f\cdot e^{2\pi i\langle\omega, \bullet\rangle}$ and $\bd T_y f = f(\bullet - y)$,
will be used frequently.

\subsubsection{Matrix notation}

For matrix-valued functions $A : U \to \RR^{d \times d}$, the notation
$A(x)\langle y\rangle := A(x)\cdot y$ denotes the multiplication of the matrix $A(x)$, $x \in U$,
with the vector $y \in \R^d$ in the usual sense.
Likewise, for a set $M\subset \RR^d$, we write
\[
  A(x)\langle M\rangle := \big\{ A(x)\langle y\rangle~:~ y\in M \big\}.
\]
Moreover, we define $A^{-1} (\tau) := [A(\tau)]^{-1}$
and similarly $A^{\pm T} (\tau) := [A(\tau)]^{\pm T}$.
Here, as in the remainder of the paper, the notation $A^T$
denotes the transpose of a matrix $A$.
We will denote the elements of the standard basis of $\R^d$ by $e_1,\dots,e_d$.

\subsubsection{Convention for variables}

Throughout this article, $x,y,z \in \R^d$ will be used to denote variables in time/position space,
$\xi,\omega,\eta \in D$ in frequency space, $\PhVar,\PhVarA,\PhVarC \in \R^d \times D$ in phase space,
and finally $\sigma, \tau,\upsilon \in \R^d$ denote variables in warped frequency space.
Unless otherwise stated, this also holds for subscript-indexed variants;
precisely, subscript indices (i.e., $x_i$) are used to denote the $i$-th element
of a vector $x \in \RR^d$.
In some cases, we also use subscripts to enumerate multiple vectors,
e.g., $x_1,\dots,x_n \in \R^d$.
In this case, we denote the components of $x_i$ by $(x_i)_j$.

\subsubsection{Solid spaces, integral kernels, and mixed Lebesgue spaces}
\label{subsub:IntegralKernels}

Unless noted otherwise, we will always consider $\PhSpace = \R^d \times D$
(with an open set $D \subset \R^d$), equipped with the Lebesgue measure $\mu$.
A Banach space $\BanachOne \subset \lebesgue_{\loc}^1 (\PhSpace)$ will be called \emph{solid}
if it satisfies the following:
whenever $F, G :\PhSpace \to \CC$ are measurable with $|F| \leq |G|$ almost everywhere
and with $G \in \BanachOne$, then $F \in \BanachOne$
and $\| F \|_{\BanachOne} \leq \| G \|_{\BanachOne}$.
$\BanachOne$ is \emph{rich}, if it contains all (locally) integrable, compactly supported functions.
The analogous definitions apply for general locally compact measure spaces,
and in particular to sequence spaces (where the index set is equipped with the discrete topology).

A \emph{kernel} on $\PhSpace$ is a (measurable) function
$K : \PhSpace \times \PhSpace \rightarrow \CC$.
Its application to a (measurable) function $F : \PhSpace \to \CC$ is denoted by
\begin{equation}
  K(F)(\PhVar):= \int_\PhSpace K(\PhVar,\PhVarA)F(\PhVarA)~d\mu(\PhVarA),
  \qquad \text{ whenever the integral exists}.
  \label{eq:IntegralOperator}
\end{equation}
We will identify two kernels if they agree almost everywhere.
As usual, $K^\ast$ denotes the \emph{adjoint kernel}
$K^\ast (\PhVar,\PhVarA) = \overline{K(\PhVarA,\PhVar)}$,
and $K^T$ denotes the \emph{transposed kernel}, given by $K^T (\PhVar,\PhVarA) = K(\PhVarA, \PhVar)$.

Since $\PhSpace = \R^d \times D$ has a product structure, it is natural to
consider the weighted, \emph{mixed} Lebesgue spaces $\bd L^{p,q}_\kappa(\PhSpace)$,
for $1\leq p,q\leq \infty$, that consist of all
(equivalence classes of almost everywhere equal) measurable functions $F: \PhSpace \to \CC$
for which
\begin{equation}\label{eq:mixedLebesgue}
  \|F \|_{\bd L_\kappa^{p,q}}
  := \left\|
          \vphantom{\sum}
          \PhVar_2 \mapsto \left\|
                              (\kappa\cdot F)(\bullet, \PhVar_2)
                            \right\|_{\bd L^p (\R^d)}
     \right\|_{\bd L^q (D)}
     <  \infty.
\end{equation}
Here, $\kappa : \PhSpace \to \RR^+$ is a (measurable) weight function.

\section{Frames, coverings and coorbit spaces}
\label{sec:prelims}

In this section, we prepare our investigation of warped time-frequency systems
by recalling several notions and results related to the theory of continuous frames
and general coorbit theory.

A collection $\Psi = (\psi_{\PhVar})_{\PhVar\in\PhSpace}$ of elements $\psi_{\PhVar} \in \CalH$
of a separable Hilbert space $\CalH$ is called a \emph{tight continuous frame (for $\CalH$)},
if there exists $A \in \RR^+$ such that
\begin{equation}\label{eq:frameeq}
   A \cdot \|f\|_{\Hil}^2
   = \int_{\PhSpace}
       |\langle f,\psi_{\PhVar}\rangle_{\Hil}|^2
     d\mu(\PhVar)
  \quad \text{for all } f \in \CalH,
\end{equation}
and if furthermore the map $\PhVar \mapsto \psi_{\PhVar}$ is weakly measurable,
meaning that $\lambda \mapsto \langle f, \psi_\lambda \rangle$ is measurable for each $f \in \CalH$.
For the warped time-frequency systems considered later, we will see that
$\PhVar \mapsto \psi_{\PhVar}$ is in fact continuous (see \Cref{prop:WarpedSystemWeaklyContinuous}).
We say that $\Psi$ is a \emph{Parseval frame} if $A = 1$ in Equation~\eqref{eq:frameeq}.

The \emph{voice transform} with respect to a tight continuous frame
$\Psi = (\psi_\PhVar)_{\PhVar \in \PhSpace}$ is given by
\begin{equation}
  V_\Psi~:~ \Hil \rightarrow \bd L^2(\PhSpace),
  \quad \text{defined by}\quad
  V_\Psi f(\PhVar):= \langle f, \psi_\PhVar \rangle_{\Hil} \text{ for all } \PhVar\in\PhSpace.
\end{equation}
The adjoint of the voice transform is given by
\begin{equation}
  V_\Psi^\ast~:~ \bd L^2(\PhSpace) \rightarrow \Hil,\quad V_\Psi^\ast G
  = \int_\PhSpace G(\PhVar) \, \psi_\PhVar d\mu(\PhVar),
\end{equation}
with the integral understood in the weak sense (see \cite[Page~43]{gr01}).
Finally, the \emph{frame operator} of $\Psi$ is given by
$\bd{S}_\Psi := V_\Psi^\ast \circ V_\Psi : \Hil \to \Hil$, so that
\[
  \bd S_\Psi f = \int_{\PhSpace}
                   \langle f,\psi_{\PhVar} \rangle_{\Hil} \psi_{\PhVar}
                 d\mu(\PhVar).
\]
It follows from \eqref{eq:frameeq} that $\bd{S}_{\Psi} f = A \cdot f$ for all $f \in \CalH$;
see \cite{ch16}.

Essentially all of coorbit theory is based on certain regularity properties of the
\emph{reproducing kernel} $K_{\Psi}$ associated to the continuous frame $\Psi$.
It is given by
\nicki{\begin{equation}
  K_\Psi : \quad
  \PhSpace \times \PhSpace \to \CC, \quad
  (\PhVar, \PhVarA) \mapsto A^{-1}\langle \psi_{\PhVarA}, \psi_{\PhVar} \rangle_{\Hil} .
  \label{eq:ReproducingKernel}
\end{equation}
Without loss of generality, we will henceforth assume that $A=1$, i.e., $\Psi$ is a Parseval frame.}
We remark that $K_\Psi$ is measurable with respect to the product $\sigma$-algebra.
Indeed, since $\Hil$ is separable, we can choose a countable orthonormal basis
$(\eta_j)_{j \in J} \subset \Hil$, so that
\(
  K_\Psi (\PhVar,\PhVarA)
  = \sum_{j \in J}
      \langle \psi_{\PhVarA}, \eta_j \rangle_{\Hil}
      \langle \eta_j, \psi_{\PhVar} \rangle_{\Hil}
\)
is seen to be measurable as a convergent, countable series of measurable functions.

A \emph{(discrete) frame} for $\Hil$ is a countable family $\Psi_d = (\psi_j)_{j \in J} \subset \Hil$
for which there exist $0 \!<\! A \leq B \!<\! \infty$ such that
\begin{equation}
  A \cdot \| f \|_{\Hil}^2
  \leq \sum_{j \in J}
         |\langle f, \psi_j \rangle_{\Hil}|^2
  \leq B \cdot \| f \|_{\Hil}^2
  \quad \text{for all } f \in \CalH.
  \label{eq:IntroductionDiscreteFrameDefinition}
\end{equation}
This implies (cf.~\cite{ch16} for details) that every $f \in \Hil$
can be expanded with respect to $\Psi_d$;
that is, for each $f \in \Hil$ there exists a sequence $(c_j)_{j\in J}\in\ell^2(J)$ such that
\begin{equation}
   f = \sum_{j \in J} c_j \, \psi_j.
  \label{eq:IntroductionFrameDecomposition}
\end{equation}

\subsection{Banach frame decompositions}
\label{ssec:discframes}

When the Hilbert space $\Hil$ is exchanged for a Banach space $(\BanachTD,\|\bullet\|_\BanachTD)$,
and $\ell^2 (J)$ is replaced by a suitable sequence space $B^{\flat} \subset \CC^J$,
then validity of the (modified) frame inequality
\(
  \big\| \bigl(\langle f, \psi_j \rangle_{\BanachTD, \BanachTD'}\bigr)_{j \in J} \big\|_{B^{\flat}}
  \asymp \| f \|_{\BanachTD}
\)
does \emph{not} necessarily imply a statement similar to \eqref{eq:IntroductionFrameDecomposition}
(among other things because in general $\psi_j \in B'$ and not $\psi_j \in B$).
Therefore, the dual concepts of Banach frames and atomic decompositions have been introduced;
see \cite{gr91,fegr89,fegr89-1}.
To reduce the number of required definitions, in this article we only consider the
combined concept of a \emph{Banach frame decomposition},
which unifies both concepts, under some mild assumptions on $B$ that allow one to make sense
of the intersection $B \cap B'$; see Appendix~\ref{sec:GelfandTripleAppendix} for details.


%

\begin{definition}\label{def:BFD}
  Let $(\BanachTD,\|\bullet\|_\BanachTD)$ be a Banach space.
  A family $\Psi_d = (\psi_j)_{j \in J} \subset \BanachTD \cap \BanachTD'$
  is called a \emph{Banach frame decomposition} for $\BanachTD$
  if there exist a dual family $E_d = (e_j)_{j \in J} \subset \BanachTD \cap \BanachTD'$
  and solid, rich Banach sequence spaces $(\BanachTD^\sharp, \|\bullet\|_{\BanachTD^\sharp})$
  and $(\BanachTD^\flat,\|\bullet\|_{\BanachTD^\flat})$ over $J$,
  i.e., $\BanachTD^\sharp,\BanachTD^\flat\subset\CC^J$, with the following properties:
  \setlength{\leftmargini}{0.7cm}
  \begin{itemize}
    \item The \emph{coefficient operators}
          \[
           \begin{split}
            \qquad
            &\CalC_{\Psi_d} : \quad
            \BanachTD \to \BanachTD^{\flat}, \quad
            f \mapsto \big( \langle f, \psi_j \rangle_{\BanachTD, \BanachTD'} \big)_{j \in J}
            \qquad \text{and} \\
            &\CalC_{E_d} : \quad
            \BanachTD \to \BanachTD^{\sharp}, \quad
            f \mapsto \big( \langle f, e_j \rangle_{\BanachTD, \BanachTD'} \big)_{j \in J}
            \end{split}
          \]
          are well-defined and bounded.
          \vspace*{0.1cm}

    \item The \emph{reconstruction operators}
          \[
            \CalR_{\Psi_d} : \quad
            \BanachTD^{\sharp} \to \BanachTD, \quad
            (c_j)_{j \in J} \mapsto \sum_{j \in J} c_j \, \psi_j
            \quad \text{and} \quad
            \CalR_{E_d} : \quad
            \BanachTD^{\flat} \to \BanachTD, \quad
            (c_j)_{j \in J} \mapsto \sum_{j \in J} c_j \, e_j
          \]
          are well-defined and bounded, with unconditional convergence of the defining series
          in a suitable topology.
          \vspace*{0.1cm}

    \item We have
          \(
            \CalR_{\Psi_d} \circ \CalC_{E_d}
            = \identity_{\BanachTD}
            = \CalR_{E_d} \circ \CalC_{\Psi_d} ,
          \)
          or in other words
          \[
            f
            = \sum_{j \in J}
                \langle f, e_j \rangle_{\BanachTD, \BanachTD'} \,\,
                \psi_j
            = \sum_{j \in J}
                \langle f, \psi_j \rangle_{\BanachTD, \BanachTD'} \,\,
                e_j
            \qquad \text{for all } f \in \BanachTD .
          \]
  \end{itemize}
\end{definition}

\begin{remark}
  In some recent works, atomic decompositions of Banach spaces are defined
  by a pair of systems $(\Psi_d,\widetilde{\Psi_d})$, with $\Psi_d\in B'$ providing the analysis,
  and $\widetilde{\Psi_d}\in B$ the synthesis operation, e.g., \cite[Definition 24.3.1]{ch16}.
  In that sense, Definition~\ref{def:BFD} is not dissimilar to stating that
  both $(\Psi_d,E_d)$ and $(E_d,\Psi_d)$ are atomic decompositions of $B$.
  Nonetheless, a Banach frame decomposition, which implies the existence
  of a class of test functions embedded into $B$ and $B'$, is distinct,
  since it places additional assumptions on the sequence spaces $\BanachTD^\sharp,\BanachTD^\flat$
  on which the reconstruction operators are further required to be unconditionally convergent. 
\end{remark}

\subsection{Coverings and weight functions}
\label{ssec:coverings}

For applying the discretization results of (general) coorbit theory,
we will have to construct special coverings of the phase space $\PhSpace = \R^d \times D$.
To allow for a more streamlined development later on,
the present subsection discusses the required properties of these coverings.
The most basic of these properties are admissibility and moderateness.

\begin{definition}\label{def:admissibility}
  Let $\CalO \neq \emptyset$ be a set.
  A family $\CalV = (V_j)_{j \in J}$ of non-empty subsets of $\CalO$
  is called an \emph{admissible covering} of $\CalO$,
  if we have $\CalO = \bigcup_{j \in J} V_j$ and if
  \begin{equation}
    \CalN(\CalV)
    := \sup_{j \in J} |j^\ast| < \infty
    \qquad \text{where} \qquad
    j^\ast := \{i \in J \,:\, V_i \cap V_j \neq \emptyset \}
    \quad \text{for } j \in J.
    \label{eq:IntersectionNumberDefinition}
  \end{equation}

  If $\CalO$ is a topological space, we say that a family $\CalV$
  as above is \emph{topologically admissible}
  if it is admissible and if each $V_j \subset \CalO$
  is open and relatively compact (i.e., $\overline{V_j} \subset \CalO$ is compact).  
\end{definition}

\begin{rem*}
  We remark that every topologically admissible covering is locally finite:
  Given $x \in \CalO$, we have $x \in V_{j_0}$ for some $j_0 \in J$.
  Since $V_{j_0}$ is open and since $V_j \cap V_{j_0} \neq \emptyset$
  can only hold for $i \in j_0^\ast$ with $j_0^\ast \subset J$ finite,
  we see that $\CalV$ is indeed a locally finite covering.
\end{rem*}

In the special case where $\CalO = \PhSpace$ has a product structure,
we will also use the following class of coverings.


\begin{definition}\label{def:ProductAdmissibleCovering}(\cite[Def.~2.12]{hovo20_algebra})
  Let $\PhSpace = \PhSpace_1 \times \PhSpace_2$, where each $\PhSpace_j$ is equipped
  with a measure $\mu_j$ and $\mu = \mu_1 \otimes \mu_2$.
  We say that a family $\CalU = (U_j)_{j \in J}$ is a \emph{product-admissible covering}
  of $\PhSpace$, if it satisfies the following:
  $J$ is countable, $\PhSpace = \bigcup_{j \in J} U_j$,
  each $U_j$ is non-empty and of the form $U_j = U_{1,j} \times U_{2,j}$
  with $U_{\ell,j} \subset \PhSpace_\ell$ open,
  and there is a constant $C > 0$ such that the \emph{covering weight} $w_{\CalU}$ defined by
  \begin{equation}
    (w_{\CalU})_j
    := \min \big\{
              1, \,\,
              \mu_1 (U_{1,j}), \,\,
              \mu_2 (U_{2,j}), \,\,
              \mu(U_j)
            \big\}
    \qquad \text{for } j \in J
    \label{eq:CoveringWeightDefinition}
  \end{equation}
  satisfies $(w_{\CalU})_j \leq C \cdot (w_{\CalU})_\ell$ for all $j,\ell \in J$ with
  $U_j \cap U_\ell \neq \emptyset$.

  Given a product-admissible covering $\CalU = (U_j)_{j \in J}$
  and a measurable function $u : \PhSpace \to \R^+$, we say that $u$ is \emph{$\CalU$-moderate}
  if there is a constant $C' > 0$, such that $u(\PhVar) \leq C' \cdot u(\PhVarA)$
  for all $j \in J$ and all $\PhVar, \PhVarA \in U_j$.
\end{definition}

If $\CalU = (U_j)_{j \in J}$ is a product-admissible covering of $\PhSpace$,
then with $w_{\CalU}$ as defined in \eqref{eq:CoveringWeightDefinition},
it is easy to see that there exists a measurable function $w_{\CalU}^c : \PhSpace \to \R^+$
such that
\begin{equation}
  (w_{\CalU})_j \asymp w_{\CalU}^c (\lambda)
  \quad \text{for all } j \in J \text{ and } \lambda \in U_j .
 \label{eq:ContinuousCoveringWeightCondition}
\end{equation}
Furthermore, any two such weights $w_{\CalU}^c, \widetilde{w_{\CalU}^c}$
satisfy $w_{\CalU}^c \asymp \widetilde{w_{\CalU}^c}$.
We refer to \cite[Theorem~2.13]{hovo20_algebra} for the details.

\medskip{}

In addition to such coverings, the study of specific coorbit spaces and their
properties relies on certain weighted spaces that are compatible with the given
coverings in a suitable way.
The following classes of weight functions are of particular importance.

\begin{definition}\label{def:weights}
  \begin{enumerate}
    \item Any measurable function $v : \CalO \to \RR^+$ on a measurable space $\CalO$ will be called
          a \emph{weight}, or a \emph{weight function}.
    \item A weight $m: \CalO \times \CalO \rightarrow \RR^+$ is called \emph{symmetric} if
          $m(\PhVar, \PhVarA) = m(\PhVarA, \PhVar)$ for all $\PhVar,\PhVarA \in \CalO$.
    \item Given any weight $v: \CalO \rightarrow \RR^+$, the \emph{associated weight}
          $m_v : \CalO \times \CalO \to \RR^+$ is defined by
          \begin{equation}\label{eq:vDerivedWeight}
               m_v(\PhVar,\PhVarA)
               := \max\left\{\frac{v(\PhVar)}{v(\PhVarA)},\frac{v(\PhVarA)}{v(\PhVar)}\right\},
               \quad \text{ for all } \PhVar,\PhVarA\in \CalO .
          \end{equation}
    \item A weight function $v$ on $\RR^d$ is called \emph{submultiplicative}, if
          \[
            v(\PhVar + \PhVarA)
            \leq v(\PhVar) \cdot v(\PhVarA),
            \quad \text{ for all } \PhVar, \PhVarA \in \RR^d.
          \]
          Given such a submultiplicative weight $v$, another weight function
          $\widetilde{v} : \RR^d \rightarrow \RR^+$ is called \emph{$v$-moderate} if
          \begin{equation}\label{eq:moderateWeight}
           \widetilde{v}(\PhVar + \PhVarA)
           \leq v(\PhVar) \cdot \widetilde{v}(\PhVarA),
           \quad \text{ for all } \PhVar, \PhVarA \in \RR^d.
          \end{equation}
    \item We say that a weight $v$ on $\RR^d$ is \emph{radially increasing}
          if $v(\PhVar) \leq v(\PhVarA)$ whenever $\PhVar, \PhVarA \in \RR^d$ with $|\PhVar| \leq |\PhVarA|$.
          This in particular implies that $v(\PhVar)$ only depends on $|\PhVar|$, so that we identify
          $v$ with a weight on $[0,\infty)$ and write $v(\PhVar) = v(|\PhVar|)$.
  \end{enumerate}
\end{definition}

\begin{remark}\label{rem:InverseMaxMinOfNoderateWeights}
  If $v_1,v_2$ are $v_0$-moderate weights and $v_0(\lambda) = v_0(-\lambda)$ for all $\lambda \in \R^d$,
  then a simple derivation shows that $1/v_1$, $\max \{ v_1,v_2 \}$, and $\min \{ v_1,v_2 \}$ are $v_0$-moderate as well.
\end{remark}

\subsection{Kernel spaces}%
\label{sub:KernelSpaces}

The main prerequisite of general coorbit theory is that the reproducing kernel $K_\Psi$---%
and some additional kernels derived from it---must satisfy appropriate decay conditions.
These are formulated in terms of certain \emph{Banach spaces of integral kernels}
that we review in this subsection.

Let $(\Lambda,\mu)$ be a $\sigma$-finite measure space.
Recall from Section~\ref{subsub:IntegralKernels} that a kernel is any measurable map
$K : \Lambda \times \Lambda \to \CC$.
Given such a kernel and a symmetric weight $m$ on $\Lambda \times \Lambda$, we define
${\| K \|_{\AAm(\Lambda)} := \| K \|_{\AAm}}$, where
\begin{equation}\label{eq:normA1}
  \|K\|_{\AAm}
  := \max \left\{
            \esssup_{\PhVarA \in \Lambda}
              \int_\Lambda
                \bigl| m(\PhVarA, \PhVar) \cdot K(\PhVarA,\PhVar) \bigr|
              ~d\mu(\PhVar),
            \quad
            \esssup_{\PhVar \in \Lambda}
              \int_\Lambda
                \bigl| m(\PhVarA, \PhVar) \cdot K(\PhVarA,\PhVar) \bigr|
              ~d\mu(\PhVarA)
          \right\} ,
\end{equation}
and we define
\(
  \AAm
  := \AAm(\Lambda)
  := \big\{
       K : \Lambda \times \Lambda \to \CC
       \, \colon \,
       K \text{ measurable and } \| K \|_{\AAm} < \infty
     \big\}
  .
\)
In the case where $m \equiv 1$, we use the notation $\AAi$.

For most applications, it is not enough to know that $K_\Psi \in \AAm$;
rather, it is required that the integral operator associated to $K_\Psi$ or $|K_\Psi|$
(defined in Equation~\eqref{eq:IntegralOperator}) acts boundedly on a given solid
Banach space $\BanachOne \subset \lebesgue_{\loc}^1 (\Lambda)$.
Precisely, given a kernel $K : \Lambda \times \Lambda \to \CC$,
we set $\big\|\, |K| \,\big\|_{\BanachOne \to \BanachOne} := \infty$
if the integral operator associated to $|K|$ does \emph{not} define a bounded linear map
on $\BanachOne$; otherwise, we denote by $\big\|\, |K| \,\big\|_{\BanachOne \to \BanachOne}$
the operator norm of this integral operator.
With this convention, we define
\[
  \AAmY := \big\{
             K \in \AAm
             \,\,\colon\,\,
             \big\|\, |K| \,\|_{\BanachOne \to \BanachOne} < \infty
           \big\} ,
  \quad \text{with norm} \quad
  \| K \|_{\AAmY}
  := \max \big\{
            \| K \|_{\AAm},
            \big\|\, |K| \,\big\|_{\BanachOne \to \BanachOne}
          \big\} .
\]

\begin{remark}(cf.~\cite[Lemma 2.45]{kempka2015general})
  If $K$ is measurable and if $|K|$ induces a bounded operator $\BanachOne \to \BanachOne$,
  then so does $K$ itself, since $\BanachOne$ is solid.
  A similar argument shows that $\AAmY$ is a solid space of kernels\nicki{: 
  Let $K,L$ be measurable with $K \in \AAmY$ and $|L| \leq |K|$ almost everywhere
  (with respect to the product measure).
  Then, for $\mu$-almost every $\PhVar\in\PhSpace$,
  $|L(\PhVar,\bullet)| \leq |K(\PhVar,\bullet)|$ $\mu$-almost everywhere, implying 
  \[
    ||L|(F)(\lambda)|
    \leq |L|(|F|)(\lambda)
    \leq |K|(|F|)(\lambda)
    \quad \text{$\mu$-almost everywhere.}
  \]
  Noting that $\|F\|_Y = \||F|\|_Y$ due to solidity of $Y$,
  the first inequality implies that to determine the operator norm $\||L|\|_{Y\rightarrow Y}$,
  it suffices to consider nonnegative functions $F\in Y$.
  On the other hand, for such functions, the second inequality implies
  $\||L|(F)\|_Y \leq \||K|(F)\|_Y$, by solidity of $Y$.
  Hence, we have established $\||L|\|_{Y\rightarrow Y} \leq \||K|\|_{Y\rightarrow Y}$,
  and therefore $\||L|\|_{\AAmY} \leq \||K|\|_{\AAmY}$ follows with solidity of $\AAm$,
  which is clear from the definition.
  }

  Finally, we remark that our definition of $\AAmY$ is different from the
  definition in \cite[Section 2.4]{kempka2015general} in that we take the
  norm $\big\| \, |K| \, \big\|_{\BanachOne \to \BanachOne}$
  instead of $\| K \|_{\BanachOne \to \BanachOne}$.
  Nevertheless, if a kernel $K$ satisfies $K \in \AAmY$ with our definition,
  it also satisfies $K \in \AAmY$ according to the definition in
  \cite[Section~2.4]{kempka2015general},
  so that the slightly different definition will not cause problems.
\end{remark}

For applications of coorbit theory, one has to verify $K_\Psi \in \AAmY$ for the
space $\BanachOne$ of interest and a certain weight $m$.
In many cases, it turns out to be easier to verify $K_\Psi \in \BBmo$,
where $\BBmo$ is a smaller space of kernels that satisfies $\BBmo \hookrightarrow \AAmY$,
possibly with $m_0=m$. Precisely, since we are mostly interested in the product setting of kernels on
$\PhSpace = \PhSpace_1 \times \PhSpace_2$, we will use the following spaces $\BBm$
introduced in \cite{hovo20_algebra}.

\begin{definition}\label{def:NewKernelModule}
  Let $(\PhSpace,\mu) = (\PhSpace_1 \times \PhSpace_2, \mu_1 \otimes \mu_2)$,
  where $(\PhSpace_1, \mu_1), (\PhSpace_2, \mu_2)$ are $\sigma$-finite measure spaces.
  Given a kernel $K : \PhSpace \times \PhSpace \to \CC$, we define
  \begin{equation}
    K^{(\PhVar_2,\PhVarA_2)} (\PhVar_1,\PhVarA_1) := K ( \PhVar,\PhVarA )
    \quad \text{ for } \quad
    \PhVar = (\PhVar_1,\PhVar_2), \PhVarA = (\PhVarA_1,\PhVarA_2) \in \PhSpace.
    \label{eq:PartialKernelDefinition}
  \end{equation}
  Using this notation, we define
  \[
    \| K \|_{\BBi}
    := \| K \|_{\BBi(\PhSpace)}
    := \Big\|
         (\PhVar_2,\PhVarA_2) \mapsto \big\| K^{(\PhVar_2,\PhVarA_2)} \big\|_{\AAi(\PhSpace_1)}
       \Big\|_{\AAi(\PhSpace_2)}
    \in [0,\infty] ,
  \]
  and
  \(
    \BBi
    := \BBi(\PhSpace)
    := \big\{
         K : \PhSpace \times \PhSpace \to \CC \,
         \colon
         K \text{ measurable and } \| K \|_{\BBi} < \infty
       \big\}.
  \)
  Finally, given a symmetric weight $m : \PhSpace \times \PhSpace \to \RR^+$, we define
  \(
    \BBm := \BBm(\PhSpace)
    := \big\{ K : \PhSpace \times \PhSpace \to \CC \, \colon m \cdot K \in \BBi \big\},
  \)
  with norm $\| K \|_{\BBm} := \| m \cdot K \|_{\BBi}$.
\end{definition}

As shown in \cite[Propositions 2.5 and 2.6]{hovo20_algebra}, $\BBm$ is a solid Banach space
of integral kernels that satisfies $\| K^T \|_{\BBm} = \| K \|_{\BBm}$ and furthermore
$\| K \|_{\AAm} \leq \| K \|_{\BBm}$ for every kernel $K$.
If the weight $m$ additionally satisfies $m(x,z) \leq C m(x,y)m(y,z)$,
for all $x,y,z\in\Lambda$ and some $C>0$, then it is easy to see that $\AAm,\BBm$
are algebrae with respect to the standard kernel product, defined by  
\[
   K_1\cdot K_2
   = \int_\Lambda K_1(\bullet_1,\lambda)K_2(\lambda,\bullet_2)~d\mu(\lambda).
\]
Most importantly for us, the integral operators associated to kernels in $\BB_{m_\kappa}$
act boundedly on the mixed-norm Lebesgue spaces $\lebesgue^{p,q}_\kappa (\PhSpace)$;
see the following proposition.

\begin{proposition}\label{prop:NewAlgebraGoodForMixedLebesgue}(see \cite[Proposition~2.7]{hovo20_algebra})
  Let $\PhSpace$ as in Definition~\ref{def:NewKernelModule}, let $\kappa$ be a weight
  on $\PhSpace$, and let $m_\kappa : \PhSpace \times \PhSpace \to \R^+$
  be as in Equation~\eqref{eq:vDerivedWeight}.
  Then, for each kernel $K \in \BB_{m_\kappa}(\PhSpace)$ and arbitrary $p,q \in [1,\infty]$,
  the associated integral operator $K(\bullet)$ defined in Equation~\eqref{eq:IntegralOperator}
  restricts to a bounded linear operator
  $K (\bullet) : \lebesgue^{p,q}_\kappa (\PhSpace) \to \lebesgue^{p,q}_\kappa (\PhSpace)$,
  with absolute convergence almost everywhere of the defining integral, and with
  \begin{equation}\label{eq:BBmBoundedOnLPQ}
    \| K(F) \|_{\lebesgue^{p,q}_\kappa(\PhSpace)}
    \leq \| K \|_{\mathcal{B}_{m_\kappa}}
         \cdot \| F \|_{\lebesgue^{p,q}_\kappa(\PhSpace)}
    \qquad \forall \, F \in \lebesgue^{p,q}_{\kappa}(\PhSpace).
  \end{equation}
  In particular, this implies for $\BanachOne = \lebesgue^{p,q}_\kappa(\PhSpace)$
  and any (symmetric) weight $m$ with $m \geq m_\kappa$ that $\| K \|_{\AAmY} \leq \| K \|_{\BBm}$.
\end{proposition}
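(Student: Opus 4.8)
The plan is to absorb the weight $\kappa$ into the kernel and thereby reduce the statement to the \emph{unweighted} case, which in turn follows from a two-fold application of the classical Schur test. Multiplication by $\kappa$ is an isometric isomorphism from $\lebesgue^{p,q}_\kappa(\PhSpace)$ onto the unweighted mixed-norm space $\lebesgue^{p,q}(\PhSpace)$, and conjugating the integral operator $K(\bullet)$ by this map produces the integral operator with kernel $\widetilde{K}(\PhVar,\PhVarA) = \frac{\kappa(\PhVar)}{\kappa(\PhVarA)} \, K(\PhVar,\PhVarA)$. Since $m_\kappa(\PhVar,\PhVarA) \geq \kappa(\PhVar)/\kappa(\PhVarA)$ by the definition \eqref{eq:vDerivedWeight} of the associated weight, we have $|\widetilde{K}| \leq |m_\kappa \cdot K|$ pointwise, so solidity of $\BBi$ (see \cite[Propositions 2.5 and 2.6]{hovo20_algebra}) gives $\|\widetilde{K}\|_{\BBi} \leq \|m_\kappa \cdot K\|_{\BBi} = \|K\|_{\BB_{m_\kappa}}$. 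Hence it is enough to show that every $H \in \BBi(\PhSpace)$ induces a bounded operator on $\lebesgue^{p,q}(\PhSpace)$ of norm at most $\|H\|_{\BBi}$, with the defining integral converging absolutely almost everywhere; transporting this conclusion back then yields \eqref{eq:BBmBoundedOnLPQ}.

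For the core estimate I would unfold the product structure. Writing $\PhVar = (\PhVar_1,\PhVar_2)$, $\PhVarA = (\PhVarA_1,\PhVarA_2)$ and using Fubini's theorem,
\[
  H(F)(\PhVar_1,\PhVar_2)
  = \int_{\PhSpace_2}
      \Big[ H^{(\PhVar_2,\PhVarA_2)} \big( F(\bullet, \PhVarA_2) \big) \Big] (\PhVar_1)
    \, d\mu_2(\PhVarA_2) .
\]
For fixed $\PhVar_2,\PhVarA_2$ the classical Schur test, applied to the partial kernel $H^{(\PhVar_2,\PhVarA_2)}$ on $\PhSpace_1$, bounds its operator norm on $\lebesgue^p(\PhSpace_1)$ by $\big\| H^{(\PhVar_2,\PhVarA_2)} \big\|_{\AAi(\PhSpace_1)} =: P(\PhVar_2,\PhVarA_2)$, uniformly over all $p \in [1,\infty]$ (the two essential suprema defining $\AAi$ are precisely the bounds at $p=1$ and $p=\infty$, and the intermediate cases follow by interpolation). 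Applying Minkowski's integral inequality in $\lebesgue^p(\PhSpace_1)$ to the $\PhVarA_2$-integral then gives
\[
  \big\| H(F)(\bullet,\PhVar_2) \big\|_{\lebesgue^p(\PhSpace_1)}
  \leq \int_{\PhSpace_2} P(\PhVar_2,\PhVarA_2) \, g(\PhVarA_2) \, d\mu_2(\PhVarA_2)
  = \big[ P(g) \big](\PhVar_2) ,
  \qquad g(\PhVarA_2) := \big\| F(\bullet,\PhVarA_2) \big\|_{\lebesgue^p(\PhSpace_1)} ,
\]
where $P(g)$ denotes the integral operator with kernel $P$ on $\PhSpace_2$ applied to $g$, and $\|g\|_{\lebesgue^q(\PhSpace_2)} = \|F\|_{\lebesgue^{p,q}(\PhSpace)}$ by definition of the mixed norm. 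Taking the $\lebesgue^q(\PhSpace_2)$-norm and invoking the Schur test a second time for $P$ on $\PhSpace_2$---whose $\AAi(\PhSpace_2)$-norm equals $\|H\|_{\BBi}$ by Definition~\ref{def:NewKernelModule}---yields $\|H(F)\|_{\lebesgue^{p,q}(\PhSpace)} \leq \|P\|_{\AAi(\PhSpace_2)} \, \|g\|_{\lebesgue^q(\PhSpace_2)} = \|H\|_{\BBi} \, \|F\|_{\lebesgue^{p,q}(\PhSpace)}$. Running the identical chain with $|H|$ and $|F|$ in place of $H$ and $F$ establishes absolute convergence of the defining integral almost everywhere; this is precisely \cite[Proposition~2.7]{hovo20_algebra}.

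For the final ``in particular'' claim, let $\BanachOne = \lebesgue^{p,q}_\kappa(\PhSpace)$ and let $m$ be a symmetric weight with $m \geq m_\kappa$. Solidity of $\BBi$ gives $\|K\|_{\BB_{m_\kappa}} = \|m_\kappa \cdot K\|_{\BBi} \leq \|m \cdot K\|_{\BBi} = \|K\|_{\BBm}$, so $K$---and, by solidity of $\BB_{m_\kappa}$, also $|K|$---lies in $\BB_{m_\kappa}$; applying the main assertion to $|K|$ then gives $\big\|\, |K| \,\big\|_{\BanachOne \to \BanachOne} \leq \|K\|_{\BB_{m_\kappa}} \leq \|K\|_{\BBm}$. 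Combined with the inequality $\|K\|_{\AAm} \leq \|K\|_{\BBm}$ recorded after Definition~\ref{def:NewKernelModule}, this gives $\|K\|_{\AAmY} = \max\big\{ \|K\|_{\AAm}, \, \big\|\, |K| \,\big\|_{\BanachOne \to \BanachOne} \big\} \leq \|K\|_{\BBm}$, as claimed. The only genuinely delicate points in a from-scratch proof are measure-theoretic: joint measurability of $P$ on $\PhSpace_2 \times \PhSpace_2$ (so that $\|H\|_{\BBi}$ is meaningful and the second Schur test applies) and the legitimacy of Fubini's theorem in the $\sigma$-finite setting, both of which are handled in \cite{hovo20_algebra}; the endpoint exponents $p,q \in \{1,\infty\}$ need no special treatment. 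Since the substantive estimate is already available as \cite[Proposition~2.7]{hovo20_algebra}, in the paper I would cite that result and spell out only the short derivation of the ``in particular'' clause.
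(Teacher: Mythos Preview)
Your proposal is correct and matches the paper's treatment: the paper simply cites \cite[Proposition~2.7]{hovo20_algebra} for the main estimate without proof, and your final paragraph correctly derives the ``in particular'' clause from it via solidity of $\BBi$ and the inequality $\|K\|_{\AAm} \leq \|K\|_{\BBm}$. Your additional sketch of the cited result (weight absorption followed by a two-fold Schur test with Minkowski's integral inequality in between) is the standard argument and is consistent with what is done in \cite{hovo20_algebra}.
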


\subsection{General coorbit spaces}
\label{ssec:coorbit}

In this subsection, we give a brief crash-course to general coorbit theory.
Our treatment is essentially based on \cite{kempka2015general}, but incorporates
additional simplifications (from \cite{hovo20_algebra}) that are
on the one hand due to using the kernel space $\BBm$ instead of $\AAmY$,
and on the other hand due to imposing slightly more restrictive
assumptions than in \cite{kempka2015general}.
For the warped time-frequency systems that we consider, these assumptions are automatically satisfied, 
justifying this restriction. 

To formulate our assumptions for the applicability of coorbit theory,
we need one final ingredient.

\begin{definition}\label{def:maxkern}
Let $\CalV = (V_j)_{j \in J}$ be an arbitrary open covering of $\PhSpace = \R^d \times D$. 
The \emph{maximal kernel} $\mathrm{M}_{\CalV} K$ associated to
a given kernel $K : \PhSpace \times \PhSpace \to \CC$, given by
\begin{equation}
  \mathrm{M}_{\CalV} K : \quad
  \PhSpace \times \PhSpace \to [0,\infty], \quad
  (\PhVar,\PhVarA) \mapsto \sup_{\PhVarC \in \CalV_{\PhVar}} |K(\PhVarC, \PhVarA)|
  \quad \text{where} \quad
  \CalV_{\PhVar} := \bigcup_{j \in J \text{ with } \PhVar \in V_j} V_j .
  \label{eq:def_maxkern}
\end{equation}
\end{definition}

In what follows, we shall always work in the following setting:

\begin{assumption}\label{assu:CoorbitAssumptions1}
  Let $D \subset \R^d$ be open, and let $\PhSpace = \R^d \times D$,
  equipped with the Borel $\sigma$-algebra and the Lebesgue measure $\mu$.
  We assume that
  \begin{enumerate}[itemsep=0.3em]

    \item \label{enu:CoorbitProductAdmissible}
          $\CalU = (U_j)_{j \in J}$ is a product-admissible covering of $\PhSpace$;

    \item \label{enu:CoorbitUModerateness}
          $u : \PhSpace \to \R^+$ is continuous and $\CalU$-moderate;

    \item \label{enu:CoorbitKernelWeightAssumption}
          $m_0 : \PhSpace \times \PhSpace \to \R^+$ is continuous and symmetric
          and satisfies $m_0 (\PhVar, \PhVarA) \leq C^{(0)} \cdot u(\PhVar) \, u(\PhVarA)$
          for all $\PhVar, \PhVarA \in \PhSpace$ and some $C^{(0)} > 0$;

    \item \label{enu:CoorbitParseval}
          $\Psi = (\psi_\PhVar)_{\PhVar \in \PhSpace}$ is a continuous Parseval frame for $\LtDF$,
          and the map $\PhSpace \to \lebesgue^2(\R^d), \PhVar \mapsto \psi_\PhVar$
          is continuous;
    
    \item \label{enu:CoorbitVCondition}
          $v : \PhSpace \to [1,\infty)$ is continuous and satisfies
          \(
            v(\PhVar)
            \geq c \cdot
                 \max
                 \big\{
                    \| \psi_{\PhVar} \|_{\lebesgue^2}, \,\,
                    u(\PhVar) / w_{\CalU}^c (\PhVar)
                 \big\}
          \)
          for some $c > 0$ and all $\PhVar \in \PhSpace$, with $w_{\CalU}^c$ as in
          Equation~\eqref{eq:ContinuousCoveringWeightCondition};
          
    \item \label{enu:CoorbitBanachSpaceCondition}
          $\BanachOne \subset \lebesgue_{\loc}^1(\PhSpace)$ is a rich, solid Banach space 
          such that
          $\| K(\bullet) \|_{\BanachOne \to \BanachOne} \leq \| K \|_{\BBmo}$
          for all $K \in \BBmo$;

    \item \label{enu:CoorbitKernelLocalization}
          The kernel $K_\Psi$ defined in Equation~\eqref{eq:ReproducingKernel} satisfies
          \begin{equation}
            K_\Psi \in \AAPlain_{m_v}
            \quad \text{ and } \quad
            \MaxKernel{\CalU} K_\Psi \in \BB_{m_0} .
            \label{eq:CoorbitKernelCondition}
          \end{equation}
          with $m_v$ as defined in Equation~\eqref{eq:vDerivedWeight}.
  \end{enumerate}
\end{assumption}

By Proposition~\ref{prop:NewAlgebraGoodForMixedLebesgue},
Condition~\eqref{enu:CoorbitBanachSpaceCondition} is satisfied
for $\BanachOne = \lebesgue^{p,q}_\kappa (\PhSpace)$, as long as
$\frac{\kappa(\PhVar)}{\kappa(\PhVarA)} \leq m_0(\PhVar, \PhVarA)$
for all $\PhVar,\PhVarA \in \PhSpace$.

\begin{remark}\label{rem:MaximalKernelContinuity}
  If the kernel $K$ is continuous in the second component (as is the case for the reproducing
  kernel $K_\Psi$, under the conditions in Assumption~\ref{assu:CoorbitAssumptions1} below),
  then $\MaxKernel{\UU} K$ is lower semicontinuous and hence measurable.
  To see this, let $\alpha \in \R$ and $(\lambda_0, \rho_0) \in \Lambda \times \Lambda$ with
  $\MaxKernel{\CalU} K (\lambda_0, \rho_0) > \alpha$.
  Then there are $j \in J$ with $\lambda_0 \in U_j$ and some $\nu \in U_j$
  such that $|K(\nu,\rho_0)| > \alpha$.
  By continuity of $K(\nu,\bullet)$, there is thus an open set $V \subset \Lambda$
  with $\rho_0 \in V$ and such that $|K(\nu,\rho)| > \alpha$ for all $\rho \in V$.
  Overall, we see for $(\lambda,\rho) \in U_j \times V$ that
  $\MaxKernel{\CalU} K (\lambda,\rho) \geq |K(\nu,\rho)| > \alpha$.
  Since $\CalU$ is a product-admissible covering, $U_j$ is open;
  thus, we have shown that $\MaxKernel{\CalU} K$ is indeed lower semicontinuous.
\end{remark}


The next theorem shows that the conditions in Assumption~\ref{assu:CoorbitAssumptions1}
ensure that one can extend the voice transform to a suitably defined space of distributions.

\begin{theorem}\label{thm:resanddual}
  Under Assumption~\ref{assu:CoorbitAssumptions1}, the following hold:
  The space
  \begin{equation}\label{eq:resspace}
    \mathcal H^1_{v}
    := \mathcal H^1_{v}(\Psi)
    := \big\{
         f\in\LtDF
         ~:~
         V_\Psi f\in \bd L^1_{v}
       \big\},
       \text{ with the norm }
       \|f\|_{\mathcal H^1_{v}} := \|V_\Psi f\|_{\bd L^1_{v}},
  \end{equation}
  is a Banach space satisfying $\mathcal{H}_v^1 \hookrightarrow \LtDF$, with dense image.
  Furthermore, there is some $C' > 0$ such that
  $\|\psi_{\PhVar}\|_{\mathcal{H}_v^1} \leq C' \cdot v(\PhVar) < \infty$ for all $\PhVar\in\PhSpace$.
  In fact, $\GoodVectors$ is the minimal Banach space with that property.

  Finally, for each $f \in \Reservoir$, the \emph{extended voice transform}
  \begin{equation}
    V_\Psi f : \quad
    \Lambda \to \CC, \quad
    \lambda \mapsto \langle f, \psi_\lambda \rangle_{\Reservoir, \GoodVectors} = f(\psi_\lambda)
    \label{eq:ExtendedVoiceTransform}
  \end{equation}
  satisfies $V_\Psi f \in \lebesgue_{1/v}^\infty (\Lambda)$.
  In fact, the expression $\| V_\Psi f \|_{\lebesgue_{1/v}^\infty}$ defines an equivalent
  norm on $\Reservoir$.
\end{theorem}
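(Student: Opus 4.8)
The plan is to follow the standard bootstrapping scheme of general coorbit theory, adapted to the present setting with the kernel spaces $\BBm$ and $\AAm$. The key objects are the reservoir $\Reservoir$ (anti-dual of $\GoodVectors$) and the extended voice transform, and the goal is a threefold statement: $\GoodVectors$ is a Banach space densely and continuously embedded in $\LtDF$, it contains all $\psi_\PhVar$ with controlled norm and is minimal with this property, and the extended voice transform maps $\Reservoir$ boundedly into $\lebesgue_{1/v}^\infty(\Lambda)$ with equivalent norm.

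\textbf{Step 1: $\GoodVectors$ is a Banach space embedding densely into $\LtDF$.} First I would observe that $V_\Psi$ is an isometry (up to the Parseval constant) from $\LtDF$ into $\lebesgue^2(\Lambda)$, so $\GoodVectors$ is the preimage of $\lebesgue^1_v \cap \lebesgue^2$ under $V_\Psi$; completeness follows by a routine Fatou/Cauchy argument using that $\lebesgue^1_v$ is complete and that $\lebesgue^1_v$-convergence plus $\lebesgue^2$-boundedness pins down the limit (using weak measurability and the reproducing formula $V_\Psi V_\Psi^\ast = $ kernel operator with kernel $K_\Psi$). The continuous embedding $\GoodVectors \hookrightarrow \LtDF$ needs a local estimate: for $f \in \GoodVectors$ one has $\|f\|_{\LtDF}^2 \asymp \|V_\Psi f\|_{\lebesgue^2}^2$, and this must be dominated by $\|V_\Psi f\|_{\lebesgue^1_v}$; here one invokes that $K_\Psi \in \AAPlain_{m_v}$ together with $v \geq 1$ and $v \gtrsim \|\psi_\PhVar\|_{\lebesgue^2}$ to bound $\|V_\Psi f\|_{\lebesgue^2}$ by $\|V_\Psi f\|_{\lebesgue^1_v}$ via the correspondence principle (the reproducing kernel, viewed in $\AAPlain_{m_v}$, maps $\lebesgue^1_v$ into itself and one extracts an $\lebesgue^\infty_{1/v}$ bound, which combined with the $\lebesgue^1_v$ bound gives the $\lebesgue^2$ bound). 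Density: the span of $\{\psi_\PhVar\}$, or finite ``tight-frame'' partial reconstructions $V_\Psi^\ast(\Indicator_{K} V_\Psi f)$ over compact $K$, lies in $\GoodVectors$ (using richness/solidity of $\lebesgue^1_v$ locally) and approximates $f$ in $\LtDF$-norm.

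\textbf{Step 2: $\psi_\PhVar \in \GoodVectors$ with $\|\psi_\PhVar\|_{\GoodVectors} \leq C' v(\PhVar)$, and minimality.} We have $V_\Psi \psi_\PhVar (\PhVarA) = \langle \psi_\PhVar, \psi_\PhVarA\rangle = K_\Psi(\PhVarA, \PhVar)$, so
\[
  \|\psi_\PhVar\|_{\GoodVectors}
  = \int_\Lambda v(\PhVarA) \, |K_\Psi(\PhVarA,\PhVar)| \, d\mu(\PhVarA)
  \leq v(\PhVar) \int_\Lambda m_v(\PhVarA,\PhVar) \, |K_\Psi(\PhVarA,\PhVar)| \, d\mu(\PhVarA)
  \leq v(\PhVar) \cdot \|K_\Psi\|_{\AAPlain_{m_v}},
\]
using $v(\PhVarA) \leq v(\PhVar) m_v(\PhVarA,\PhVar)$ from the definition of $m_v$ and the column-bound in $\|\cdot\|_{\AAPlain_{m_v}}$. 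Minimality is the usual argument: if $B$ is any Banach space with $B \hookrightarrow \LtDF$, $\psi_\PhVar \in B$, $\|\psi_\PhVar\|_B \leq c\, v(\PhVar)$, and with a suitable continuity/consistency of $\PhVar \mapsto \psi_\PhVar$, then for $f \in \GoodVectors$ the reconstruction $f = A^{-1} V_\Psi^\ast V_\Psi f = A^{-1}\int_\Lambda V_\Psi f(\PhVar)\, \psi_\PhVar \, d\mu(\PhVar)$ converges as a $B$-valued Bochner integral because $\int \|V_\Psi f(\PhVar)\| \, \|\psi_\PhVar\|_B \, d\mu(\PhVar) \lesssim \|V_\Psi f\|_{\lebesgue^1_v} = \|f\|_{\GoodVectors} < \infty$; hence $f \in B$ with $\|f\|_B \lesssim \|f\|_{\GoodVectors}$.

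\textbf{Step 3: The extended voice transform and the $\lebesgue^\infty_{1/v}$ bound.} For $f \in \Reservoir$, $V_\Psi f(\PhVar) := f(\psi_\PhVar)$ is well-defined by Step 2, and $|f(\psi_\PhVar)| \leq \|f\|_{\Reservoir}\, \|\psi_\PhVar\|_{\GoodVectors} \leq C' \|f\|_{\Reservoir}\, v(\PhVar)$, giving $V_\Psi f \in \lebesgue^\infty_{1/v}$ with $\|V_\Psi f\|_{\lebesgue^\infty_{1/v}} \lesssim \|f\|_{\Reservoir}$ immediately. For the reverse estimate — that this defines an \emph{equivalent} norm — one must reconstruct the functional on $\GoodVectors$ from its voice transform: for $g \in \GoodVectors$, write $g = A^{-1}\int_\Lambda V_\Psi g(\PhVar)\, \psi_\PhVar\, d\mu(\PhVar)$ and pair formally to get $f(g) = A^{-1}\int_\Lambda \overline{V_\Psi g(\PhVar)}\, V_\Psi f(\PhVar)\, d\mu(\PhVar)$ (care with anti-linearity/conjugates), so $|f(g)| \leq A^{-1}\|V_\Psi f\|_{\lebesgue^\infty_{1/v}} \|V_\Psi g\|_{\lebesgue^1_v} = A^{-1}\|V_\Psi f\|_{\lebesgue^\infty_{1/v}} \|g\|_{\GoodVectors}$, whence $\|f\|_{\Reservoir} \lesssim \|V_\Psi f\|_{\lebesgue^\infty_{1/v}}$.

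\textbf{Main obstacle.} The genuinely delicate points are (i) justifying the convergence of the reconstruction integral $\int_\Lambda V_\Psi f(\PhVar)\, \psi_\PhVar\, d\mu(\PhVar)$ as a \emph{Bochner} integral in $\GoodVectors$ (not just weakly in $\LtDF$), which requires strong measurability of $\PhVar \mapsto \psi_\PhVar$ as a $\GoodVectors$-valued map — this follows from continuity of $\PhVar \mapsto \psi_\PhVar$ in $\lebesgue^2$ (Assumption \eqref{enu:CoorbitParseval}) together with the quantitative bound $\|\psi_\PhVar - \psi_\PhVarA\|_{\GoodVectors} \lesssim$ something controllable, which is where the maximal-kernel condition $\MaxKernel{\CalU} K_\Psi \in \BBm$ and $\CalU$-moderateness of $u$ (hence of the relevant weights) enter to turn local oscillation of $K_\Psi$ into $\GoodVectors$-norm continuity; and (ii) showing $\GoodVectors$ is \emph{complete}, for which the maximal-kernel hypothesis again is the key tool, since it controls $\sup_{\PhVarC \in \CalV_\PhVar}|K_\Psi(\PhVarC,\cdot)|$ and thereby lets one pass from $\lebesgue^1_v$-Cauchy sequences of voice transforms to an actual limit function lying in the range of $V_\Psi$. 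I expect to cite \cite{kempka2015general} and \cite{hovo20_algebra} for the bulk of these arguments, checking only that our Assumption~\ref{assu:CoorbitAssumptions1} implies their hypotheses (in particular that the $\BBm$-membership of the maximal kernel implies their $\AAmY$-type localization conditions via Proposition~\ref{prop:NewAlgebraGoodForMixedLebesgue}).
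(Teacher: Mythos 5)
Your proposal is correct and follows essentially the same route as the paper: the paper's own proof is largely a citation of \cite[Lemma~2.13]{kempka2015general} for the bound $\|\psi_\PhVar\|_{\GoodVectors} \leq C' v(\PhVar)$, of \cite[Lemma~8.1]{hovo20_algebra} for completeness, the continuous embedding, and the equivalence of $\|V_\Psi \varphi\|_{\lebesgue^\infty_{1/v}}$ with the $\Reservoir$-norm, and of \cite[Corollary~1]{fora05} for minimality — exactly the references you say you would lean on, and your sketched direct arguments (the $m_v$-estimate for $\|\psi_\PhVar\|_{\GoodVectors}$, the Bochner-integral reconstruction for minimality, the duality pairing for the reverse $\lebesgue^\infty_{1/v}$ bound) are the content of those lemmas.

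Two small technical points that the paper addresses and you gloss over. First, the norm $\|K_\Psi\|_{\AAPlain_{m_v}}$ is defined via essential suprema, so your chain of inequalities in Step~2 a priori only gives $\|\psi_\PhVar\|_{\GoodVectors} \leq C' v(\PhVar)$ for almost every $\PhVar$; the paper upgrades this to every $\PhVar$ using the continuity of $\PhVar \mapsto \psi_\PhVar$ and of $v$ (and this pointwise version is what is needed to conclude $\Psi \subset \GoodVectors$ and hence density). Second, the only non-citation content of the paper's proof is a measurability upgrade: the cited lemma yields that $V_\Psi \varphi$ is Lebesgue-measurable for $\varphi \in \Reservoir$, and the paper shows Borel measurability by representing $\varphi$ via Hahn--Banach and the dual of $\lebesgue^1_v$ as integration against some $G \in \lebesgue^\infty_{1/v}$ and then invoking Fubini--Tonelli. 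Your proposal is silent on this, but it is a technicality rather than a gap in the mathematical idea.
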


\begin{proof}
  Define $C' := \| K_\Psi \|_{\AAPlain_{m_v}}$.
  Then, \cite[Lemma~2.13]{kempka2015general} shows that 
  \(
    \| \psi_\PhVar \|_{\mathcal H^1_v}
    \leq C' \cdot v(\PhVar)
  \)
  holds for all $\PhVar \in \PhSpace \setminus N$, if $\PhVar \mapsto \psi_\PhVar$ is weakly measurable. 
  If 
  $\PhVar \mapsto \psi_\PhVar$ and $v$ are continuous,
  their proof is easily seen to hold pointwise for all $\PhVar \in \PhSpace$
  and hence $\Psi \subset \GoodVectors$. 
  Since $\Psi$ is a continuous frame for $\LtDF$, this in particular implies that
  $\GoodVectors \subset \LtDF$ is dense. 
  The completeness of $(\GoodVectors, \| \bullet \|_{\GoodVectors})$
  and the continuity of the embedding $\GoodVectors \hookrightarrow \LtDF$ follow from
  \cite[Lemma~8.1]{hovo20_algebra}.
  The minimality property of $\mathcal{H}_v^1$ is shown in \cite[Corollary~1]{fora05}.

  \smallskip{}

  For $\varphi \in \Reservoir$, \cite[Lemma~8.1]{hovo20_algebra} shows that $V_\Psi \varphi$
  is measurable with respect to the \emph{Lebesgue $\sigma$-algebra},
  and that $\varphi \mapsto \| V_\Psi \varphi \|_{\lebesgue_{1/v}^\infty}$
  defines an equivalent norm on $\Reservoir$.
  Thus, we only show that $V_\Psi \varphi$ is in fact measurable
  with respect to the \emph{Borel $\sigma$-algebra}.
  To see this, define $W := \{ V_\Psi f \colon f \in \GoodVectors \} \subset \lebesgue_v^1 (\PhSpace)$
  and $\gamma : W \to \CC, V_\Psi f \mapsto \overline{\varphi(f)}$, noting that this is a
  well-defined, bounded linear functional since
  \(
    |\gamma(V_\Psi f)|
    = |\varphi(f)|
    \leq C \cdot \| f \|_{\GoodVectors}
    = C \cdot \| V_\Psi f \|_{\lebesgue_v^1}
    .
  \)
  By combining the Hahn-Banach theorem with the characterization of the dual of $\lebesgue_v^1(\PhSpace)$,
  we thus see that there exists $G \in \lebesgue_{1/v}^\infty (\PhSpace)$ satisfying
  \[
    V_\Psi \varphi (\PhVar)
    = \varphi(\psi_{\PhVar})
    = \overline{\gamma(V_\Psi \psi_{\PhVar})}
    = \overline{
        \int_{\PhSpace}
          G(\PhVarA) V_\Psi \psi_\PhVar (\PhVarA)
        \, d \PhVarA
      } .
  \]
  Now, since
  \(
    (\PhVar,\PhVarA) \mapsto V_\Psi \psi_\lambda (\rho)
    = \langle \psi_{\PhVar}, \psi_{\PhVarA} \rangle
    = K_\Psi (\PhVarA,\PhVar)
  \)
  is measurable and since $G \in \lebesgue_{1/v}^\infty$ and $V_\Psi \psi_\lambda \in \lebesgue_v^1$
  (as shown above), the measurability of $V_\Psi \varphi$ is an easy consequence of the
  Fubini-Tonelli theorem (see \cite[Proposition~5.2.1]{CohnMeasureTheory}).
\end{proof}

Now that we have constructed the ``reservoir'' $\Reservoir$, we can use it to define the
coorbit space associated to the frame $\Psi$ and a solid Banach space $\BanachOne$.

\begin{theorem}\label{thm:coorbits}
  Suppose that Assumption~\ref{assu:CoorbitAssumptions1} is satisfied.
  Then the \emph{coorbit of $\BanachOne$ with respect to $\Psi$},
  \begin{equation}\label{eq:coorbits}
    \Co \BanachOne
    := \Co (\Psi,\BanachOne)
    := \bigl\{f\in \Reservoir~:~ V_\Psi f\in \BanachOne\bigr\},
  \end{equation}
  is a Banach space with natural norm $\|f\|_{\Co \BanachOne} := \|V_\Psi f\|_\BanachOne$.

  Additionally, for any $G\in \BanachOne$, the property $G=K_\Psi(G)$ is equivalent to
  $G = V_\Psi f$ for some $f\in\Co \BanachOne$.
  The map $V_\Psi:\Co \BanachOne \rightarrow \BanachOne$ is an isometry of $\Co \BanachOne$ onto the
  closed subspace $K_\Psi(\BanachOne)$ of $\BanachOne$.
  Finally, the inclusion $\Co \BanachOne \hookrightarrow \Reservoir$
  is continuous.
\end{theorem}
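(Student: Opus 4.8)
The plan is to deduce \Cref{thm:coorbits} from the general machinery of coorbit theory (in the form of \cite{kempka2015general}, with the kernel‑algebra refinements of \cite{hovo20_algebra} and the reservoir/minimality results of \cite{fora05}): \Cref{assu:CoorbitAssumptions1} is calibrated precisely so that its conditions imply the hypotheses of that theory, so the cleanest route is to verify this implication and then read off the conclusions; below I recall the underlying mechanism. Everything pivots on two properties of the reproducing kernel $K_\Psi$. First, $K_\Psi$ induces bounded operators on $\BanachOne$, on $\lebesgue^1_v(\PhSpace)$, on $\lebesgue^\infty_{1/v}(\PhSpace)$, and as a map $\BanachOne \to \lebesgue^\infty_{1/v}(\PhSpace)$: indeed $|K_\Psi| \le \MaxKernel{\UU}K_\Psi$ pointwise and $\MaxKernel{\UU}K_\Psi \in \BBmo$ by \Cref{assu:CoorbitAssumptions1}\eqref{enu:CoorbitKernelLocalization}, so solidity of $\BBmo$ gives $K_\Psi \in \BBmo$ and then \Cref{assu:CoorbitAssumptions1}\eqref{enu:CoorbitBanachSpaceCondition} yields boundedness on $\BanachOne$, whereas $K_\Psi \in \AAPlain_{m_v}$ together with the two Schur‑type estimates built into the $\AAPlain_{m_v}$‑norm (and the elementary bound $|K_\Psi(\lambda,\rho)| \le \|\psi_\lambda\|_{\lebesgue^2}\,\|\psi_\rho\|_{\lebesgue^2} \lesssim v(\lambda)\,v(\rho)$ coming from \Cref{assu:CoorbitAssumptions1}\eqref{enu:CoorbitVCondition}) yields the remaining mapping properties. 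Second, $K_\Psi$ is \emph{idempotent}: since $\Psi$ is a Parseval frame for $\LtDF$, the reconstruction formula $\psi_\rho = \int_\PhSpace \langle \psi_\rho,\psi_\nu\rangle\,\psi_\nu\,d\mu(\nu)$ yields the pointwise identity $\int_\PhSpace K_\Psi(\lambda,\nu)\,K_\Psi(\nu,\rho)\,d\mu(\nu) = K_\Psi(\lambda,\rho)$, and since $K_\Psi \in \AAPlain_{m_v}\cap\BBmo$ with $m_v(\lambda,\rho) \le m_v(\lambda,\nu)\,m_v(\nu,\rho)$ (so that $\AAPlain_{m_v}$ is an algebra under the kernel product, cf.\ the remark after \Cref{def:NewKernelModule}), Fubini--Tonelli upgrades this to the operator identity $K_\Psi \circ K_\Psi = K_\Psi$ on each of the spaces above.

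Granting this, I would argue in three steps. \textbf{(i)} For every $f \in \Reservoir$ one has the reproducing identity $V_\Psi f = K_\Psi(V_\Psi f)$: this is the Parseval reconstruction formula when $f \in \GoodVectors$, and for general $f$ it follows by a weak-$\ast$/density argument, using that $V_\Psi f \in \lebesgue^\infty_{1/v}$ with $\|V_\Psi f\|_{\lebesgue^\infty_{1/v}} \asymp \|f\|_\Reservoir$ (\Cref{thm:resanddual}) and the continuity of $K_\Psi$ on $\lebesgue^\infty_{1/v}$. Consequently $V_\Psi(\Reservoir) \subset \{G : K_\Psi(G)=G\}$, and $V_\Psi$ is injective on $\Reservoir$ because $f \mapsto \|V_\Psi f\|_{\lebesgue^\infty_{1/v}}$ is a norm on $\Reservoir$. \textbf{(ii)} As $\|f\|_{\Co\BanachOne} := \|V_\Psi f\|_\BanachOne$ by definition, $V_\Psi : \Co\BanachOne \to \BanachOne$ is an isometry, and by (i) its range lies in $\{G \in \BanachOne : K_\Psi(G)=G\} = K_\Psi(\BanachOne)$, the last equality by idempotency. \textbf{(iii)} Conversely, let $G \in \BanachOne$ with $K_\Psi(G)=G$; then $G = K_\Psi(G) \in \lebesgue^\infty_{1/v}$ by the mapping property above, so $f(\varphi) := \int_\PhSpace G(\rho)\,\overline{V_\Psi\varphi(\rho)}\,d\mu(\rho)$ defines an anti‑linear functional on $\GoodVectors$ with $|f(\varphi)| \le \|G\|_{\lebesgue^\infty_{1/v}}\,\|V_\Psi\varphi\|_{\lebesgue^1_v} = \|G\|_{\lebesgue^\infty_{1/v}}\,\|\varphi\|_\GoodVectors$, hence $f \in \Reservoir$; and since $\overline{V_\Psi\psi_\lambda(\rho)} = K_\Psi(\lambda,\rho)$, one gets $V_\Psi f(\lambda) = f(\psi_\lambda) = K_\Psi(G)(\lambda) = G(\lambda)$, so that $V_\Psi f = G \in \BanachOne$ and $f \in \Co\BanachOne$. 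This proves the asserted correspondence and shows that $V_\Psi$ maps $\Co\BanachOne$ \emph{onto} $K_\Psi(\BanachOne)$.

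Combining (ii) and (iii), $V_\Psi$ is an isometric isomorphism of $\Co\BanachOne$ onto $K_\Psi(\BanachOne)$, and $K_\Psi(\BanachOne) = \ker(\identity_\BanachOne - K_\Psi)$ is closed in $\BanachOne$ as the kernel of a bounded operator (equivalently, as the range of a bounded idempotent). Hence $\Co\BanachOne$ is isometric to a closed subspace of the Banach space $\BanachOne$ and is therefore itself a Banach space with norm $\|\bullet\|_{\Co\BanachOne}$. Finally, the inclusion $\Co\BanachOne \hookrightarrow \Reservoir$ is continuous: for $f \in \Co\BanachOne$, \Cref{thm:resanddual}, the reproducing identity, and boundedness of $K_\Psi : \BanachOne \to \lebesgue^\infty_{1/v}$ give
\[
  \|f\|_\Reservoir \;\asymp\; \|V_\Psi f\|_{\lebesgue^\infty_{1/v}} \;=\; \big\| K_\Psi(V_\Psi f) \big\|_{\lebesgue^\infty_{1/v}} \;\lesssim\; \|V_\Psi f\|_\BanachOne \;=\; \|f\|_{\Co\BanachOne} .
\]

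I do not expect a conceptual obstacle: the content is the weighted kernel bookkeeping, and it is concentrated in two places. The first is the justification of the Fubini--Tonelli interchanges behind idempotency and the reproducing identity on the reservoir --- which is exactly why \Cref{assu:CoorbitAssumptions1}\eqref{enu:CoorbitKernelLocalization} demands $K_\Psi \in \AAPlain_{m_v}$ on top of $\MaxKernel{\UU}K_\Psi \in \BBmo$. The second is the verification of the mapping properties of $K_\Psi$ among $\BanachOne$, $\lebesgue^1_v$ and $\lebesgue^\infty_{1/v}$, above all its boundedness as a map $\BanachOne \to \lebesgue^\infty_{1/v}$; the proof of the latter uses Schur‑type and interpolation estimates and hinges on the interplay between $v$, $u$, $w_{\UU}^c$ and $m_0$ prescribed in \Cref{assu:CoorbitAssumptions1}\eqref{enu:CoorbitUModerateness}--\eqref{enu:CoorbitVCondition}. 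All of this is carried out in full generality in \cite{kempka2015general,hovo20_algebra,fora05}, so the most economical write‑up verifies that \Cref{assu:CoorbitAssumptions1} entails those papers' hypotheses and then quotes the corresponding statements.
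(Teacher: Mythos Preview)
Your proposal is correct and takes essentially the same approach as the paper: the paper's proof is a one-line citation of \cite[Proposition~8.6]{hovo20_algebra} together with \cite[Sections~2.3 and~2.4]{kempka2015general}, which is exactly the ``verify the hypotheses, then quote'' route you outline. Your sketch of the underlying mechanism (idempotency of $K_\Psi$, the reproducing identity on $\Reservoir$, the correspondence principle, and closedness of $K_\Psi(\BanachOne)$ as the kernel of a bounded idempotent) is accurate and more explicit than what the paper records, but it is not a different argument---it is the content of the cited results.
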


\begin{proof}
  This follows from \cite[Proposition~8.6]{hovo20_algebra} together with
  \cite[Sections~2.3 and 2.4]{kempka2015general}.
\end{proof}

Note that the definition of $\Co \BanachOne$ is independent of the weight $v$
in the following sense:
If $\widetilde{v}$ is another weight such that Assumption~\ref{assu:CoorbitAssumptions1} holds,
then \eqref{eq:coorbits} defines the same space,
see \cite[Lemma~2.26]{kempka2015general}.
Furthermore, according to \cite[Lemma 2.32]{kempka2015general},
we have the following special cases:
\[
  \Co \bd L^1_v
  = \mathcal{H}^1_v,\quad \Co \bd L^\infty_{1/v}
  = \Reservoir
  \quad \text{ and } \quad
  \Co \bd L^2 = \bd L^2.
\]


The coorbit spaces $(\Co \BanachOne,\|\bullet\|_{\Co \BanachOne})$ are independent of
the particular choice of the continuous frame $\Psi$, under a certain
equivalence condition on the mixed kernel associated to a pair of continuous Parseval frames.

\begin{proposition}\label{pro:mixedkern}
  If $\Psi$ and $\widetilde{\Psi}$ are continuous Parseval frames for $\LtDF$
  such that Assumption~\ref{assu:CoorbitAssumptions1} is satisfied for $\Psi$
  and also for $\widetilde{\Psi}$, and if
  ${K_{\Psi,\widetilde{\Psi}},K_{\widetilde{\Psi},\Psi}\in\AAPlain_{m_v} \cap \BB_{m_0}}$, where
  $K_{\Psi,\widetilde{\Psi}}$ is the \emph{mixed} kernel defined by
  \begin{equation}
    K_{\Psi,\widetilde{\Psi}}(\PhVar,\PhVarA)
    := \big\langle
         \widetilde{\psi_{\PhVarA}},\psi_{\PhVar}
       \big\rangle
    \label{eq:MixedKernelDefinition}
  \end{equation}
  then
  \[
    \mathcal{H}^1_v(\Psi) = \mathcal{H}^1_v(\widetilde{\Psi})
    \quad \text{ and } \quad
    \Co (\Psi,\BanachOne) = \Co (\widetilde{\Psi},\BanachOne).
  \]
\end{proposition}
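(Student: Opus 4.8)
The plan is to reduce the whole statement to a single \emph{intertwining identity} relating the two voice transforms through the mixed kernels, and then to transport this identity successively onto $\mathcal{H}^1_v$ and onto the reservoir $\Reservoir$. First I would establish the identity on $\LtDF$: since $\Psi$ is a Parseval frame, $\bd{S}_\Psi = V_\Psi^\ast V_\Psi = \identity_{\LtDF}$, so every $f \in \LtDF$ satisfies $f = \int_\PhSpace V_\Psi f(\PhVarA)\,\psi_\PhVarA\,d\mu(\PhVarA)$ in the weak sense; pairing this against $\widetilde{\psi_\PhVar} \in \LtDF$ and reading off \eqref{eq:MixedKernelDefinition} gives
\[
  V_{\widetilde{\Psi}} f(\PhVar) = \langle f, \widetilde{\psi_\PhVar} \rangle = \int_\PhSpace \langle \psi_\PhVarA, \widetilde{\psi_\PhVar} \rangle \, V_\Psi f(\PhVarA) \, d\mu(\PhVarA) = K_{\widetilde{\Psi},\Psi}(V_\Psi f)(\PhVar) ,
\]
and, by exchanging the roles of $\Psi$ and $\widetilde{\Psi}$, also $V_\Psi f = K_{\Psi,\widetilde{\Psi}}(V_{\widetilde{\Psi}} f)$.

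Next I would deduce the first assertion, $\mathcal{H}^1_v(\Psi) = \mathcal{H}^1_v(\widetilde{\Psi})$. A one-line Schur estimate shows that any $K \in \AAPlain_{m_v}$ acts boundedly on $\bd L^1_v$ with operator norm at most $\|K\|_{\AAPlain_{m_v}}$ (use $v(\PhVar)\,|K(\PhVar,\PhVarA)| \le m_v(\PhVar,\PhVarA)\,v(\PhVarA)\,|K(\PhVar,\PhVarA)|$ together with Tonelli's theorem). Combined with the identity above and the hypothesis $K_{\Psi,\widetilde{\Psi}},K_{\widetilde{\Psi},\Psi} \in \AAPlain_{m_v}$, this yields for $f \in \LtDF$ the equivalence $V_\Psi f \in \bd L^1_v \iff V_{\widetilde{\Psi}} f \in \bd L^1_v$ with comparable norms; hence $\mathcal{H}^1_v(\Psi) = \mathcal{H}^1_v(\widetilde{\Psi})$ with equivalent norms, and consequently their anti-dual spaces coincide, $\Reservoir(\Psi) = \Reservoir(\widetilde{\Psi}) =: \Reservoir$. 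This last identification is what makes the statement $\Co(\Psi,\BanachOne) = \Co(\widetilde{\Psi},\BanachOne)$ even meaningful.

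Then I would extend the intertwining identity to all $f \in \Reservoir$. Fix $\PhVar$. Since $\widetilde{\psi_\PhVar} \in \mathcal{H}^1_v(\widetilde{\Psi}) = \mathcal{H}^1_v(\Psi)$, the reconstruction formula $\widetilde{\psi_\PhVar} = \int_\PhSpace V_\Psi \widetilde{\psi_\PhVar}(\PhVarA)\,\psi_\PhVarA\,d\mu(\PhVarA)$ converges as a Bochner integral in $\mathcal{H}^1_v$, because $\int_\PhSpace |V_\Psi \widetilde{\psi_\PhVar}(\PhVarA)|\,\|\psi_\PhVarA\|_{\mathcal{H}^1_v}\,d\mu(\PhVarA) \lesssim \|V_\Psi \widetilde{\psi_\PhVar}\|_{\bd L^1_v} < \infty$ by \Cref{thm:resanddual} (and it agrees with the weak $\LtDF$-integral via $\mathcal{H}^1_v \hookrightarrow \LtDF$). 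Applying the continuous anti-linear functional $f$ and interchanging it with the Bochner integral gives $V_{\widetilde{\Psi}} f(\PhVar) = f(\widetilde{\psi_\PhVar}) = \int_\PhSpace \langle \psi_\PhVarA, \widetilde{\psi_\PhVar}\rangle\,V_\Psi f(\PhVarA)\,d\mu(\PhVarA) = K_{\widetilde{\Psi},\Psi}(V_\Psi f)(\PhVar)$, where the integral converges absolutely since $V_\Psi f \in \bd L^\infty_{1/v}$ and $V_\Psi \widetilde{\psi_\PhVar} \in \bd L^1_v$; symmetrically $V_\Psi f = K_{\Psi,\widetilde{\Psi}}(V_{\widetilde{\Psi}} f)$ on $\Reservoir$. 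Finally, $K_{\widetilde{\Psi},\Psi} \in \BB_{m_0}$ together with Assumption~\ref{assu:CoorbitAssumptions1}\eqref{enu:CoorbitBanachSpaceCondition} makes $K_{\widetilde{\Psi},\Psi}(\bullet)$ bounded on $\BanachOne$, so $f \in \Co(\Psi,\BanachOne)$ forces $V_{\widetilde{\Psi}} f = K_{\widetilde{\Psi},\Psi}(V_\Psi f) \in \BanachOne$, i.e.\ $f \in \Co(\widetilde{\Psi},\BanachOne)$ with $\|f\|_{\Co(\widetilde{\Psi},\BanachOne)} \le \|K_{\widetilde{\Psi},\Psi}\|_{\BB_{m_0}}\,\|f\|_{\Co(\Psi,\BanachOne)}$; the symmetric argument (using $K_{\Psi,\widetilde{\Psi}}$) gives the reverse inclusion, and hence equality with equivalent norms.

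The only genuine obstacle I anticipate is the reservoir step — specifically, justifying that the anti-linear functional may be pulled inside the reproducing integral — and this is exactly what the $\mathcal{H}^1_v$-Bochner-integrability supplied by the bound $\|\psi_\PhVarA\|_{\mathcal{H}^1_v} \lesssim v(\PhVarA)$ of \Cref{thm:resanddual} is there to handle; everything else is routine bookkeeping with the kernel spaces $\AAPlain_{m_v}$ and $\BB_{m_0}$ and the boundedness property already built into Assumption~\ref{assu:CoorbitAssumptions1}.
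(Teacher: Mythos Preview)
Your proposal is correct and self-contained: you establish the intertwining identity $V_{\widetilde{\Psi}}f = K_{\widetilde{\Psi},\Psi}(V_\Psi f)$ first on $\LtDF$, transfer it to $\mathcal{H}^1_v$ via the Schur-type bound for $\AAPlain_{m_v}$ on $\bd L^1_v$, and then to $\Reservoir$ via a Bochner argument, finally invoking the $\BBmo$-boundedness on $\BanachOne$.

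The paper's own proof, however, takes a much shorter route: it simply observes that Assumption~\ref{assu:CoorbitAssumptions1} gives $\AAPlain_{m_v} \cap \BB_{m_0} \hookrightarrow \AAPlain_{m_v,\BanachOne}$ and then invokes \cite[Lemma~2.29]{kempka2015general} as a black box. What you have written is essentially a reconstruction of the argument behind that cited lemma, adapted to the $\BBmo$ setting used here. Your approach buys self-containment and transparency about where each hypothesis ($\AAPlain_{m_v}$ for the $\mathcal{H}^1_v$ step, $\BBmo$ for the $\BanachOne$ step) is actually consumed; the paper's approach buys brevity by outsourcing these details to the literature. One minor technical point you may want to make explicit in a fully rigorous write-up is the strong measurability of $\PhVarA \mapsto \psi_\PhVarA$ as a map into $\mathcal{H}^1_v$ (needed for the Bochner integral), but this follows from the continuity of $\PhVarA \mapsto \psi_\PhVarA$ into $\LtDF$ combined with the $\AAPlain_{m_v}$-bound on $K_\Psi$, and is routine.
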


\begin{proof}
  Assumption~\ref{assu:CoorbitAssumptions1} implies
  $\AAPlain_{m_v} \cap \BB_{m_0} \hookrightarrow \AAPlain_{m_v,Y}$.
  Thus, \cite[Lemma~2.29]{kempka2015general} yields the claim.
\end{proof}

\subsection{Discretization in coorbit spaces}
\label{ssec:discret}

General coorbit theory provides a machinery for constructing Banach spaces $\Co \BanachOne$
and associated (Banach) frames and atomic decompositions through
sampling of the continuous frame $\Psi$ on $\PhSpace$.
The results summarized here have been developed by Fornasier and
Rauhut~\cite{fora05} and extended
in~\cite{rauhut2011generalized,hovo20_algebra,kempka2015general,ournote,bahowi15}.

In a nutshell, the idea for discretizing the continuous frame $\Psi$ is to
consider a sufficiently fine covering $\CalV = (V_j)_{j \in J}$
such that the frame $\Psi = (\psi_{\PhVar})_{\PhVar \in \PhSpace}$ is
\emph{almost constant} (in a suitable sense) on each of the sets $V_j$.
Then, by choosing $\PhVar_j \in V_j$, it is intuitively plausible that
the discrete family $(\psi_{\PhVar_j})_{j \in J}$ behaves similarly to the
continuous frame $\Psi$.
The following definition makes this idea of $\Psi$ being almost constant on
each of the $V_j$ more precise.

\begin{definition}\label{def:genosckern}
  Let $\Gamma : \PhSpace \times \PhSpace \to S^1 \subset \CC$ be continuous.
  The \emph{$\Gamma$-oscillation} $\oscVG : \PhSpace \times \PhSpace \to [0,\infty)$
  of a continuous Parseval frame $\Psi = (\psi_\lambda)_{\lambda \in \lambda}$ with
  respect to the topologically admissible covering
  $\CalV = (V_j)_{j \in J}$ of $\PhSpace$ is defined as
  \begin{equation}
   \begin{split}
    \oscVG(\PhVar,\PhVarA)
    := {\mathrm{osc}}_{\Psi,\VV,\Gamma}(\PhVar,\PhVarA)
    & := \sup_{\PhVarC \in \CalV_{\PhVarA}}
           |
             \langle
               \psi_{\PhVar},
               \psi_{\PhVarA} - \Gamma(\PhVarA,\PhVarC)\psi_{\PhVarC}
             \rangle
           |\\
    & = \sup_{\PhVarC \in \CalV_{\PhVarA}}
         |
            K_\Psi(\PhVarA,\PhVar)
            - \overline{\Gamma(\PhVarA,\PhVarC)}
              K_\Psi(\PhVarC,\PhVar)
         | \\
    & = \sup_{\PhVarC \in \CalV_{\PhVarA}}
         |
            K_\Psi(\PhVar,\PhVarA)
            - \Gamma(\PhVarA,\PhVarC)
              K_\Psi(\PhVar,\PhVarC)
         | ,
         \label{eq:def_genosckern}
         \end{split}
  \end{equation}
  where $\CalV_{\PhVarA} := \bigcup_{j \in J \text{ with } \PhVarA \in V_j} V_j$.
\end{definition}

\begin{remark}\label{rem:OscKernelContinuity}
  The oscillation $\oscVG : \PhSpace \times \PhSpace \to [0,\infty)$ is well-defined
  and lower semicontinuous and hence measurable.
  Indeed, each set $\bf{V}_\PhVarA \subset \PhSpace$ is relatively compact as a finite union
  of relatively compact sets, where finiteness of the union is implied by
  the remark after Definition~\ref{def:admissibility}.
  Next, note that $K_\Psi$ is continuous, since the map $\lambda \mapsto \psi_\lambda$
  is (strongly) continuous by Assumption~\ref{assu:CoorbitAssumptions1}.
  Since continuous functions are bounded on relatively compact sets,
  this shows that $\oscVG$ is finite-valued. Now proceed analogous to Remark \ref{rem:MaximalKernelContinuity}.
%
\end{remark}

We further consider specific sequence spaces associated to $\BanachOne$ and a collection $\CalW$ of subsets of $\PhSpace$. 

\begin{definition}\label{def:discreteSpaces}
     For any family $\CalW = (W_j)_{j \in J}$ with a countable index set $J$
     and consisting of measurable subsets $W_j \subset \PhSpace$ with $0 < \mu(W_j) < \infty$
     and any sequence $c = (c_j)_{j \in J} \in \CC^J$, we define
          \[
            \qquad
            \| c \|_{\BanachOne^{\flat}(\CalW)}
            := \bigg\|
                 \sum_{j \in J}
                   |c_j| \Indicator_{W_j}
               \bigg\|_{\BanachOne}
            \in [0,\infty]
            \quad \text{ and } \quad
            \| c \|_{\BanachOne^{\sharp}(\CalW)}
            := \bigg\|
                 \sum_{j \in J}
                   \frac{|c_j|}{\mu(W_j)}
                   \Indicator_{W_j}
               \bigg\|_{\BanachOne}
            \in [0,\infty] ,
          \]
          and finally
          \begin{equation}
            \begin{split}
              & \BanachOne^{\flat}(\CalW)
                := \{
                     c \in \CC^{J}
                     \colon
                     \| c \|_{\BanachOne^{\flat}(\CalW)} < \infty
                   \} \\
              \text{and} \quad
              & \BanachOne^{\sharp}(\CalW)
                := \{
                     c \in \CC^{J}
                     \colon
                     \| c \|_{\BanachOne^{\sharp}(\CalW)} < \infty
                   \}
                .
            \end{split}
            \label{eq:sequenceSpacesGeneral}
          \end{equation}            
\end{definition}

The following set of assumptions summarizes the conditions that ensure applicability
of the discretization results from coorbit theory.

\begin{assumption}\label{assu:CoorbitAssumptions2}
  In addition to Assumption~\ref{assu:CoorbitAssumptions1},
  assume the following conditions:
  \begin{enumerate}[itemsep=0.3em]
    \item $\CalV = (V_j)_{j \in J}$ is a topologically admissible covering of $\PhSpace$;

    \item $\Gamma : \PhSpace \times \PhSpace \to S^1$ is continuous;

    \item \label{enu:smallOscillation}With $m := \max\{m_0,m_v\}$, we have
          \[
            \| \oscVG \|_{\BBm}
            \cdot (2 \| K_\Psi \|_{\BBm} + \| \oscVG \|_{\BBm})
            < 1 ;
          \] 
  \end{enumerate}
\end{assumption}
\begin{remark}\label{rem:KernOscEstimate}
  If $\CalW$ is identical to the topologically admissible covering $\CalV = (V_j)_{j \in J}$,
  we often write $\BanachOne^{\flat}$ and $\BanachOne^{\sharp}$ for $\BanachOne^{\flat}(\CalV)$
  or $\BanachOne^{\sharp}(\CalV)$.
  In fact, it is often possible to choose the product-admissible covering $\CalU$
  from Assumption~\ref{assu:CoorbitAssumptions1} identical
  to the topologically admissible covering $\CalV$,
  and we will indeed do so, but this is not required.
  However, the oscillation of $\Psi$ provides a useful,
  straightforward estimate for the maximal kernel associated to $K_\Psi$:
  \begin{equation}\label{eq:MaxKernOscEstimate}
    \mathrm{M}_{\CalV} K_\Psi(\PhVar,\PhVarA)
    \leq |K_\Psi(\PhVar,\PhVarA)| + \mathrm{osc}_{\Psi,\VV,\Gamma}^\ast(\PhVar,\PhVarA), \text{ a.e.},
  \end{equation}
  for any choice of $\Gamma$.
  Hence, Assumption~\ref{assu:CoorbitAssumptions2}\eqref{enu:smallOscillation} implies
  the second part of Assumption~\ref{assu:CoorbitAssumptions1}\eqref{enu:CoorbitKernelLocalization}
  if $\CalU = \CalV$. 
\end{remark}

\begin{remark}\label{rem:ChoiceOfGamma}
  Note that an appropriate choice of the map $\Gamma : \PhSpace \times \PhSpace \to S^1$
  is crucial to achieve small $\BBm$-norm of the oscillation and, consequently,
  for satisfying Item~\ref{enu:smallOscillation} above.
  In  this work, we will only consider a single, straightforward choice for $\PhSpace$
  and the map $\Gamma : \PhSpace \times \PhSpace \to S^1$, namely $\PhSpace = \RR^d\times D$,
  with $D\subset\RR^d$ open, and $\Gamma : \bigl((y,\omega),(z,\eta)\bigr) \mapsto e^{-2\pi i\langle y-z,\omega\rangle}$,
  cf.\ Theorem~\ref{thm:main2_discreteframes}.
  However, other continuous frames $\Psi$ may require a different choice of $\Gamma$. 
\end{remark}

The following theorem shows that the preceding conditions indeed imply that suitably sampling
the continuous frame $\Psi$ produces a Banach frame decomposition of $\Co(\BanachOne)$.

\begin{theorem}\label{thm:CoorbitDiscretization}
  If Assumption~\ref{assu:CoorbitAssumptions2} holds and if for each $j \in J$
  some $\lambda_j \in V_j$ is chosen, then the discrete frame $\Psi_d = (\psi_{\lambda_j})_{j \in J}$
  forms a Banach frame decomposition for $\Co (\BanachOne) = \Co(\Psi,\BanachOne)$,
  with the sequence space $\BanachOne^{\flat}$ and $\BanachOne^{\sharp}$
  taking the place of $\BanachTD^{\flat}$ and $\BanachTD^{\sharp}$.
\end{theorem}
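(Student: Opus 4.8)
The plan is to deduce the theorem from the abstract discretization machinery of general coorbit theory --- developed by Fornasier and Rauhut \cite{fora05} and refined in \cite{kempka2015general,hovo20_algebra} --- so that the only real work is to check that Assumption~\ref{assu:CoorbitAssumptions2}, on top of Theorems~\ref{thm:resanddual} and \ref{thm:coorbits}, supplies exactly the hypotheses those results require. Throughout, I would use the isometric identification of $\Co \BanachOne$ with the closed subspace $K_\Psi(\BanachOne) \subset \BanachOne$ via $V_\Psi$ from Theorem~\ref{thm:coorbits}, so that the analysis, synthesis and reconstruction maps all become integral operators on the solid Banach space $\BanachOne$, and the fact that $\Psi_d$ is a frame for $\LtDF$ drops out as the special case $\BanachOne = \lebesgue^2(\PhSpace)$.

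First come the preliminary reductions. Since $\CalV$ is topologically admissible it is locally finite, so each $\CalV_\PhVar$ is a finite union of relatively compact sets; together with the continuity of $\PhVar \mapsto \psi_\PhVar$ this makes $\oscVG$ finite-valued and, being lower semicontinuous, measurable (Remark~\ref{rem:OscKernelContinuity}). Next I would settle the kernel-theoretic input: from $\MaxKernel{\CalU}K_\Psi \in \BBmo$ and $|K_\Psi| \le \MaxKernel{\CalU}K_\Psi$ (as $\PhVar \in \CalV_\PhVar$), solidity of $\BBmo$ gives $K_\Psi \in \BBmo$, whence Assumption~\ref{assu:CoorbitAssumptions1}\eqref{enu:CoorbitBanachSpaceCondition} shows that $|K_\Psi|$ acts boundedly on $\BanachOne$; combined with $K_\Psi \in \AAPlain_{m_v}$ and the inequalities $\|K\|_{\AAm} \le \|K\|_{\BBm}$ and $m = \max\{m_0,m_v\} \le m_0 + m_v$, this yields $K_\Psi \in \AAmY$. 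The same comparison $\|K\|_{\AAmY}\le\|K\|_{\BBm}$, valid because $m \ge m_0$, turns Assumption~\ref{assu:CoorbitAssumptions2}\eqref{enu:smallOscillation} into the $\AAmY$-smallness estimate $\|\oscVG\|_{\AAmY}\bigl(2\|K_\Psi\|_{\AAmY}+\|\oscVG\|_{\AAmY}\bigr) < 1$ demanded by \cite{kempka2015general}, while Remark~\ref{rem:KernOscEstimate} and \eqref{eq:MaxKernOscEstimate} bound $\MaxKernel{\CalV}K_\Psi$ pointwise by $|K_\Psi| + \oscVG^\ast \in \BBmo$, which is precisely the maximal-kernel hypothesis used for the atomic-decomposition half of the theory (and which makes taking $\CalU = \CalV$ legitimate, cf.\ Remark~\ref{rem:KernOscEstimate}).

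With these checked, I would run the abstract argument. The sampling-error operator --- the difference between the continuous reproducing identity $K_\Psi(G) = G$ on $K_\Psi(\BanachOne)$ and its discretized, $\Gamma$-twisted version built from the chosen points $\lambda_j \in V_j$ and the weights $\mu(V_j)$ --- has a kernel lying in $\BBm$ with norm at most $\|\oscVG\|_{\BBm}(2\|K_\Psi\|_{\BBm}+\|\oscVG\|_{\BBm})$, hence, by $\BBm \hookrightarrow \AAmY$ and solidity of $\BanachOne$, defines a strict contraction of $\BanachOne$; a Neumann series in the algebra of bounded operators on $\BanachOne$ then produces a bounded inverse, whose action on $(\psi_{\lambda_j})_{j\in J}$ furnishes the dual family $E_d = (e_j)_{j\in J} \subset \GoodVectors \hookrightarrow \Co \BanachOne \cap (\Co \BanachOne)'$ and from which the reconstruction identities $\CalR_{\Psi_d}\circ\CalC_{E_d} = \identity = \CalR_{E_d}\circ\CalC_{\Psi_d}$ are read off. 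Boundedness of $\CalC_{\Psi_d}: \Co\BanachOne \to \BanachOne^\flat(\CalV)$, of $\CalC_{E_d}: \Co\BanachOne \to \BanachOne^\sharp(\CalV)$, and of the two reconstruction maps into $\Co\BanachOne$, with unconditional convergence of the defining series, then follows from solidity and richness of $\BanachOne$ and the covering-compatibility estimates that control $\sum_j |c_j| \Indicator_{V_j}$ and $\sum_j \mu(V_j)^{-1}|c_j| \Indicator_{V_j}$ against $V_\Psi$ of the corresponding syntheses --- here the dominations $v(\PhVar) \ge c\,\|\psi_\PhVar\|_{\lebesgue^2}$ and $v(\PhVar) \ge c\, u(\PhVar)/w_{\CalU}^c(\PhVar)$ from Assumption~\ref{assu:CoorbitAssumptions1}\eqref{enu:CoorbitVCondition}, together with $\CalU$-moderateness of $u$, enter exactly as in \cite{kempka2015general,hovo20_algebra}. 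That $\BanachOne^\flat(\CalV)$ and $\BanachOne^\sharp(\CalV)$ are solid, rich Banach sequence spaces is immediate from the corresponding properties of $\BanachOne$.

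I expect the main obstacle to be the faithful translation between the two formalisms rather than any single estimate: \cite{kempka2015general} phrases its discretization hypotheses through $\AAmY$ and the maximal kernel, whereas everything here is packaged via the stronger but more convenient module $\BBm$, so one must verify that every kernel occurring in the argument --- the sampling-error kernel, the discretized reproducing kernel, the iterates appearing in the Neumann series, and $\MaxKernel{\CalV}K_\Psi$ --- is genuinely controlled in $\BBm$ (using that $\BBm$ behaves like a kernel algebra for the weights at hand, cf.\ the remark after Definition~\ref{def:NewKernelModule}), that the resulting operator-norm bounds on $\BanachOne$ stay strictly below $1$, and that all constants remain compatible with the normalization $A = 1$ forced by $\Psi$ being a Parseval frame. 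Once this dictionary is in place, the theorem is exactly \cite{kempka2015general} together with the product-space refinements of \cite{hovo20_algebra}.
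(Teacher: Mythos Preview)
Your proposal is correct and takes essentially the same approach as the paper: both deduce the result from the abstract discretization machinery of general coorbit theory, with the paper simply citing \cite[Proposition~8.7]{hovo20_algebra} (with $L := \oscVG$ and $\widetilde{\CalU} = \CalV$) rather than unpacking the Neumann-series argument and kernel estimates as you do. Your sketch is precisely the content of that cited proposition, so the only difference is level of detail.
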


\begin{proof}
  This follows from \cite[Proposition~8.7]{hovo20_algebra},
  by choosing $L := \oscVG$ and $\widetilde{\CalU} = \CalV$ and by noting that the
  \emph{topologically admissible covering} $\CalV$ is admissible in the terminology
  of \cite{hovo20_algebra}.
\end{proof}

One strategy to satisfy the conditions of Theorem~\ref{thm:CoorbitDiscretization}
is the construction of a parametrized family of topologically admissible coverings
$\CalV^\delta$ such that
\begin{equation}\label{eq:convergentcover}
  \|\text{osc}_{\mathcal{V}^\delta,\Gamma}\|_{\BBm}
  \overset{\delta \rightarrow 0}{\longrightarrow} 0.
\end{equation}
Then, $\delta_0 > 0$ can be found such that Theorem~\ref{thm:CoorbitDiscretization}
holds for the fixed frame $\Psi$ and all $\mathcal{V}^\delta$ with $\delta \leq \delta_0$.

In \cite{rauhut2011generalized}---later generalized in \cite[Theorem~2.50]{kempka2015general}%
---a complementary discretization result is introduced,
which allows to derive Banach frame decompositions for all appropriate $\Co \BanachOne$
directly from (discrete) frames on the Hilbert space $\Hil$, obtained by sampling a continuous frame.
This is an intriguing and important result, given that the explicit construction of frames
for $\Hil$ by sampling a continuous frame is often straightforward, see, e.g.,~\cite{howi14}.
Although we do not consider this result in detail here,
we would like to note that its adjustment to our setting is straightforward.

\subsection{Sequence spaces associated to mixed-norm Lebesgue spaces}%
\label{sub:MixedLebesgueSequenceSpaces}

In this subsection, we show for $\BanachOne = \lebesgue^{p,q}_\kappa(\PhSpace)$
and under suitable conditions on the covering $\CalW$,
that the coefficient spaces $\BanachOne^\flat(\CalW)$
and $\BanachOne^\sharp(\CalW)$ coincide with certain mixed-norm sequence spaces
$\ell^{p,q}_{\widetilde{\kappa}}(J)$.
Here, given a (countable) index set $J$ of the form $J = J_1 \times J_2$,
and any fixed discrete weight $\widetilde{\kappa} \colon J \rightarrow \RR^+$,
the space $\ell^{p,q}_{\widetilde{\kappa}}(J)$ consists of all sequences
$c = (c_{\ell,k})_{(\ell,k) \in J} \in \CC^J$ for which
\begin{equation}\label{eq:mixedsequencespace}
  \| c \|_{\ell^{p,q}_{\widetilde{\kappa}}(J)}
  := \left\|
       k \mapsto \|\widetilde{\kappa}(\bullet,k) \, c_{\bullet,k} \|_{\ell^{p}(J_1)}
     \right\|_{\ell^{q}(J_2)}
  < \infty.
\end{equation}

Precisely, our result is as follows:

\begin{lemma}\label{lem:SequenceSpaceIdentification}
  Let $J = J_1 \times J_2$ be a countable index set
  and $\CalQ =  ( Q_{k} )_{k\in J_2}$ an admissible covering of $\PhSpace_2$.
  For each $k \in J_2$, let $\mathcal{P}_k = ( P_{\ell,k} )_{\ell \in J_1}$ be an admissible
  covering of $\PhSpace_1$ such that $\sup_{k \in J_2} \CalN(\CalP_k) < \infty$.
  Define $\CalU = ( U_{\ell,k} )_{(\ell,k)\in J}$ by
  \begin{equation}\label{eq:prodcover}
     U_{\ell,k} := P_{\ell,k} \times Q_k, \text{ for all } (\ell,k)\in J.
  \end{equation}

  If the weight function $\kappa \colon \PhSpace \rightarrow \RR^+$ satisfies
  \begin{equation}\label{eq:wAlmostConstOnUi}
    \kappa(\PhVar_0)/\kappa(\PhVar_1) \leq C,
    \text{ for some } C > 0,
    \text{ all } \PhVar_0,\PhVar_1\in U_{\ell,k}
    \text{ and all } (\ell,k) \in J = J_1 \times J_2,
  \end{equation}
  then, for all $1\leq p,q\leq \infty$,
  \begin{equation}\label{eq:sequencespacesLpq}
    \bigl(\lebesgue^{p,q}_\kappa(\PhSpace)\bigr)^{\flat}(\CalU)
    = \ell^{p,q}_{\kappa^\flat_{\CalU}}(J)
    \quad \text{and} \quad
    \bigl(\lebesgue^{p,q}_\kappa(\PhSpace)\bigr)^{\sharp}(\CalU)
    = \ell^{p,q}_{\kappa^\sharp_{\CalU}}(J),
    \quad \text{with equivalent norms.}
  \end{equation}
  Here, $\bigl(\lebesgue^{p,q}_\kappa(\PhSpace)\bigr)^{\flat}(\CalU)$
  and $\bigl(\lebesgue^{p,q}_\kappa(\PhSpace)\bigr)^{\sharp}(\CalU)$ are the spaces
  defined in \eqref{eq:sequenceSpacesGeneral} and the weights $\kappa^\flat_{\CalU}$
  and $\kappa^\sharp_{\CalU}$ are given by
  \[
    \kappa^{\flat}_{\CalU} (\ell,k)
    = [\mu_1(P_{\ell,k})]^{1/p}
      \cdot [\mu_2(Q_k)]^{1/q}
      \cdot \kappa_{\ell,k}
    \quad \text{and} \quad
    \kappa^{\sharp}_{\CalU} (\ell,k)
    = [\mu_1(P_{\ell,k})]^{1/p - 1}
      \cdot [\mu_2(Q_k)]^{1/q - 1}
      \cdot \kappa_{\ell,k} ,
  \]
  where $\kappa_{\ell,k} := \sup_{\PhVar \in U_{\ell,k}} \kappa(\PhVar)$ for all $(\ell,k) \in J$.
\end{lemma}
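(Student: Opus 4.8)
The strategy is to unwind the definitions of $\BanachOne^\flat(\CalU)$ and $\BanachOne^\sharp(\CalU)$ from \Cref{def:discreteSpaces} for $\BanachOne = \lebesgue^{p,q}_\kappa(\PhSpace)$ and to estimate the resulting mixed-norm integral of a step function above and below by the corresponding $\ell^{p,q}$ sum, exploiting the bounded overlap of the coverings and the near-constancy of $\kappa$. I will treat only the $\flat$-case in detail; the $\sharp$-case is entirely analogous, the only change being the replacement of $|c_{\ell,k}|$ by $|c_{\ell,k}|/\mu(U_{\ell,k})$ and a bookkeeping of the extra factor $[\mu_1(P_{\ell,k})\,\mu_2(Q_k)]^{-1}$, noting that $\mu(U_{\ell,k}) = \mu_1(P_{\ell,k})\,\mu_2(Q_k)$ by the product structure. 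Throughout, I write $N_2 := \CalN(\CalQ)$ and $N_1 := \sup_{k\in J_2}\CalN(\CalP_k)$, both finite by hypothesis.

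First I would fix $c = (c_{\ell,k})_{(\ell,k)\in J}$ and consider the function $F := \sum_{(\ell,k)\in J} |c_{\ell,k}|\,\Indicator_{U_{\ell,k}} = \sum_{(\ell,k)} |c_{\ell,k}|\,\Indicator_{P_{\ell,k}}\otimes\Indicator_{Q_k}$ on $\PhSpace = \PhSpace_1\times\PhSpace_2$. For fixed $\PhVar_2 \in \PhSpace_2$, let $K(\PhVar_2) := \{k\in J_2 : \PhVar_2\in Q_k\}$; this set has at most $N_2$ elements. Then $(\kappa\cdot F)(\bullet,\PhVar_2) = \sum_{k\in K(\PhVar_2)} \sum_{\ell\in J_1} |c_{\ell,k}|\,\kappa(\bullet,\PhVar_2)\,\Indicator_{P_{\ell,k}}$. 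Using \eqref{eq:wAlmostConstOnUi} to replace $\kappa(\PhVar_1,\PhVar_2)$ by $\kappa_{\ell,k} := \sup_{U_{\ell,k}}\kappa$ up to the constant $C$ (from above; and trivially $\kappa(\PhVar_1,\PhVar_2)\le\kappa_{\ell,k}$ from below on $P_{\ell,k}\times\{\PhVar_2\}$ when $\PhVar_2\in Q_k$), together with the bounded overlap of $\CalP_k$ and the elementary inequality $\|\sum_i a_i\Indicator_{A_i}\|_{\lebesgue^p}^p \asymp_{N} \sum_i a_i^p\,\mu(A_i)$ valid for finitely-overlapping families (with the reverse inequality from disjointification and the forward one from the quasi-triangle/$\ell^p$-sum bound), I would obtain
\[
  \big\|(\kappa\cdot F)(\bullet,\PhVar_2)\big\|_{\lebesgue^p(\PhSpace_1)}
  \asymp \Big(\sum_{k\in K(\PhVar_2)} \sum_{\ell\in J_1} |c_{\ell,k}|^p\,\kappa_{\ell,k}^p\,\mu_1(P_{\ell,k})\Big)^{1/p},
\]
with implied constants depending only on $C, N_1$ (and, for $p=\infty$, the obvious supremum version). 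Introducing $a_k := \big(\sum_{\ell} |c_{\ell,k}|^p\,\kappa_{\ell,k}^p\,\mu_1(P_{\ell,k})\big)^{1/p}$, the right-hand side is $\asymp \big(\sum_{k\in K(\PhVar_2)} a_k^p\big)^{1/p} \asymp \big(\sum_{k\in K(\PhVar_2)} a_k^q\big)^{1/q}$ — here I use that $K(\PhVar_2)$ has uniformly bounded cardinality $\le N_2$, so all $\ell^p(K(\PhVar_2))$ norms are mutually equivalent with constants depending only on $N_2$.

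Then I would take the $\lebesgue^q(\PhSpace_2)$-norm in $\PhVar_2$. Writing $g := \sum_{k\in J_2} a_k\,\Indicator_{Q_k}$, the previous step gives $\|F\|_{\lebesgue^{p,q}_\kappa} \asymp \|g\|_{\lebesgue^q(\PhSpace_2)}$, and another application of the finitely-overlapping step-function estimate for the family $\CalQ$ yields $\|g\|_{\lebesgue^q}^q \asymp \sum_{k} a_k^q\,\mu_2(Q_k)$, i.e.
\[
  \|c\|_{\BanachOne^\flat(\CalU)}
  \asymp \Big(\sum_{k\in J_2} \mu_2(Q_k)\,\Big(\sum_{\ell\in J_1} |c_{\ell,k}|^p\,\kappa_{\ell,k}^p\,\mu_1(P_{\ell,k})\Big)^{q/p}\Big)^{1/q}
  = \|c\|_{\ell^{p,q}_{\kappa^\flat_{\CalU}}(J)},
\]
since $\kappa^\flat_{\CalU}(\ell,k) = \mu_1(P_{\ell,k})^{1/p}\,\mu_2(Q_k)^{1/q}\,\kappa_{\ell,k}$ distributes correctly inside the $\ell^p$ and $\ell^q$ layers. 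For $\BanachOne^\sharp(\CalU)$ one runs the identical argument with $|c_{\ell,k}|$ replaced by $|c_{\ell,k}|/\mu(U_{\ell,k})$; the factor $\mu_1(P_{\ell,k})^{1/p}$ from the step-function estimate combines with $\mu(U_{\ell,k})^{-1} = \mu_1(P_{\ell,k})^{-1}\mu_2(Q_k)^{-1}$ to produce the exponent $1/p-1$ on $\mu_1(P_{\ell,k})$ and, after the outer layer, $1/q-1$ on $\mu_2(Q_k)$, matching $\kappa^\sharp_{\CalU}$. The main technical obstacle is the two-sided step-function estimate $\|\sum_i a_i\Indicator_{A_i}\|_{\lebesgue^p}^p \asymp \sum_i a_i^p\mu(A_i)$ for finitely-overlapping (but not disjoint) families: the lower bound is subtle because the $A_i$ genuinely overlap, so one cannot simply invoke disjointness; the clean way is to note $\sum_i a_i\Indicator_{A_i} \ge \max_i a_i\Indicator_{A_i}$ pointwise for the lower bound (or pass to a disjoint refinement weighted by the overlap count, which is bounded below and above), and use the $\ell^p$-triangle inequality together with the uniform overlap bound for the upper bound; handling the endpoint cases $p=\infty$ or $q=\infty$ requires only replacing sums by suprema throughout. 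One must also be slightly careful that the implied constants in \eqref{eq:sequencespacesLpq} are independent of $p,q$, which they are, since at each step they depend only on $C$, $N_1$, $N_2$.
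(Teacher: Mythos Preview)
Your proposal is correct and follows essentially the same route as the paper: both arguments rest on the two-sided estimate for finitely-overlapping step functions (the paper states it pointwise as $(\sum_j a_j\Indicator_{V_j})^r \asymp \sum_j a_j^r\Indicator_{V_j}$, you state it in integrated form $\|\sum_i a_i\Indicator_{A_i}\|_p^p \asymp \sum_i a_i^p\mu(A_i)$), combined with the near-constancy of $\kappa$ on each $U_{\ell,k}$, iterated through the two layers of the mixed norm. One small slip: in your inner-layer estimate the family $\{P_{\ell,k}:\ell\in J_1,\;k\in K(\PhVar_2)\}$ has overlap bounded by $N_1\cdot N_2$, not just $N_1$, so the constants there depend on both; and your detour through the $\ell^p\leftrightarrow\ell^q$ equivalence on the finite set $K(\PhVar_2)$ is unnecessary, since the pointwise form of the step-function estimate handles the exponent $q/p$ directly.
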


\begin{proof}
  We prove the assertion for $p,q < \infty$;
  the proof for the cases $p = \infty$ or $q = \infty$ is similar and hence omitted.

  Note that if $\CalV = (V_j)_{j \in J}$ is an admissible covering of a set $\CalO$
  and if $(a_j)_{j \in J} \in [0,\infty)^J$, then at most $\CalN(\CalV)$ summands
  of the sum $\sum_{j \in J} a_j \Indicator_{V_j}(x)$ are non-zero for each fixed $x \in \CalO$.
  Therefore, given any $r \in \RR^+$, we have
  \(
    \big(
      \sum_{j \in J}
        a_j \Indicator_{V_j}(x)
    \big)^{r}
    \asymp \sum_{j \in J}
             a_j^r \Indicator_{V_j}(x)
    ,
  \)
  where the implied constant only depends on $r$ and on $\CalN(\CalV)$.

  Let $(c_{\ell,k})_{(\ell,k)\in J} \in \CC^J$ and set
  $f_c (\PhVar) := \sum_{\ell,k \in J} |c_{\ell,k}| \, \Indicator_{U_{\ell,k}}(\PhVar)$.
  The estimate from the preceding paragraph,
  first applied to $\CalV = \CalQ$, and then applied to $\CalV = \CalP_k$ for fixed $k \in J_2$,
  shows
  \begin{equation}
    \begin{split}
      \big( f_c (\PhVar) \big)^p
      & = \Big(
            \sum_{k \in J_2}
              \Indicator_{Q_k} (\PhVar_2)
              \sum_{\ell \in J_1}
                |c_{\ell,k}| \,
                \Indicator_{P_{\ell,k}} (\PhVar_1)
          \Big)^p
        \asymp \sum_{k \in J_2}
               \bigg[
                 \Indicator_{Q_k}(\PhVar_2)
                 \Big(
                   \sum_{\ell \in J_1}
                     |c_{\ell,k}| \,
                     \Indicator_{P_{\ell,k}} (\PhVar_1)
                 \Big)^p
               \bigg] \\
      & \asymp \sum_{k \in J_2}
                 \Indicator_{Q_k} (\PhVar_2)
                 \sum_{\ell \in J_1}
                   |c_{\ell,k}|^p
                   \Indicator_{P_{\ell,k}}(\PhVar_1) .
    \end{split}
    \label{eq:SequenceSpaceIdentificationPointwise}
  \end{equation}
  Furthermore, note that Equation~\eqref{eq:wAlmostConstOnUi} implies
  $\kappa(\PhVar) \asymp \kappa_{\ell,k}$ for $\PhVar \in U_{\ell,k} = P_{\ell,k} \times Q_k$.
  Therefore, integrating the estimate \eqref{eq:SequenceSpaceIdentificationPointwise}
  over $\PhVar_1 \in \PhSpace_1$, we see
  \begin{align*}
    g_c (\PhVar_2)
    & := \!\!
       \int_{\PhSpace_1} \!\!
         \bigl(f_c (\PhVar_1,\PhVar_2) \cdot \kappa(\PhVar_1,\PhVar_2) \bigr)^p
       \, d \mu_1(\PhVar_1)\\
    & \asymp \sum_{k \in J_1} \!
               \Indicator_{Q_k} (\PhVar_2)
               \sum_{\ell \in J_1}
                 |c_{\ell,k}|^p \!
                 \int_{\PhSpace_1} \!\!
                   \bigl(\kappa(\PhVar_1,\PhVar_2)\bigr)^p
                   \cdot \Indicator_{P_{\ell,k}}(\PhVar_1)
                 \, d \mu_1 (\PhVar_1) \\
    & \asymp \sum_{k \in J_1}
             \Big[
               \Indicator_{Q_k} (\PhVar_2)
               \sum_{\ell \in J_1}
                 |c_{\ell,k} \cdot \kappa_{\ell,k}|^p
                 \cdot \mu_1(P_{\ell,k})
             \Big]
    .
  \end{align*}
  Now, we again use the estimate from the beginning of the proof (for $\CalV = \CalQ$) to obtain
  \[
   \begin{split}
    [g_c(\PhVar_2)]^{q/p}
    & \asymp \bigg(
             \sum_{k \in J_1}
             \Big[
               \Indicator_{Q_k} (\PhVar_2)
               \sum_{\ell \in J_1}
                 |c_{\ell,k} \cdot \kappa_{\ell,k}|^p
                 \cdot \mu_1(P_{\ell,k})
             \Big]
           \bigg)^{\frac{q}{p}}\\
           & \asymp \sum_{k \in J_1} \!
           \bigg[
             \Indicator_{Q_k}(\PhVar_2)
             \Big(
               \sum_{\ell \in J_1}
                 |c_{\ell,k} \cdot \kappa_{\ell,k}|^p
                 \cdot \mu_1(P_{\ell,k}) \!
             \Big)^{\frac{q}{p}}
           \bigg].
    \end{split}
  \]
  Integrating this over $\PhVar_2 \in \PhSpace_2$, we finally see
  \begin{align*}
    \| c \|_{(\lebesgue^{p,q}_\kappa(\PhSpace))^{\flat}(\CalU)}^q
    & = \| f_c \|_{\lebesgue^{p,q}_\kappa(\PhSpace)}^q
      = \int_{\PhSpace_2}
          [g_c(\PhVar_2)]^{q/p}
        \, d \mu_2(\PhVar_2) \\
    & \asymp \sum_{k \in J_1}
             \bigg[
               \mu_2(Q_k)
               \Big(
                 \sum_{\ell \in J_1}
                   |c_{\ell,k} \cdot \kappa_{\ell,k}|^p
                   \cdot \mu_1(P_{\ell,k}) \!
               \Big)^{\frac{q}{p}}
             \bigg]
      =      \| c \|_{\ell^{p,q}_{\kappa^{\flat}_{\CalU}}}^q ,
  \end{align*}
  which completes the proof for the identification of the space
  $\bigl(\lebesgue^{p,q}_\kappa(\PhSpace)\bigr)^{\flat}(\CalU)$.

  The identification of $\bigl(\lebesgue^{p,q}_\kappa(\PhSpace)\bigr)^{\sharp}(\CalU)$
  follows by substituting $c_{\ell,k} \mu(U_{\ell,k})^{-1}$ for $c_{\ell,k}$
  everywhere in the derivations above.
\end{proof}

Our proof of the above result relies heavily on the product structure of the covering $\CalU$
in \eqref{eq:prodcover}.
Although minor generalizations of the conditions placed on $\CalU$
are possible without significant complications, one cannot expect to recover a similar result
without restrictions on $\CalU$.
However, in our setting of warped time-frequency systems,
product coverings as in \eqref{eq:prodcover} arise quite naturally and the result above is entirely sufficient.


 \section{Frequency-adapted tight continuous frames through warping}
\label{sec:warpedsystems}

In this section, we define the class of warped time-frequency systems
as tools for the analysis and synthesis of functions.
The framework presented here generalizes the systems
introduced in \cite{bahowi15} to arbitrary dimensions.
The basic properties presented in this section are proven analogous
to the one-dimensional case, such that we only provide references.

As explained in the introduction, a warped time-frequency system generates
a joint time-frequency representation
in which the trade-off between time- and frequency-resolution at any given
frequency position is governed by the associated frequency scale.
That frequency scale is generated by the warping function.

\begin{definition}\label{def:warpfun}
  Let $D \subset \RR^d$ be open.
  A $C^1$ diffeomorphism $\Phi: D \rightarrow \RR^d$ is called
  a \emph{warping function}, if $\det(\mathrm{D}\Phi^{-1}(\tau))>0$
  for all $\tau \in \RR^d$ and if the associated weight function
  \begin{equation}
    w : \quad
    \RR^d \to \RR^+, \quad
    w(\tau) = \det(\mathrm{D}\Phi^{-1}(\tau)),
    \label{eq:wDefinition}
  \end{equation}
  is $w_0$-moderate for some submultiplicative weight
  $w_0 : \RR^d \to \R^+$.
\end{definition}

\begin{rem*}
  We note that $w_0$ is automatically locally bounded,
  as shown in \cite[Theorem~2.1.4]{HeilPhDThesisWienerAmalgam}
  and \cite[Theorem~2.2.22]{VoigtlaenderPhDThesis}.
\end{rem*}

Let us collect some basic results that are direct consequences
of $w$ being $w_0$-moderate.
For the sake of brevity, set
\begin{equation}\label{eq:DefofA}
  A(\tau) := \mathrm{D}\Phi^{-1}(\tau) \qquad \forall \, \tau \in \mathbb{R}^d
\end{equation}
for the remainder of this article.
First, note that the chain rule---applied to the identity
$\tau = \Phi(\Phi^{-1}(\tau))$ for $\tau \in \mathbb{R}^d$---yields
\begin{equation}\label{eq:DerivativeOfInverse}
  \mathrm{id}
  = \mathrm{D}\Phi(\Phi^{-1}(\tau)) \cdot A(\tau),
  \quad \text{ i.e.,} \quad
  A(\tau) = [\mathrm{D}\Phi(\Phi^{-1}(\tau))]^{-1}.
\end{equation}
In particular, we get (for arbitrary $\tau = \Phi(\xi)$)
that $w(\Phi(\xi)) = \frac{1}{\det(\mathrm{D}\Phi(\xi))}$.  
Thus, given any measurable nonnegative $f : \RR^d \to [0,\infty)$,
a change of variables leads to the frequently useful formulae
\begin{equation}
  \int_D f(\Phi(\xi)) \, d\xi
  = \int_{\mathbb{R}^d}
      w(\tau) \cdot f(\tau)
    \, d\tau \, ,
  \quad \text{ and consequently } \quad
  \| f \circ \Phi\|_{\bd L^p(D)} = \|f\|_{\bd L_{w^{1/p}}^p}.
  \label{eq:StandardChangeOfVariables}
\end{equation}

Finally, we note that submultiplicativity of $w_0$
and $w_0$-moderateness of $w$ yields translation invariance of
$\bd L_{w^{1/p}}^p$ and $\bd L_{w_0^{1/p}}^p$.
Indeed, if $w$ is any $w_0$-moderate weight
(not necessarily given by \eqref{eq:wDefinition}), then
$w(\tau+\upsilon) \leq \min\{w(\tau) w_0 (\upsilon),w(\upsilon) w_0 (\tau)\}$,
so that \eqref{eq:StandardChangeOfVariables} yields
\begin{equation}
    \| \bd T_\upsilon f \|^p_{\bd L_{w^{1/p}}^p}
    \leq  w_0(\upsilon) \cdot \|f\|^p_{\bd L_{w^{1/p}}^p}
    \qquad \text{and} \qquad
    \| \bd T_\upsilon f\|_{\bd L_{w^{1/p}}^p}^p
    \leq  w(\upsilon) \cdot \|f\|_{\bd L_{w_0^{1/p}}^p}^p
\label{eq:StandardTrafoAndTranslationEstimate}
\end{equation}
for all measurable $f : \RR^d \to \CC$ and all $w_0$-moderate weights $w$.
In particular, one can choose $w=w_0$,
since $w_0$ is submultiplicative and hence $w_0$-moderate.

Moderateness (and positivity) of the weight function $w$
associated to the warping function $\Phi$ ensure that
warped time-frequency systems and the associated representations
are well-defined and possess some essential properties,
as we will see shortly. 
But first, let us formally introduce warped time-frequency systems.

\begin{definition}\label{def:warpedsystem}
  Let $\Phi$ be a warping function and $\theta \in \Ltw(\RR^d)$.
  The \emph{(continuous) warped time-frequency system} generated by $\theta$
  and $\Phi$ is the collection of functions
  $\mathcal G(\theta,\Phi):=(g_{y,\omega})_{(y,\omega)\in\Lambda}$, where
  \begin{equation}
    g_{y,\omega} := \bd T_{y} \widecheck{g_\omega},
    \quad \text{ with } \quad
    g_\omega := w(\Phi(\omega))^{-1/2} \cdot (\bd T_{\Phi(\omega)} \theta)\circ \Phi
    \text{ for all } y\in \RR^d,\ \omega\in D.
    \label{eq:WarpedSystemDefinition}
  \end{equation}
  Here, the function $g_\omega : D \to \CC$ is extended by zero to a
  function on all of $\RR^d$, so that $\widecheck{g_\omega}$ is well-defined.
  The \emph{phase space} associated with this family is $\Lambda = \RR^d\times D$.
\end{definition}

Since $w$ is moderate with respect to $w_0$, we obtain $g_{y,\omega}\in \LtDF$.
In fact, \eqref{eq:StandardChangeOfVariables} and \eqref{eq:StandardTrafoAndTranslationEstimate}
show
\begin{equation}\label{eq:FrameElementsBounded}
  \|\widehat{g_{y,\omega}}\|_{\lebesgue^2(D)}^2
  \leq \frac{w_0(\Phi(\omega))}{w(\Phi(\omega))}\|\theta\|_{\Ltw(\RR^d)}^2
  < \infty
  \quad\text{and}\quad
  \|\widehat{g_{y,\omega}}\|_{\lebesgue^2(D)}^2
  \leq \|\theta\|_{\Ltv}^2 \in [0,\infty].
\end{equation}
Thus, $\mathcal G(\theta,\Phi) \subset \LtDF$ and the associated
analysis operation, i.e., taking inner products with the functions $g_{y,\omega}$,
defines a transform on $\LtDF$.

\begin{definition}\label{def:WarpingFunction}
  Let $\Phi$ be a warping function and $\theta \in \Ltw(\RR^d)$.
  The \emph{$\Phi$-warped time-frequency transform} of $f\in \LtDF$ with respect
  to the \emph{prototype} $\theta$ is defined as
  \begin{equation}\label{eq:warpedtransform}
    V_{\theta, \Phi} f: \quad
    \RR^d  \times D \to \CC, \quad
    (y,\omega) \mapsto \inner{f}{g_{y,\omega}}_{\lebesgue^2(\RR^d)}.
  \end{equation}
\end{definition}


For $\lambda = (y,\omega)\in\Lambda = \RR^d\times D$, we will alternatively use
the notations $V_{\theta,\Phi}f(y,\omega) = V_{\theta,\Phi}f(\lambda)$ and
$g_{y,\omega} = g_{\lambda}$, whenever one or the other is more convenient.

By definition and \eqref{eq:FrameElementsBounded}, we have $V_{\theta, \Phi}f
\in \bd L^\infty(\Lambda)$, whenever $\theta\in \Ltv(\RR^d)$.
Furthermore, using that $\Phi \in \mathcal C^1$ and the translation-invariance of $\Ltw(\RR^d)$,
one can also deduce that $V_{\theta,\Phi}f\in \mathcal C(\Lambda)$,
even under the weaker assumption $\theta \in \Ltw (\RR^d)$.

\begin{proposition}\label{prop:WarpedSystemWeaklyContinuous}
  Let $\Phi$ be a warping function and $\theta\in \Ltw(\RR^d)$.
  Then
  \begin{equation}
    V_{\theta,\Phi}f\in \mathcal C(\Lambda),\ \text{ for all } f\in \LtDF . 
  \end{equation}
  In fact, the mapping
  $\RR^d \times D \to \LtDF, (y,\omega) \mapsto g_{y,\omega}$ is continuous.
\end{proposition}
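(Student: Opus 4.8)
The plan is to reduce the continuity of $(y,\omega)\mapsto g_{y,\omega}$ in $\LtDF$ to the continuity of $\omega\mapsto g_\omega$ in $\lebesgue^2(\R^d)$ (equivalently in $\lebesgue^2(D)$, extending by zero), together with strong continuity of the translation operators $\bd T_y$. Indeed, writing $g_{y,\omega}=\bd T_y\widecheck{g_\omega}$ and using that $\Fourier$ is unitary, we have
\[
  \|g_{y,\omega}-g_{y_0,\omega_0}\|_{\lebesgue^2}
  \leq \|\bd T_y \widecheck{g_\omega} - \bd T_y \widecheck{g_{\omega_0}}\|_{\lebesgue^2}
       + \|\bd T_y \widecheck{g_{\omega_0}} - \bd T_{y_0}\widecheck{g_{\omega_0}}\|_{\lebesgue^2}
  = \|g_\omega - g_{\omega_0}\|_{\lebesgue^2(D)}
       + \|\bd T_{y-y_0}\widecheck{g_{\omega_0}} - \widecheck{g_{\omega_0}}\|_{\lebesgue^2}.
\]
The second term tends to $0$ as $y\to y_0$ by strong continuity of translation on $\lebesgue^2(\R^d)$. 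So it remains to show $\omega\mapsto g_\omega$ is continuous from $D$ into $\lebesgue^2(\R^d)$; the continuity of $V_{\theta,\Phi}f$ then follows immediately from Cauchy-Schwarz, since $V_{\theta,\Phi}f(y,\omega)=\langle f,g_{y,\omega}\rangle$.

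For the continuity of $\omega\mapsto g_\omega$, recall $g_\omega = w(\Phi(\omega))^{-1/2}\cdot(\bd T_{\Phi(\omega)}\theta)\circ\Phi$, extended by zero. First I would handle the scalar prefactor: since $\Phi$ is a $C^1$ diffeomorphism and $w = \det(\mathrm D\Phi^{-1})$ is continuous and strictly positive, $\omega\mapsto w(\Phi(\omega))^{-1/2}$ is continuous and locally bounded, so it suffices to prove continuity of $\omega\mapsto (\bd T_{\Phi(\omega)}\theta)\circ\Phi$ in $\lebesgue^2(D)$. Using the change-of-variables identity \eqref{eq:StandardChangeOfVariables}, namely $\|h\circ\Phi\|_{\lebesgue^2(D)}^2 = \|h\|_{\lebesgue^2_{\sqrt w}}^2 = \int_{\R^d} w(\tau)|h(\tau)|^2\,d\tau$, we get for $\tau_0 := \Phi(\omega_0)$ and $\tau := \Phi(\omega)$ that
\[
  \big\|(\bd T_\tau\theta)\circ\Phi - (\bd T_{\tau_0}\theta)\circ\Phi\big\|_{\lebesgue^2(D)}^2
  = \|\bd T_\tau\theta - \bd T_{\tau_0}\theta\|_{\lebesgue^2_{\sqrt w}}^2 .
\]
Thus the problem is now purely one of continuity of $\tau\mapsto \bd T_\tau\theta$ in the weighted space $\lebesgue^2_{\sqrt w}(\R^d)$, and $\tau=\Phi(\omega)$ depends continuously on $\omega$ since $\Phi$ is continuous, so composing with $\Phi$ poses no issue.

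The core of the argument is therefore strong continuity of translation on $\lebesgue^2_{\sqrt w}(\R^d)$ when $w$ is $w_0$-moderate. Here I would argue as follows: by \eqref{eq:StandardTrafoAndTranslationEstimate}, translations act boundedly on $\lebesgue^2_{\sqrt w}$, with $\|\bd T_\upsilon h\|_{\lebesgue^2_{\sqrt w}}^2 \leq w_0(\upsilon)\|h\|_{\lebesgue^2_{\sqrt w}}^2$ and $w_0$ locally bounded; hence it suffices to establish continuity at $\upsilon = 0$ on a dense subset and then use an $\varepsilon/3$-argument. For $h$ in the dense subclass $\TestFunctionSpace(\R^d)$ of compactly supported smooth functions (or just compactly supported bounded functions), $\bd T_\upsilon h \to h$ uniformly as $\upsilon\to 0$, with supports contained in a common compact set for $|\upsilon|\leq 1$; since $w$ is locally bounded (being continuous), dominated convergence gives $\|\bd T_\upsilon h - h\|_{\lebesgue^2_{\sqrt w}}\to 0$. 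For general $\theta\in\lebesgue^2_{\sqrt w}$, pick $h\in\TestFunctionSpace$ with $\|\theta - h\|_{\lebesgue^2_{\sqrt w}}$ small, and bound
\[
  \|\bd T_\upsilon\theta - \theta\|_{\lebesgue^2_{\sqrt w}}
  \leq \|\bd T_\upsilon(\theta-h)\|_{\lebesgue^2_{\sqrt w}}
       + \|\bd T_\upsilon h - h\|_{\lebesgue^2_{\sqrt w}}
       + \|h - \theta\|_{\lebesgue^2_{\sqrt w}}
  \leq (1+\sqrt{w_0(\upsilon)})\,\|\theta-h\|_{\lebesgue^2_{\sqrt w}}
       + \|\bd T_\upsilon h - h\|_{\lebesgue^2_{\sqrt w}},
\]
which can be made arbitrarily small for $\upsilon$ near $0$ using local boundedness of $w_0$. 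The main obstacle, if any, is exactly the density of compactly supported functions in the \emph{weighted} space $\lebesgue^2_{\sqrt w}$ — but this is standard since $w$ is locally bounded and strictly positive, so that $\lebesgue^2_{\sqrt w}$ agrees locally with $\lebesgue^2$; everything else is bookkeeping with the moderateness inequalities already established in the excerpt.
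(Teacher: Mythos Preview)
Your proposal is correct and follows the natural route: reduce to continuity of $\omega\mapsto g_\omega$ in $\lebesgue^2$ plus strong continuity of translation, then push the $\omega$-dependence through the change-of-variables identity to land on strong continuity of translation in $\lebesgue^2_{\sqrt{w}}$, which follows from $w_0$-moderateness and density of compactly supported functions. The paper does not give a self-contained proof but refers to \cite[Proposition~4.5]{bahowi15} (the $d=1$ case), whose argument is precisely the one you have written out; so your approach coincides with the intended one.
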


\begin{proof}
Analogous to the proof of \cite[Proposition~4.5]{bahowi15}.
\end{proof}

The next result provides the crucial property that makes warped time-frequency systems so attractive.
Namely, $V_{\bullet, \Phi}$ possesses a norm-preserving property similar to the orthogonality relations
(Moyal's formula~\cite{mo49,gr01}) for the short-time Fourier transform.

\begin{theorem}\label{thm:orthrel}
  Let $\Phi$ be a warping function and $\theta_1, \theta_2 \in \Ltw\cap\LtRd$.
  Then the following holds for all $f_1,f_2 \in \LtDF$:
  \begin{equation}
    \int_\Lambda V_{\theta_1,\Phi}f_1(\lambda) \overline{V_{\theta_2,\Phi}f_2(\lambda)}\; d\lambda
    = \inner{f_1}{f_2}_{\lebesgue^2(\RR^d)}\langle \theta_2,\theta_1 \rangle_{\lebesgue^2(\RR^d)}.
    \label{eq:orthrel}
  \end{equation}
  In particular, for any $\theta\in \Ltw\cap\LtRd$, $\mathcal G(\theta,\Phi)$ is a continuous tight frame
  with frame bound $\|\theta\|_{\lebesgue^2}^2$.
\end{theorem}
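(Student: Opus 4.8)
The plan is to reduce the orthogonality relation \eqref{eq:orthrel} to the standard Plancherel/Parseval calculus on $\RR^d$ by unfolding the definition of $g_{y,\omega}$. First I would fix $\omega \in D$ and recognize that, for fixed second variable, $V_{\theta_j,\Phi} f_j(\cdot,\omega)$ is (up to the inversion $\widecheck{\phantom{x}}$) the inner product of $f_j$ against a translate of a fixed $\LtDF$-function, so Plancherel turns the $y$-integral into a convolution-type expression. Concretely, using $\langle f_j, \bd T_y \widecheck{g_\omega^{(j)}}\rangle = \langle \widehat{f_j}, \bd M_{-y} g_\omega^{(j)}\rangle = \mathcal F^{-1}\big(\widehat{f_j}\cdot \overline{g_\omega^{(j)}}\big)(y)$ (all functions supported in $D$, extended by zero), Parseval in the $y$-variable gives
\[
  \int_{\RR^d}
    V_{\theta_1,\Phi} f_1(y,\omega)\,\overline{V_{\theta_2,\Phi} f_2(y,\omega)}
  \, dy
  = \int_D
      \widehat{f_1}(\xi)\,\overline{\widehat{f_2}(\xi)}\,
      \overline{g_\omega^{(1)}(\xi)}\, g_\omega^{(2)}(\xi)
    \, d\xi .
\]

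**Main computation.** The heart of the matter is then the remaining integral over $\omega \in D$. Plugging in $g_\omega^{(j)}(\xi) = w(\Phi(\omega))^{-1/2}\cdot \theta_j\big(\Phi(\xi)-\Phi(\omega)\big)$ and applying Tonelli to exchange the order of integration, one must evaluate
\[
  \int_D
    \frac{1}{w(\Phi(\omega))}\,
    \overline{\theta_1\big(\Phi(\xi)-\Phi(\omega)\big)}\,
    \theta_2\big(\Phi(\xi)-\Phi(\omega)\big)
  \, d\omega .
\]
Here I would substitute $\upsilon = \Phi(\omega)$; by the change-of-variables formula \eqref{eq:StandardChangeOfVariables} the Jacobian factor $w(\upsilon) = \det(\mathrm D\Phi^{-1}(\upsilon))$ exactly cancels the weight $1/w(\Phi(\omega))$ — recall from the identity just after \eqref{eq:DerivativeOfInverse} that $w(\Phi(\omega))^{-1} = \det(\mathrm D\Phi(\omega))$, while $d\omega = w(\upsilon)\,d\upsilon$ — leaving
\[
  \int_{\RR^d}
    \overline{\theta_1\big(\Phi(\xi)-\upsilon\big)}\,
    \theta_2\big(\Phi(\xi)-\upsilon\big)
  \, d\upsilon
  = \langle \theta_2, \theta_1 \rangle ,
\]
by translation invariance of the Lebesgue integral and the fact that $\theta_1,\theta_2 \in \LtRd$. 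The constant is independent of $\xi$, so the outer integral collapses to $\langle \theta_2,\theta_1\rangle \int_D \widehat{f_1}(\xi)\overline{\widehat{f_2}(\xi)}\,d\xi = \langle \theta_2,\theta_1\rangle\,\langle f_1,f_2\rangle$ by Plancherel, which is \eqref{eq:orthrel}.

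**Justifying the interchanges, and the tight-frame consequence.** The main obstacle is not the algebra but the bookkeeping needed to justify the two applications of Fubini--Tonelli: the $y$-integral is handled by Parseval on $\LtRd$ directly (no positivity issue, since for fixed $\omega$ both factors lie in $\LtRd$), but exchanging the $\omega$-integral with the $\xi$-integral requires an integrability estimate. Here I would invoke \eqref{eq:FrameElementsBounded} together with the bounds on $\|\theta_j\|$ in $\Ltw\cap\LtRd$, Tonelli on the nonnegative integrand $|\widehat{f_1}|\,|\widehat{f_2}|\,|g_\omega^{(1)}|\,|g_\omega^{(2)}|$, and Cauchy--Schwarz; alternatively, as the statement of the theorem indicates, this entire argument is the higher-dimensional transcription of \cite[Theorem~4.7]{bahowi15} (or the analogous one-dimensional result cited there), so the estimates carry over verbatim once the change of variables above is in place. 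Finally, taking $\theta_1 = \theta_2 = \theta$ and $f_1 = f_2 = f$ in \eqref{eq:orthrel} gives $\int_\Lambda |V_{\theta,\Phi} f(\lambda)|^2\, d\lambda = \|\theta\|_{\lebesgue^2}^2\,\|f\|_{\lebesgue^2}^2$; combined with the weak continuity of $(y,\omega)\mapsto g_{y,\omega}$ from \Cref{prop:WarpedSystemWeaklyContinuous}, this is precisely the statement that $\mathcal G(\theta,\Phi)$ is a continuous tight frame for $\LtDF$ with frame bound $\|\theta\|_{\lebesgue^2}^2$.
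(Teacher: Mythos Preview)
Your proposal is correct and is precisely the argument the paper has in mind: the paper's own proof simply says ``Analogous to \cite[Theorem~4.6]{bahowi15}'' and notes that $\theta_1,\theta_2\in\LtRd$ supplies the admissibility needed for Plancherel, which is exactly the computation you spell out (Parseval in $y$, then the substitution $\upsilon=\Phi(\omega)$ cancelling the weight $w$). One trivial remark: you cite Theorem~4.7 of \cite{bahowi15} while the paper cites Theorem~4.6; double-check the numbering.
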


\begin{proof}
Analogous to \cite[Theorem~4.6]{bahowi15}.
Note that $\theta_1, \theta_2 \in \LtRd$ implies the admissibility condition required there,
and moreover serves to justify the application of Plancherel's theorem in the proof.
\end{proof}

As already remarked in \cite{bahowi15}, $\theta_1,\theta_2\in\LtRd$
is a sort of admissibility condition and, in fact,
yields the classical wavelet admissibility, if $d=1$ and $\Phi = \log$.
Besides the tight frame property, Theorem~\ref{thm:orthrel} shows
that the warped time-frequency representations with respect to orthogonal windows,
but the same warping function, span orthogonal subspaces of $\bd L^2(\Lambda)$.
Similarly, orthogonal functions $f_1,f_2$ have orthogonal representations,
independent of the prototypes $\theta_1,\theta_2$.
These additional properties are useful, e.g., for constructing superframes
for multiplexing~\cite{grochenig2009gabor,balan2000multiplexing} or multitapered representations
\cite{thomson1982spectrum,xiao2007multitaper,daubechies2016conceft}.

The tight frame property itself is a basic requirement for general coorbit theory,
and provides a convenient inversion formula:

\begin{corollary}\label{cor:WarpingInversion}
  Given a warping function $\Phi$ and some nonzero $\theta\in \Ltw\cap \bd L^2 (\R^d)$.
  Then any $f \in \mathcal F^{-1}(\LtD)$ can be reconstructed from $V_{\theta, \Phi}f$ by
  \begin{equation}
    f
    = \frac{1}{\|\theta\|^2_{\bd L^2}}
      \int_\Lambda
        V_{\theta,\Phi}f(\lambda) \, g_{\lambda}
      \; d\lambda.
    \label{eq:WeakSenseReconstruction}
  \end{equation}
  The equation holds in the weak sense.
\end{corollary}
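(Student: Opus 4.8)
The plan is to derive \eqref{eq:WeakSenseReconstruction} directly from the orthogonality relations in \Cref{thm:orthrel}; the only real work is to pin down the meaning of the weak-sense integral and to check that it is a genuine element of $\LtDF$. First I would recall that, since $(g_\lambda)_{\lambda\in\Lambda}$ is weakly measurable (in fact continuous into $\LtDF$ by \Cref{prop:WarpedSystemWeaklyContinuous}), the right-hand side of \eqref{eq:WeakSenseReconstruction} is understood---exactly as for the adjoint voice transform $V_\Psi^\ast$ discussed in \Cref{sec:prelims}, cf.\ \cite[Page~43]{gr01}---as the unique vector $\widetilde f \in \LtDF$ satisfying $\langle \widetilde f, h\rangle = \|\theta\|_{\bd L^2}^{-2}\int_\Lambda V_{\theta,\Phi}f(\lambda)\,\langle g_\lambda, h\rangle\, d\lambda$ for all $h \in \LtDF$, provided the functional on the right is bounded (and conjugate-linear in $h$). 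Boundedness is immediate: $V_{\theta,\Phi}f\in\bd L^2(\Lambda)$ by the tight-frame property from \Cref{thm:orthrel}, and $\langle g_\lambda,h\rangle = \overline{V_{\theta,\Phi}h(\lambda)}$ by \Cref{def:WarpingFunction}, which likewise defines an $\bd L^2(\Lambda)$-function of $\lambda$, so the integrand lies in $\bd L^1(\Lambda)$ by Cauchy--Schwarz, with $\big|\int_\Lambda V_{\theta,\Phi}f(\lambda)\,\langle g_\lambda,h\rangle\, d\lambda\big| \le \|V_{\theta,\Phi}f\|_{\bd L^2}\,\|V_{\theta,\Phi}h\|_{\bd L^2}$. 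Hence $\widetilde f$ exists by the Riesz representation theorem.

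Next I would carry out the (essentially one-line) computation. For arbitrary $h\in\LtDF$, substituting $\langle g_\lambda,h\rangle = \overline{V_{\theta,\Phi}h(\lambda)}$ and then invoking \Cref{thm:orthrel} with $\theta_1 = \theta_2 = \theta$, $f_1 = f$, $f_2 = h$ (the hypothesis $\theta\in\Ltw\cap\bd L^2(\R^d)$ being exactly what that theorem requires) yields
\[
  \|\theta\|_{\bd L^2}^2\,\langle \widetilde f, h\rangle
  = \int_\Lambda V_{\theta,\Phi}f(\lambda)\,\overline{V_{\theta,\Phi}h(\lambda)}\, d\lambda
  = \langle f, h\rangle\,\langle\theta,\theta\rangle
  = \|\theta\|_{\bd L^2}^2\,\langle f, h\rangle .
\]
Since $\theta\neq 0$ we have $\|\theta\|_{\bd L^2}^2 > 0$, so we may divide, and since $h\in\LtDF$ was arbitrary this gives $\widetilde f = f$, which is exactly the asserted identity \eqref{eq:WeakSenseReconstruction} (interpreted in the weak sense).

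As for difficulty: there is essentially no obstacle here. The statement is a straightforward corollary of \Cref{thm:orthrel}; the only point demanding (minor) attention is the well-definedness of the weak integral as an element of $\LtDF$ rather than a merely formal expression, and that is settled by the Cauchy--Schwarz estimate above together with the fact that both $V_{\theta,\Phi}f$ and $V_{\theta,\Phi}h$ belong to $\bd L^2(\Lambda)$.
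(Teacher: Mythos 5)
Your proof is correct and follows essentially the same route as the paper, which simply notes that the reconstruction formula is a direct consequence of $\mathcal G(\theta,\Phi)$ being a tight continuous frame with bound $\|\theta\|_{\bd L^2}^2$ (i.e.\ of \Cref{thm:orthrel}). You merely spell out the details the paper leaves implicit — the Riesz-representation interpretation of the weak integral and the Cauchy--Schwarz bound — and both are handled correctly.
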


\begin{proof}
  The assertion is a direct consequence of $\mathcal G(\theta,\Phi)$ being a tight continuous frame
  with bound $\|\theta\|^2_{\bd L^2}$.
\end{proof}

Now that the essential properties of warped time-frequency systems are established,
and before proceeding to construct and examine coorbit spaces
associated to warped time-frequency systems,
we provide some instructive examples of warping functions
and the resulting warped time-frequency systems.

\subsection{Examples}
\label{ssec:examples}

We present several examples of warping functions. We begin by constructing a $d$-dimensional function
as a separable (coordinate-wise) combination of $1$-dimensional warping functions.
Examples of such $1$-dimensional warping functions can be found in~\cite{bahowi15}.\\

\noindent
\textbf{Separable warping.}
Fix $\mathcal C^1$-diffeomorphisms $\Phi_i: D_i \rightarrow \RR$, $i\in\underline{d}$,
such that $\mathrm{D}\Phi_i^{-1}(\tau) = \frac{\partial \Phi_i^{-1}}{\partial \tau}(\tau) > 0$,
for all $\tau\in\RR$, $i\in\underline{d}$.
If each $\mathrm{D}\Phi_i^{-1}$, $i\in\underline{d}$,
is $w_{0,i}$-moderate and we take $\Phi$ to be defined as
\[
  \Phi(\xi)
  = \bigl(\Phi_1(\xi_1),\ldots,\Phi_d(\xi_d)\bigr)^T,
  \quad \text{for all} \quad \xi \in D := D_1 \times \cdots \times D_d,
\]
then clearly $\Phi : D \to \R^d$ is a diffeomorphism and $\mathrm{D}\Phi^{-1}$ is diagonal,
and hence
\[
  w(\tau)
  = \det(\mathrm{D}\Phi^{-1}(\tau))
  = \prod_{i\in\underline{d}}
      \mathrm{D}\Phi_i^{-1}(\tau_i)
  > 0
  \qquad \forall \, \tau \in \R^d ,
\]
and $w$ is $w_0$-moderate for $w_0 (\tau) := \prod_{i\in\underline{d}} w_{0,i}(\tau_i)$.

A family of anisotropic wavelets can be constructed by selecting $\Phi = \textbf{log}$,
where ${\textbf{log}:(\RR^+)^d\rightarrow \RR^d}$ denotes the map
$\xi \mapsto (\log(\xi_1),\ldots,\log(\xi_d))^T$.
It follows that $\Phi^{-1}$ is the componentwise exponential function and satisfies
\[
  \mathrm{D}\Phi_i^{-1}(\tau) = \mathrm{diag}(e^{\tau_1},\dots,e^{\tau_d})
  \quad \text{and} \quad
  w(\tau) = \exp(\tau_1+ \cdots +\tau_d),
\]
for all $\tau\in\RR^d$.
Hence, $w$ is submultiplicative and moderate with respect to itself.
Furthermore, writing $\bd{d}(\omega) := \mathrm{diag}(\omega_1,\dots,\omega_d)\in \RR^{d\times d}$
for $\omega \in (\R^+)^d$, we see that the elements of $\mathcal G(\theta,\Phi)$ are given by
\[
  \begin{split}
    g_{y,\omega}
    & = w(\Phi(\omega))^{-1/2}
        \cdot \bd T_{y}\mathcal{F}^{-1} \left(
                                          (\bd T_{\textbf{log}(\omega)}\theta)
                                          \circ \textbf{log}
                                        \right) \\
    & = \det(\bd d(\omega))^{-1/2}
        \cdot \bd T_{y} \mathcal{F}^{-1} \left(
                                           \theta
                                           \circ \textbf{log}
                                                 (
                                                  [\bd d(\omega)]^{-1}
                                                  \langle\cdot\rangle
                                                 )
                                         \right) \\
    & = \det(\bd d(\omega))^{1/2}
        \cdot \big[
                \mathcal{F}^{-1} \left(
                                   \theta\circ \textbf{log}
                                 \right)
              \big]
              (\bd d(\omega) \langle \cdot-y\rangle) \\
    & = \det(\bd d(\omega))^{1/2} \cdot \widetilde{g}
                                        (
                                         \bd d(\omega)\langle \cdot-y\rangle
                                        ),
     \text{ with }
     \widetilde{g} := \mathcal{F}^{-1}\bigl( \theta\circ \textbf{log}\bigr).
  \end{split}
\]
Thus, $\mathcal G(\theta,\Phi)$ is a wavelet system in the sense
of~\cite{bernier1996wavelets,fuhr1996wavelet}, with the dilation group
given by the diagonal $d\times d$-matrices with entries in $\RR^+$. 
The derivations above do not seem to generalize, however, to a setting that recovers 
wavelets with respect to general dilation groups. 
Finally, the expression of $g_{y,\omega}$ through linear operators applied to
a single \emph{mother wavelet} $\widetilde{g}$ defined in the time-domain relies on 
properties of the coordinate-wise logarithm $\textbf{log}$ and does not generalize to arbitrary warping functions $\Phi$.\\

\noindent
\textbf{Radial warping.}
By choosing the warping function $\Phi$ to be radial,
we can construct time-frequency systems with frequency resolution depending
on the modulus $|\xi|$ of $\xi\in\RR^d$. The deformation is then fixed on any 
$(d-1)$-sphere of fixed radius, similar to isotropic wavelets
(see \cite[Section~2.6]{da92} and \cite[Example~2.30]{fuhr2005abstract}).
Generally, radial warping functions are of the form
\[
  \Phi_\varrho: \quad
  \RR^d \to \RR^d, \quad
  \xi \mapsto \varrho(|\xi|) \cdot \xi/|\xi|,
\]
for a strictly increasing diffeomorphism $\varrho : \RR \to \RR$.
Under suitable additional assumptions on $\varrho$,
it can then be shown that $(\Phi_\varrho)^{-1} = \Phi_{\varrho^{-1}}$
and that $\Phi_\varrho$ is a warping function as per Definition \ref{def:warpfun}.
\nicki{It will be shown in future work that radial warping does \emph{not}
recover isotropic wavelets exactly in dimensions $d > 1$,
for any choice of $\varrho$, but that warped time-frequency systems can be close to isotropic wavelets
in a sense that will be made formal in the mentioned follow-up work.}
An in depth study of radial warping with some specific examples is provided in Section~\ref{sec:RadialWarping}.
\\


\noindent
\textbf{An explicit, exotic example for $d=2$.}
To demonstrate that there is potential for warping functions
beyond the separable and radial cases, consider the
continuous $\mathcal C^1$-diffeomorphism
\[
  \Phi : \quad
  \R^2 \to \R^2, \quad
  \xi \mapsto \big( e^{\xi_2} \, \xi_1 , \,\, \xi_2 \big)^T .
\]
It is straightforward to see that $\Phi$ is a diffeomorphism with inverse
$\Phi^{-1} (\tau) = (e^{-\tau_2} \, \tau_1, \,\, \tau_2)^T$, which satisfies
\[
  \mathrm{D}\Phi^{-1}(\tau)
  = \begin{pmatrix}
      e^{-\tau_2} & -e^{-\tau_2} \, \tau_1 \\
      0           & 1 \\
    \end{pmatrix}
\]
and hence $w(\tau) = \det (\mathrm{D}\Phi^{-1}(\tau)) = e^{-\tau_2} > 0$.
Moreover, it is easy to see that $w$ is multiplicative (and in particular submultiplicative)
and hence self-moderate.
Thus, $\Phi$ is a valid warping function that is neither separable nor radial.


\section{Membership of the reproducing kernel in \texorpdfstring{$\BBm$}{𝓑ₘ}}
\label{sec:coorbits}

As we saw in Section~\ref{ssec:coorbit} (see in particular Assumption~\ref{assu:CoorbitAssumptions1}),
the main challenge in verifying the applicability of coorbit theory
for a continuous Parseval frame $\Psi$ lies in showing that (the maximal function of)
the reproducing kernel $K_\Psi$ is contained $\AAPlain_{m_v}$ or $\BB_{m_0}$,
for suitable weights $m_v, m_0 : \PhSpace \times \PhSpace \to \R^+$.
We will do so in two steps:
(1) In the present section, we will derive verifiable conditions on the warping function $\Phi$
and the prototype function $\theta$ which ensure that the warped time-frequency system
$\Psi = \CalG(\theta,\Phi)$ satisfies $K_{\Psi} \in \BBm$,
for a weight $m$ satisfying suitable assumptions.
(2) In Section \ref{sec:discretewarped}, we do the same for the $\Gamma$-oscillation
of $\Psi$ and additionally demonstrate that $\|\oscVG\|_{\BBm}$ can be made arbitrarily small
by choosing an appropriate covering $\CalV$.
Then, the desired properties of the maximal kernel $\textrm{M}_\CalV K_\Psi$
are a consequence of Remark \ref{rem:KernOscEstimate}.

To prepare for the treatment of the $\Gamma$-oscillation,
we already consider mixed kernels in the present section.
This setting only requires little additional effort.
We begin by introducing some notation and conditions that will be used throughout this section.

\begin{notanddef}\label{assu:WeightFunctionAssumptions}
  By $\Phi$, we denote a warping function $\Phi : D \to \R^d$,
  with associated weights $w,w_0$ as in \Cref{def:warpfun},
  $A = \mathrm{D}\Phi^{-1}$ and $\PhSpace := \R^d \times D$.
  In all instances, $\theta,\theta_1,\theta_2 \in \lebesgue_{\sqrt{w_0}}^2 (\R^d)$
  and we denote the mixed kernel associated with $\CalG(\theta_1,\Phi)$ and $\CalG(\theta_2,\Phi)$
  by $K_{\theta_1,\theta_2}:= K_{\CalG(\theta_1,\Phi),\CalG(\theta_2,\Phi)}$.
  Finally, for $\ell\in\{1,2\}$, we write
  \(
    \CalG(\theta_\ell,\Phi)
    = \bigl(g_{y,\omega}^{[\ell]}\bigr)_{y \in \R^d, \omega \in D}
    = \bigl(\translation_y \, \widecheck{g_{\omega}^{[\ell]}}\bigr)_{y \in \R^d, \omega \in D}.
  \)
%
   \begin{enumerate}
   \item \label{enu:M0domination}If there is a continuous function $m^{\Phi} : \R^d \times \R^d \to \R^+$
   satisfying
          \begin{equation}
            m \big( (x,\Phi^{-1}(\sigma)), (y,\Phi^{-1}(\tau)) \big)
            \leq m^{\Phi} (x-y, \sigma-\tau)
            \qquad \forall \, x,y,\sigma,\tau \in \R^d,
            \label{eq:M0Domination}
          \end{equation}
   then we say that $m$ is \emph{$\Phi$-convolution-dominated} (by $m^{\Phi}$).
   If that is the case, we denote by $M : \R^d \times \R^d \to \R^+$ the weight
   \begin{equation}
            M(x,\tau)
            := \sup_{y \in \R^d, |y| \leq R_{\Phi} |x|}
               \big[
                 \sqrt{w_0(\tau)} \, m^{\Phi} (y, \tau)
               \big]
            \quad \text{where} \quad
            R_\Phi := \sup_{\xi \in D} \| \mathrm{D}\Phi (\xi) \| \in \RR^+\cup\{\infty\}.
            \label{eq:BigWeightDefinition}
   \end{equation}
   \item If there exists an $m^{\Phi}$ as in \eqref{enu:M0domination},
         such that $m$ is $\Phi$-convolution-dominated by $m^{\Phi}$ and
        \begin{equation}
            R_\Phi < \infty
            \qquad \text{or} \qquad
            m^{\Phi}(x,\sigma) \lesssim m^{\Phi}(0,\sigma) \text{ for all } x,\sigma \in \R^d,
            \label{eq:M0XDependence}
          \end{equation}
    then we say that $m$ is \emph{$\Phi$-compatible} (with dominating weight $m^{\Phi}$).
  \end{enumerate}
\end{notanddef}

Furthermore, we require a slightly stricter and more structured notion
of regularity for warping functions.

\begin{definition}\label{assume:DiffeomorphismAssumptions}
  Let $\emptyset \neq D \subset \RR^d$ be an open set and fix an integer $k\in\NN_0$.
  A map $\Phi:D\to\mathbb{R}^{d}$ is a \emph{$k$-admissible warping function}
  with \emph{control weight} $v_0:\RR^d\rightarrow \RR^+$,
  if $v_0$ is continuous, submultiplicative and radially increasing
  and $\Phi$ satisfies the following assumptions:
  \begin{itemize}
   \item $\Phi$ is a $C^{k+1}$-diffeomorphism.
   \item $A = \mathrm{D}\Phi^{-1}$ has positive determinant.
   \item With
          \begin{equation}
              \phi_{\tau}\left(\upsilon\right)
              :=\left(A^{-1}(\tau) A(\upsilon+\tau)\right)^T
              = A^T (\upsilon+\tau) \cdot A^{-T}(\tau),
              \label{eq:PhiDefinition}
          \end{equation}
          we have
          \begin{equation}\label{eq:PhiHigherDerivativeEstimate}
            \left\Vert
              \partial^{\alpha}\phi_{\tau}\left(\upsilon\right)
            \right\Vert
            \leq v_0 (\upsilon) 
            \qquad \text{ for all } \tau, \upsilon \in \mathbb{R}^{d}
                   \text{ and all multiindices }
                   \alpha \in \mathbb{N}_{0}^{d}, ~ \left|\alpha\right|\leq k.  
          \end{equation}
  \end{itemize}
\end{definition}

\begin{remark}
  1) The function $\phi_\tau$ describes the regularity of $A$ around $\tau$;
      its relevance will become clear before long,
      see Equation~\eqref{eq:PsiDerivativeIsPhi} below.

  \smallskip{}

  \noindent 2) On the right-hand side of \eqref{eq:PhiHigherDerivativeEstimate},
     one could allow constants $C_{\alpha}$ and different weights $\tilde{v}_{\alpha}$
     not necessarily being radially increasing, therefore obtaining tighter bounds on
     $\left\Vert \partial^{\alpha}\phi_{\tau}\left(\upsilon\right)\right\Vert$.
     However, whenever such $C_{\alpha}$, $\tilde{v}_{\alpha}$ exist, there also exists
     a weight $v_0$ satisfying all the requirements
     of Definition~\ref{assume:DiffeomorphismAssumptions}.

  \smallskip{}

 \noindent  3) We remark that \eqref{eq:PhiHigherDerivativeEstimate} generalizes the conditions
     mentioned in~\cite{bahowi15}, even for the case $d = 1$ considered there.
\end{remark}

\Cref{thm:MR1_kernel_is_in_AAm} below shows that smoothness of the prototypes $\theta_1,\theta_2$
and decay (or localization) of their partial derivatives implies $K_{\theta_1,\theta_2} \!\in\! \BBm$,
provided that $m$ is $\Phi$-compatible.
In particular, all conditions are surely satisfied
for arbitrary $\theta_1, \theta_2 \in \mathcal{C}^{\infty}_c(\RR^d)$.
The proof of \Cref{thm:MR1_kernel_is_in_AAm} is deferred to the end of the section.

%

\begin{theorem}\label{thm:MR1_kernel_is_in_AAm}
  Let $\Phi$ be a $(d+p+1)$-admissible warping function
  with control weight $v_0$, where $p=0$ if $R_\Phi = \infty$, \nicki{defined as in \eqref{eq:BigWeightDefinition}, }and $p\in\NN_0$ otherwise.
  Let furthermore $m : \Lambda \times \Lambda \to \R^+$ be a symmetric weight satisfying
  \begin{equation}\label{eq:m_weight_estimate}
    m\bigl((y, \xi), (z, \eta)\bigr)
    \leq (1 + |y-z|)^p \cdot v_1 \bigl(\Phi(\xi) - \Phi(\eta)\bigr),
    \text{ for all } y,z\in\RR^d \text{ and } \xi,\eta\in D,
  \end{equation}
  for some continuous and submultiplicative weight $v_1 : \RR^\dimension \to \R^+$
  satisfying $v_1 (\upsilon) = v_1(-\upsilon)$ for all $\upsilon \in \R^d$.

  Finally, with
  \[
     w_2 : \quad
     \RR^d \to \R^+, \quad
     \upsilon \mapsto (1+|\upsilon|)^{d+1} \cdot v_1(\upsilon) \cdot [v_0 (\upsilon)]^{9d/2+3p+3},
  \]
  assume that $\theta_1,\theta_2\in \mathcal C^{d+p+1}(\RR^d)$ and
  \[
    \frac{\partial^n}{\partial \upsilon_j^n} \theta_\ell\in \lebesgue^2_{w_2}(\RR^d),
    \qquad \text{ for all } j \in \underline{d},\  \ell \in \{1,2\},\ 0\leq n\leq d+p+1 \, ,
  \]
  and let
  \begin{equation}\label{eq:DefOfCmax}
    C_{\max}
    := \nicki{C_{\max}(d+p+1,\theta_1,\theta_2) :=}
    \prod_{\ell \in \{1,2\}} 
          \bigg(
             \max_{j \in \underline{d}} \,\,
               \max_{0 \leq n \leq d+p+1}
                 \Big\|
                   \frac{\partial^n}{\partial \upsilon_j^n} \theta_\ell
                 \Big\|_{\lebesgue^2_{w_2}(\RR^d)}
          \bigg).
  \end{equation}

  Then, $m$ is $\Phi$-compatible with dominating weight $m^{\Phi}(x,\tau) = (1 + |x|)^p \cdot v_1(\tau)$
  and there is a constant $C>0$, independent of $\theta_1,\theta_2$ and $m$, 
  satisfying
  \[
    \big\| K_{\theta_1, \theta_2} \big\|_{\BBm}
    \leq C \cdot C_{\max}
    < \infty.
  \]
\end{theorem}

\subsection{Bounding \texorpdfstring{$\|K_{\theta_1,\theta_2}\|_{\BBm}$}{the 𝓑ₘ norm of the kernel} via Fourier integral operators}

Towards an explicit estimate for $\|K_{\theta_1,\theta_2}\|_{\BBm}$, the next result
provides an estimate
in terms of families of Fourier integral operators~\cite{hormander1971fourier,duistermaat1972fourier,duistermaat2011fourier,st93}
dependent on $\theta_1,\theta_2$.
\nicki{Here and in the following, we use $e_{\tau}$, $\tau\in\RR^d$, as short-hand for the ($\Phi$-dependent) map
 \begin{equation}\label{eq:DefOfEtau}
     e_{\tau} \colon \quad \RR^d\times\RR^d \rightarrow \CC, \quad
     (x,\upsilon) \mapsto e^{-2\pi i \left\langle A^{-T}(\tau) \langle x \rangle, \Phi^{-1}(\upsilon+\tau)\right\rangle}.
  \end{equation}
  }

\begin{theorem}\label{thm:BBmFourierIntegralEstimate}
Define
  \nicki{\begin{equation}\label{eq:defOfL}
      L_{\tau_0}^{(\ell)} (x,\tau)
      := L_{\tau_0}[\theta_\ell,\theta_{3-\ell}] (x,\tau)
      := 
             \int_{\RR^d} \!\!\!
                 \frac{w(\upsilon+\tau_0)}{w(\tau_0)}
                 \cdot (\theta_{3-\ell}\cdot \overline{\bd T_\tau\theta_\ell})(\upsilon)
                 \cdot e_{\tau_0}(x,\upsilon)~
             d\upsilon\, ,
  \end{equation}
  for $\ell \in \{ 1,2 \}$ and $x,\tau,\tau_0 \in \R^d$.}
  If $m$ is $\Phi$-compatible with dominating weight $m^\Phi$, then we have
  \begin{equation}\label{eq:normcond_warped}
    \|K_{\theta_1,\theta_2}\|_{\BBm}
    \leq \max_{\ell\in\{1,2\}}
         \left[
           \esssup_{\eta\in D} \big\| L_{\Phi(\eta)}^{(\ell)} \big\|_{\lebesgue_M^1 (\R^d \times \R^d)}
         \right],
  \end{equation}
  with $M$ as in \eqref{eq:BigWeightDefinition}.
  In particular, if
  \(
    \esssup_{\tau_0 \in \R^d}
      \big\| L_{\tau_0}^{(\ell)} \big\|_{\lebesgue_M^1 (\R^d \times \R^d)}
    < \infty
  \),
  for $\ell \in \{ 1,2 \}$, then $\| K_{\theta_1,\theta_2} \|_{\BBm}$ is finite.
\end{theorem}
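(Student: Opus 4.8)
The plan is to unfold the definition $\|K_{\theta_1,\theta_2}\|_{\BBm} = \|m\cdot K_{\theta_1,\theta_2}\|_{\BBi}$ — an $\AAi(D)$-norm, in the frequency variables $\omega,\eta$, of an $\AAi(\R^d)$-norm, in the time variables $y,z$ — and to evaluate each of these two levels by a single change of variables, using little beyond Plancherel's theorem and $w_0$-moderateness of $w$. The first step is to obtain a closed form for the kernel. From $\widehat{g_{y,\omega}^{[1]}} = \bd M_{-y}\,g_\omega^{[1]}$ and Plancherel's theorem one gets $K_{\theta_1,\theta_2}((y,\omega),(z,\eta)) = \int_{\R^d} g_\eta^{[2]}(\xi)\,\overline{g_\omega^{[1]}(\xi)}\,e^{2\pi i\langle y-z,\xi\rangle}\,d\xi$; inserting $g_\omega^{[\ell]}(\xi) = w(\Phi(\omega))^{-1/2}\theta_\ell(\Phi(\xi)-\Phi(\omega))$, substituting $\sigma = \Phi(\xi)$ (which contributes the Jacobian $w(\sigma)$, cf.~\eqref{eq:StandardChangeOfVariables}) and then $\upsilon = \sigma-\Phi(\eta)$, and finally pulling $w(\Phi(\eta))$ out of $w(\upsilon+\Phi(\eta))$, yields
\[
  \big| K_{\theta_1,\theta_2}\big((y,\omega),(z,\eta)\big) \big|
  = \sqrt{\tfrac{w(\Phi(\eta))}{w(\Phi(\omega))}}\;
    L_{\Phi(\eta)}^{(1)}\!\big( A^T(\Phi(\eta))\langle z-y\rangle,\ \Phi(\omega)-\Phi(\eta) \big).
\]
Expanding instead around $\Phi(\omega)$ and taking complex conjugates inside the integral — which merely exchanges $\theta_1$ and $\theta_2$ — produces the companion identity
\[
  \big| K_{\theta_1,\theta_2}\big((y,\omega),(z,\eta)\big) \big|
  = \sqrt{\tfrac{w(\Phi(\omega))}{w(\Phi(\eta))}}\;
    L_{\Phi(\omega)}^{(2)}\!\big( A^T(\Phi(\omega))\langle y-z\rangle,\ \Phi(\eta)-\Phi(\omega) \big).
\]
In particular $K_{\theta_1,\theta_2}$ is continuous (by \Cref{prop:WarpedSystemWeaklyContinuous}), hence measurable, and depends on $(y,z)$ only through $y-z$.

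Next, to handle the inner norm: fixing $(\omega,\eta)$ and writing $\tau_0 := \Phi(\eta)$, $\tau := \Phi(\omega)-\Phi(\eta)$, the $\Phi$-convolution-domination of $m$ gives $m((y,\omega),(z,\eta)) \le m^\Phi(y-z,\tau)$; since $K_{\theta_1,\theta_2}$ also depends on $(y,z)$ only through $y-z$, both essential suprema defining $\|(m\cdot K_{\theta_1,\theta_2})^{(\omega,\eta)}\|_{\AAi(\R^d)}$ are bounded by $\int_{\R^d} m^\Phi(u,\tau)\,\big|K_{\theta_1,\theta_2}((u,\omega),(0,\eta))\big|\,du$. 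The decisive substitution is the linear map $x = A^T(\tau_0)\langle u\rangle$: its Jacobian is $|\det A(\tau_0)|^{-1} = w(\tau_0)^{-1}$, and it is exactly what turns the first argument of $L^{(1)}_{\tau_0}$ into $x$. Combined with $\|A^{-1}(\tau_0)\| = \|\mathrm{D}\Phi(\Phi^{-1}(\tau_0))\| \le R_\Phi$ (cf.~\eqref{eq:DerivativeOfInverse}), this gives $|A^{-T}(\tau_0)\langle x\rangle| \le R_\Phi|x|$, hence $m^\Phi(A^{-T}(\tau_0)\langle x\rangle,\tau) \le w_0(\tau)^{-1/2}\,M(x,\tau)$ directly from the definition \eqref{eq:BigWeightDefinition} of $M$. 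Altogether
\[
  \big\| (m\cdot K_{\theta_1,\theta_2})^{(\omega,\eta)} \big\|_{\AAi(\R^d)}
  \le \frac{1}{\sqrt{w(\Phi(\omega))\,w(\Phi(\eta))\,w_0(\tau)}}
      \int_{\R^d} M(x,\tau)\,L_{\Phi(\eta)}^{(1)}(x,\tau)\,dx ,
\]
and, starting instead from the companion identity (and using symmetry of $m$), one obtains the analogous bound in which $L^{(1)}_{\Phi(\eta)}$ is replaced by $L^{(2)}_{\Phi(\omega)}$ and the roles of $\omega$ and $\eta$ are interchanged.

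It remains to take the outer $\AAi(D)$-norm of the inner-norm function $(\omega,\eta)\mapsto\|(m\cdot K_{\theta_1,\theta_2})^{(\omega,\eta)}\|_{\AAi(\R^d)}$. For the essential supremum over $\eta$ I would use the first bound above: freezing $\eta$ (so $\tau_0=\Phi(\eta)$ is fixed), the substitution $\tau = \Phi(\omega)-\Phi(\eta)$ has Jacobian $w(\Phi(\omega)) = w(\tau_0+\tau)$, and $w_0$-moderateness $w(\tau_0+\tau) \le w(\tau_0)\,w_0(\tau)$ collapses the weight prefactor $w(\tau_0+\tau)^{1/2}\,w(\tau_0)^{-1/2}\,w_0(\tau)^{-1/2}$ to at most $1$, so $\int_D(\text{inner norm})\,d\omega \le \|L^{(1)}_{\Phi(\eta)}\|_{\lebesgue^1_M(\R^d\times\R^d)}$. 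For the essential supremum over $\omega$ I would use the second bound and the substitution $\tau' = \Phi(\eta)-\Phi(\omega)$ in exactly the same way, obtaining $\int_D(\text{inner norm})\,d\eta \le \|L^{(2)}_{\Phi(\omega)}\|_{\lebesgue^1_M(\R^d\times\R^d)}$. Taking the maximum over the two essential suprema yields \eqref{eq:normcond_warped}, and the ``in particular'' statement follows since $\Phi(D)=\R^d$, whence $\esssup_{\eta\in D}\|L^{(\ell)}_{\Phi(\eta)}\|_{\lebesgue^1_M} \le \esssup_{\tau_0\in\R^d}\|L^{(\ell)}_{\tau_0}\|_{\lebesgue^1_M}$.

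The main obstacle is the explicit kernel computation in the first step: the two changes of variables have to be carried out so that the resulting oscillatory integrals coincide with \eqref{eq:defOfL} on the nose — in particular, one must recognize that the Jacobian-normalizing substitution $x = A^T(\Phi(\eta))\langle z-y\rangle$ is precisely what produces the factor $A^{-T}(\tau_0)\langle x\rangle$ appearing in the exponent of $L^{(\ell)}_{\tau_0}$. After that the only delicate point is the weight bookkeeping: every stray power of $w$ and $w_0$ must be made to cancel using only $w_0$-moderateness of $w$ and the definition of $M$, and this is what forces the use of the two alternative representations of the kernel — one for each of the two outer essential suprema — so that the frozen base point $\tau_0$ never depends on the variable over which one integrates.
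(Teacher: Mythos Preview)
Your proposal is correct and follows essentially the same route as the paper: the paper also computes the kernel via Plancherel and the substitution $\upsilon=\Phi(\xi)-\Phi(\eta)$ (Lemma~\ref{pro:kern_in_theta}), then performs the linear change $x=A^T(\Phi(\eta))\langle z-y\rangle$ for the inner integral and $\tau=\Phi(\omega)-\Phi(\eta)$ for the outer one, using $w_0$-moderateness to absorb the weight factors. The only organizational difference is that the paper packages the two halves of the $\BBm$ norm via the kernel identity $K_{\theta_1,\theta_2}\bigl((y,\omega),(z,\eta)\bigr)=\overline{K_{\theta_2,\theta_1}\bigl((z,\eta),(y,\omega)\bigr)}$ (Lemma~\ref{lem:CrossGramianBabyStep}) rather than writing out the companion representation in terms of $L^{(2)}_{\Phi(\omega)}$; your direct derivation of both formulas is equivalent and arguably cleaner.
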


We prove \Cref{thm:BBmFourierIntegralEstimate} by means of two intermediate results.
First, an  (elementary) lemma concerned with the $\BBm$-norm of $K_{\theta_1,\theta_2}$.

\begin{lemma}\label{lem:CrossGramianBabyStep}
  If $m$ is $\Phi$-convolution-dominated by $m^\Phi$, we have
  \begin{equation}\label{eq:BBm_norm_warped_mixed}
  \begin{split}
  & \| K_{\theta_1, \theta_2} \|_{\BBm}\\
  & \leq \max_{\ell \in\{1,2\}}
          \left[
              \esssup_{\eta \in D}
                  \int_{D}
                      \esssup_{z \in \RR^d}
                          \int_{\RR^d}
                              m^{\Phi}(y \!-\! z,\Phi(\omega) \!-\! \Phi(\eta)) \cdot
                              |K_{\theta_\ell,\theta_{3-\ell}}((y,\omega),(z,\eta))|
                          ~dy
                  ~d\omega
          \right]
    \! .
  \end{split}
  \end{equation}
\end{lemma}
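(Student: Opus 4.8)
The plan is to unfold $\|K_{\theta_1,\theta_2}\|_{\BBm} = \|m\cdot K_{\theta_1,\theta_2}\|_{\BBi}$ according to Definition~\ref{def:NewKernelModule}: it equals the $\AAi(D)$-norm of the function
\[
  h(\omega,\eta) := \big\| (m\cdot K_{\theta_1,\theta_2})^{(\omega,\eta)} \big\|_{\AAi(\RR^d)} ,
\]
and one then estimates the two essential-supremum/integral expressions making up this outer $\AAi(D)$-norm separately. Before that, I would record the structural identity behind the whole argument: since $g_{y,\omega}^{[\ell]} = \translation_y \widecheck{g_\omega^{[\ell]}}$ by \eqref{eq:WarpedSystemDefinition} and translation acts unitarily on $\LtDF$, the mixed kernel from \eqref{eq:MixedKernelDefinition} satisfies
\[
  K_{\theta_\ell,\theta_{3-\ell}}\big((y,\omega),(z,\eta)\big)
  = \big\langle \translation_{z-y}\widecheck{g_\eta^{[3-\ell]}},\, \widecheck{g_\omega^{[\ell]}} \big\rangle ,
\]
so that its modulus depends on the time variables $y,z$ only through the difference $y-z$.

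Next, fix $\omega,\eta\in D$ and use $\Phi$-convolution-domination in the form $m\big((y,\omega),(z,\eta)\big)\le m^\Phi\big(y-z,\Phi(\omega)-\Phi(\eta)\big)$, which is exactly \eqref{eq:M0Domination} with $\sigma=\Phi(\omega)$ and $\tau=\Phi(\eta)$. Then both terms in the inner maximum defining $\|(m\cdot K_{\theta_1,\theta_2})^{(\omega,\eta)}\|_{\AAi(\RR^d)}$, namely $\esssup_{y}\int_{\RR^d}(\cdot)\,dz$ and $\esssup_{z}\int_{\RR^d}(\cdot)\,dy$, become---after the substitution $u=y-z$---bounded by the single quantity $\int_{\RR^d} m^\Phi\big(u,\Phi(\omega)-\Phi(\eta)\big)\,\big|K_{\theta_1,\theta_2}\big((u,\omega),(0,\eta)\big)\big|\,du$, which no longer depends on any time variable and which, reversing the substitution, equals
\[
  \Theta_1(\omega,\eta)
  := \esssup_{z\in\RR^d}\int_{\RR^d} m^\Phi\big(y-z,\Phi(\omega)-\Phi(\eta)\big)\,\big|K_{\theta_1,\theta_2}\big((y,\omega),(z,\eta)\big)\big|\,dy ,
\]
the inner double integral appearing on the right-hand side of \eqref{eq:BBm_norm_warped_mixed} for $\ell=1$. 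Hence $h(\omega,\eta)\le\Theta_1(\omega,\eta)$, and running the same computation with the roles of $\theta_1,\theta_2$ exchanged gives $h_{\theta_\ell,\theta_{3-\ell}}(\omega,\eta)\le\Theta_\ell(\omega,\eta)$, the subscript on $h$ recording the pair of prototypes.

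Finally I would treat the outer norm $\|h\|_{\AAi(D)}=\max\big\{\esssup_\eta\int_D h(\omega,\eta)\,d\omega,\ \esssup_\omega\int_D h(\omega,\eta)\,d\eta\big\}$. The first term is $\le\esssup_\eta\int_D\Theta_1(\omega,\eta)\,d\omega$, the $\ell=1$ summand of the claim. For the second term I would use the symmetry of $m$ together with $\big|K_{\theta_1,\theta_2}\big((y,\eta),(z,\omega)\big)\big|=\big|K_{\theta_2,\theta_1}\big((z,\omega),(y,\eta)\big)\big|$ (immediate from \eqref{eq:MixedKernelDefinition}) to see that the partial kernel $(m\cdot K_{\theta_1,\theta_2})^{(\eta,\omega)}$ coincides in modulus with the transpose of $(m\cdot K_{\theta_2,\theta_1})^{(\omega,\eta)}$; since the $\AAi(\RR^d)$-norm is transpose invariant this gives $h_{\theta_1,\theta_2}(\omega,\eta)=h_{\theta_2,\theta_1}(\eta,\omega)$, so after relabeling $\omega\leftrightarrow\eta$ the second term equals $\esssup_\eta\int_D h_{\theta_2,\theta_1}(\omega,\eta)\,d\omega\le\esssup_\eta\int_D\Theta_2(\omega,\eta)\,d\omega$, the $\ell=2$ summand. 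Taking the maximum of the two bounds yields \eqref{eq:BBm_norm_warped_mixed}. The only real difficulty here is organizational: one has to keep careful track of the four ``transpose configurations'' hidden in the nested $\AAi$-norms, and recognize both (i) that the translation structure of $\CalG(\theta,\Phi)$ collapses the inner maximum to one convolution-type integral, and (ii) that the remaining ``outer'' transpose is exactly what the $\max_{\ell\in\{1,2\}}$ in the statement absorbs.
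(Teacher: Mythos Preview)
Your argument is correct and follows essentially the same route as the paper: unfold $\|K_{\theta_1,\theta_2}\|_{\BBm}$ into nested $\AAi$-norms, bound the inner $\AAi(\RR^d)$-norm by a single convolution-type integral, and then use the swap symmetry $K_{\theta_1,\theta_2}\leftrightarrow K_{\theta_2,\theta_1}$ to handle the second term of the outer $\AAi(D)$-norm. Your version is in fact slightly cleaner than the paper's, because by using translation invariance of $|K_{\theta_1,\theta_2}((y,\omega),(z,\eta))|$ in $y-z$ directly, and by performing the swap at the level of the exact inner norm $h$ rather than the bound $B_\ell$, you avoid the paper's preliminary symmetrization $m^\Phi\rightsquigarrow\min\{m^\Phi(\cdot),m^\Phi(-\cdot)\}$.
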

\begin{proof}
  If we define $\widetilde{m}^{\Phi} (x,\tau) := \min \{ m^{\Phi}(x,\tau), m^{\Phi}(-x,-\tau) \}$,
  the symmetry of $m$ easily shows that \eqref{eq:M0Domination} also holds for
  $\widetilde{m}^{\Phi}$ instead of $m^{\Phi}$.
  Hence, we can assume in what follows that $m^{\Phi}$ satisfies
  $m^{\Phi}(-x,-\tau) = m^{\Phi}(x,\tau)$ for all $x,\tau \in \R^d$.

  For $\ell \in \{ 1,2 \}$ and $\omega,\eta \in D$, define
  \[
    B_\ell(\omega,\eta)
    := \esssup_{z \in \R^d}
         \int_{\R^d}
           m^{\Phi}\bigl(y-z, \Phi(\omega) - \Phi(\eta)\bigr)
           \cdot \big| K_{\theta_{\ell},\theta_{3 - \ell}} \big( (y,\omega), (z,\eta) \big) \big|
         \, d  y ,
  \]
  and let $C := \max_{\ell \in \{ 1,2 \}} \esssup_{\eta \in D} \int_D B_\ell(\omega,\eta) \, d \omega$,
  which is precisely the right-hand side of the target inequality.
  Equation~\eqref{eq:M0Domination} yields
  \begin{equation}
    \begin{split}
      \esssup_{z \in \R^d}
        \int_{\R^d}
          \bigl|(m \cdot K_{\theta_1,\theta_2}) \big( (y,\omega),(z,\eta) \big)\bigr|
        \, d y
      \leq B_1(\omega,\eta) .
    \end{split}
    \label{eq:CrossGramianBabyStepsStep1}
  \end{equation}

  Next, note that $\langle \bd T_yf_1,\bd T_z f_2\rangle = \langle \bd T_{-z}f_1,\bd T_{-y} f_2\rangle$
  and $\langle f_1, f_2 \rangle = \overline{\langle f_2,f_1 \rangle}$ for all $f_1,f_2\in\LtRd$.
  Based on these identities and the translation-invariant structure of warped time-frequency
  systems, we see
  \begin{equation}
    \begin{split}
      K_{\theta_1,\theta_2}\bigl((y,\omega),(z,\eta)\bigr)
      & = K_{\theta_1,\theta_2}\bigl((-z,\omega),(-y,\eta)\bigr) \\
      & = \overline{K_{\theta_2,\theta_1}\bigl((-y,\eta),(-z,\omega)\bigr)}
        = \overline{K_{\theta_2,\theta_1}\bigl((z,\eta),(y,\omega)\bigr)}.
    \end{split}
   \label{eq:KernelEasyIdentities}
  \end{equation}
  Using these identities and renaming $\widetilde{z} = -y$ and $\widetilde{y} = -z$, we see
  \begin{equation}
    \begin{split}
      & \esssup_{y \in \R^d}
          \int_{\R^d}
            \bigl|(m \cdot K_{\theta_1,\theta_2}) \big( (y,\omega),(z,\eta) \big)\bigr|
          \, d z \\
      & \leq \esssup_{y \in \R^d}
               \int_{\R^d}
                 m^{\Phi}\bigl( (-z) - (-y), \Phi(\omega) - \Phi(\eta)\bigr)
                 \cdot \bigl|K_{\theta_1,\theta_2} \big( (-z,\omega),(-y,\eta) \big)\bigr|
               \, d z \\
      & = \esssup_{\widetilde{z} \in \R^d}
            \int_{\R^d}
              m^{\Phi}\bigl(\widetilde{y} - \widetilde{z}, \Phi(\omega) - \Phi(\eta)\bigr)
              \cdot \bigl|K_{\theta_1,\theta_2} \big( (\widetilde{y}, \omega), (\widetilde{z},\eta) \big)\bigr|
            \, d \widetilde{y}
        = B_1 (\omega,\eta) .
    \end{split}
    \label{eq:CrossGramianBabyStepsStep2}
  \end{equation}
  Combining \eqref{eq:CrossGramianBabyStepsStep1} and \eqref{eq:CrossGramianBabyStepsStep2},
  we see with notation as in \eqref{eq:PartialKernelDefinition} that
  \[
    \big\| (m \cdot K_{\theta_1,\theta_2})^{(\omega,\eta)} \big\|_{\AAi}
    \leq B_1 (\omega, \eta)
    \qquad \forall \, \omega,\eta \in D.
  \]

  A simple calculation using \eqref{eq:KernelEasyIdentities}
  and the symmetry $m^{\Phi}(-x,-\tau) = m^{\Phi}(x,\tau)$ proves the identity
  $B_1(\omega,\eta) = B_2(\eta,\omega)$.
  Overall, we thus see
  \begin{align*}
    \| m \cdot K_{\theta_1,\theta_2} \|_{\BBm}
    & = \max
        \Big\{
          \esssup_{\eta \in D}
            \int_{D}
              \| (m \cdot K_{\theta_1,\theta_2})^{(\omega,\eta)} \|_{\AAi}
            \, d \omega
          , \,\,
          \esssup_{\omega \in D}
            \int_{D}
              \| (m \cdot K_{\theta_1,\theta_2})^{(\omega,\eta)} \|_{\AAi}
            \, d \eta
        \Big\} \\
    & \leq \max
           \Big\{
             \esssup_{\eta \in D}
               \int_D
                 B_1(\omega,\eta)
               \, d \omega
             , \,\,
             \esssup_{\eta \in D}
               \int_D
                 B_2 (\eta,\omega)
               \, d \omega
           \Big\},
  \end{align*}
  which completes the proof.
\end{proof}

The second intermediate result expresses the integral over $D$ in \eqref{eq:BBm_norm_warped_mixed}
through the Fourier integral operators $L^{(\ell)}_{\tau_0}$.

\begin{lemma}\label{pro:kern_in_theta}
  Let $L_{\tau_0}^{(\ell)}$, $\ell \in \{ 1,2 \}$, be as in Theorem \ref{thm:BBmFourierIntegralEstimate}.
  For all $(y,\omega),(z,\eta)\in\Lambda$ and $\ell \in\{1,2\}$, we have
  \begin{equation}\label{eq:kern_in_terms_of_theta}
    \bigl|K_{\theta_\ell,\theta_{3-\ell}}\bigl((y,\omega),(z,\eta)\bigr)\bigr|
    = \sqrt{\!\frac{w(\Phi(\eta))}{w(\Phi(\omega))}}
      \cdot \left| L^{(\ell)}_{\Phi(\eta)}\bigl(A^T (\Phi(\eta))\langle z-y \rangle,\Phi(\omega) - \Phi(\eta)\bigr)\right|.
  \end{equation}

  If $m$ is $\Phi$-compatible with dominating weight $m^\Phi$, then we have,
  for given arbitrary $\ell \in \{ 1,2 \}$ and $\eta \in D$,
  \begin{equation}\label{eq:kerncond_warped}
    \int_{D}
       \esssup_{z \in \RR^d}
         \int_{\RR^d}
             m^{\Phi}\bigl(y-z,\Phi(\omega) - \Phi(\eta)\bigr)
             \cdot |K_{\theta_\ell,\theta_{3-\ell}}\bigl((y,\omega),(z,\eta)\bigr)|
         ~dy
    ~d\omega
    \leq \big\| L_{\Phi(\eta)}^{(\ell)} \big\|_{\lebesgue_M^1 (\R^d \times \R^d)},
  \end{equation}
  with $M$ as in \eqref{eq:BigWeightDefinition}.
\end{lemma}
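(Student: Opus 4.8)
The plan is to establish the pointwise formula \eqref{eq:kern_in_terms_of_theta} first, by a computation on the Fourier side, and then to deduce the norm estimate \eqref{eq:kerncond_warped} from it by two successive changes of variables, the second of which is the one that makes the supremum over $z$ harmless.

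For \eqref{eq:kern_in_terms_of_theta}: fix $\ell\in\{1,2\}$ and $(y,\omega),(z,\eta)\in\Lambda$, and abbreviate $\tau_0 := \Phi(\eta)$. Starting from $K_{\theta_\ell,\theta_{3-\ell}}((y,\omega),(z,\eta)) = \langle g_{z,\eta}^{[3-\ell]}, g_{y,\omega}^{[\ell]}\rangle$, I would apply Plancherel's theorem together with $\widehat{g_{y,\omega}^{[\ell]}}(\xi) = e^{-2\pi i\langle y,\xi\rangle}\, g_\omega^{[\ell]}(\xi)$, where $g_\omega^{[\ell]}(\xi) = w(\Phi(\omega))^{-1/2}\,\theta_\ell(\Phi(\xi)-\Phi(\omega))$ for $\xi\in D$ and $0$ otherwise, to obtain
\[
  K_{\theta_\ell,\theta_{3-\ell}}\bigl((y,\omega),(z,\eta)\bigr)
  = \frac{1}{\sqrt{w(\Phi(\eta))\,w(\Phi(\omega))}}
    \int_D e^{-2\pi i\langle z-y,\xi\rangle}\,\theta_{3-\ell}(\Phi(\xi)-\Phi(\eta))\,\overline{\theta_\ell(\Phi(\xi)-\Phi(\omega))}\,d\xi .
\]
I would then substitute $\xi = \Phi^{-1}(\upsilon+\tau_0)$. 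By \eqref{eq:StandardChangeOfVariables} this replaces $d\xi$ by $w(\upsilon+\tau_0)\,d\upsilon$, turns $\Phi(\xi)-\Phi(\eta)$ into $\upsilon$ and $\Phi(\xi)-\Phi(\omega)$ into $\upsilon-(\Phi(\omega)-\Phi(\eta))$, and—using $A^{-T}(\tau_0)A^T(\tau_0) = \mathrm{id}$—rewrites the exponent phase as $\langle A^{-T}(\tau_0)\langle x\rangle,\Phi^{-1}(\upsilon+\tau_0)\rangle$ with $x := A^T(\Phi(\eta))\langle z-y\rangle$. The resulting integral is exactly $w(\tau_0)$ times the integral defining $L_{\tau_0}^{(\ell)}(x,\Phi(\omega)-\Phi(\eta))$ (up to a unimodular phase, which disappears under the absolute value), and collecting $w(\tau_0)\cdot(w(\Phi(\eta))w(\Phi(\omega)))^{-1/2} = \sqrt{w(\Phi(\eta))/w(\Phi(\omega))}$ yields \eqref{eq:kern_in_terms_of_theta}. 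Absolute convergence of all integrals and the use of Fubini follow from $\theta_1,\theta_2\in\lebesgue^2_{\sqrt{w_0}}$ and the translation-invariance of $\lebesgue^2_{\sqrt{w_0}}$ recorded in \eqref{eq:StandardTrafoAndTranslationEstimate}; I would treat these as routine.

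For \eqref{eq:kerncond_warped}: I would insert \eqref{eq:kern_in_terms_of_theta} into the left-hand side with $\eta$ fixed and $\tau_0 = \Phi(\eta)$, and first substitute $\omega\mapsto\tau := \Phi(\omega)-\tau_0$, which by \eqref{eq:StandardChangeOfVariables} contributes a Jacobian $w(\tau+\tau_0)$ and replaces $w(\Phi(\omega))$ by $w(\tau+\tau_0)$, so the $w$-factors combine to $\sqrt{w(\tau_0)\,w(\tau+\tau_0)}$. Next, for fixed $z$ and $\tau$, I would substitute $y\mapsto x := A^T(\tau_0)\langle z-y\rangle$ in the inner $y$-integral; since $\det A(\tau_0) = w(\tau_0) > 0$ this contributes a factor $1/w(\tau_0)$, and—crucially—the integrand becomes independent of $z$ (one has $y-z = -A^{-T}(\tau_0)\langle x\rangle$, and the argument of $L_{\tau_0}^{(\ell)}$ is just $x$), so the $\esssup_{z\in\R^d}$ collapses. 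What remains is
\[
  \int_{\R^d}\sqrt{\frac{w(\tau+\tau_0)}{w(\tau_0)}}\int_{\R^d} m^\Phi\bigl(-A^{-T}(\tau_0)\langle x\rangle,\tau\bigr)\,L_{\tau_0}^{(\ell)}(x,\tau)\,dx\,d\tau .
\]
Finally I would estimate $\sqrt{w(\tau+\tau_0)/w(\tau_0)}\le\sqrt{w_0(\tau)}$ by $w_0$-moderateness of $w$, and $\|A^{-T}(\tau_0)\| = \|\mathrm{D}\Phi(\Phi^{-1}(\tau_0))\|\le R_\Phi$ using \eqref{eq:DerivativeOfInverse} and the transpose-invariance of the operator norm, so that $m^\Phi(-A^{-T}(\tau_0)\langle x\rangle,\tau)\le\sup_{|y|\le R_\Phi|x|}m^\Phi(y,\tau)$. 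The product of these two bounds is precisely $M(x,\tau)$ from \eqref{eq:BigWeightDefinition}, whence the double integral is at most $\|L_{\Phi(\eta)}^{(\ell)}\|_{\lebesgue^1_M(\R^d\times\R^d)}$, which is \eqref{eq:kerncond_warped}.

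I do not expect a genuine obstacle here; the work is essentially careful bookkeeping of the numerous $w$-normalization factors. The one conceptual point worth emphasizing is that the substitution $y\mapsto A^T(\tau_0)\langle z-y\rangle$ eliminates all $z$-dependence, so the supremum over $z$ costs nothing and the entire estimate can be funneled through the single-variable Fourier integral operators $L_{\tau_0}^{(\ell)}$. I would also note that \eqref{eq:kern_in_terms_of_theta} itself needs no hypothesis on $m$; $\Phi$-compatibility is used only in \eqref{eq:kerncond_warped}, where it guarantees that $M$ is finite-valued so that $\|L_{\Phi(\eta)}^{(\ell)}\|_{\lebesgue^1_M}$ is a meaningful quantity.
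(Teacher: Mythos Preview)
Your proposal is correct and follows essentially the same route as the paper: Plancherel plus the substitution $\upsilon=\Phi(\xi)-\Phi(\eta)$ for \eqref{eq:kern_in_terms_of_theta}, then the two changes of variable $\tau=\Phi(\omega)-\tau_0$ and $x=A^T(\tau_0)\langle z-y\rangle$ together with $w_0$-moderateness and $\|A^{-T}(\tau_0)\|\le R_\Phi$ for \eqref{eq:kerncond_warped}. The paper treats the cases $R_\Phi<\infty$ and $R_\Phi=\infty$ separately when bounding $m^\Phi(-A^{-T}(\tau_0)\langle x\rangle,\tau)\sqrt{w_0(\tau)}$ by $M(x,\tau)$, whereas your formulation absorbs both cases into the single supremum defining $M$; this is a cosmetic difference only.
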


\begin{proof}
  We provide the proof for $\ell =1$; the proof for $\ell =2$ follows the same steps.
  First, recall from after Equation~\eqref{eq:DerivativeOfInverse} the identity
  $0 < w(\Phi(\xi)) = [\det \mathrm{D}\Phi (\xi)]^{-1}$ for all $\xi \in D$.
  This identity will be applied repeatedly.
  To show \eqref{eq:kern_in_terms_of_theta}, apply Plancherel's theorem
  and perform the change of variable $\upsilon = \Phi(\xi)-\Phi(\eta)$ to derive
  \begin{equation}\label{eq:firstKest}
    \begin{split}
       & \left| K_{\theta_1, \theta_2} \bigl((y,\omega), (z,\eta)\bigr) \right|
        = \bigl| \big\langle g_{z,\eta}^{[2]}, g_{y,\omega}^{[1]} \big\rangle \bigr|
        = \big|
            \big\langle
              \widehat{\strut \smash{g_{z,\eta}^{[2]}}},
              \widehat{\strut \smash{g_{y,\omega}^{[1]}}}
            \big\rangle
          \big| \\
       &= \left|
             \int_{D}
                 \frac{\theta_2 (\Phi(\xi) - \Phi(\eta))
                       \cdot \overline{\theta_1 (\Phi(\xi) - \Phi(\omega))}}
                      {\sqrt{w(\Phi(\eta)) \cdot w(\Phi(\omega))}}
                 \cdot e^{-2\pi i \left\langle z-y, \xi\right\rangle}
             ~d\xi
          \right| \\
       &= \left|
             \int_{\RR^d}
                 \theta_2 (\upsilon)
                 \cdot \overline{\theta_1 (\upsilon + \Phi(\eta) - \Phi(\omega))}
                 \cdot \frac{w(\upsilon + \Phi(\eta))}{\sqrt{w(\Phi(\eta)) w(\Phi(\omega))}}
                 \cdot e^{-2\pi i \left\langle z-y, \Phi^{-1}(\upsilon + \Phi(\eta))\right\rangle}
             ~d\upsilon
          \right|.
    \end{split}
  \end{equation}
  This easily implies \eqref{eq:kern_in_terms_of_theta}.

  To prove \eqref{eq:kerncond_warped}, set $\tau_0 := \Phi(\eta)$ and note that
  \eqref{eq:kern_in_terms_of_theta} implies that the left-hand side of
  \eqref{eq:kerncond_warped} satisfies
  \begin{equation*}
   \begin{split}
    \circledast
    & := \mathrm{LHS} \eqref{eq:kerncond_warped}\\
    & = \int_{D}
         \esssup_{z \in \RR^d}
             \int_{\RR^d}
                 m^{\Phi}(y-z, \Phi(\omega) - \tau_0)
                 \cdot \sqrt{\frac{w(\tau_0)}{w(\Phi(\omega))}}
                 \cdot \left| L_{\tau_0}^{(1)} (A^T(\tau_0)\langle z-y\rangle, \Phi(\omega)-\tau_0)\right|
             ~dy
       ~d\omega .
       \end{split}
  \end{equation*}
  Next, perform the change of variable $\tau = \Phi(\omega) - \tau_0$ to obtain
  \[
    \circledast
    = \int_{\RR^d}
         \esssup_{z \in \RR^d}
            \int_{\RR^d}
               m^{\Phi}(y-z,\tau)
               \sqrt{w(\tau_0)w(\tau+\tau_0)}
               \cdot \left|L_{\tau_0}^{(1)} (A^T(\tau_0)\langle z-y\rangle,\tau)\right|
            ~dy
      ~d\tau
    =: \odagger .
  \]
  Next, perform the change of variables $x = A^T(\tau_0)\langle z-y\rangle$ in the inner integral
  and apply the estimate $\sqrt{\frac{w(\tau + \tau_0)}{w(\tau_0)}} \leq \sqrt{w_0 (\tau)}$ to derive
  \[
    \odagger
    \leq \int_{\RR^d}
            \int_{\RR^d}
                m^{\Phi}(-A^{-T}(\tau_0)\langle x\rangle, \tau)
                \cdot \sqrt{w_0(\tau)}
                \cdot \left|L_{\tau_0}^{(1)} (x,\tau)\right|
            ~dx
        ~d\tau .
  \]

  Now, in the case where $\mathrm{D}\Phi$ is unbounded, we are done, since in this case
  Equations~\eqref{eq:M0XDependence} and \eqref{eq:BigWeightDefinition} show
  \[
    m^{\Phi}(-A^{-T}(\tau_0)\langle x\rangle, \tau) \cdot \sqrt{w_0 (\tau)}
    = m^{\Phi}(0,\tau) \cdot \sqrt{w_0 (\tau)}
    \leq M(x,\tau) .
  \]
  For the case that $\mathrm{D}\Phi$ is bounded,
  recall from \eqref{eq:DerivativeOfInverse}
  that $A^{-T}(\tau_0) = A^{-T}(\Phi(\eta)) = [\mathrm{D}\Phi]^T(\eta)$ and thus
  $|A^{-T}(\tau_0)\langle x \rangle| = |\mathrm{D}\Phi^T (\eta) \langle x \rangle| \leq R |x|$
  by choice of $R$ in \eqref{eq:BigWeightDefinition}.
  Therefore, by choice of $M$, we see
  \[
           m^{\Phi}(-A^{-T}(\tau_0)\langle x\rangle,\tau) \cdot \sqrt{w_0(\tau)}
      \leq M(x,\tau).
      \qedhere
  \]
\end{proof}

We now obtain Eq.~\eqref{eq:normcond_warped} in Theorem~\ref{thm:BBmFourierIntegralEstimate}
simply by inserting Eq.~\eqref{eq:kerncond_warped} into Eq.~\eqref{eq:BBm_norm_warped_mixed}.

\subsection{Uniform integrability of the integral kernels \texorpdfstring{$L^{(\ell)}_{\tau_0}$}{from Equation (\ref{eq:defOfL})}}

To control $\esssup_{\eta\in D} \|L^{(\ell)}_{\Phi(\eta)}\|_{\bd L^1_M}$,
we find that $k$-admissibility of the warping function $\Phi$ is crucial.
The remainder of this subsection is dedicated to proving
Theorem~\ref{lem:NiceCrossGramianEstimate} below,
which will in turn be central to proving Theorem~\ref{thm:MR1_kernel_is_in_AAm}.

\begin{theorem}\label{lem:NiceCrossGramianEstimate}
  Let $\Phi$ be a $k$-admissible warping function with control weight $v_0$.
  Furthermore, let $w_1 : \RR^d \to \R^+$ be continuous
  and submultiplicative and such that $w_1(-\upsilon) = w_1(\upsilon)$ for all $\upsilon \in \R^d$.
  Define
  \[
    w_2 : \quad
    \RR^d \to \R^+, \quad
    \upsilon \mapsto w_1 (\upsilon) \cdot [v_0 (\upsilon)]^{d+3k} ,
  \]
  assume that $\theta_1,\theta_2\in \mathcal C^{k}(\RR^d)$ are such that
  \begin{equation}
    \frac{\partial^n}{\partial \upsilon_j^n} \theta_\ell \in \lebesgue^2_{w_2}(\RR^d),
    \qquad \text{ for all } j \in \underline{d},\  \ell \in \{1,2\},\ 0\leq n\leq k,
    \label{eq:NiceCrossGramianAssumption}
  \end{equation}
  \nicki{and recall from Equation \eqref{eq:DefOfCmax}, that }
  \[
      C_{\max}
     = \prod_{\ell \in \{1,2\}}
           \Bigg(
              \max_{j \in \underline{d}} \,\,
                  \max_{0 \leq n \leq k}
                      \bigg\|
                          \frac{\partial^n}{\partial \upsilon_j^n} \theta_\ell
                      \bigg\|_{\lebesgue^2_{w_2}(\RR^d)}
           \Bigg) .
  \]
  
  Then, with $e_{\tau}$ as defined in Equation~\eqref{eq:DefOfEtau}
  and $L_{\tau_0}^{(\ell)}$ as in \Cref{thm:BBmFourierIntegralEstimate}
  there exists a constant 
 $C = C(d, k, v_0) > 0$ satisfying
 for all $x,\tau,\tau_0 \in \R^d$ and $\ell \in \{ 1,2 \}$ the estimate
 \begin{equation}\label{eq:NiceCrossGramianEstimate}
    \left|L_{\tau_0}^{(\ell)} (x,\tau)\right|
    = \left|
        \int_{\RR^d}
         \frac{w(\upsilon+\tau_0)}{w(\tau_0)}
         \left(
           \theta_{3-\ell} \cdot \overline{\bd T_{\tau}\theta_\ell}
         \right) \! (\upsilon)
         \, e_{\tau_0}(x,\upsilon)
       ~d\upsilon
      \right|
    \leq C \cdot  C_{\max} \cdot (1 + |x|)^{-k} \cdot [w_1 (\tau)]^{-1} .
  \end{equation}
\end{theorem}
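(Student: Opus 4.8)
The plan is to prove the pointwise bound on $L_{\tau_0}^{(\ell)}$ by a non-stationary phase argument in the warped variable $\upsilon$: integrate by parts $k$ times against a first-order operator that reproduces the oscillatory factor $e_{\tau_0}(x,\cdot)$, and then estimate the resulting integrand in $L^1$ using the decay of $\theta_1,\theta_2$ encoded in their membership in $\lebesgue^2_{w_2}(\R^d)$. By a density argument ($C_c^\infty(\R^d)$ is dense in the weighted Sobolev space determined by the right-hand side, and $L_{\tau_0}^{(\ell)}(x,\tau)$ depends continuously on $(\theta_1,\theta_2)$ in the $\lebesgue^2$-topology by Cauchy--Schwarz), it suffices to prove the estimate assuming $\theta_1,\theta_2\in C_c^\infty(\R^d)$, which legitimises all integrations by parts. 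Moreover $L_{\tau_0}^{(\ell)}(x,\tau)\leq\int_{\R^d}\tfrac{w(\upsilon+\tau_0)}{w(\tau_0)}|\theta_{3-\ell}(\upsilon)|\,|\theta_\ell(\upsilon-\tau)|\,d\upsilon$ trivially, which (see the last paragraph) settles the range $|x|\leq 1$, so one may also assume $|x|>1$.

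\textbf{Key identities from $k$-admissibility (all uniform in $\tau_0$).} First, with $e_{\tau_0}(x,\upsilon)=e^{-2\pi i\langle A^{-T}(\tau_0)\langle x\rangle,\,\Phi^{-1}(\upsilon+\tau_0)\rangle}$, the chain rule ($\mathrm{D}_\upsilon\Phi^{-1}(\upsilon+\tau_0)=A(\upsilon+\tau_0)$) together with \eqref{eq:PhiDefinition} gives
\[
  \nabla_\upsilon e_{\tau_0}(x,\upsilon)=-2\pi i\,\phi_{\tau_0}(\upsilon)\langle x\rangle\cdot e_{\tau_0}(x,\upsilon).
\]
Second, directly from \eqref{eq:PhiDefinition}, $\tfrac{w(\upsilon+\tau_0)}{w(\tau_0)}=\det\phi_{\tau_0}(\upsilon)$, whence $|\partial_\upsilon^\alpha\det\phi_{\tau_0}(\upsilon)|\leq C_{d,k}\,[v_0(\upsilon)]^d$ for $|\alpha|\leq k$ by multilinearity of the determinant and \eqref{eq:PhiHigherDerivativeEstimate}. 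Third, $\phi_{\tau_0}(\upsilon)$ is invertible with $\phi_{\tau_0}(\upsilon)^{-1}=\phi_{\upsilon+\tau_0}(-\upsilon)$, so \eqref{eq:PhiHigherDerivativeEstimate} and evenness of $v_0$ (it is radially increasing) give $\|\phi_{\tau_0}(\upsilon)^{\pm1}\|\leq v_0(\upsilon)$, and differentiating $\phi_{\tau_0}^{-1}\phi_{\tau_0}=\mathrm{id}$ yields $\|\partial_\upsilon^\alpha[\phi_{\tau_0}(\upsilon)]^{-1}\|\leq C_{d,k}\,[v_0(\upsilon)]^{2|\alpha|+1}$ for $|\alpha|\leq k$.

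\textbf{Integration by parts.} For fixed $x\neq0$ and $\tau_0$, introduce the operator
\[
  \mathcal{L}:=\frac{i}{2\pi|x|^2}\sum_{j=1}^d\big([\phi_{\tau_0}(\upsilon)]^{-T}\langle x\rangle\big)_j\,\partial_{\upsilon_j},
\]
which satisfies $\mathcal{L}\,e_{\tau_0}(x,\cdot)=e_{\tau_0}(x,\cdot)$ because $\langle\phi^{-T}x,\phi x\rangle=|x|^2$. Writing $F(\upsilon):=\tfrac{w(\upsilon+\tau_0)}{w(\tau_0)}(\theta_{3-\ell}\cdot\overline{\bd T_\tau\theta_\ell})(\upsilon)$, so that $L_{\tau_0}^{(\ell)}(x,\tau)=\big|\int_{\R^d}F(\upsilon)\,e_{\tau_0}(x,\upsilon)\,d\upsilon\big|$, $k$-fold integration by parts gives $L_{\tau_0}^{(\ell)}(x,\tau)\leq\int_{\R^d}\big|(\mathcal{L}^\ast)^kF(\upsilon)\big|\,d\upsilon$, with $\mathcal{L}^\ast g=-\tfrac{i}{2\pi}\sum_j\partial_{\upsilon_j}(\tilde Q_j g)$ where $\tilde Q(\upsilon):=|x|^{-2}[\phi_{\tau_0}(\upsilon)]^{-T}\langle x\rangle$ obeys $\|\partial_\upsilon^\alpha\tilde Q\|\leq C_{d,k}\,[v_0(\upsilon)]^{2|\alpha|+1}/|x|$ for $|\alpha|\leq k$. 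Expanding $(\mathcal{L}^\ast)^kF$ by the Leibniz rule produces a finite sum of terms, each a product of \emph{exactly} $k$ factors $\partial^{\gamma_i}\tilde Q_{j_i}$ and one factor $\partial^\beta F$ with $\sum_{i=1}^k|\gamma_i|+|\beta|=k$; since $\prod_i[v_0]^{2|\gamma_i|+1}=[v_0]^{2\sum|\gamma_i|+k}\leq[v_0]^{3k}$ (using $\sum|\gamma_i|\leq k$ and $v_0\geq1$), each term is bounded by $C_{d,k}\,[v_0(\upsilon)]^{3k}|x|^{-k}|\partial^\beta F(\upsilon)|$, hence
\[
  \big|(\mathcal{L}^\ast)^kF(\upsilon)\big|\leq C_{d,k}\,\frac{[v_0(\upsilon)]^{3k}}{|x|^k}\sum_{|\beta|\leq k}\big|\partial^\beta F(\upsilon)\big|.
\]

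\textbf{Conclusion.} Expanding $\partial^\beta F$ by Leibniz and using $|\partial^\alpha\det\phi_{\tau_0}|\leq C_{d,k}[v_0]^d$ reduces the matter to estimating $\int_{\R^d}[v_0(\upsilon)]^{3k+d}|\partial^{\delta_1}\theta_{3-\ell}(\upsilon)|\,|(\partial^{\delta_2}\theta_\ell)(\upsilon-\tau)|\,d\upsilon$ for $|\delta_1|,|\delta_2|\leq k$. A standard weighted Sobolev estimate bounds the mixed partials $\partial^{\delta_i}\theta$ by the pure partials $\partial_{\upsilon_j}^n\theta$ with $n\leq k$ (so the bound may be written through $C_{\max}$); then Cauchy--Schwarz with the splitting $[v_0(\upsilon)]^{3k+d}=\big([v_0(\upsilon)]^{3k+d}w_1(\upsilon)\big)\cdot w_1(\upsilon)^{-1}$, using $w_1 v_0^{3k+d}=w_2$ and $w_1(s+\tau)^{-1}\leq w_1(s)/w_1(\tau)$ (a consequence of submultiplicativity and evenness of $w_1$), bounds the integral by $\|\partial^{\delta_1}\theta_{3-\ell}\|_{\lebesgue^2_{w_2}}\|\partial^{\delta_2}\theta_\ell\|_{\lebesgue^2_{w_2}}/w_1(\tau)\lesssim C_{\max}/w_1(\tau)$. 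Combining, $L_{\tau_0}^{(\ell)}(x,\tau)\leq C\,C_{\max}\,|x|^{-k}w_1(\tau)^{-1}\leq 2^kC\,C_{\max}(1+|x|)^{-k}w_1(\tau)^{-1}$ for $|x|>1$; for $|x|\leq1$ the trivial estimate from the first paragraph, combined with $\det\phi_{\tau_0}\leq v_0^d$ and the same Cauchy--Schwarz step with $\delta_1=\delta_2=0$, gives $L_{\tau_0}^{(\ell)}(x,\tau)\lesssim C_{\max}/w_1(\tau)\leq 2^kC_{\max}(1+|x|)^{-k}/w_1(\tau)$.

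\textbf{Main obstacle.} The difficulty is the $\tau_0$-uniformity: since $R_\Phi$ may be infinite, $\|A(\tau_0)\|$ and $w(\tau_0)$ are uncontrolled, so one cannot, e.g., linearise the phase by the substitution $t=\Phi^{-1}(\upsilon+\tau_0)$ without losing uniformity. The resolution is that $\mathcal{L}$ is built from $\phi_{\tau_0}$, which is \emph{normalised} ($\phi_{\tau_0}(0)=\mathrm{id}$) and whose derivatives --- and those of its inverse --- are controlled by $v_0$ uniformly in $\tau_0$ by $k$-admissibility; this is exactly what makes all constants $\tau_0$-independent, and the careful bookkeeping of the $v_0$-powers ($d$ from $\det\phi_{\tau_0}$ and $3k$ from the $k$ integrations by parts) is precisely what pins down the exponent in $w_2$.
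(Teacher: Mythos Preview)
Your overall strategy is natural, and the key identities you record are correct: the formula $\nabla_\upsilon e_{\tau_0}=-2\pi i\,\phi_{\tau_0}(\upsilon)\langle x\rangle\,e_{\tau_0}$, the relation $w(\upsilon+\tau_0)/w(\tau_0)=\det\phi_{\tau_0}(\upsilon)$, and the bounds on $\partial^\alpha[\phi_{\tau_0}]^{-1}$ obtained by differentiating $\phi_{\tau_0}^{-1}\phi_{\tau_0}=\mathrm{id}$ are all fine and indeed give $\tau_0$-uniform control. The trivial estimate for $|x|\leq 1$ is also exactly right.

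The genuine gap is the sentence ``A standard weighted Sobolev estimate bounds the mixed partials $\partial^{\delta_i}\theta$ by the pure partials $\partial_{\upsilon_j}^n\theta$.'' This is not standard and, for general weights, is not true. The hypothesis \eqref{eq:NiceCrossGramianAssumption} and the constant $C_{\max}$ involve \emph{only} the pure derivatives $\partial_j^n\theta_\ell$; the weights $w_1,v_0$ are merely continuous and submultiplicative, so no Plancherel or Muckenhoupt argument is available. In the unweighted case one has, e.g., $\|\partial_1\partial_2 f\|_{L^2}^2\leq\|\partial_1^2 f\|_{L^2}\|\partial_2^2 f\|_{L^2}$ via integration by parts or Fourier, but with a weight present the integration by parts produces derivatives of $w_2^2$, which need not exist, let alone be controlled. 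Your global operator $\mathcal L=\tfrac{i}{2\pi|x|^2}\langle\phi_{\tau_0}^{-T}x,\nabla\rangle$ inevitably mixes directions when iterated, so the terms $\partial^{\delta_1}\theta_{3-\ell}\cdot\partial^{\delta_2}\theta_\ell$ in your Leibniz expansion genuinely involve mixed $\delta_1,\delta_2$, and there is no way to absorb them into $C_{\max}$ under the stated assumptions. The density reduction to $C_c^\infty$ does not help here: it lets you justify the integrations by parts, but the resulting bound still involves mixed partials of the approximants, and you cannot pass to the limit using only pure-partial information about $\theta_\ell$.

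This is precisely why the paper takes the longer route. The paper's differential operator is $\Square_{j,\tau_0,x}=(2\pi i)^{-1}\partial_{\upsilon_j}\big[\,\cdot\,/(\phi_{\tau_0}(\upsilon)\langle\rho_x\rangle)_j\big]$, which differentiates in a \emph{single} coordinate direction and therefore produces only pure partials of the $\theta_\ell$. The price is that $(\phi_{\tau_0}(\upsilon)\langle\rho_x\rangle)_j$ may vanish, so one cannot use $\Square_{j,\tau_0,x}$ globally; the paper introduces a partition of unity (Lemmas~\ref{lem:phiIsReasonable}--\ref{lem:IntegralSegmentation}) so that on each piece a suitable $j$ can be chosen with $(\phi_{\tau_0}\langle\rho_x\rangle)_j$ uniformly bounded below. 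Your operator $\mathcal L$ elegantly sidesteps this division problem by inverting $\phi_{\tau_0}$ rather than a single component, but the two difficulties are dual to one another: avoiding the localisation forces mixed derivatives, and avoiding mixed derivatives forces the localisation. If you want to salvage your approach, you would need to strengthen the hypothesis to $\partial^\alpha\theta_\ell\in L^2_{w_2}$ for all $|\alpha|\leq k$ (and redefine $C_{\max}$ accordingly), which yields a correct but weaker theorem.
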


\begin{rem*}
  In Section \ref{sec:discretewarped}, we will apply \Cref{lem:NiceCrossGramianEstimate} in a setting
  in which $\theta_1, \theta_2$ depend on $x,\tau,\tau_0$.
  We suggest that the reader keeps this potential dependency in mind.
\end{rem*}

%

In a first step, we derive a number of important consequences of Definition~\ref{assume:DiffeomorphismAssumptions}
that will be used repeatedly.

\begin{lemma}\label{lem:assume_conclude}
  If $\Phi$ is a $0$-admissible warping function with control weight $v_0$,
  then $\Phi$ is a warping function in the sense of Definition~\ref{def:warpfun}.
  In particular, $w = \det A$ is $w_0$-moderate with $w_0 = v_0^d$, i.e.
  \begin{align}
    & w(\upsilon+\tau) \leq w(\upsilon) \cdot [v_0(\tau)]^d
      \qquad \forall \, \tau, \upsilon \in \R^d
    \label{eq:wModerateness}
    \\
    \text{and} \quad
    &
    \|A(\upsilon+\tau)\|
    \leq \|A(\upsilon)\| \cdot v_0(\tau)
    \qquad \forall \, \tau,\upsilon\in\RR^d.
    \label{eq:ModeratenessAssumption}
  \end{align}
  Additionally, for arbitrary $\gamma \in S^{d-1}$ and $\tau, \upsilon \in \R^d$, we have
  \begin{equation}
    [v_0 (\upsilon-\tau)]^{-1}
    \leq \|A^{-1}(\tau) A(\upsilon)\|^{-1}
    \leq |\phi_{\upsilon}(\tau - \upsilon) \langle \gamma \rangle|
    \leq \|A^{-1}(\upsilon) \cdot A (\tau)\|
    \leq v_0 (\tau-\upsilon)
    \label{eq:PhiUpperLowerEstimate}
  \end{equation}
  and
  \begin{equation}
    \phi_{\tau_0}(\upsilon)
    = \phi_{\tau_0 + \tau} (\upsilon-\tau) \cdot \phi_{\tau_0}(\tau)
    .
    \label{eq:PhiAlmostGroup}
  \end{equation}
  Finally, we have
  \begin{equation}
    [v_0(\tau)]^{-1} \cdot |\gamma|
    \leq |\phi_{\upsilon}(\tau) \langle \gamma \rangle|
    \leq v_0(\tau) \cdot |\gamma|
    \qquad \forall \, \gamma \in \R^d \text{ and } \tau,\upsilon \in \RR^d.
    \label{eq:PhiTauUpperLowerBounds}
  \end{equation}
\end{lemma}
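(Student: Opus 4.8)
The plan is to reduce every assertion to a purely algebraic identity among the matrices $A(\cdot) = \mathrm{D}\Phi^{-1}(\cdot)$, fed by the single analytic input that $0$-admissibility provides: taking $\alpha = 0$ in \eqref{eq:PhiHigherDerivativeEstimate} gives $\|\phi_\tau(\upsilon)\| \le v_0(\upsilon)$ for \emph{all} $\tau,\upsilon \in \R^d$. I will also use throughout that, since $v_0$ is radially increasing, it depends only on $|\upsilon|$, so $v_0(-\upsilon) = v_0(\upsilon)$.

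First I would record, straight from \eqref{eq:PhiDefinition}, the identities $A^T(\tau+\upsilon) = \phi_\upsilon(\tau)\,A^T(\upsilon)$ and $[\phi_\upsilon(\tau)]^{-1} = A^T(\upsilon)\,A^{-T}(\tau+\upsilon) = \phi_{\tau+\upsilon}(-\tau)$; the second of these (inverses of the $\phi$'s are again $\phi$'s) is the small trick that converts one-sided bounds into two-sided ones. Taking operator norms in the first identity, and using $\|B\| = \|B^T\|$, yields \eqref{eq:ModeratenessAssumption}. Taking determinants instead, and combining $\det\phi_\upsilon(\tau) = \det A(\tau+\upsilon)/\det A(\upsilon) > 0$ with the elementary inequality $\det B \le \|B\|^d$ (valid whenever $\det B \ge 0$, since $\det B$ is the product of the singular values), gives $w(\tau+\upsilon) = \det A(\tau+\upsilon) \le \|\phi_\upsilon(\tau)\|^d\,\det A(\upsilon) \le [v_0(\tau)]^d\, w(\upsilon)$, which is \eqref{eq:wModerateness}. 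Since $v_0$ is submultiplicative, so is $w_0 := v_0^d$; together with the positivity of $\det A$ and with $\Phi \in C^{k+1} \subset C^1$, this shows $\Phi$ is a warping function in the sense of \Cref{def:warpfun}, with $w$ being $w_0$-moderate.

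For \eqref{eq:PhiAlmostGroup}, I would simply expand both sides via \eqref{eq:PhiDefinition} and cancel using $A^{-T}(\tau_0+\tau)\,A^T(\tau_0+\tau) = \identity$. For \eqref{eq:PhiTauUpperLowerBounds}, the upper bound is immediate from $\|\phi_\upsilon(\tau)\| \le v_0(\tau)$; for the lower bound I would write $|\gamma| = |[\phi_\upsilon(\tau)]^{-1}\,\phi_\upsilon(\tau)\langle\gamma\rangle| \le \|[\phi_\upsilon(\tau)]^{-1}\| \cdot |\phi_\upsilon(\tau)\langle\gamma\rangle| \le v_0(\tau)\,|\phi_\upsilon(\tau)\langle\gamma\rangle|$, using $\|[\phi_\upsilon(\tau)]^{-1}\| = \|\phi_{\tau+\upsilon}(-\tau)\| \le v_0(-\tau) = v_0(\tau)$.

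Finally, for the chain \eqref{eq:PhiUpperLowerEstimate} I would note the identities $\phi_\upsilon(\tau-\upsilon) = A^T(\tau)\,A^{-T}(\upsilon) = [A^{-1}(\upsilon)A(\tau)]^T$ and $A^{-1}(\tau)A(\upsilon) = [\phi_\tau(\upsilon-\tau)]^T = [A^{-1}(\upsilon)A(\tau)]^{-1}$. The two rightmost inequalities then follow since $|\gamma| = 1$ for $\gamma \in S^{d-1}$ and $\|\phi_\upsilon(\tau-\upsilon)\| = \|A^{-1}(\upsilon)A(\tau)\| \le v_0(\tau-\upsilon) = v_0(\upsilon-\tau)$. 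The middle inequality holds because $\|A^{-1}(\tau)A(\upsilon)\|^{-1}$ equals the smallest singular value of $A^{-1}(\tau)A(\upsilon) = [A^{-1}(\upsilon)A(\tau)]^{-1}$, hence (singular values are invariant under transposition) equals $\min_{|\gamma'| = 1}|\phi_\upsilon(\tau-\upsilon)\langle\gamma'\rangle| \le |\phi_\upsilon(\tau-\upsilon)\langle\gamma\rangle|$. The leftmost inequality is $\|A^{-1}(\tau)A(\upsilon)\| = \|\phi_\tau(\upsilon-\tau)\| \le v_0(\upsilon-\tau)$. No step is genuinely difficult; the only points that need care are the determinant bound $\det B \le \|B\|^d$ entering \eqref{eq:wModerateness} and the bookkeeping with the identity $[\phi_\upsilon(\tau)]^{-1} = \phi_{\tau+\upsilon}(-\tau)$ (plus symmetry of $v_0$) that delivers all the lower bounds.
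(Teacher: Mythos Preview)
Your proof is correct and follows essentially the same route as the paper's: both arguments reduce everything to the single input $\|\phi_\tau(\upsilon)\|\le v_0(\upsilon)$, use the determinant bound $|\det B|\le\|B\|^d$ (the paper cites it as Hadamard's inequality, you via singular values) for \eqref{eq:wModerateness}, and use $|M\gamma|\ge\|M^{-1}\|^{-1}$ for the lower bounds in \eqref{eq:PhiUpperLowerEstimate} and \eqref{eq:PhiTauUpperLowerBounds}. One small slip: in your justification of the middle inequality of \eqref{eq:PhiUpperLowerEstimate} you write that $\|A^{-1}(\tau)A(\upsilon)\|^{-1}$ equals the smallest singular value of $A^{-1}(\tau)A(\upsilon)$, but $\|M\|^{-1}$ is the smallest singular value of $M^{-1}$, not of $M$; your conclusion $\|A^{-1}(\tau)A(\upsilon)\|^{-1}=\min_{|\gamma'|=1}|\phi_\upsilon(\tau-\upsilon)\langle\gamma'\rangle|$ is nevertheless correct, since $[A^{-1}(\tau)A(\upsilon)]^{-1}=A^{-1}(\upsilon)A(\tau)=[\phi_\upsilon(\tau-\upsilon)]^T$.
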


\begin{proof}
  To show that $\Phi$ is a warping function, we need only verify moderateness of $w = \det A$.
  To prove this moderateness, apply \emph{Hadamard's inequality}
  $|\det M| \leq \|M\|^d = \|M^T\|^d$ (see \cite[Chapter~75]{RieszNagyFunctionalAnalysis})
  for $M \in \RR^{d \times d}$, combined with \eqref{eq:PhiHigherDerivativeEstimate}
  (for $\alpha = 0$) to see that
  \[
    \frac{w(\upsilon+\tau)}{w(\upsilon)}
    = \det \left([A(\upsilon)]^{-1} A(\upsilon+\tau) \right)
    \leq \big\| [A(\upsilon)]^{-1} A(\upsilon+\tau) \big\|^d
    =    \big\| [\phi_{\upsilon} (\tau)]^T \big\|^d
    \leq [v_0(\tau)]^d.
  \]
  Hence, we obtain \eqref{eq:wModerateness}.
  Moreover,
  \[
    \|A(\upsilon+\tau)\|
    = \| A(\upsilon) A^{-1}(\upsilon) A(\upsilon+\tau) \|
    \leq \|A(\upsilon)\| \cdot \| [\phi_{\upsilon}(\tau)]^T \|
    \smash{\overset{\eqref{eq:PhiHigherDerivativeEstimate}}{\leq}} \|A(\upsilon)\| \cdot v_0(\tau),
  \]
  proving \eqref{eq:ModeratenessAssumption}.
  To show \eqref{eq:PhiUpperLowerEstimate}, first note for $\gamma \in S^{d-1}$
  and any $M \in \mathrm{GL}(\RR^d)$ that $|M\gamma| \geq \|M^{-1}\|^{-1}$,
  and then apply \eqref{eq:PhiHigherDerivativeEstimate} twice:
  \begin{align*}
    \frac{1}{v_0 (\upsilon-\tau)}
    & \leq  \big\| [\phi_\tau (\upsilon - \tau)]^T \big\|^{-1}
      =     \|A^{-1}(\tau) A(\upsilon)\|^{-1}
      =     \big\| [A^T(\tau) \cdot A^{-T}(\upsilon)]^{-1} \big\|^{-1}
      =     \big\| [\phi_{\upsilon}(\tau - \upsilon)]^{-1} \big\|^{-1} \\
    & \leq  |\phi_{\upsilon}(\tau - \upsilon)\gamma|
      \leq  \|\phi_{\upsilon}(\tau - \upsilon)\|
      =     \|A^{-1}(\upsilon) \cdot A (\tau)\| \\
    & \leq  v_0 (\tau-\upsilon),
    \quad \text{ for all }\quad  \upsilon,\tau \in \RR^d,\quad  \gamma \in S^{d-1}.
  \end{align*}

  Finally, assertion \eqref{eq:PhiAlmostGroup} is easily verified using direct computation,
  and $[v_0(\tau)]^{-1}\leq |\phi_{\upsilon}(\tau)\cdot \gamma| \leq v_0(\tau)$
  for $|\gamma| = 1$ is obtained from \eqref{eq:PhiUpperLowerEstimate} through the bijective map
  $\tau \mapsto \tau-\upsilon$ and using that $v_0$ is radial.
  This proves \eqref{eq:PhiTauUpperLowerBounds}.
\end{proof}

Lemma~\ref{lem:assume_conclude} shows that $w$ is $v_0^d$-moderate.
The next result provides $v_0^d$-moderateness (up to a constant) for the partial derivatives of $w$.

\begin{lemma}\label{cor:deriv_of_w_estimate}
  Let $\Phi$ be a $k$-admissible warping function with control weight $v_0$.
  For every $j \in \underline{d}$ and $n \in \N_0$ with $n \leq k$, we have
  \begin{equation}\label{eq:deriv_of_w_estimate}
    \left|
        \frac{\partial^n}{\partial \upsilon^n_j} w(\upsilon+\tau)
    \right|
    \leq D_{n} \cdot [v_0(\upsilon)]^d \cdot w(\tau),
    \text{ for all } \upsilon,\tau\in\RR^d,
  \end{equation}
   with $D_{n} := D_{n}(d) := d!\cdot d^n$.
\end{lemma}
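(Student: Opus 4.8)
The plan is to reduce the estimate to a single algebraic identity relating $w$ to $\det \phi_\tau$, and then to differentiate a determinant directly via the Leibniz expansion.

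First I would record the identity linking $w(\upsilon+\tau)$ and $\phi_\tau(\upsilon)$. By \eqref{eq:PhiDefinition} we have $[\phi_\tau(\upsilon)]^T = A^{-1}(\tau)\, A(\upsilon+\tau)$, hence $A(\upsilon+\tau) = A(\tau) \cdot [\phi_\tau(\upsilon)]^T$. Taking determinants, using multiplicativity of $\det$, the relation $w = \det A$, and $\det M^T = \det M$, this yields
\[
  w(\upsilon+\tau) = w(\tau) \cdot \det \phi_\tau(\upsilon)
  \qquad \text{for all } \upsilon,\tau \in \RR^d .
\]
Since $\tau$ is fixed throughout, it then suffices to bound $\bigl| \tfrac{\partial^n}{\partial \upsilon_j^n} \det \phi_\tau(\upsilon) \bigr| \leq D_n \cdot [v_0(\upsilon)]^d$.

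Next I would expand $\det \phi_\tau(\upsilon) = \sum_{\sigma \in S_d} \sgn(\sigma) \prod_{r=1}^{d} \bigl( \phi_\tau(\upsilon) \bigr)_{r,\sigma(r)}$, where $S_d$ denotes the symmetric group on $\underline{d}$, and apply the general Leibniz (product) rule to each of the $d!$ products: differentiating $\prod_{r=1}^d ( \phi_\tau(\upsilon) )_{r,\sigma(r)}$ exactly $n$ times in $\upsilon_j$ gives $\sum_{n_1 + \cdots + n_d = n} \binom{n}{n_1,\dots,n_d} \prod_{r=1}^d \tfrac{\partial^{n_r}}{\partial \upsilon_j^{n_r}} ( \phi_\tau(\upsilon) )_{r,\sigma(r)}$, the sum running over $(n_1,\dots,n_d) \in \N_0^d$. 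For each factor, the entrywise estimate $|M_{rs}| \leq \|M\|$ together with \eqref{eq:PhiHigherDerivativeEstimate}, applied to the multiindex $\alpha = n_r e_j$ (which is admissible because $n_r \leq n \leq k$), gives $\bigl| \tfrac{\partial^{n_r}}{\partial \upsilon_j^{n_r}} ( \phi_\tau(\upsilon) )_{r,\sigma(r)} \bigr| \leq v_0(\upsilon)$. Hence each product of $d$ such factors is at most $[v_0(\upsilon)]^d$, and the multinomial theorem $\sum_{n_1+\cdots+n_d = n} \binom{n}{n_1,\dots,n_d} = d^n$ bounds the derivative of each permutation-product by $d^n \cdot [v_0(\upsilon)]^d$. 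Summing over the $d!$ permutations and recombining with the displayed identity gives the claim with $D_n = d! \cdot d^n$.

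There is no essential difficulty here; the whole argument is elementary once the identity $w(\upsilon+\tau) = w(\tau)\,\det \phi_\tau(\upsilon)$ is in place. The only points that need attention are the combinatorial bookkeeping in the Leibniz/multinomial step and the (trivial but necessary) observation that every differentiation order $n_r$ appearing there satisfies $n_r \leq n \leq k$, so that the hypothesis \eqref{eq:PhiHigherDerivativeEstimate} of $k$-admissibility actually applies.
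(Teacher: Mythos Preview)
Your proof is correct and follows essentially the same approach as the paper's own proof: factor $w(\upsilon+\tau) = w(\tau)\det\phi_\tau(\upsilon)$, expand the determinant via the Leibniz formula, apply the multivariate Leibniz rule to each product, bound each factor by $v_0(\upsilon)$ using \eqref{eq:PhiHigherDerivativeEstimate}, and sum via the multinomial theorem to obtain $D_n = d!\cdot d^n$.
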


\begin{proof}
  We begin by rewriting $\frac{\partial^n}{\partial \upsilon^n_j}w(\upsilon+\tau)$
  using some simple properties of determinants:
  \begin{equation*}
  \begin{split}
  \frac{\partial^n}{\partial \upsilon^n_j}w(\upsilon+\tau)
  & = \frac{\partial^n}{\partial \upsilon^n_j}\det(A(\upsilon+\tau))
    = \frac{\partial^n}{\partial \upsilon^n_j}\det(A^T(\upsilon+\tau)) \\
  & = \det(A^T(\tau))\frac{\partial^n}{\partial \upsilon^n_j} \det(A^T(\upsilon+\tau)A^{-T}(\tau))
  \overset{\eqref{eq:PhiDefinition}}{=}
      w(\tau)\frac{\partial^n}{\partial \upsilon^n_j}\det(\phi_\tau(\upsilon)) .
  \end{split}
  \end{equation*}
  Let $S_d$ be the set of permutations on $\underline{d}$.
  Then, the definition of the determinant yields
  \[
    \frac{\partial^n}{\partial \upsilon^n_j}\det(\phi_\tau(\upsilon))
    = \frac{\partial^n}{\partial \upsilon^n_j}
        \left[
            \sum_{\sigma\in S_d}\sgn(\sigma)
                \prod_{i=1}^d
                    [\phi_\tau(\upsilon)]_{i,\sigma(i)}
        \right]
    = \sum_{\sigma\in S_d}
         \sgn(\sigma)
         \frac{\partial^n}{\partial \upsilon^n_j}
         \prod_{i=1}^d [\phi_\tau(\upsilon)]_{i,\sigma(i)}.
  \]
  The general Leibniz rule for products with $d$ terms shows
  \[
    \frac{\partial^n}{\partial \upsilon^n_j} \prod_{i=1}^d [\phi_\tau(\upsilon)]_{i,\sigma(i)}
    = \sum_{\substack{m_1,\dots,m_d \in \N_0, \\ m_1+\ldots+m_d = n}}
        \binom{n}{m_1,\ldots,m_d}
        \prod_{i=1}^d
          \frac{\partial^{m_i}}{\partial \upsilon_j^{m_i}}
            [\phi_\tau(\upsilon)]_{i,\sigma(i)},
  \]
  where $\binom{n}{m_1,\ldots,m_d} := \frac{n!}{m_1!\cdots m_d!}$
  is the usual multinomial coefficient.
  Moreover, the estimate \eqref{eq:PhiHigherDerivativeEstimate} yields
  \[
   \left|
     \frac{\partial^{m_i}}{\partial \upsilon_j^{m_i}}[\phi_\tau(\upsilon)]_{i,\sigma(i)}
   \right|
   \leq \left\Vert
                  \partial^{m_ie_j}\phi_{\tau}\left(\upsilon\right)
              \right\Vert \leq v_0(\upsilon).
  \]
  Altogether, we obtain
  \[
    \begin{split}
    \left|\frac{\partial^n}{\partial \upsilon^n_j} w(\upsilon+\tau)\right|
    & = \left|
          w(\tau)
          \cdot \frac{\partial^n}{\partial \upsilon^n_j}\det(\phi_\tau(\upsilon))
        \right| \\
    & \leq w(\tau)
           \cdot [v_0(\upsilon)]^d
           \cdot \sum_{\sigma\in S_d} \,\,\,
                   \sum_{m_1+\ldots+m_d = n}
                     \binom{n}{m_1,\ldots,m_d} \\
    & = d!
        \cdot d^n
        \cdot [v_0(\upsilon)]^d
        \cdot w(\tau)
      = D_{n} \cdot [v_0(\upsilon)]^d \cdot w(\tau) ,
    \end{split}
  \]
  where we used $|S_d| = d!$ and the multinomial theorem
  (see e.g.\@ \cite[Exercise 2(a)]{FollandRA}), i.e.
  \[
    \sum_{m_1+\ldots+m_d = n}
      \binom{n}{m_1,\ldots,m_d}
      \prod_{i=1}^d a_i^{m_i}
    = (a_1+\ldots+a_d)^n,
    \text{ for all } n \in \NN,\ (a_i)_{i\in\underline{d}} \in \RR^d,
  \]
  for $a_1,\ldots,a_d = 1$.
  Thus, the proof is complete.
\end{proof}

We now turn our attention towards the Fourier integral operators $L^{(\ell)}_{\tau_0}$ defined in \eqref{eq:defOfL}.
We will obtain the desired integrability with respect to $x\in\RR^d$
by means of an integration by parts argument of the kind well-known
for establishing the smoothness-decay duality of a function and its Fourier transform,
as well as the asymptotic behavior of oscillatory integrals, cf.~\cite[Chapter~VIII]{st93}.
An additional complication in our setting is that we require a uniform estimate
over all $L^{(\ell)}_{\tau_0}$, $\ell\in\{1,2\}$, $\tau_0\in\RR^d$.

For now, we replace
$\frac{w(\upsilon+\tau_0)}{w(\tau_0)} \left(\theta_2 \cdot \overline{\bd T_{\tau}\theta_1}\right)(\upsilon)$
in \eqref{eq:defOfL} by an unspecific, compactly supported function $g\in\mathcal C^k_c(\R^d)$, i.e., we consider
\begin{equation}\label{eq:exp_abbrv}
 \int_{\RR^d} g(\upsilon)\cdot e_{\tau_0}(x,\upsilon)~d\upsilon, \text{ recalling } e_{\tau}(x,\upsilon)
  = e^{-2\pi i \langle A^{-T}(\tau)\langle x\rangle, \Phi^{-1}(\upsilon+\tau)\rangle},
  \quad \text{ for all } x,\upsilon,\tau\in\RR^d.
\end{equation}
Note that, with $f =  e_{\tau}(x,\bullet)$, we have
\[
 \frac{\partial}{\partial \upsilon_j} f(\upsilon)
 = -2\pi i
   \cdot \left\langle
                A^{-T}(\tau) \langle x\rangle ,
                \frac{\partial}{\partial \upsilon_j}\Phi^{-1}(\upsilon+\tau)
         \right\rangle
   \cdot e_{\tau}(x,\upsilon)
 = -2\pi i
   \cdot (\phi_{\tau}(\upsilon)\cdot x)_j
   \cdot e_{\tau}(x,\upsilon) .
\]
The final equality can be verified by observing
\begin{equation}
  \begin{split}
       \big\langle
            A^{-T}(\tau) \langle \eta\rangle,
            \frac{\partial}{\partial \upsilon_j}\Phi^{-1}(\upsilon+\tau)
          \big\rangle
      & = \inner{A^{-T}(\tau) \langle\eta\rangle}{\mathrm{D}\Phi^{-1}(\upsilon+\tau)\langle e_j\rangle}\\
      & =  \inner{A^{-T}(\tau) \langle \eta\rangle}
                 {A(\upsilon+\tau) \langle e_j\rangle}
        = (\phi_{\tau}(\upsilon) \cdot \eta)_j,
  \end{split}
  \label{eq:PsiDerivativeIsPhi}
\end{equation}
which motivates the definition of $\phi_{\tau}$.
Provided $(\phi_{\tau}(\upsilon)\cdot x)_j\neq 0$ on the support of $g$, we obtain, with
\(
  \tilde{g}(\upsilon)
  = \left(-2\pi i \cdot (\phi_{\tau}(\upsilon) \langle x \rangle )_j\right)^{-1}\cdot g(\upsilon)
  ,
\)
\[
  \int g(\upsilon) f(\upsilon) d \upsilon
  = \int \tilde{g}(\upsilon)\frac{\partial}{\partial \upsilon_j} f(\upsilon) d\upsilon
  = -\int\frac{\partial}{\partial \upsilon_j} \tilde{g}(\upsilon)f(\upsilon) d\upsilon,
\]
where the last equality is obtained through integration by parts.

For fixed $x,\tau\in\RR^d$ and $j \in \underline{d}$ and all $g\in\mathcal C^k_c (\R^d)$ such that
$(\phi_{\tau} (\cdot) \langle x \rangle)_j \neq 0$ on the support of $g$,
we define the differential operator $\Square_{j,\tau,x}$ by
\begin{equation}\label{eq:differentialOperator}
  \left(\Square_{j,\tau,x} \, g\right)(\upsilon)
    := (2\pi i)^{-1}
       \frac{\partial}{\partial \upsilon_j}
          \left[
            \frac{g(\upsilon)}{(\phi_{\tau}(\upsilon) \langle x \rangle )_j}
          \right]
    = (2\pi i|x|)^{-1}
       \frac{\partial}{\partial \upsilon_j}
           \left[
               \frac{g(\upsilon)}{(\phi_{\tau}(\upsilon) \langle \rho_x \rangle)_j}
           \right],
\end{equation}
where $\rho_x\in S^{d-1}$ with  $x = |x|\rho_x$.
We can rewrite the integral in \eqref{eq:exp_abbrv} as
\begin{equation}\label{eq:partialint}
 \int_{\RR^d} g(\upsilon) \, e_{\tau}(x,\upsilon) d\upsilon
 = \int_{\RR^d} \!
                            \left(\Square_{j,\tau,x} \, g\right)(\upsilon)
                            e_{\tau}(x,\upsilon)
                        d\upsilon
   = \int_{\RR^d} \!
          \left(\Square_{j,\tau,x}^n \, g\right) \!\! (\upsilon)
          \, e_{\tau}(x,\upsilon)
        d\upsilon,
  \text{ for } n \leq k.
\end{equation}
where $\Square_{j,\tau,x}^n$ denotes $n$-fold application of $\Square_{j,\tau,x}$.

By \eqref{eq:differentialOperator}, each application of $\Square_{j,\tau,x}$ provides additional,
linear decay with respect to $|x|$, $x\in\RR^d$.
For a given pair $(\Phi,\theta)$ of warping function and prototype, however,
we cannot expect the support restriction required for the application
of the differential operator $\Square_{j,\tau,x}$, i.e.,
$(\phi_{\tau}(\cdot) \langle x \rangle )_j \neq 0$ on the support of $g$, to hold.
To account for this, we decompose
\(
  g_{\tau,\tau_0}(\upsilon)
  := \frac{w(\upsilon+\tau_0)}{w(\tau_0)}
     \left(
       \theta_2 \cdot \overline{\bd T_{\tau}\theta_1}
     \right)(\upsilon)
\)
into compactly supported functions, such that each of them allows
the application of $\Square_{j,\tau,x}$, for some $j \in \underline{d}$.
Therefore, our next steps are:
\begin{itemize}
    \item \textbf{Step 1: } Find a suitable splitting
          \(
            g_{\tau,\tau_0}
            = \sum_{i \in I} g_{i,\tau,\tau_0}
            = \sum_{i \in I} \varphi_i g_{\tau, \tau_0}
          \)
          (with $(\varphi_i)_i$ only depending on $\tau_0$) into compactly supported elements
          $g_{i,\tau,\tau_0} = \varphi_i g_{\tau, \tau_0}$,
          such that, for any fixed $\rho_x\in S^{d-1}$, $\tau_0\in\RR^d$ and $i \in I$,
          there is an index $j = j(\rho_x,\tau_0,i)\in\underline{d}$
          and a positive function $\tilde{v}$ (\emph{independent} of $i,\rho_x,\tau_0, \tau$),
          such that
          $| (\phi_{\tau_0}(\upsilon) \langle \rho_x \rangle )_j | \geq \tilde{v}(\upsilon) > 0$
          for $\upsilon \in \supp \varphi_i \supset \supp(g_{i,\tau,\tau_0})$.
          Besides being able to apply $\Square_{j,\tau_0,x}$, this property lets us control
          the growth of $\frac{1}{(\phi_{\tau_0}(\upsilon) \langle \rho_x \rangle)_j}$
          independently of the orientation $\rho_x\in S^{d-1}$ and of $\tau, \tau_0$.

    \item \textbf{Step 2: } Estimate $\left(\Square_{j,\tau_0,x}^n \,\, g_{i,\tau,\tau_0}\right)(\upsilon)$,
          for $x=|x| \cdot \rho_x \neq 0$, independently of $i,\rho_x,\tau_0$.
          In fact, this estimate will exhibit rapid decay with respect to $|x|$
          and depend boundedly on the derivative of $g_{i,\tau,\tau_0}$,
          which can be used to obtain decay with respect to $|\tau|$.
\end{itemize}

Towards Step 1, we introduce a specific family of coverings
in the following lemma.
The smooth splitting of $g_{\tau,\tau_0}$ into the building blocks
$g_{i,\tau,\tau_0} = \varphi_i g_{\tau, \tau_0}$, see Lemma~\ref{lem:IntegralSegmentation}, is provided
by a $\mathcal C^\infty_c$ partition of unity $(\varphi_i)_i$ with respect to these coverings,
introduced in Lemma~\ref{lem:SmoothPartitionOfUnity}.
Lemmas~\ref{lem:partial_derivs_of_localized_gte} and \ref{lem:estimate_diffop_abs}
take care of Step 2.

\begin{lemma}\label{lem:phiIsReasonable}
  Let $\Phi$ be a $1$-admissible warping function with control weight $v_0$.
  For any $\upsilon_0,\tau_0\in\RR^d$, the following are true:
  \begin{enumerate}
   \item The family $\left(U_j^{(\upsilon_0,\tau_0)}\right)_{j \in\underline{d}}$ defined by
         \[
              U_j^{(\upsilon_0,\tau_0)}
              := \left\{
                    \gamma\in S^{d-1}
                    ~:~
                    \left|
                      \left(\phi_{\tau_0}(\upsilon_0) \langle \gamma \rangle \right)_j
                    \right|
                    > \frac{1}{2d}
                      \left|
                          \phi_{\tau_0}(\upsilon_0) \langle \gamma \rangle
                      \right|
                 \right\}
         \]
         is a covering of $S^{d-1}$.

   \item For any $\delta > 0$ satisfying $\delta \cdot v_0(\delta / (4d) \cdot e_1) \leq 1/\sqrt{d}$
         and arbitrary $\upsilon\in B_{\delta/(4d)}(\upsilon_0)$
         and $\gamma\in U_j^{(\upsilon_0, \tau_0)}$, we have
         \[
           \big| \left(\phi_{\tau_0}(\upsilon) \langle \gamma \rangle \right)_j \big|
           \geq C_{\delta} \cdot [v_0(\upsilon)]^{-1},
         \]
         with $C_{\delta}\nicki{: = C_{\delta}(d,v_0)} := \bigl[4d \cdot v_0(\delta/(4d)\cdot e_1)\bigr]^{-1}$.
  \end{enumerate}
\end{lemma}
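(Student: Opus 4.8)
The plan is to prove the two assertions in order, since the second builds directly on the covering property from the first. For part (1), I would argue by contradiction: suppose some $\gamma \in S^{d-1}$ lies in no $U_j^{(\upsilon_0,\tau_0)}$, i.e. $|(\phi_{\tau_0}(\upsilon_0)\langle\gamma\rangle)_j| \le \frac{1}{2d}|\phi_{\tau_0}(\upsilon_0)\langle\gamma\rangle|$ for \emph{every} $j \in \underline d$. Summing over $j$ and using $|z| \le \sum_{j=1}^d |z_j|$ for any $z \in \R^d$, I would get
\[
  |\phi_{\tau_0}(\upsilon_0)\langle\gamma\rangle|
  \le \sum_{j=1}^d |(\phi_{\tau_0}(\upsilon_0)\langle\gamma\rangle)_j|
  \le d \cdot \frac{1}{2d} |\phi_{\tau_0}(\upsilon_0)\langle\gamma\rangle|
  = \tfrac{1}{2} |\phi_{\tau_0}(\upsilon_0)\langle\gamma\rangle|,
\]
which forces $\phi_{\tau_0}(\upsilon_0)\langle\gamma\rangle = 0$. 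But $\phi_{\tau_0}(\upsilon_0) = A^T(\upsilon_0+\tau_0)A^{-T}(\tau_0)$ is invertible (as a product of invertible matrices, since $A$ has positive determinant by $1$-admissibility), so $\gamma = 0$, contradicting $\gamma \in S^{d-1}$. Hence every $\gamma \in S^{d-1}$ lies in at least one $U_j^{(\upsilon_0,\tau_0)}$, and the family is a covering.

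For part (2), fix $j$, $\gamma \in U_j^{(\upsilon_0,\tau_0)}$ and $\upsilon \in B_{\delta/(4d)}(\upsilon_0)$. I would write $\phi_{\tau_0}(\upsilon)\langle\gamma\rangle = \phi_{\tau_0}(\upsilon_0)\langle\gamma\rangle + \big(\phi_{\tau_0}(\upsilon) - \phi_{\tau_0}(\upsilon_0)\big)\langle\gamma\rangle$ and estimate the $j$-th component via the reverse triangle inequality:
\[
  \big|(\phi_{\tau_0}(\upsilon)\langle\gamma\rangle)_j\big|
  \ge \big|(\phi_{\tau_0}(\upsilon_0)\langle\gamma\rangle)_j\big|
      - \big|\big((\phi_{\tau_0}(\upsilon) - \phi_{\tau_0}(\upsilon_0))\langle\gamma\rangle\big)_j\big|.
\]
The first term is $> \frac{1}{2d}|\phi_{\tau_0}(\upsilon_0)\langle\gamma\rangle| \ge \frac{1}{2d}[v_0(\upsilon_0)]^{-1}$ by membership in $U_j^{(\upsilon_0,\tau_0)}$ together with the lower bound in \eqref{eq:PhiTauUpperLowerBounds} (applied with unit vector $\gamma$). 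For the second term, I would bound it by $\|\phi_{\tau_0}(\upsilon) - \phi_{\tau_0}(\upsilon_0)\|$, and then use the mean value theorem along the segment from $\upsilon_0$ to $\upsilon$ together with the first-derivative estimate \eqref{eq:PhiHigherDerivativeEstimate} (valid since $\Phi$ is $1$-admissible), giving $\|\phi_{\tau_0}(\upsilon) - \phi_{\tau_0}(\upsilon_0)\| \le |\upsilon - \upsilon_0| \cdot \sup_{\zeta} v_0(\zeta - \tau_0)$ where the sup is over the relevant intermediate points — here one must be slightly careful to express the derivative of $\upsilon \mapsto \phi_{\tau_0}(\upsilon)$ correctly; since $\phi_{\tau_0}(\upsilon) = \phi_{\upsilon'}(\tau_0 + \upsilon - \upsilon')\cdot(\ldots)$-type identities from \eqref{eq:PhiAlmostGroup} relate these, I expect $\|\partial^{e_i}\phi_{\tau_0}(\upsilon)\| \le v_0(\upsilon)$ directly, and then by submultiplicativity and $|\upsilon - \upsilon_0| < \delta/(4d)$ one gets $v_0(\upsilon) \le v_0(\upsilon_0)\cdot v_0(\upsilon - \upsilon_0) \le v_0(\upsilon_0)\cdot v_0(\delta/(4d)\cdot e_1)$ using that $v_0$ is radially increasing. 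Combining, the error term is at most $\frac{\delta}{4d} v_0(\upsilon_0) v_0(\delta/(4d)\cdot e_1)$.

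Putting the two bounds together and using the hypothesis $\delta \cdot v_0(\delta/(4d)\cdot e_1) \le 1/\sqrt d$ (so in particular $\le 1$, which lets me absorb the error term into half of the main term when $[v_0(\upsilon_0)]^{-2}$-type factors are handled — more precisely, comparing $\frac{1}{2d}[v_0(\upsilon_0)]^{-1}$ against $\frac{\delta}{4d}v_0(\upsilon_0)v_0(\delta/(4d)e_1)$ requires care about the $v_0(\upsilon_0)$ versus $[v_0(\upsilon_0)]^{-1}$ discrepancy), I would derive $|(\phi_{\tau_0}(\upsilon)\langle\gamma\rangle)_j| \ge C_\delta [v_0(\upsilon)]^{-1}$ with $C_\delta = [4d\,v_0(\delta/(4d)\cdot e_1)]^{-1}$, again converting $v_0(\upsilon_0)$ to $v_0(\upsilon)$ via radial monotonicity and submultiplicativity at the end. \textbf{The main obstacle} I anticipate is bookkeeping of the $v_0$ factors: tracking where $v_0(\upsilon_0)$ appears versus $v_0(\upsilon)$, and ensuring the chosen smallness condition on $\delta$ is exactly what is needed to close the estimate with the stated constant $C_\delta$ — the analytic content (MVT plus the derivative bound) is routine, but matching constants precisely is where errors creep in.
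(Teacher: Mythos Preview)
Your proof of part~(1) is fine and matches the paper's. Part~(2), however, has a real gap, and it is precisely the one you flag as ``bookkeeping'': it is not bookkeeping, it is the heart of the matter.

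In your approach, the main term is bounded below by $\tfrac{1}{2d}[v_0(\upsilon_0)]^{-1}$, while your error term, after the direct mean-value estimate $\|\phi_{\tau_0}(\upsilon)-\phi_{\tau_0}(\upsilon_0)\|\lesssim |\upsilon-\upsilon_0|\cdot v_0(\upsilon_0)\,v_0(\delta/(4d)e_1)$, scales like $v_0(\upsilon_0)$. To absorb the error into the main term you would need $\delta\,v_0(\delta/(4d)e_1)\lesssim [v_0(\upsilon_0)]^{-2}$, but $\upsilon_0$ is arbitrary and $v_0(\upsilon_0)$ can be arbitrarily large, so the hypothesis $\delta\,v_0(\delta/(4d)e_1)\le 1/\sqrt d$ cannot close this. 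The estimate simply does not go through uniformly in $\upsilon_0$.

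The missing idea is to turn the absolute error into a \emph{relative} one. The paper uses the algebraic identity \eqref{eq:PhiAlmostGroup} to factor
\[
  \phi_{\tau_0}(\upsilon)-\phi_{\tau_0}(\upsilon_0)
  = \bigl(\phi_{\tau_0+\upsilon_0}(\upsilon-\upsilon_0)-\mathrm{id}\bigr)\,\phi_{\tau_0}(\upsilon_0),
\]
so that the error term is bounded by $\|\phi_{\tau_0+\upsilon_0}(\upsilon-\upsilon_0)-\mathrm{id}\|\cdot|\phi_{\tau_0}(\upsilon_0)\langle\gamma\rangle|$. Now the mean-value estimate is applied to $\phi_{\tau_0+\upsilon_0}$ on a ball of radius $\delta/(4d)$ \emph{around the origin}, giving a bound $\le\tfrac{\sqrt d\,\delta}{4d}v_0(\delta/(4d)e_1)\le\tfrac{1}{4d}$ by the hypothesis --- with no dependence on $\upsilon_0$ at all. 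Since both the main term and the error are multiples of $|\phi_{\tau_0}(\upsilon_0)\langle\gamma\rangle|$, one gets $|(\phi_{\tau_0}(\upsilon)\langle\gamma\rangle)_j|\ge\tfrac{1}{4d}|\phi_{\tau_0}(\upsilon_0)\langle\gamma\rangle|$, and only then is the latter bounded below via \eqref{eq:PhiAlmostGroup} and \eqref{eq:PhiTauUpperLowerBounds} by $[v_0(\delta/(4d)e_1)\,v_0(\upsilon)]^{-1}$, yielding exactly $C_\delta[v_0(\upsilon)]^{-1}$.
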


\begin{rem*}
  If $\delta \leq \min \bigl\{ 1, 1/ (\sqrt{d} \cdot v_0(e_1/(4d))) \bigr\}$, then
  $\delta \cdot v_0(\delta/(4d) \cdot e_1) \leq \delta \cdot v_0(e_1 / (4d)) \leq 1/\sqrt{d}$.
  Hence, the condition of Part~(2) of the lemma is satisfied for all sufficiently
  small $\delta > 0$.
\end{rem*}

\begin{proof}
  Part (1) does not use any of the properties of $\Phi$,
  except that $\phi_{\tau_0} (\upsilon) \in \GL (\RR^d)$:
  We simply note that any $z\in\mathbb{R}^{d}\setminus\left\{ 0\right\} $ satisfies
  \[
      \left|z\right|
      \leq \sum_{j=1}^{d}
             \left|z_{j}\right|
      \leq d  \cdot \max\bigl\{ \left|z_{j}\right| \colon j \in \underline{d} \bigr\}
      <    2d \cdot \max\bigl\{ \left|z_{j}\right| \colon j \in \underline{d} \bigr\} .
  \]
  Hence, there is some $j \in \underline{d}$ with $| z_{j} | > \frac{1}{2d} \cdot |z|$.
  Now apply this to $z=\phi_{\tau_0}(\upsilon_0) \langle \gamma \rangle$, noting
  that $z\neq0$ since $\phi_{\tau_0}(\upsilon_0)\in\mathrm{GL}(\mathbb{R}^{d})$
  and $\gamma\in S^{d-1}$.

  For part (2), let $\upsilon\in B_{\delta / (4d)} (\upsilon_0)$
  and $\gamma\in U_j^{(\upsilon_0,\tau_0)} \subset S^{d-1}$ be arbitrary.
  The triangle inequality provides
  \begin{align*}
           |(\phi_{\tau_0}(\upsilon) \langle \gamma \rangle )_j|
     &\geq |(\phi_{\tau_0}(\upsilon_0) \langle \gamma \rangle )_j|
           - |
                (\phi_{\tau_0}(\upsilon) \langle \gamma \rangle
              - \phi_{\tau_0}(\upsilon_0) \langle \gamma \rangle )_j
             | \\
     ({\scriptstyle{\text{since }\gamma \in U_j^{(\upsilon_0,\tau_0)}}})
     & \geq   \frac{|\phi_{\tau_0}(\upsilon_0)\langle \gamma \rangle |}{2d}
            - |
               \left(
                 \phi_{\tau_0}(\upsilon) - \phi_{\tau_0}(\upsilon_0)
               \right)
               \langle \gamma \rangle
              |.
  \end{align*}
  Note that
  \begin{equation}
      \begin{split}
            \phi_{\tau_0}(\upsilon) - \phi_{\tau_0}(\upsilon_0)
         &= \left(\phi_{\tau_0 + \upsilon_0} (\upsilon - \upsilon_0) - \mathrm{id}\right)
            \phi_{\tau_0}(\upsilon_0),
      \end{split}
      \label{eq:differenceOfphi}
  \end{equation}
  where we used the identity \eqref{eq:PhiAlmostGroup} of Lemma~\ref{lem:assume_conclude},
  with $\tau = \upsilon_0$.

  To estimate the first factor on the right-hand side of \eqref{eq:differenceOfphi},
  recall that $\phi_{\tau_0 +\upsilon_0}(0) = \mathrm{id}$.
  Therefore,
  \begin{align*}
     \big\| \mathrm{id} - \phi_{\tau_0 + \upsilon_0}(\upsilon - \upsilon_0) \big\|
     &= \big\|
          \phi_{\tau_0 + \upsilon_0}(0)
          - \phi_{\tau_0 + \upsilon_0} (\upsilon - \upsilon_0)
        \big\| \\
     &=\left\|
         \int_0^1
           \frac{d}{dt}\bigg|_{t=s}
             \big[
               \phi_{\tau_0 + \upsilon_0} (t(\upsilon - \upsilon_0))
             \big]
         \, ds
       \right\|\\
     & \leq \int_0^1
               \sum_{\ell=1}^d
                 \big\|
                   ( \partial_\ell \,\, \phi_{\tau_0 +\upsilon_0}) (s(\upsilon - \upsilon_0))
                 \big\|
                 \cdot \bigl|(\upsilon - \upsilon_0)_\ell\bigr|
            \, ds
     =: (\ast) .
  \end{align*}
  We now rewrite this expression further, recalling that $v_0$ is radially increasing and applying
  the Cauchy-Schwarz inequality, and inequality \eqref{eq:PhiHigherDerivativeEstimate}:
  \begin{align*}
      (\ast)
      & \overset{\text{CS}}{\leq}
         |\upsilon - \upsilon_0| \cdot
         \sup_{t \in [0,1]}
           \left|
            \left(
               \begin{matrix}
                  \left\|
                    (\partial_1 \, \phi_{\tau_0 + \upsilon_0}) (t(\upsilon-\upsilon_0))
                  \right\| \\
                  \vdots \\
                  \left\|
                    (\partial_d \, \phi_{\tau_0 + \upsilon_0}) (t(\upsilon-\upsilon_0))
                  \right\|
               \end{matrix}
            \right)
           \right|\\
     & \overset{\eqref{eq:PhiHigherDerivativeEstimate}}{\leq}
         |\upsilon - \upsilon_0| \cdot \sqrt{d}
         \cdot \sup_{t \in [0,1]}v_0 (t (\upsilon - \upsilon_0)) \\
     ({\scriptstyle{\text{since } |\upsilon - \upsilon_0| < \delta / (4d)}})
     & \leq
     \frac{\sqrt{d} \cdot \delta \cdot v_0 (\delta/(4d)\cdot e_1)}{4d}
     \leq \frac{1}{4d}.
  \end{align*}
  Hence,
  \[
    |(\phi_{\tau_0}(\upsilon)\langle  \gamma \rangle )_j|
    \geq \frac{|\phi_{\tau_0}(\upsilon_0)\langle \gamma \rangle |}{2d}
         - \|\phi_{\tau_0 + \upsilon_0}(\upsilon - \upsilon_0) - \mathrm{id}\|
           \cdot |\phi_{\tau_0}(\upsilon_0) \langle \gamma \rangle | \\
    \geq \frac{|\phi_{\tau_0}(\upsilon_0) \langle \gamma \rangle |}{4d}.
  \]

  To finish the proof, it remains to show
  $|\phi_{\tau_0}(\upsilon_0)\langle \gamma \rangle | \geq 4d C_\delta \cdot [v_0(\upsilon)]^{-1}$.
  To see this, note
  \begin{equation*}
    |\phi_{\tau_0}(\upsilon_0) \langle \gamma \rangle |
    \overset{\eqref{eq:PhiAlmostGroup}}{=}
      |
        \phi_{\tau_0 + \upsilon}(\upsilon_0 - \upsilon)
        \cdot \phi_{\tau_0}(\upsilon)\langle \gamma \rangle
      |
    \overset{\eqref{eq:PhiTauUpperLowerBounds}}{\geq}
      \frac{1}{v_0 (\upsilon - \upsilon_0) v_0(\upsilon)}
    \geq 4d C_\delta \cdot [v_0(\upsilon)]^{-1} ,
  \end{equation*}
  where we inserted $C_{\delta} = (4d \cdot v_0(\delta/(4d)\cdot e_1))^{-1}$,
  using $|\upsilon - \upsilon_0| < \delta / (4 d)$.
\end{proof}

\begin{lemma}\label{lem:SmoothPartitionOfUnity}
  Let $\delta' > 0$ be arbitrary.
  The sequence $\bigl(B_{\delta'}(\upsilon_i)\bigr)_{i\in \ZZ^d}$,
  with $(\upsilon_i)_{i \in \ZZ^d} = \bigl(\frac{\delta'}{\sqrt{d}} i\bigr)_{i \in \ZZ^d}$,
  is an open cover of $\RR^d$.
  Moreover, there is a collection of smooth functions $(\varphi_i)_{i \in \ZZ^d}$, such that
  \begin{enumerate}
   \item $\varphi_i \geq 0$ and $\varphi_i\in\mathcal C^\infty(\RR^d)$,

   \item $\operatorname{supp}(\varphi_i) \subset B_{\delta'}(\upsilon_i)$,

   \item $\sum_i \varphi_i \equiv 1$ on $\RR^d$, and

   \item for every multi-index $\alpha\in\NN^d_0$, there exists a constant $D_\alpha^ {(\delta')} > 0$
         such that $|\partial^\alpha \varphi_i(\upsilon)|\leq D_\alpha^{(\delta')}$
         uniformly over $i \in \ZZ^d$ and $\upsilon \in \RR^d$.
  \end{enumerate}
\end{lemma}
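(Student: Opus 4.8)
The plan is to carry out the standard construction of a smooth partition of unity subordinate to a regular lattice covering, and then to use periodicity of the normalizing denominator to obtain the uniform derivative bounds. First I would settle the covering claim: given $\upsilon\in\RR^d$, choose $i\in\ZZ^d$ with $|i_j-\tfrac{\sqrt d}{\delta'}\upsilon_j|\le\tfrac12$ for every $j\in\underline d$, i.e.\ $|\upsilon_j-(\upsilon_i)_j|\le\tfrac{\delta'}{2\sqrt d}$, so that $|\upsilon-\upsilon_i|^2\le d\cdot\tfrac{(\delta')^2}{4d}=\tfrac{(\delta')^2}{4}$; hence $\upsilon\in\overline{B_{\delta'/2}(\upsilon_i)}\subset B_{\delta'}(\upsilon_i)$. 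This shows that $\bigl(B_{\delta'}(\upsilon_i)\bigr)_{i\in\ZZ^d}$ --- in fact already the concentric balls of half the radius --- cover $\RR^d$.

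For the partition of unity, I would fix once and for all some $\psi\in\mathcal C_c^\infty(\RR^d)$ with $\psi\ge0$, $\operatorname{supp}\psi\subset B_{\delta'}(0)$ and $\psi>0$ on $\overline{B_{\delta'/2}(0)}$ (for instance $\psi(\upsilon):=\exp(-1/(r^2-|\upsilon|^2))$ for $|\upsilon|<r$ and $\psi:=0$ elsewhere, with $r:=\tfrac34\delta'$), and set $\Psi(\upsilon):=\sum_{j\in\ZZ^d}\psi(\upsilon-\upsilon_j)$. For each fixed $\upsilon$ only the finitely many $j$ with $\upsilon_j\in B_{\delta'}(\upsilon)$ contribute --- finitely many because the points $\upsilon_j$ have coordinate spacing $\delta'/\sqrt d$ --- so the sum is locally finite and $\Psi\in\mathcal C^\infty(\RR^d)$. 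Since translating by any $\upsilon_k$, $k\in\ZZ^d$, merely permutes the index set, $\Psi$ is periodic with respect to $\tfrac{\delta'}{\sqrt d}\ZZ^d$; being continuous and periodic it attains a minimum $c:=\min_\upsilon\Psi(\upsilon)$, which is strictly positive because, by the covering step, each $\upsilon$ lies in $\overline{B_{\delta'/2}(\upsilon_i)}$ for some $i$ and then $\Psi(\upsilon)\ge\psi(\upsilon-\upsilon_i)>0$. I would then define $\varphi_i:=\psi(\bullet-\upsilon_i)/\Psi$.

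Properties (1)--(3) are then immediate: $\varphi_i\in\mathcal C^\infty(\RR^d)$ and $\varphi_i\ge0$ since $\Psi$ is smooth and $\ge c>0$; $\operatorname{supp}\varphi_i\subset\operatorname{supp}\psi(\bullet-\upsilon_i)\subset B_{\delta'}(\upsilon_i)$; and $\sum_i\varphi_i=\Psi/\Psi\equiv1$, the sum being locally finite. For property (4), the Leibniz rule gives $\partial^\alpha\varphi_i=\sum_{\beta\le\alpha}\binom{\alpha}{\beta}[\partial^\beta\psi](\bullet-\upsilon_i)\cdot\partial^{\alpha-\beta}(1/\Psi)$, where $\|[\partial^\beta\psi](\bullet-\upsilon_i)\|_\infty=\|\partial^\beta\psi\|_\infty$ is independent of $i$, and each $\partial^\gamma(1/\Psi)$ is bounded because $1/\Psi$ is smooth and periodic (by induction it is a polynomial in the periodic, hence bounded, derivatives $\partial^\delta\Psi$ divided by a positive power of $\Psi\ge c$); so $D_\alpha^{(\delta')}:=\sum_{\beta\le\alpha}\binom{\alpha}{\beta}\|\partial^\beta\psi\|_\infty\,\|\partial^{\alpha-\beta}(1/\Psi)\|_\infty$ works uniformly in $i\in\ZZ^d$ and $\upsilon\in\RR^d$. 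The construction is entirely routine; the only point that needs a moment's thought is precisely this uniformity in $i$ of the bound in (4), and it is exactly what the two facts just invoked --- translation-invariance of the sup-norms of $\partial^\beta\psi$, and periodicity together with positivity of the common denominator $\Psi$ --- are there to supply.
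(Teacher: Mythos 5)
Your construction is correct and is precisely the ``standard construction of smooth partitions of unity'' that the paper simply cites (H\"ormander, Theorem~1.4.6) without writing out. In particular you correctly supply the one point that makes property~(4) with its uniformity in $i$ non-vacuous, namely translation-invariance of $\|\partial^\beta\psi\|_\infty$ together with periodicity and the positive lower bound of the common denominator $\Psi$.
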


\begin{proof}
  The result is a direct consequence of standard constructions of smooth partitions of unity;
  see e.g.\ \cite[Theorem~1.4.6]{Hoermander2015Analysis}.
\end{proof}

\begin{lemma}\label{lem:IntegralSegmentation}
  Let $\Phi$ be a $k$-admissible warping function with control weight $v_0$ and $\delta>0$ be
  such that $\delta\cdot v_0(\delta/(4d)\cdot e_1) \leq 1/\sqrt{d}$.
  Set $\delta' = \delta/(4d)$ and let $(\upsilon_i)_{i\in\ZZ^d}$,
  $\left( B_{\delta / (4d)} (\upsilon_i) \right)_{i \in \ZZ^d}$ and $(\varphi_i)_{i \in \ZZ^d}$
  be as in \Cref{lem:SmoothPartitionOfUnity}.
  Then
  \begin{equation}
  \begin{split}
    \sharp\{i\in\ZZ^d~:~ B_{\delta'}(\upsilon_\ell) \cap B_{\delta'}(\upsilon_i) \neq \emptyset\}
    & \leq (1 \!+\! 4d)^d
    \quad \text{ and } \quad
    \sharp\{i\in\ZZ^d~:~ \upsilon \in B_{\delta'}(\upsilon_i)\} \leq (1 \!+\! 4d)^d,
    \label{eq:uniform_covering_admissible}
  \end{split}
  \end{equation}
  for all $\ell \in \ZZ^d$ and $\upsilon \in \RR^d$.
  For $\theta_1,\theta_2\in\lebesgue^2_{\sqrt{w_0}} (\RR^d)\cap \mathcal C^k(\RR^d)$,
  $\upsilon, \tau, \tau_0 \in \R^d$ and $i \in \Z^d$, define
  \begin{equation}
    g_{i,\tau,\tau_0}(\upsilon)
    := \varphi_i(\upsilon) g_{\tau,\tau_0}(\upsilon)
    \quad\text{ with }\quad
    g_{\tau,\tau_0}(\upsilon)
    := \frac{w(\upsilon + \tau_0)}{w(\tau_0)}
       \left(\theta_2 \cdot \overline{\bd T_{\tau}\theta_1}\right)(\upsilon).
  \label{eq:local_g_definition}
  \end{equation}
  Then $g_{i,\tau,\tau_0}\in \mathcal C^k_c(B_{\delta'}(\upsilon_i))$ and,
  for any fixed $\tau_0\in\RR^d$ and $x\in\RR^d\setminus\{0\}$,
  there exists a sequence $(j_i)_{i\in\ZZ^d}$ with $j_i \in \underline{d}$, such that
  $\rho_x := x/|x| \in U_{j_i}^{(\upsilon_i, \tau_0)}$
  (where this set is defined is in \Cref{lem:phiIsReasonable}) for all $i \in \ZZ^d$ and such that
  \begin{equation}
    \begin{split}
      \int_{\RR^d} g_{\tau,\tau_0}(\upsilon)
         e_{\tau_0}(x,\upsilon)
         ~d\upsilon
     &= \sum_{i \in \ZZ^d}
          \int_{\R^d}
            \left(\Square_{j_i,\tau_0,x}^n \, g_{i,\tau,\tau_0}\right)\!(\upsilon)
            \cdot e_{\tau_0}(x,\upsilon)
          ~d\upsilon,
      \quad \text{ for all } n \leq k.
    \end{split}
    \label{eq:partialint2}
  \end{equation}
\end{lemma}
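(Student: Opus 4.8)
The plan is to dispatch the three assertions in order, mostly by assembling facts established above. For the covering estimates \eqref{eq:uniform_covering_admissible} I would use only that the centres form a scaled lattice: $\upsilon_i-\upsilon_\ell=\frac{\delta'}{\sqrt d}\,(i-\ell)$, so $B_{\delta'}(\upsilon_\ell)\cap B_{\delta'}(\upsilon_i)\neq\emptyset$ is equivalent to $|i-\ell|<2\sqrt d$, and likewise $\upsilon\in B_{\delta'}(\upsilon_i)\cap B_{\delta'}(\upsilon_{i'})$ forces $|i-i'|<2\sqrt d$. In either case $|i_m-\ell_m|<2\sqrt d$ for every coordinate $m\in\underline d$, and an open interval of length $4\sqrt d$ contains at most $1+4\sqrt d\leq 1+4d$ integers (using $\sqrt d\leq d$ for $d\geq 1$); hence both index sets in \eqref{eq:uniform_covering_admissible} have cardinality at most $(1+4d)^d$.

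Next, since $\Phi$ is a $\mathcal{C}^{k+1}$-diffeomorphism, $A=\mathrm{D}\Phi^{-1}$ is of class $\mathcal{C}^k$, hence $w=\det A\in\mathcal{C}^k(\RR^d)$ and $\upsilon\mapsto w(\upsilon+\tau_0)$ is $\mathcal{C}^k$; together with $\theta_1,\theta_2\in\mathcal{C}^k(\RR^d)$ this gives $g_{\tau,\tau_0}\in\mathcal{C}^k(\RR^d)$, and since $\varphi_i\in\mathcal{C}^\infty_c(\RR^d)$ with $\supp\varphi_i\subset B_{\delta'}(\upsilon_i)$ by \Cref{lem:SmoothPartitionOfUnity}, we obtain $g_{i,\tau,\tau_0}=\varphi_i g_{\tau,\tau_0}\in\mathcal{C}^k_c(B_{\delta'}(\upsilon_i))$. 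Before proving \eqref{eq:partialint2} I would record that $g_{\tau,\tau_0}\in\lebesgue^1(\RR^d)$: since $w$ is $w_0$-moderate, $\frac{w(\upsilon+\tau_0)}{w(\tau_0)}\leq w_0(\upsilon)=\sqrt{w_0(\upsilon)}\cdot\sqrt{w_0(\upsilon)}$, and submultiplicativity of $\sqrt{w_0}$ gives $\sqrt{w_0(\upsilon)}\leq\sqrt{w_0(\tau)}\,\sqrt{w_0(\upsilon-\tau)}$, whence $|g_{\tau,\tau_0}(\upsilon)|\leq\sqrt{w_0(\tau)}\bigl(\sqrt{w_0(\upsilon)}\,|\theta_2(\upsilon)|\bigr)\bigl(\sqrt{w_0(\upsilon-\tau)}\,|\theta_1(\upsilon-\tau)|\bigr)$ and Cauchy--Schwarz yields $\|g_{\tau,\tau_0}\|_{\lebesgue^1}\leq\sqrt{w_0(\tau)}\,\|\theta_2\|_{\lebesgue^2_{\sqrt{w_0}}}\|\theta_1\|_{\lebesgue^2_{\sqrt{w_0}}}<\infty$. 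Because $\sum_i\varphi_i\equiv 1$ and, by the first part, every point of $\RR^d$ lies in at most $(1+4d)^d$ of the balls, $g_{\tau,\tau_0}=\sum_i g_{i,\tau,\tau_0}$ pointwise with $\sum_i|g_{i,\tau,\tau_0}|\leq(1+4d)^d\,|g_{\tau,\tau_0}|\in\lebesgue^1$; since $|e_{\tau_0}(x,\bullet)|\equiv 1$, dominated convergence gives $\int_{\RR^d}g_{\tau,\tau_0}(\upsilon)\,e_{\tau_0}(x,\upsilon)\,d\upsilon=\sum_{i\in\ZZ^d}\int_{\RR^d}g_{i,\tau,\tau_0}(\upsilon)\,e_{\tau_0}(x,\upsilon)\,d\upsilon$.

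It then remains to choose the indices $(j_i)_{i\in\ZZ^d}$ and integrate by parts termwise. Fix $\tau_0$ and $x\neq 0$ and write $x=|x|\,\rho_x$, $\rho_x\in S^{d-1}$. For each $i$, \Cref{lem:phiIsReasonable}(1) shows $(U_j^{(\upsilon_i,\tau_0)})_{j\in\underline d}$ covers $S^{d-1}$, so I pick $j_i\in\underline d$ with $\rho_x\in U_{j_i}^{(\upsilon_i,\tau_0)}$ (this depends only on $\rho_x$ and $\tau_0$, as the statement permits). Since $\delta'=\delta/(4d)$ and $\delta\cdot v_0(\delta/(4d)\cdot e_1)\leq 1/\sqrt d$ by hypothesis, \Cref{lem:phiIsReasonable}(2), applied with $\upsilon_0=\upsilon_i$, gives $|(\phi_{\tau_0}(\upsilon)\langle\rho_x\rangle)_{j_i}|\geq C_{\delta}\,[v_0(\upsilon)]^{-1}>0$ for all $\upsilon\in B_{\delta'}(\upsilon_i)$; as $x\neq 0$, linearity forces $(\phi_{\tau_0}(\upsilon)\langle x\rangle)_{j_i}=|x|\,(\phi_{\tau_0}(\upsilon)\langle\rho_x\rangle)_{j_i}\neq 0$ on $B_{\delta'}(\upsilon_i)\supset\supp g_{i,\tau,\tau_0}$. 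Hence each $g_{i,\tau,\tau_0}$ meets the hypotheses of \eqref{eq:partialint} (with $\tau_0$ in place of the parameter there, $g=g_{i,\tau,\tau_0}$, $j=j_i$), which yields $\int_{\RR^d}g_{i,\tau,\tau_0}(\upsilon)\,e_{\tau_0}(x,\upsilon)\,d\upsilon=\int_{\RR^d}(\Square_{j_i,\tau_0,x}^n g_{i,\tau,\tau_0})(\upsilon)\,e_{\tau_0}(x,\upsilon)\,d\upsilon$ for every $n\leq k$. Summing over $i$ and combining with the interchange of the previous paragraph proves \eqref{eq:partialint2}; the right-hand series then converges absolutely, being termwise equal to $\sum_i\int_{\RR^d}g_{i,\tau,\tau_0}(\upsilon)\,e_{\tau_0}(x,\upsilon)\,d\upsilon$. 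I expect the only steps needing genuine care to be the weight algebra placing $g_{\tau,\tau_0}$ in $\lebesgue^1$ (needed to legitimize the interchange) and the geometric bookkeeping matching $\supp g_{i,\tau,\tau_0}\subset B_{\delta'}(\upsilon_i)$ to the radius $\delta/(4d)$ appearing in \Cref{lem:phiIsReasonable}(2); everything else is routine assembly of the preceding lemmas.
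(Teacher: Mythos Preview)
Your proposal is correct and follows essentially the same route as the paper's proof: both handle the covering counts by elementary lattice geometry, establish $g_{i,\tau,\tau_0}\in\mathcal C^k_c(B_{\delta'}(\upsilon_i))$ from the regularity assumptions and \Cref{lem:SmoothPartitionOfUnity}, justify the sum--integral interchange via the finite overlap bound together with the $w_0$-moderateness/submultiplicativity argument placing $g_{\tau,\tau_0}$ in $\lebesgue^1$, and then pick $j_i$ via \Cref{lem:phiIsReasonable}(1), verify the non-vanishing condition on $\supp g_{i,\tau,\tau_0}$ via \Cref{lem:phiIsReasonable}(2), and invoke \eqref{eq:partialint} termwise. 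Your write-up supplies more explicit detail on the counting bound (the coordinate-wise estimate $|i_m-\ell_m|<2\sqrt d\leq 2d$ yielding at most $1+4d$ integers per coordinate), which the paper leaves as a ``straightforward calculation''; otherwise the arguments match.
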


\begin{proof}
The first assertion, \eqref{eq:uniform_covering_admissible}, is verified
by a straightforward calculation and $g_{i,\tau,\tau_0}\in \mathcal C^k_c(B_{\delta'}(\upsilon_i))$
is a consequence of \Cref{lem:SmoothPartitionOfUnity}, with $k$-admissibility of $\Phi$
and $\theta_1,\theta_2\in\mathcal C^k(\RR^d)$.
\Cref{lem:phiIsReasonable}(1) provides the existence of
$j_i = j_i (i, \tau_0,\rho_x) \in \underline{d}$ satisfying
$\rho_x \in U_{j_i}^{(\upsilon_i, \tau_0)}$, for arbitrary,
fixed $\tau_0,\rho_x$ and each $i\in\ZZ^d$.
The elements of the covering $\left( B_{\delta / (4d)} (\upsilon_i) \right)_{i \in \ZZ^d}$
are specific instances of the set in \Cref{lem:phiIsReasonable}(2),
such that the application of $\Square_{j_i,\tau_0,x}^n$, $n\in\NN$,
to $g_{i,\tau,\tau_0}$ is well-defined.
Thus, to prove \eqref{eq:partialint2} it only remains to justify the interchange
of integral and summation
\begin{equation}\label{eq:splitGSumIntegralSwitch}
  \int_{\RR^d}
        \sum_{i \in \ZZ^d}
          g_{i,\tau,\tau_0}(\upsilon)
        e_{\tau_0}(x,\upsilon)
      ~d\upsilon
   =
       \sum_{i\in\ZZ^d}
        \int_{\R^d}
          g_{i,\tau,\tau_0}(\upsilon)
          e_{\tau_0}(x,\upsilon)
        ~d\upsilon.
\end{equation}
Since
\begin{align*}
  \int_{\RR^d}
    \sum_{i \in \ZZ^d}
      |g_{i,\tau,\tau_0} (\upsilon) e_{\tau_0}(x,\upsilon) |
  \, d\upsilon
  &\leq \int_{\RR^d}
          \sum_{i \in \ZZ^d}
            \Indicator_{B_{\delta'}(\upsilon_i)}(\upsilon)
            \cdot |g_{\tau, \tau_0} (\upsilon)|
        \, d\upsilon \\
  ({\scriptstyle{\text{Eq. } \eqref{eq:uniform_covering_admissible}}})
  & \leq (1 + 4d)^d \cdot \| g_{\tau, \tau_0} \|_{\lebesgue^1} \\
  ({\scriptstyle{\text{since } \frac{w(\upsilon + \tau_0)}{w(\tau_0)} \leq w_0 (\upsilon)}})
  & \leq (1 + 4d)^d
         \cdot \|
                 w_0
                 \cdot \theta_2
                 \cdot \translation_{\tau} \theta_1
               \|_{\lebesgue^1} \\
   ({\scriptstyle{w_0 \text{ is submultiplicative}}})&\leq \sqrt{w_0(\tau)}\cdot (1 + 4d)^d
        \cdot \| \theta_2 \|_{\lebesgue^2_{\sqrt{w_0}}}
        \cdot \| \translation_{\tau} \theta_1 \|_{\lebesgue^2_{\sqrt{w_0}}}
  < \infty,
\end{align*}
the dominated convergence theorem justifies \eqref{eq:splitGSumIntegralSwitch}.
\end{proof}

To prepare for an estimate of $\Square_{j_i,\tau_0,x}^n \, g_{i,\tau,\tau_0}$ itself,
we consider the partial derivatives of $g_{i,\tau,\tau_0}$.

\begin{lemma}\label{lem:partial_derivs_of_localized_gte}
  Let $\Phi$ be a $k$-admissible warping function with control weight $v_0$,
  let $\theta_1,\theta_2 \in \Ltv \cap \mathcal{C}^k$,
  and let $(\varphi_i)_{i \in \ZZ^d}$ be a bounded partition of unity
  as in Lemma~\ref{lem:SmoothPartitionOfUnity}, for some given $\delta' > 0$.
  For any fixed $j \in \underline{d}$, $i \in \ZZ^d$ and $\tau, \tau_0 \in \RR^d$, we have 
  \nicki{\begin{equation}\label{eq:estimate_of_partial_deriv_abs}
    \left| \frac{\partial^n}{\partial \upsilon_j^n} \, g_{i,\tau,\tau_0}\right|
    \leq C_n \cdot v_0^{d} \cdot
         \sum_{\substack{m_1,m_2\in\NN_0\\ m_1+m_2\leq n}}
             \left|
               \frac{\partial^{m_1}}{\partial \upsilon_j^{m_1}}
               \theta_2
               \cdot \frac{\partial^{m_2}}{\partial \upsilon_j^{m_2}}
               \overline{\bd T_\tau \theta_1}
             \right|
    ,\text{ for all } n\leq k,
  \end{equation}
  for some constant $C_n = C_n (\delta', d) > 0$. Here 
  \(
    g_{i,\tau,\tau_0}(\upsilon)
    = \frac{w(\upsilon+\tau_0)}{w(\tau_0)}
      \varphi_i(\upsilon)
      \left( \theta_2 \cdot \overline{\bd T_\tau \theta_1} \right)(\upsilon)
  \)
  is as in \eqref{eq:local_g_definition}.}
\end{lemma}

\begin{proof}
  \nicki{We begin by applying the general Leibniz rule, with $4$ terms in this case,
    to rewrite the partial derivatives of  $g_{i,\tau,\tau_0}$:
  \begin{equation}\label{eq:form_of_partial_deriv}
   \begin{split}
   & \frac{\partial^n}{\partial \upsilon_j^n} \, g_{i,\tau,\tau_0}
     = \frac{\partial^n}{\partial \upsilon_j^n}
          \left(
            \frac{\bd T_{-\tau_0}w}{w(\tau_0)}
            \cdot \varphi_i
            \cdot \theta_2
            \cdot \overline{\bd T_\tau \theta_1}
          \right) \\
            & = \frac{1}{w(\tau_0)}
       \sum_{\substack{n_1,\ldots,n_4\in\NN_0\\ n_1+\cdots+n_4=n}}
         \binom{n}{n_1,\ldots,n_4} \cdot 
                    \frac{\partial^{n_1}}{\partial \upsilon_j^{n_1}}\bd T_{-\tau_0}w
                    \cdot \frac{\partial^{n_2}}{\partial \upsilon_j^{n_2}}
                            \varphi_i
                    \cdot \frac{\partial^{n_3}}{\partial \upsilon_j^{n_3}}
                            \theta_2
                    \cdot \frac{\partial^{n_4}}{\partial \upsilon_j^{n_4}}
                            \overline{\bd T_\tau \theta_1},
   \end{split}
  \end{equation}
  where $\binom{n}{n_1,\ldots,n_4} := \frac{n!}{n_1!n_2!n_3!n_4!}$ is, once more, the usual multinomial coefficient. 
 
  We now consider each term
  appearing in \eqref{eq:form_of_partial_deriv} individually.
  Since all the involved sums are finite, there is a finite constant
  $\widetilde{C}_n > 0$, depending only on $\delta'>0$,
  the chosen partition of unity $(\varphi_i)_{i \in \ZZ^d}$, and (implicitly) $d\in\NN$, such that
  \[
    \max_{\substack{n_1,\ldots,n_4\in\NN_0\\ n_1+\cdots+n_4=n}}
      \left|
        \binom{n}{n_1,\ldots,n_4} \cdot 
        \frac{\partial^{n_2}}
             {\partial \upsilon_j^{n_2}}
          \varphi_i
      \right|
    \leq \max_{\substack{n_1,\ldots,n_4\in\NN_0\\ n_1+\cdots+n_4=n}}
           \left( \binom{n}{n_1,\ldots,n_4} \cdot D^{(\delta')}_{n_2e_j}\right)
    \leq \widetilde{C}_n \, ,
  \]
  where property (4) of $(\varphi_i)_{i \in \ZZ^d}$ in Lemma~\ref{lem:SmoothPartitionOfUnity}
  was used, and $n_2e_j$ is interpreted as a multi-index.

  For the term
  $[w(\tau_0)]^{-1} \cdot \frac{\partial^{n_1}}{\partial \upsilon_j^{n_1}}w(\upsilon+\tau_0)$
  on the other hand, we apply the estimate given in Lemma~\ref{cor:deriv_of_w_estimate}, i.e.
  \[
    [w(\tau_0)]^{-1}
    \cdot \left|\frac{\partial^{n_1}}{\partial \upsilon_j^{n_1}}w(\upsilon+\tau_0)\right|
    \leq D_{n_1} \cdot  [v_0(\upsilon)]^{d}
    \leq \left(\max_{0 \leq m \leq n} D_{m} \right) \cdot [v_0 (\upsilon)]^d
    = D_n \cdot [v_0 (\upsilon)]^d ,
  \]
  where $D_n = d!d^n$ as in Lemma~\ref{cor:deriv_of_w_estimate}. With $C_n := D_n\widetilde{C_n}$, we see that 
  \[
   \begin{split}
     \left|\frac{\partial^n}{\partial \upsilon_j^n} \, g_{i,\tau,\tau_0} \right|
     & \leq D_{n} \widetilde{C}_n
            \cdot \sum_{\substack{n_1,\ldots,n_4\in\NN_0\\ n_1+\cdots+n_4=n}}
                          \left|
                            v_0^{d}
                            \cdot \frac{\partial^{n_3}}{\partial \upsilon_j^{n_3}}
                                    \theta_2
                            \cdot \frac{\partial^{n_4}}{\partial \upsilon_j^{n_4}}
                                    \overline{\bd T_\tau \theta_1}
                          \right| \\
     & \leq C_n
            \cdot v_0^{d}
            \cdot \sum_{\substack{n_3,n_4\in\NN_0\\ n_3+n_4\leq n}}
                        \left|
                            \frac{\partial^{n_3}}{\partial \upsilon_j^{n_3}}
                                    \theta_2
                            \cdot \frac{\partial^{n_4}}{\partial \upsilon_j^{n_4}}
                                    \overline{\bd T_\tau \theta_1}
                        \right|.
    \qedhere
   \end{split}
  \]}
\end{proof}

The next lemma provides an estimate of $|\Square_{j_i,\tau_0,x}^n \, g|$
in terms of the partial derivatives of $g$ and the weight function $v_0$
from Definition~\ref{assume:DiffeomorphismAssumptions}.

\begin{lemma}\label{lem:estimate_diffop_abs}
  Let $\Phi$ be a $k$-admissible warping function with control weight $v_0$
  and choose $\delta > 0$ such that $\delta \cdot v_0(\delta / (4d) \cdot e_1) \leq 1/\sqrt{d}$.
  Fix $j \in \underline{d}$ and $\upsilon_0,\tau_0 \in\RR^d$, and let
  $U_j^{(\upsilon_0,\tau_0)}$ be as in Lemma~\ref{lem:phiIsReasonable}(1).
  If $g\in\mathcal C^k_c(B_{\delta/(4d)}(\upsilon_0))$ and if $x \in \RR^d \setminus \{0\}$
  satisfies $x/|x|\in U_j^{(\upsilon_0,\tau_0)}$ then, with
  \[
    \left(\Square_{j,\tau_0,x} \, g\right)
    = (2\pi i|x|)^{-1}
      \frac{\partial}{\partial \upsilon_j}
      \left[\frac{g(\bullet)}{\bigl(\phi_{\tau_0}(\bullet) \langle x/|x| \rangle \bigr)_j}\right]
  \]
  as in \eqref{eq:differentialOperator},
  there exists \nicki{$D_{n,\delta} := D_{n,\delta}(v_0) >0$, independent of $j,x,\tau_0$,
  as well as $\upsilon_0$ and the function $g\in\mathcal C^k_c(B_{\delta/(4d)}(\upsilon_0))$, }
  such that
  \[
    \left|\Square^n_{j,\tau_0,x} \, g\right|
    \leq D_{n,\delta}
         \cdot (2\pi |x|)^{-n}
         \cdot v_0^{3n}
         \cdot \sum_{m=0}^n
                  \left|\frac{\partial^m}{\partial \upsilon^m_j} \, g\right|
  \]
  holds for all $0 \leq n \leq k$.
\end{lemma}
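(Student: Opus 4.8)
The plan is to prove the estimate by induction on $n$, where the base case $n = 0$ is trivial (with $D_{0,\delta} = 1$). For the inductive step, I would first unwind one application of $\Square_{j,\tau_0,x}$ using the product/quotient rule. Writing $\rho_x := x/|x| \in S^{d-1}$ and $\psi_j(\upsilon) := \bigl(\phi_{\tau_0}(\upsilon)\langle \rho_x\rangle\bigr)_j$, so that
\[
  \bigl(\Square_{j,\tau_0,x} \, h\bigr)(\upsilon)
  = (2\pi i |x|)^{-1}
    \left[
      \frac{\partial_{\upsilon_j} h(\upsilon)}{\psi_j(\upsilon)}
      - \frac{h(\upsilon)\,\partial_{\upsilon_j}\psi_j(\upsilon)}{\psi_j(\upsilon)^2}
    \right],
\]
I would need two pointwise bounds valid on $B_{\delta/(4d)}(\upsilon_0) \supset \supp g$: a \emph{lower} bound $|\psi_j(\upsilon)| \geq C_\delta \cdot [v_0(\upsilon)]^{-1}$, which is exactly Lemma~\ref{lem:phiIsReasonable}(2) (applicable since $x/|x| \in U_j^{(\upsilon_0,\tau_0)}$ and $g$ is supported in $B_{\delta/(4d)}(\upsilon_0)$), and an \emph{upper} bound on $|\partial_{\upsilon_j}\psi_j(\upsilon)|$. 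For the latter, note $\partial_{\upsilon_j}\psi_j(\upsilon) = \bigl(\partial_{\upsilon_j}\phi_{\tau_0}(\upsilon)\langle\rho_x\rangle\bigr)_j = \bigl((\partial_j\phi_{\tau_0})(\upsilon)\langle\rho_x\rangle\bigr)_j$, whose modulus is at most $\|(\partial_j\phi_{\tau_0})(\upsilon)\| \leq v_0(\upsilon)$ by the $k$-admissibility estimate \eqref{eq:PhiHigherDerivativeEstimate} (using $k \geq 1$, which is implicit whenever $n \geq 1$ is under consideration). Combining these yields
\[
  \bigl|\Square_{j,\tau_0,x} \, h\bigr|
  \leq (2\pi |x|)^{-1} \cdot C_\delta^{-1}
       \left(
         v_0 \cdot |\partial_{\upsilon_j} h|
         + C_\delta^{-1} v_0^{3} \cdot |h|
       \right)
  \leq \widetilde{D}_\delta \cdot (2\pi |x|)^{-1} \cdot v_0^{3}
       \bigl(|h| + |\partial_{\upsilon_j} h|\bigr),
\]
for a constant $\widetilde{D}_\delta = \widetilde D_{\Phi,\delta}$, where I used $v_0 \geq 1$ (submultiplicativity and $v_0(0)\leq v_0(0)^2$ force $v_0\geq 1$) to absorb the lower power of $v_0$ into $v_0^3$.

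With this one-step estimate in hand, applying it to $h = \Square_{j,\tau_0,x}^{\,n-1} g$ and then invoking the inductive hypothesis on both $\Square_{j,\tau_0,x}^{\,n-1} g$ and $\partial_{\upsilon_j}(\Square_{j,\tau_0,x}^{\,n-1} g)$ gives the result; the only bookkeeping issue is that $\partial_{\upsilon_j}$ does not commute with $\Square_{j,\tau_0,x}$, so one cannot directly feed $\partial_{\upsilon_j}\Square_{j,\tau_0,x}^{\,n-1}g$ into the hypothesis. The clean way around this is to strengthen the induction: prove simultaneously for all $n \leq k$ that
\[
  \Bigl|\,\partial_{\upsilon_j}^{\,r}\bigl(\Square_{j,\tau_0,x}^{\,n} \, g\bigr)\Bigr|
  \leq D_{n,r,\delta} \cdot (2\pi |x|)^{-n} \cdot v_0^{3n + r}
       \cdot \sum_{m=0}^{n+r} \Bigl|\tfrac{\partial^m}{\partial\upsilon_j^m} \, g\Bigr|
  \qquad (r + n \leq k),
\]
which follows by the same one-step computation (each $\partial_{\upsilon_j}$ hitting the $1/\psi_j$ or $1/\psi_j^2$ factors costs one more power of $v_0$, again via \eqref{eq:PhiHigherDerivativeEstimate} and the lower bound on $|\psi_j|$), and then specialize to $r = 0$. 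Taking $D_{n,\delta} := D_{n,0,\delta}$ finishes the proof, with the dependence of $D_{n,\delta}$ only on $n, \Phi, \delta, d$ as claimed (the constant $C_\delta$ and the derivative bounds for $\phi_{\tau_0}$ depend only on these).

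\textbf{Main obstacle.} The genuine content is the \emph{uniformity} of the estimate over all $\tau_0 \in \RR^d$ and all orientations $\rho_x = x/|x| \in S^{d-1}$. This is precisely what the lower bound from Lemma~\ref{lem:phiIsReasonable}(2) buys us: the constant $C_\delta$ there is independent of $\upsilon_0, \tau_0$ and of $\gamma \in U_j^{(\upsilon_0,\tau_0)}$, and the upper bounds on derivatives of $\phi_{\tau_0}$ from \eqref{eq:PhiHigherDerivativeEstimate} are likewise $\tau_0$-independent (depending only on $v_0$, hence on $\Phi$). Once one trusts these two inputs, the rest is a mechanical — though slightly fiddly — iteration of the product rule, and the only subtlety is keeping track of how many powers of $v_0$ accumulate (three per application of $\Square$, one extra per $\partial_{\upsilon_j}$), which the augmented induction above handles automatically.
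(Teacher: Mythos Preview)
Your approach is correct and genuinely different from the paper's. One small correction: in the strengthened induction the exponent should be $3n+2r$, not $3n+r$. Each $\partial_{\upsilon_j}$ landing on a negative power of $\psi_j$ produces an extra factor $\psi_j^{-1}\,\partial_{\upsilon_j}\psi_j$, and since $|\psi_j|^{-1}\lesssim C_\delta^{-1} v_0$ (Lemma~\ref{lem:phiIsReasonable}(2)) and $|\partial_{\upsilon_j}\psi_j|\leq v_0$ (by \eqref{eq:PhiHigherDerivativeEstimate}), this costs \emph{two} powers of $v_0$, not one. With $3n+2r$ the induction closes cleanly and the $r=0$ specialization still yields $v_0^{3n}$, so nothing is lost.

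The paper takes a different route: it introduces the auxiliary operator $\blacksquare_{j,h}\,g := \partial_{\upsilon_j}(g/h)$ and records (via a ``tedious but straightforward induction'') the closed-form identity
\[
  \blacksquare_{j,h}^{\,n}\,g
  = h^{-2n}\sum_{m=0}^n \frac{\partial^m g}{\partial\upsilon_j^m}
    \sum_{\substack{\alpha\in\NN_0^n\\|\alpha|=n-m}}
      C^{(m,\alpha)}\prod_{\ell=1}^n \frac{\partial^{\alpha_\ell} h}{\partial\upsilon_j^{\alpha_\ell}}\,,
\]
then reads off the bound directly: $|h|^{-2n}\leq C_\delta^{-2n} v_0^{2n}$ from Lemma~\ref{lem:phiIsReasonable}(2), and the $n$-fold product of derivatives of $h=(\phi_{\tau_0}(\cdot)\langle\rho_x\rangle)_j$ contributes at most $v_0^n$ by \eqref{eq:PhiHigherDerivativeEstimate}, giving $v_0^{3n}$ in one stroke. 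Your strengthened-induction argument avoids stating (and proving) this identity, at the price of tracking auxiliary derivative orders; the paper's version makes the origin of the exponent $3n$ transparent but defers the combinatorial work to an unproven formula. Both rely on exactly the same two inputs for uniformity in $\tau_0$ and $\rho_x$.
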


\begin{proof}
  \textbf{Step~1 (Preparation):}
  Given $j \in \underline{d}$ and a strictly positive (or strictly negative)
  function $h \in \mathcal{C}^1(U)$ defined on an open set $\emptyset \neq U \subset \R^d$,
  we define the differential operator $\blacksquare_{j,h}$ by
  $\blacksquare_{j,h} \,\, g := \frac{\partial}{\partial \upsilon_j} \left(\frac{g}{h}\right)$.
  Then the following identity can be derived from the quotient rule by a tedious,
  but straightforward induction:
  \begin{equation}\label{eq:genquotrulelike}
    \blacksquare_{j,h}^n \,\, g
    = h^{-2n}
      \cdot \sum_{m=0}^n
              \Bigg(
                  \frac{\partial^{m} g}{\partial \upsilon_j^{m}}
                  \cdot
                  \sum_{\substack{\alpha \in \NN_0^{n} \\ |\alpha| = n - m}}
                      \bigg(
                          C^{(m,\alpha)}
                          \cdot \prod_{\ell=1}^{n}
                                  \frac{\partial^{\alpha_\ell} h}{\partial \upsilon_j^{\alpha_\ell}}
                      \bigg)
              \Bigg),
              \text{ for all } g\in {\mathcal C}^k(U) \text{ and } n \in \underline{k},
  \end{equation}
  for suitable constants $C^{(m,\alpha)} \in \ZZ$ that depend only on $\alpha \in \NN_0^{n}$
  and on $m \in \{0,\ldots,n\}$. Furthermore, we have the equality
  \[
    \Square^n_{i,\tau_0,x} \, g(\upsilon)
    = (2\pi i|x|)^{-n}
      \cdot \blacksquare_{j, \left(\phi_{\tau_0} (\cdot) \langle x/|x| \rangle \right)_j}^n \,
      g(\upsilon).
  \]

  \medskip{}

  \textbf{Step~2 (Completing the proof):}
  For $n=0$, there is nothing to prove.
  Hence, we can assume $n \in \underline{k}$.
  With $U_j^{(\upsilon_0,\tau_0)}\subset S^{d-1}$ as in \Cref{lem:phiIsReasonable}(1),
  there is a $j\in\underline{d}$, such that $x/|x| = \rho_x \in U_j^{(\upsilon_0,\tau_0)}$
  and therefore, $\Square_{j,\tau_0,x}g$ is well-defined
  for arbitrary $g \in \mathcal C_c^k(B_{\delta/(4d)}(\upsilon_0))$ by \Cref{lem:phiIsReasonable}(2).
  Now, \eqref{eq:genquotrulelike} provides
  \begin{equation}\label{eq:expression_for_tilde_square}
   \begin{split}
   \blacksquare_{j,\left(\phi_{\tau_0} (\cdot) \langle \rho_x \rangle \right)_j}^n \, g
   & = \bigl( \phi_{\tau_0} (\cdot) \langle \rho_x \rangle \bigr)_j^{-2n}
       \cdot \sum_{m=0}^n
               \Bigg(
                 \frac{\partial^{m} g}{\partial \upsilon_j^{m}}
                 \cdot
                 \sum_{\substack{\alpha\in\NN_0^{n}\\|\alpha|=n-m}}
                     \bigg(
                       C^{(m,\alpha)}
                       \cdot
                       \prod_{\ell=1}^{n}
                         \frac{\partial^{\alpha_\ell}}{\partial \upsilon_j^{\alpha_\ell}}
                         \big(
                           \phi_{\tau_0} (\cdot) \langle \rho_x \rangle
                         \big)_j
                     \bigg)
               \Bigg).
   \end{split}
  \end{equation}
  We now estimate the modulus of the innermost product
  by using \eqref{eq:PhiHigherDerivativeEstimate}:
  \[
     \left|
       \prod_{\ell=1}^{n}
         \frac{\partial^{\alpha_\ell}}{\partial \upsilon_j^{\alpha_\ell}}
         \big(
           \phi_{\tau_0}(\upsilon) \langle \rho_x \rangle
         \big)_j
     \right|
     \overset{|\rho_x|=1}{\leq}
       \prod_{\ell=1}^{n}
        \left\|
            \frac{\partial^{\alpha_\ell}}{\partial \upsilon_j^{\alpha_\ell}}
            \phi_{\tau_0}(\upsilon)
        \right\|
      \leq
        v_0^n (\upsilon).
  \]
  Insert this estimate into \eqref{eq:expression_for_tilde_square} to obtain
  \[
   \begin{split}
   \left|
     \blacksquare_{j,\left(\phi_{\tau_0} (\cdot) \langle \rho_x \rangle \right)_j}^n \, g (\upsilon)
   \right|
   & \leq [v_0 (\upsilon)]^n
          \cdot \left|
                  \left(\phi_{\tau_0}(\upsilon) \langle \rho_x \rangle \right)_j
                \right|^{-2n}
          \sum_{m=0}^n
            \Bigg(
                \left|
                    \frac{\partial^{m}}{\partial \upsilon_j^{m}} g (\upsilon)
                \right|
                \cdot
                \sum_{\substack{\alpha\in\NN_0^{n}\\|\alpha|=n-m}}
                    \bigl|C^{(m,\alpha)}\bigr|
            \Bigg) \\
    ({\scriptstyle{\text{Lemma } \ref{lem:phiIsReasonable}}})
    & \leq [v_0(\upsilon)]^{n}
           \cdot C_\delta(d,v_0)^{-2n}
           \cdot [v_0(\upsilon)]^{2n}
           \cdot
           \sum_{m=0}^n
           \Bigg(
             \left|
                 \frac{\partial^m}{\partial \upsilon^m_j} g(\upsilon)
             \right|
             \cdot \sum_{\substack{\alpha\in\NN_0^{n}\\|\alpha|=n-m}}
                     \left|C^{(m,\alpha)}\right|
           \Bigg)\\
    & \leq D_{n,\delta}
           \cdot [v_0(\upsilon)]^{3n}
           \cdot \sum_{m=0}^n
                    \left|
                      \frac{\partial^m}{\partial \upsilon^m_j} g(\upsilon)
                    \right|,
   \end{split}
  \]
  where
  \(
    D_{n,\delta}(v_0)
    := C_\delta(d,v_0)^{-2n}
       \cdot \max_{m=0,\ldots,n}
             \left(
               \sum_{|\alpha| = n-m}
                 |C^{(m,\alpha)}|
             \right)
  \)
  only depends on $n\leq k$, $\delta>0$, and \nicki{on the control weight $v_0$.}
\end{proof}

We are ready to prove \Cref{lem:NiceCrossGramianEstimate}, in particular
we can now estimate the integral appearing on the right-hand side of \eqref{eq:defOfL}.

\begin{proof}[Proof of Theorem \ref{lem:NiceCrossGramianEstimate}]
  Recall from Lemma~\ref{lem:assume_conclude} that $w_0 = v_0^d$.
  Furthermore, note by submultiplicativity of $w_1$
  that $w_1 (0) = w_1 (0+0) \leq [w_1 (0)]^2$, and hence $w_1(0) \geq 1$.
  This implies $w_1 \geq 1$:
  Another application of submultiplicativity yields
  $1 \leq w_1(0) = w_1 (\upsilon + (-\upsilon)) \leq w_1(\upsilon) \cdot w_1(-\upsilon) = [w_1(\upsilon)]^2$,
  since $w_1(-\upsilon) = w_1(\upsilon)$.
  By the same arguments, we see $v_0 \geq 1$.
  Therefore, we conclude that \eqref{eq:NiceCrossGramianAssumption} implies
  $\theta_{\ell} \in \lebesgue_{v_0^{d/2}}^2 (\RR^d) = \lebesgue_{\sqrt{w_0}}^2 (\RR^d)$,
  i.e., $\theta_1,\theta_2$ satisfy the conditions of \Cref{lem:IntegralSegmentation}.

  In the following, we only consider the case $\ell = 1$; the corresponding estimates for $\ell = 2$
  can be obtained simply by swapping $\theta_1, \theta_2$;
  our assumptions, and the definition of $C_{\max}$, are invariant under this operation.

  A first estimate \nicki{for the modulus of $L_{\tau_0}^{(1)}$
  (as defined in \eqref{eq:defOfL})}---which is effective for $|x| \leq 1$
  and which can be obtained using the $v_0^d$-moderateness of $w$
  (see Lemma~\ref{lem:assume_conclude})
  and the submultiplicativity of $w_1,v_0$---reads as follows:
  \begin{equation}\label{eq:estimate_no_deriv}
   \begin{split}
       & \left|
            \int_{\RR^d}
               \frac{w(\upsilon+\tau_0)}{w(\tau_0)}
               \left(\theta_2 \cdot \overline{\bd T_{\tau}\theta_1}\right)\!(\upsilon)
               \, e_{\tau_0}(x,\upsilon)
            ~d\upsilon
         \right| \\
       &\leq \int_{\RR^d}
                v_0^d(\upsilon)
                \cdot \left|\theta_2(\upsilon)\right|
                \cdot \left|\theta_1(\upsilon-\tau)\right|
             ~d\upsilon \\
       &=  w_1(\tau)^{-1}
           \cdot w_1(\tau)
           \cdot \int_{\RR^d}
                    v_0^d(\upsilon)
                    \cdot \left|\theta_2(\upsilon)\right|
                    \cdot \left|\theta_1(\upsilon-\tau)\right|
                 ~d\upsilon\\
       & \leq w_1(\tau)^{-1}
              \int_{\RR^d}
                \left|
                    v_0^d(\upsilon)w_1(\upsilon)
                    \theta_2(\upsilon)
                \right|
                \left|
                    w_1(\tau-\upsilon)\theta_1(\upsilon-\tau)
                \right|
              ~d\upsilon\\
       \text{\scriptsize{($w_1$  is radial) }}
       & \leq w_1(\tau)^{-1}
              \cdot \|\theta_1\|_{\bd L^2_{w_1}}
              \cdot \|\theta_2\|_{\bd L^2_{v_0^{d}w_1}}
         \leq C_{\max} \cdot [w_1(\tau)]^{-1}.
  \end{split}
  \end{equation}
  The last step used 
  $v_0\geq 1$, such that 
  $\| \theta_1 \|_{\lebesgue^2_{w_1}} \leq 
  \| \theta_1 \|_{\lebesgue^2_{w_2}}$
  and likewise $\| \theta_2 \|_{\lebesgue^2_{v_0^d w_1}} \leq 
  \| \theta_2 \|_{\lebesgue^2_{w_2}}$.
%

  To obtain an estimate which is effective for large $|x|$, we have to work harder:
  We fix some $\delta = \delta (d, \Phi, v_0) > 0$,
  such that $\delta v_0(\delta/(4d)\cdot e_1) < 1/\sqrt{d}$.
  Hence, we can apply \Cref{lem:IntegralSegmentation} to obtain
  a sequence $(j_i)_{i \in \ZZ^d}$, with $j_i \in \underline{d}$, such that
  $\rho_x = x/|x| \in U^{(i)}_{j_i}$ for all $i \in \ZZ^d$,
  and
  \begin{equation}\label{eq:partialint_again}
    \begin{split}
      \left|
        \int_{\RR^d}
            \frac{w(\upsilon+\tau_0)}{w(\tau_0)}
           \left(
                \theta_2 \cdot \overline{\bd T_{\tau}\theta_1}
            \right)(\upsilon)
           \cdot e_{\tau_0}(x,\upsilon)
        ~d\upsilon
      \right|
      & = \left|
            \sum_{i \in \ZZ^d}
                \int_{\RR^d}
                    \left(\Square_{j_i,\tau_0,x}^{k} \,\, g_{i,\tau,\tau_0}\right)(\upsilon)
                    \cdot e_{\tau_0}(x,\upsilon)
                ~d\upsilon
          \right|\\
       & \leq
            \int_{\RR^d}
                \sum_{i\in \ZZ^d}
                    \left|
                        \left(\Square_{j_i,\tau_0,x}^{k} \,\, g_{i,\tau,\tau_0}\right)(\upsilon)
                    \right|
            ~d\upsilon\\
       & = \sum_{j \in \underline{d}}
             \int_{\RR^d}
               \sum_{\substack{i\in \ZZ^d\\\text{s.t. }j_i = j}}
                 \left|
                   \left( \Square_{j,\tau_0,x}^{k} \,\, g_{i,\tau,\tau_0}\right) (\upsilon)
                 \right|
             ~d\upsilon,
    \end{split}
  \end{equation}
  for any $x\in\RR^d \setminus \{0\}$, $\tau,\tau_0\in\RR^d$.

  For $j_i = j$ (which implies $\rho_x \in U_{j_i}^{(i)} = U_{j}^{(i)}$) we further see that
  \nicki{
  \[
    \begin{split}
      & \left|
          \left( \Square_{j,\tau_0,x}^{k} \,\, g_{i,\tau,\tau_0}\right) (\upsilon)
        \right| \\
      & \overset{\text{Lem. \ref{lem:estimate_diffop_abs}}}{\leq}
         D_{k,\delta}
         \cdot (2\pi |x|)^{-k}
         \cdot v_0^{3k}(\upsilon)
         \cdot \sum_{n=0}^{k}
                 \left|
                    \frac{\partial^n}{\partial \upsilon_{j}^n}
                        g_{i,\tau,\tau_0}(\upsilon)
                 \right|\\
      & \overset{\text{Lem. \ref{lem:partial_derivs_of_localized_gte}}}{\leq}
        D_{k,\delta}
        \cdot \Indicator_{B_{\delta / (4d)}(\upsilon_i)}(\upsilon)
        \cdot (2\pi |x|)^{-k}
        \cdot v_0^{d+3k}(\upsilon)
        \cdot \sum_{n=0}^{k}
                C_n \cdot
                 \sum_{\substack{m_1,m_2\in\NN_0\\ m_1+m_2 \leq n}}
                    \left|
                        \frac{\partial^{m_1}}
                             {\partial \upsilon_{j}^{m_1}}
                        \theta_2(\upsilon)
                        \cdot \frac{\partial^{m_2}}
                                   {\partial \upsilon_{j}^{m_2}}
                              \overline{\theta_1(\upsilon-\tau)}
                    \right|.
    \end{split}
  \]
  Note that constants above are independent of $i \in \ZZ^d$.
  Next, using the finite overlap property, \eqref{eq:uniform_covering_admissible}, we get
  \[
    \sum_{\substack{i \in \ZZ^d\\\text{s.t. } j_i = j}}
         \left|
             \left( \Square_{j,\tau_0,x}^{k} \,\, g_{i,\tau,\tau_0}\right)(\upsilon)
         \right|
    \leq \widetilde{C} \cdot
         (2\pi |x|)^{-k}
         v_0^{d+3k}(\upsilon) \cdot
         \sum_{\substack{m_1,m_2\in\NN_0\\ m_1+m_2\leq k}}
                 \left|
                     \frac{\partial^{m_1}}
                          {\partial \upsilon_{j}^{m_1}}
                     \theta_2(\upsilon)
                     \cdot \frac{\partial^{m_2}}
                                {\partial \upsilon_{j}^{m_2}}
                           \overline{\theta_1(\upsilon-\tau)}
                 \right|,
  \]
  \nicki{where $\widetilde{C} := (k+1) \cdot (1+4d)^d \cdot D_{k,\delta} \cdot \max_{n=0,\dots,k} C_n$.}
  Insert this estimate into the final line of \eqref{eq:partialint_again},
  apply the Cauchy-Schwarz inequality, and
  recall that $w_1$ is submultiplicative and satisfies $w_1(-\upsilon) = w_1(\upsilon)$, whence
  \(
    1
    = [w_1(\tau)]^{-1} \cdot w_1(\upsilon + \tau - \upsilon)
    \leq [w_1(\tau)]^{-1} \cdot w_1(\upsilon) \cdot w_1(\upsilon - \tau)
    ,
  \)
  to obtain
  \[
   \begin{split}
   \lefteqn{
       \sum_{j \in \underline{d}}
         \int_{\RR^d}
             \sum_{\substack{i \in \ZZ^d\\\text{s.t. } j_i = j}}
                 \left|
                     \left(\Square_{j,\tau_0,x}^{k} \,\, g_{i,\tau,\tau_0}\right) (\upsilon)
                 \right|
         ~d\upsilon}\\
   & \leq \widetilde{C}
          \cdot (2\pi |x|)^{-k}
          \sum_{j \in \underline{d}}
             \int_{\RR^d}
                 v_0^{d+3k}(\upsilon) \cdot
               \sum_{\substack{m_1,m_2\in\NN_0\\ m_1+m_2\leq k}}
                         \left|
                             \frac{\partial^{m_1}}
                                  {\partial \upsilon_{j}^{m_1}}
                             \theta_2(\upsilon)
                             \cdot \frac{\partial^{m_2}}
                                        {\partial \upsilon_{j}^{m_2}}
                                   \overline{\theta_1(\upsilon-\tau)}
                         \right|
             ~d\upsilon\\
   & \leq \widetilde{C}
          \cdot (2\pi |x|)^{-k}
          [w_1(\tau)]^{-1}
          \cdot \\
   & \hspace{40pt}
          \sum_{j \in \underline{d}} \,\,
              \sum_{\substack{m_1,m_2\in\NN_0\\ m_1+m_2\leq k}}
                    \int_{\RR^d}
                        \left|
                            v_0^{d+3k}(\upsilon)
                            \cdot w_1(\upsilon)
                            \frac{\partial^{m_1}}
                                 {\partial \upsilon_{j}^{m_1}}
                            \theta_2(\upsilon)
                            \cdot w_1(\upsilon - \tau)
                            \frac{\partial^{m_2}}
                                 {\partial \upsilon_{j}^{m_2}}
                            \overline{\theta_1(\upsilon-\tau)}
                        \right|
                    ~d\upsilon\\
   & \leq \widetilde{C}
          \cdot (2\pi |x|)^{-k}
          w_1(\tau)^{-1}
          \sum_{j \in \underline{d}} \,\,
             \sum_{\substack{m_1,m_2\in\NN_0\\ m_1+m_2\leq k}}
                      \left\|
                          \frac{\partial^{m_1}}
                               {\partial \upsilon_{j}^{m_1}}
                          \theta_2
                      \right\|_{\bd L^2_{w_2}}
                      \cdot
                      \left\|
                          \frac{\partial^{m_2}}
                               {\partial \upsilon_{j}^{m_2}}
                          \overline{\theta_1}
                      \right\|_{\bd L^2_{w_1}}.
   \end{split}
  \]}
  Since all the involved sums are finite, so is the total number of summands.
  Moreover, the highest order partial derivatives that appear
  are $\frac{\partial^{k}}{\partial \upsilon_{j}^{k}}\theta_1$ and
  $\frac{\partial^{k}}{\partial \upsilon_{j}^{k}}\theta_2$,
  for arbitrary $j \in \underline{d}$.
  Hence, a joint maximization over $j \in \underline{d}$
  and the partial derivatives of $\theta_1, \theta_2$ yields
  \begin{equation}\label{eq:estimate_with_deriv}
    \begin{split}
      \lefteqn{
        \left|
          \int_{\RR^d}
            \frac{w(\upsilon+\tau_0)}{w(\tau_0)}
            \left(\theta_2 \cdot \overline{\bd T_{\tau}\theta_1}\right) \! (\upsilon)
            \, e_{\tau_0}(x,\upsilon)
          ~d\upsilon
        \right|
      } \\
     & \leq C'
            \cdot (2\pi |x|)^{-k}
            w_1(\tau)^{-1}
            \max_{j \in \underline{d}}
            \left\{
              \left(
                  \max_{n=0,\dots,k}
                    \left\|
                      \frac{\partial^{n}}{\partial \upsilon_{j}^{n}}\theta_1
                    \right\|_{\bd L^2_{w_1}}
              \right)
              \cdot \left(
                      \max_{n=0,\dots,k}
                          \left\|
                              \frac{\partial^{n}}{\partial \upsilon_{j}^{n}}\theta_2
                          \right\|_{\bd L^2_{w_2}}
                    \right)
          \right\}  \\
          & \leq C' \cdot (2\pi |x|)^{-k} \cdot [w_1 (\tau)]^{-1} \cdot C_{\max},
   \end{split}
  \end{equation}
  for a suitable (large) constant $C' >0$ 
  Here, the last step used again that $w_1 \leq w_2$.

  Now, define
  \[
    F(x) := \begin{cases}
                 C_{\max} , & \text{ if } |x|< 1\\
                 C' \cdot C_{\max} \cdot (2\pi |x|)^{-k} , &  \text{ else.}
             \end{cases}
  \]
  It is not hard to see $|F (x)| \leq C'' \cdot C_{\max} \cdot (1+|x|)^{-k}$ for some constant
  $C'' >0$. 
  Combining the inequalities \eqref{eq:estimate_no_deriv} and \eqref{eq:estimate_with_deriv},
  we obtain \nicki{for all $x,\ \tau,\ \tau_0\in\RR^d$ that 
  \begin{align*}
   |L_{\tau_0}^{(1)}(x,\tau)| = \left|
     \int_{\RR^d}
       \frac{w(\upsilon+\tau_0)}{w(\tau_0)}
       \left(\theta_2 \cdot \overline{\bd T_{\tau}\theta_1}\right) \! (\upsilon)
       \, e_{\tau_0}(x,\upsilon)
     ~d\upsilon
   \right|
   & \leq [w_1(\tau)]^{-1} \cdot F(x) \\
   & \leq C'' \cdot C_{\max} \cdot (1+|x|)^{-k} \cdot [w_1 (\tau)]^{-1}.
  \end{align*}
  If we collect all the hidden dependencies, then we note that the final constant $C''$
  depends on $D_{k,\delta} = D_{k,\delta}(v_0)$ and $C_n = C_n(\delta',d)$,
  and also directly on $d,\ k$.
  However, the support radius $\delta'$ of the assumed partition of unity is derived directly
  from $\delta, d$, where the largest valid choice of $\delta$ itself depends only on $v_0, d$,
  see Lemma~\ref{lem:SmoothPartitionOfUnity} for both dependencies.
  Further, noting that $D_{k,\delta}(v_0)$ is increasing in $\delta$
  (see proof of Lemma~\ref{lem:estimate_diffop_abs}), we can choose, without loss of generality,
  the largest possible value of $\delta$.
  Overall, $C''$ is a function of $d,\ k$ and $v_0$, as desired. 
  }  
\end{proof}

\subsection{Proof of Theorem \ref{thm:MR1_kernel_is_in_AAm}}

  Recall that $w = \det A$ is $w_0 := v_0^d$-moderate (Lemma~\ref{lem:assume_conclude}) and
  $v_0,v_1 \geq 1$ (see proof of Theorem~\ref{lem:NiceCrossGramianEstimate}), such that
  $w_2 \geq v_0^{d/2} = \sqrt{w_0}$ and $\theta_1,\theta_2 \in \lebesgue_{\sqrt{w_0}}^2 (\R^d)$
  follows. That $m$ is $\Phi$-compatible with dominating weight $m^{\Phi}$ is an immediate consequence
  of the inequality \eqref{eq:m_weight_estimate}, i.e.,
  \[
   m\bigl((y, \xi), (z, \eta)\bigr)
    \leq (1 + |y-z|)^p \cdot v_1 \bigl(\Phi(\xi) - \Phi(\eta)\bigr),
    \text{ for all } y,z\in\RR^d \text{ and } \xi,\eta\in D,
  \]
 and the choice of $p\in\NN_0$ (in particular, $p=0$ if $R_\Phi = \infty$).

  Thus, Lemma~\ref{lem:CrossGramianBabyStep} and Lemma~\ref{pro:kern_in_theta}
  can be applied, showing that
  \[
   \| K_{\theta_1,\theta_2} \|_{\BBm}
   \leq \max_{\ell \in\{1,2\}}
          \esssup_{\tau_0 \in \RR^d}
            \int_{\RR^d}
              \int_{\RR^d}
                M(x,\tau)\cdot
                |L^{(\ell)}_{\tau_0} (x,\tau)|
              ~dx
            ~d\tau,
  \]
  where
  \[
    M(x,\tau)
    = \sup_{y \in \R^d, |y| \leq R |x|}
        (1 + |y|)^p \cdot v_0^{d/2}(\tau) \cdot v_1(\tau)
    .
  \]
  Note that $M(x,\tau) \!\leq\! C_\Phi \cdot (1 + |x|)^p \cdot v_0^{d/2}(\tau) \cdot v_1(\tau)$,
  where $C_\Phi := \max \big\{ 1, \sup_{\xi \in D} \| \mathrm{D}\Phi (\xi) \|^p \big\}$ if $p > 0$
  and $C_\Phi := 1$ otherwise.

  Define
  \(
    w_1 :
    \R^d \to \R^+,
    \upsilon \mapsto (1 + |\upsilon|)^{d+1} \cdot v_1(\upsilon) \cdot [v_0(\upsilon)]^{d/2}.
  \)
  Since $v_0,v_1$ are submultiplicative and satisfy $v_\ell (-\upsilon) = v_\ell(\upsilon)$
  for $\ell \in \{ 0,1 \}$ and $\upsilon \in \R^d$, it is easy to see that $w_1$
  satisfies the same two properties.
  Furthermore, $w_2(\upsilon) = w_1(\upsilon) \cdot [v_0(\upsilon)]^{d + 3 (d+p+1)}$,
  so that Theorem~\ref{lem:NiceCrossGramianEstimate},
  with $k = d + p + 1$, yields a \nicki{constant $C = C(d, d+p+1, v_0) > 0$} satisfying
  \[
    \begin{split}
      \|K_{\theta_1,\theta_2}\|_{\BBm}
      & \leq \max_{\ell \in\{1,2\}}
               \esssup_{\tau_0 \in \RR^d}
                 \int_{\RR^d}
                   \int_{\RR^d}
                      M(x,\tau)\cdot 
                     \nicki{ |L^{(\ell)}_{\tau_0} (x,\tau)|}
                   ~dx
                  ~d\tau\\
      & \leq C_\Phi\cdot
             \esssup_{\tau_0 \in \RR^d}
                 \int_{\RR^d}
                     \int_{\RR^d}
                        (1+|x|)^p \cdot
                        v_0^{d/2}(\tau) \cdot
                        v_1(\tau)
                        \cdot \max_{\ell \in\{1,2\}}
                                \nicki{|L^{(\ell)}_{\tau_0} (x,\tau)|}
                     ~dx
                 ~d\tau\\
      \text{\scriptsize{(Thm. \ref{lem:NiceCrossGramianEstimate})}}
      & \leq C C_\Phi C_{\max}\cdot
             \int_{\RR^d}
                \int_{\RR^d}
                    v_0^{d/2}(\tau)
                    \cdot v_1(\tau)
                    \cdot [w_1(\tau)]^{-1}
                    \cdot (1+|x|)^{-(d+1)}
                ~d\tau
             ~dx\\
      & \leq C C_\Phi C_{\max}\cdot
             \int_{\RR^d}
               \int_{\RR^d}
                 (1+|\tau|)^{-(d+1)} (1+|x|)^{-(d+1)}
               ~d\tau
             ~dx\\
      & =: \widetilde{C} \cdot C_{\max}
        <  \infty.
    \end{split}
  \]
Here, the final constant $\widetilde{C} = \widetilde{C}(d, p, \Phi, v_0) > 0$ is finite,
simply because $(1+|\cdot|)^{-(d+1)}\in \bd L^1(\RR^d)$.
\nicki{Arguably, the dependence of $\widetilde{C}$ on $v_0$ could be expressed
as a consequence of the dependence on $\Phi$, but there may be cases where
different choices of $v_0$ could be of interest, such that we prefer to keep it explicit.}
This concludes the proof.
\hfill\qed


\section{The phase-space coverings induced by the warping function \texorpdfstring{$\Phi$}{Φ}}
\label{sec:coverings}

To prepare for the estimation of $\| \oscVGd \|_{\BBm}$ we construct families of coverings 
$\CalV_{\Phi}^{\delta} = (V_i^{\delta})_{i \in I}$ of the phase space $\Lambda$, 
induced by a given warping function $\Phi$ and study their properties. In the next section, 
we will show that $\| \oscVGd \|_{\BBm} \to 0$ as $\delta \to 0$,
with $\oscVGd$ as introduced in Definition~\ref{def:genosckern}.

\begin{definition}\label{def:inducedcover}
  Let $\Phi\colon D\rightarrow \RR^d$ be a warping function.
  Define
  \begin{equation}
    Q_{\Phi,\tau}^{(\delta,r)}
    := \Phi^{-1}(\delta \cdot B_{r}(\tau)),
    \quad \text{for all } r, \delta > 0 \text{ and } \tau \in \RR^d.
  \label{eq:freq_cover0}
  \end{equation}
  We call $\mathcal{V}^\delta_\Phi = (V^\delta_{\ell,k})_{\ell,k\in\ZZ^d}$,
  defined by
  \begin{equation}\label{eq:deltacover}
    V_{\ell,k}^{\delta}
    := A^{-T}( \delta k / \sqrt{d} )
         \left\langle \delta \cdot B_1 (\ell / \sqrt{d}) \right\rangle
       \times Q_{k}^{\delta},
    \quad \text{with} \quad
    Q_{k}^{\delta}
    := Q_{\Phi,k/\sqrt{d}}^{(\delta,1)}
     = \Phi^{-1}\bigl(\delta \cdot B_1(k/\sqrt{d})\bigr),
  \end{equation}
  the \emph{$\Phi$-induced $\delta$-fine (phase-space) covering}.
\end{definition}

By allowing $r\neq 1$ in \eqref{eq:freq_cover0}, it is possible to control the
overlap of the covering elements.
In particular, any radius strictly larger than $1/2$ provides a covering.
For proving the feasibility of discretization in coorbit spaces, however, the above
choice of $r = 1$ in \eqref{eq:deltacover} is completely sufficient.

\begin{proposition}\label{pro:inducedcoverProps}
  Let $\Phi$ be a $0$-admissible warping function with control weight $v_0$
  (see Definition~\ref{assume:DiffeomorphismAssumptions}).
  Then the $\Phi$-induced $\delta$-fine phase-space covering
  $\mathcal{V}^\delta_\Phi = (V^\delta_{\ell,k})_{\ell,k\in\ZZ^d}$
  is a topologically admissible cover of $\Lambda = \RR^d\times D$
  which is also product-admissible as per Definition~\ref{def:ProductAdmissibleCovering}.
  More precisely, we have the following properties:
  \begin{enumerate}[label=(\arabic*),leftmargin=0.7cm]
   \item \label{enu:admissibility}
         If $k,\ell, k_0, \ell_0 \in \ZZ^d$ satisfy
         $|k - k_0| > 2\sqrt{d}$, then
         $V^\delta_{\ell,k}\cap V^\delta_{\ell_0,k_0} = \emptyset$.
         Furthermore,
         \[
           \qquad
           \sup_{(\ell,k)\in \ZZ^{2d}}
              \# \big\{
                   (\ell_0,k_0)\in \ZZ^{d} \times \Z^d
                   ~:~
                   V^\delta_{\ell,k} \cap V^\delta_{\ell_0,k_0} \neq \emptyset
                 \big\}
           \leq (1 + 4\dimension)^{\dimension}
                \big(
                  1 + 2 \sqrt{\dimension} \cdot \left(1 + v_{0} (2\delta) \right)
                \big)^{\dimension}.
         \]

   \item \label{enu:measure_almost_constant}
         We have
         \(
           [v_0 (\delta)]^{-d}
           \leq \frac{\mu(V^\delta_{\ell,k})}{[\mu(B_1(0))]^2 \cdot \delta^{2d}}
           \leq [v_0 (\delta)]^{d}
         \)
         for all $k,\ell \in \Z^d$.

   \item \label{enu:moderateness}
         We have $\mu(V_{\ell, k}^\delta) / \mu(V_{\ell_0, k_0}^\delta) \leq [v_0 (\delta)]^{2d}$
         for arbitrary $\ell, k, \ell_0, k_0 \in \ZZ^d$.

    \item \label{enu:prodAdmissibility}
          For each fixed $\delta > 0$, the weight $w_{\CalV_\Phi^\delta}$
          as given in Equation~\eqref{eq:CoveringWeightDefinition} satisfies 
          \begin{equation}\label{eq:CoveringWeightEst}
            \begin{split}
              \quad
              \bigl(w_{\CalV_\Phi^\delta}\bigr)_{\ell,k}
              & \asymp \min
                       \big\{
                         w(\delta \cdot k/\sqrt{d}),
                         [w(\delta \cdot k/\sqrt{d})]^{-1}
                       \big\} \\
              & \asymp \min
                       \big\{
                         w(\Phi(\xi)),
                         [w(\Phi(\xi))]^{-1}
                       \big\}
                \gtrsim [v_0(\Phi(\xi))]^{-d}
              ,\ \text{for all }
                     \ell,k\in\ZZ^d,\ \xi \in Q^{\delta}_{k}.
            \end{split}
          \end{equation}  
          In particular, there exists a constant \nicki{$C = C(d,\delta,v_0) > 0$} such that
          \(
            (w_{\CalV_\Phi^\delta})_{\ell,k} \big/ (w_{\CalV_\Phi^\delta})_{\ell_0,k_0}
            \leq C
          \)
          for all $\ell, k, \ell_0, k_0 \in \ZZ^d$
          with $V^\delta_{\ell,k} \cap V^\delta_{\ell_0,k_0} \neq \emptyset$.
          Moreover, \eqref{eq:ContinuousCoveringWeightCondition} holds with
          \[
            w_{\CalV_\Phi^\delta}^c : \quad
            \Lambda \rightarrow \R^+, \quad
            (x,\xi) \mapsto \min
                            \big\{
                              w(\Phi(\xi)), \,\,
                              [w(\Phi(\xi))]^{-1}
                            \big\}
            .
          \]
  \end{enumerate}
\end{proposition}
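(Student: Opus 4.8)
The plan is to reduce every assertion to the geometry of the Euclidean balls $B_\delta(\tfrac{\delta}{\sqrt d}k)$ underlying the construction, together with the moderateness estimates for $w=\det A$ and for $\phi_\tau$ already collected in \Cref{lem:assume_conclude} and \Cref{lem:IntegralSegmentation}. First I would check that $\CalV^\delta_\Phi$ is a topologically admissible cover of $\Lambda$: the family $\bigl(\delta B_1(k/\sqrt d)\bigr)_{k\in\ZZ^d}=\bigl(B_\delta(\tfrac{\delta}{\sqrt d}k)\bigr)_{k\in\ZZ^d}$ covers $\RR^d$ for every $\delta>0$ by \Cref{lem:SmoothPartitionOfUnity} (with $\delta'=\delta$), hence $(Q^\delta_k)_k$ covers $D$ since $\Phi$ is a bijection, and applying the invertible linear map $A^{-T}(\delta k/\sqrt d)$ to a cover of $\RR^d$ again yields a cover; so $\CalV^\delta_\Phi$ covers $\Lambda$. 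Each $V^\delta_{\ell,k}$ is open (a linear-homeomorphic image of an open ball, times $\Phi^{-1}$ of an open ball), and it is relatively compact because $\Phi^{-1}\bigl(\overline{B_\delta(\delta k/\sqrt d)}\bigr)$ is a compact subset of $D$ by continuity of $\Phi^{-1}$, whence so is its closed subset $\overline{Q^\delta_k}$.

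For the intersection statements, note that $Q^\delta_k\cap Q^\delta_{k_0}\neq\emptyset$ forces $B_1(k/\sqrt d)\cap B_1(k_0/\sqrt d)\neq\emptyset$, i.e.\ $|k-k_0|<2\sqrt d$, which immediately gives the disjointness claim. For the uniform intersection count I would first bound the number of admissible $k_0$ by $(1+4d)^d$ using \eqref{eq:uniform_covering_admissible}, and then, for fixed $\ell,k,k_0$ with $|k-k_0|\le 2\sqrt d$, apply $A^T(\delta k_0/\sqrt d)$ to the intersection of the two position parts. Using the identity $A^T(\tau_0)A^{-T}(\tau)=\phi_\tau(\tau_0-\tau)$ from \eqref{eq:PhiDefinition} and the operator-norm bound $\|\phi_\tau(\tau_0-\tau)\|\le v_0(2\delta)$ supplied by \eqref{eq:PhiTauUpperLowerBounds} (since $|\tau-\tau_0|<2\delta$ and $v_0$ is radially increasing), the intersection condition reads that $B_\delta(\delta\ell_0/\sqrt d)$ meets a set contained in a ball of radius $\delta\, v_0(2\delta)$ about a fixed point; dividing by $\delta/\sqrt d$, this confines $\ell_0$ to a Euclidean ball of radius $\sqrt d\,(1+v_0(2\delta))$ around a fixed point, which contains at most $\bigl(1+2\sqrt d(1+v_0(2\delta))\bigr)^d$ lattice points. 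Multiplying the two counts gives the stated bound.

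For the measure and covering-weight properties I would exploit that $V^\delta_{\ell,k}$ is a product set: the position factor has measure $\delta^d\mu(B_1(0))\,[w(\delta k/\sqrt d)]^{-1}$ by the transformation rule for linear maps and $\det A=w$, while $\mu_2(Q^\delta_k)=\int_{B_\delta(\delta k/\sqrt d)}w(\tau)\,d\tau$ by \eqref{eq:StandardChangeOfVariables}, and the $v_0^d$-moderateness \eqref{eq:wModerateness} confines $w(\tau)$ between $[v_0(\delta)]^{\mp d}\,w(\delta k/\sqrt d)$ on that ball, so $\mu_2(Q^\delta_k)\asymp w(\delta k/\sqrt d)\,\delta^d\mu(B_1(0))$. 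Taking the product, the $w$-factors cancel and the almost-constancy of $\mu(V^\delta_{\ell,k})$ follows, with the moderateness ratio $[v_0(\delta)]^{2d}$ just the quotient of the resulting bounds. For the covering weight, $(w_{\CalV^\delta_\Phi})_{\ell,k}=\min\{1,\mu_1,\mu_2,\mu_1\mu_2\}$ with $\mu_1\asymp a/w$, $\mu_2\asymp a w$, $\mu_1\mu_2\asymp a^2$ where $a=\delta^d\mu(B_1(0))$ and $w=w(\delta k/\sqrt d)$; a short case distinction on whether $w\ge 1$ or $w\le 1$ identifies this minimum with $\min\{w,1/w\}$ up to constants depending only on $d,\delta,v_0$ (for $w$ large the binding term is $\mu_1\asymp1/w$, for $w$ small it is $\mu_2\asymp w$, and on the remaining bounded range everything is comparable to a constant). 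Since $\xi\in Q^\delta_k$ forces $|\Phi(\xi)-\delta k/\sqrt d|<\delta$, another application of \eqref{eq:wModerateness} gives $w(\Phi(\xi))\asymp w(\delta k/\sqrt d)$, and because $x\asymp y$ implies $\min\{x,1/x\}\asymp\min\{y,1/y\}$ the chain in \eqref{eq:CoveringWeightEst} follows; the lower bound $\min\{w(\Phi(\xi)),[w(\Phi(\xi))]^{-1}\}\gtrsim[v_0(\Phi(\xi))]^{-d}$ is \eqref{eq:wModerateness} applied to the pair $(0,\Phi(\xi))$. The moderateness of $w_{\CalV^\delta_\Phi}$ — hence product-admissibility in the sense of \Cref{def:ProductAdmissibleCovering}, the index set $\ZZ^d\times\ZZ^d$ being countable and the factors open — follows because an intersection forces $|\delta k/\sqrt d-\delta k_0/\sqrt d|\le 2\delta$, so the two minima differ by at most $[v_0(2\delta)]^d$; and $(x,\xi)\mapsto\min\{w(\Phi(\xi)),[w(\Phi(\xi))]^{-1}\}$ is continuous and comparable to $(w_{\CalV^\delta_\Phi})_{\ell,k}$ on each $V^\delta_{\ell,k}$, so it serves as $w^c_{\CalV^\delta_\Phi}$ in \eqref{eq:ContinuousCoveringWeightCondition}.

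I expect the main obstacle to be the intersection count: the change of variables linearizing the position parts must be arranged so that the $v_0$-factor enters only as $v_0(2\delta)$ — rather than as the larger $\|\phi\|\cdot\|\phi^{-1}\|$ produced by a naive estimate — in order to match the stated constant, and the resulting lattice-point count over a ball must be kept in the sharp form $(1+2\rho)^d$. The four-way minimum appearing in the covering-weight computation is also somewhat delicate, though ultimately elementary.
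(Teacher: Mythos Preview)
Your proposal is correct and follows essentially the same approach as the paper: the covering, relative compactness, the disjointness via $|k-k_0|\le 2\sqrt d$, the intersection count via the linearizing map $A^T(\delta k_0/\sqrt d)A^{-T}(\delta k/\sqrt d)=\phi_{\delta k/\sqrt d}(\delta(k_0-k)/\sqrt d)$ bounded by $v_0(2\delta)$, the measure computation via \eqref{eq:StandardChangeOfVariables} and $v_0^d$-moderateness of $w$, and the identification of the covering weight all match the paper's argument. The only cosmetic difference is that the paper confines $\ell_0$ to a cube $A_{k,k_0}\ell+[-C_1,C_1]^d$ rather than a ball before counting lattice points, but your ball is contained in that cube so the bound $(1+2C_1)^d$ with $C_1=\sqrt d(1+v_0(2\delta))$ is the same.
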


\begin{proof}
  Note that the family $\delta \cdot B_1 (\ell/\sqrt{d})$, $\ell \in\ZZ^d$ forms a covering
  of $\RR^\dimension$, since $\frac{1}{\sqrt{d}} \big(\ell + [0,1)^d\big) \!\subset\! B_1 (\ell/\sqrt{d})$.
  Considering that $\Phi: D \to \RR^\dimension$ is a diffeomorphism and
  $A^{-T}(\delta k/\sqrt{d})$, for any $k\in\ZZ^d$, is an invertible matrix,
  it follows that $\mathcal{V}^\delta_\Phi$ indeed covers all of $\Lambda$.

  We first prove part \ref{enu:admissibility}.
  For $k,\ell \in \Z^d$, let
  \[
    J_{\ell,k}
    := \left\{
         \left(\ell_{0},k_{0}\right) \in \ZZ^d \times \Z^d
         \,:\,
         V_{\ell,k}^{\delta} \cap V_{\ell_{0},k_{0}}^{\delta} \neq \emptyset
       \right\}.
  \]
  If $V_{\ell,k}^{\delta} \cap V_{\ell_{0},k_{0}}^{\delta} \neq \emptyset$,
  then in particular $Q_{k}^{\delta} \cap Q_{k_{0}}^{\delta} \neq \emptyset$.
  Straightforward calculations show that the latter implies
  $|k_0-k| \leq 2 \sqrt{d}$, and then
  $k_{0}\in k+\left\{ -2\dimension,\dots,2\dimension\right\} ^{\dimension}$.
  Moreover, if $(\ell_0,k_0)\in J_{\ell,k}$, then an easy calculation shows that
  there exist $x_{1},x_{2}\in B_{1}(0)$ such that
  \[
    \ell_{0}
    = A^{T}\left(\delta k_{0}/\smash{\sqrt{\dimension}}\right)
      \cdot A^{-T}\left(\delta k/\smash{\sqrt{\dimension}}\right)
      \left\langle
          \ell+\sqrt{\dimension}\cdot x_{1}
      \right\rangle
      -\sqrt{\dimension}\cdot x_{2}.
  \]
  Property \eqref{eq:PhiHigherDerivativeEstimate} shows that
  \(
    A_{k,k_{0}}
    := A^{T}\left(\delta k_{0}/\smash{\sqrt{\dimension}}\right)
       \cdot A^{-T}\left(\delta k/\smash{\sqrt{\dimension}}\right)
     = \phi_{\delta k / \sqrt{d}} (\delta \cdot (k_0 - k) / \sqrt{d})
  \)
  satisfies
  \[
    \left\Vert A_{k,k_{0}} \right\Vert
    \leq v_{0} \left( \tfrac{\delta}{\sqrt{\dimension}} (k_{0} - k) \right)
    \leq v_{0} (2\delta).
  \]
  Here, we used $|k_{0} - k| \leq 2 \sqrt{\dimension}$ and that $v_0$ is radially increasing.
  Since $x_{1},x_{2}\in B_{1}(0)$, we thus have $\ell_{0} \in A_{k,k_{0}} \ell + [-C_1,C_1]^d$,
  where
  \[
    C_{1}
    := \sqrt{\dimension}
       \cdot \bigl(1 + v_{0} (2\delta)\bigr)
    \geq \left|
           \sqrt{\dimension} \cdot A_{k,k_{0}} x_{1}
           - \sqrt{\dimension} \cdot x_{2}
         \right|.    
  \]
  Altogether, we have shown
  \[
    J_{\ell,k}
    \subset \bigcup_{k_{0} \in k + \left\{ -2\dimension, \dots ,2\dimension \right\}^{\dimension}}
              \left(
                \left[
                  \ZZ^{\dimension}
                  \cap \left(
                          A_{k,k_{0}} \ell
                         + \left[-C_{1},C_{1}\right]^{\dimension}
                       \right)
                \right]
                \times \left\{ k_{0}\right\}
              \right).
  \]
  But we have
  \(
    \#\big[
      \ZZ^{\dimension}
      \cap \big(
              A_{k,k_{0}} \ell + \left[-C_{1}, C_{1}\right]^{\dimension}
           \big)
     \big]
   \leq \left(1 + 2 C_{1} \right)^{\dimension}
  \)
  and hence
  \[
    |J_{\ell,k}|
    \leq \sum_{k_{0} \in k + \{-2\dimension, \dots, 2\dimension\}^{\dimension}}
           (1 + 2 C_{1})^{d}
    =    (1+4\dimension)^{\dimension} (1 + 2 C_{1})^{\dimension},
  \]
  completing the proof of part (1).
  This also shows that $\CalV_{\Phi}^\delta$ is an admissible covering.
  Since each $V_{\ell,k}^\delta$ is open and relatively compact in $\PhSpace$,
  we see that $\CalV_{\Phi}^\delta$ is topologically admissible.

  \medskip

  We proceed to prove Item \ref{enu:measure_almost_constant}.
  By the change of variables formula, cf.~Equation~\eqref{eq:StandardChangeOfVariables}, we get
  \begin{equation*}
    \mu (Q_{ k}^\delta)
    = \int_{D}
         \Indicator_{\delta \cdot B_1(k/\sqrt{d})} (\Phi(\xi))
      ~ d \xi
    = \int_{\delta \cdot B_1(k/\sqrt{d})}
          w(\tau)
      ~ d \tau.
  \end{equation*}
  Recall that $w$ is $v_0^d$-moderate by Lemma~\ref{lem:assume_conclude},
  where $v_0$ is submultiplicative and radially increasing.
  Therefore,
  \[
    [v_0 (\delta)]^{-d}
    \leq \frac{w(\tau)}{w(\delta \cdot k / \sqrt{d})}
    \leq [v_0 (\delta)]^d,
    \text{ for all } \tau\in \delta \cdot \overline{B_1}(k/\sqrt{d})
  \]
  In combination, the two preceding displayed equations show that
  \begin{equation}\label{eq:U2est}
    \mu (Q_{ k}^\delta)
    \in \mu(B_1(0))
        \cdot \delta^d
        \cdot w(\delta \cdot k/\sqrt{d})
        \cdot \left[[v_0 (\delta)]^{-d}, [v_0 (\delta)]^{d}\right].
  \end{equation}
  Moreover,
  \begin{equation}\label{eq:U1est}
     \mu
     \left(
       A^{-T}(\delta k/\sqrt{d})\langle\delta\cdot B_1(\ell / \sqrt{d})\rangle
     \right)
   = \left|\det \left( A^{-T} ( \delta \cdot k / \sqrt{d} ) \right) \right|
     \cdot \mu\left(\delta \cdot B_1(\ell / \sqrt{d})\right)
   = \frac{\mu(B_1(0)) \cdot \delta^{d}}{w(\delta \cdot k / \sqrt{d})}.
  \end{equation}
  Since
  \(
    \mu(V_{\ell,k}^\delta)
    = \mu
      \big(
        A^{-T}(\delta k/\sqrt{d})
        \langle \delta \cdot B_1(\ell / \sqrt{d}) \rangle
      \big)
      \cdot \mu (Q_{ k}^\delta)
  \),
  this proves part~\ref{enu:measure_almost_constant}.
  Finally, part \ref{enu:moderateness} is a direct consequence of part \ref{enu:measure_almost_constant}.

  \medskip{}

  It remains to prove part~\ref{enu:prodAdmissibility}.
  Since $\CalV^\delta_\Phi$ is a covering of $\PhSpace = \R^d \times D$ with countable index set
  and with each set $V_{\ell,k}^\delta$
  being a Cartesian product of open sets, this will then imply
  that $\CalV^\delta_\Phi$ is product-admissible.
  First note that $\min\bigl\{1, \mu(V^\delta_{\ell,k})\bigr\} \asymp 1$
  as a function in $\ell,k \in \ZZ^d$ and that
  $V_{\ell,k}^\delta = V_{1,(\ell,k)}^\delta \times V_{2,(\ell,k)}^\delta$ with
  $V^\delta_{1,(\ell,k)} = A^{-T}(\delta k/\sqrt{d})\langle\delta\cdot B_1(\ell / \sqrt{d})\rangle$
  and $V^\delta_{2,(\ell,k)} = Q^{\delta}_{k}$.
  Hence, by \eqref{eq:U1est} and \eqref{eq:U2est}, we have 
  \[
    \min
    \big\{
      \mu(V^\delta_{1,(\ell,k)}), \,\,
      \mu(V^\delta_{2,(\ell,k)})
    \big\}
    \asymp \min
           \big\{
             w(\delta k/\sqrt{d}),
             [w(\delta k/\sqrt{d})]^{-1}
           \big\},
    \,\text{as a function in } \ell,k \in \ZZ^d.
  \]
  Together, this yields the first estimate in \eqref{eq:CoveringWeightEst}.
  The other two estimates in \eqref{eq:CoveringWeightEst} are simple consequences
  of $w$ being $v_0^d$-moderate (and thus $w^{-1}$ is as well)
  and of the identity $\Phi(Q^{\delta}_k) = \delta \cdot B_1(k/\sqrt{d})$.
  Note that \eqref{eq:CoveringWeightEst} implies \eqref{eq:ContinuousCoveringWeightCondition}
  with the stated choice of $w^c_{\CalV^\delta_{\Phi}}$. 

  To prove that $(w_{\CalV_\Phi^\delta})_{\ell,k}/(w_{\CalV_\Phi^\delta})_{\ell_0,k_0} \lesssim 1$
  if $V_{\ell,k}^\delta \cap V_{\ell_0,k_0}^\delta \neq \emptyset$, first note that
  since $w$ is $v_0^d$-moderate and $v_0$ is radially increasing (and hence radial).
  Note that taking reciprocal values, as well as pointwise minima/maxima
  preserve moderateness relations, see Remark~\ref{rem:InverseMaxMinOfNoderateWeights}, such that 
  \[
    \frac{\min \bigl\{ w(\delta k/\sqrt{d}), [w(\delta k/\sqrt{d})]^{-1} \bigr\}}
         {\min \bigl\{ w(\delta k_0/\sqrt{d}), [w(\delta k_0/\sqrt{d})]^{-1} \bigr\}}
    \leq \bigl[v_0(\delta (k - k_0) / \sqrt{d})\bigr]^d
    \qquad \forall \, k, k_0 \in \Z^d .
  \]
  Furthermore, part~\ref{enu:admissibility} of the proposition shows that if 
  $V_{\ell,k}^\delta \cap V_{\ell_0,k_0}^\delta \neq \emptyset$, then $|k - k_0| \leq 2 \sqrt{d}$.
  Combining these observations with Equation~\eqref{eq:CoveringWeightEst}
  and with the fact that $v_0$ is radially increasing, we see \nicki{
  \(
    (w_{\CalV_\Phi^\delta})_{\ell,k} / (w_{\CalV_\Phi^\delta})_{\ell_0,k_0}
    \lesssim v_0(2\delta)^d
    \lesssim 1,
  \)
  where the implied constant depends (only) on $d$, $\delta$, and $v_0$.}
\end{proof}

The next lemma is concerned with the sets
$\bd{V}_\lambda = \bigcup_{i \in I \text{ s.t. } \lambda \in V_i} V_i$
defining the oscillation $\oscVG$, see Definition~\ref{def:genosckern}.
For the induced coverings $\CalV^\delta_\Phi$, the set $\bd{V}_\lambda^\delta$ can once more
be estimated by a convenient product set.
Moreover, the lemma implies that if $\lambda = (z,\eta) \in \bd{V}_{\lambda_0}^\delta$,
with $\lambda_0 = (y,\omega)$, then
\[
  |A^{T}(\Phi(\omega))\langle z - y \rangle| \leq C_\delta
  \qquad \text{and} \qquad
  |\Phi(\eta) - \Phi(\omega)| \leq C_\delta,
  \quad \text{with} \quad
  C_\delta > 0 \text{ independent of } \lambda,\lambda_0.
\]
In particular, this holds if there exists $(\ell,k) \in \ZZ^{2d}$
such that $\lambda,\lambda_0 \in V_{\ell,k}^\delta$.
These estimates will be crucial for estimating $\|\oscVGd\|_{\BBm}$.

\begin{lemma}\label{lem:coverings}
  Let $\Phi$ be a warping function,
  and let $\CalV^\delta_\Phi$ be the $\Phi$-induced $\delta$-fine covering.
  For all $(y,\omega)\in \Lambda$ and all $\delta > 0$, we have
  \begin{equation}
              \bd{V}_{(y,\omega)}^\delta
    =         \bigcup_{\substack{(\ell,k) \text{ s.t.}\\
                                 (y,\omega) \in V_{\ell,k}^{\delta}}}
                  V_{\ell,k}^{\delta}
    \subset (y+\bd P_\omega^\delta) \times \bd Q_\omega^\delta,
    \vspace{-0.3cm}
  \end{equation}
  where
  \begin{equation}
    \bd{Q}_\omega^\delta := \Phi^{-1}\bigl(\Phi(\omega)+B_{2\delta}(0)\bigr)
    \qquad \text{ and } \qquad
    \bd{P}_\omega^\delta
    := v_0 (\delta) \cdot A^{-T} (\Phi(\omega))
                            \left\langle B_{2\delta}(0)\right\rangle.
  \end{equation}
\end{lemma}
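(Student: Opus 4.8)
The plan is to unravel the definition of the covering element $V^\delta_{\ell,k}$ in \eqref{eq:deltacover} and that of $\bd{V}^\delta_{(y,\omega)}=\bigcup_{(\ell,k)\,:\,(y,\omega)\in V^\delta_{\ell,k}} V^\delta_{\ell,k}$, and then handle the two coordinates of $\Lambda=\RR^d\times D$ separately. Fix $(y,\omega)\in\Lambda$ and an arbitrary $(z,\eta)\in\bd{V}^\delta_{(y,\omega)}$; by definition there is a pair $(\ell,k)\in\ZZ^d\times\ZZ^d$ with both $(y,\omega)\in V^\delta_{\ell,k}$ and $(z,\eta)\in V^\delta_{\ell,k}$. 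Since $V^\delta_{\ell,k}=A^{-T}(\delta k/\sqrt{d})\langle\delta\cdot B_1(\ell/\sqrt{d})\rangle\times Q^\delta_k$ with $Q^\delta_k=\Phi^{-1}(\delta\cdot B_1(k/\sqrt{d}))$, this yields $\omega,\eta\in Q^\delta_k$ and $y,z\in A^{-T}(\delta k/\sqrt{d})\langle\delta\cdot B_1(\ell/\sqrt{d})\rangle$, and it remains to show $\eta\in\bd{Q}^\delta_\omega$ and $z-y\in\bd{P}^\delta_\omega$.

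The frequency coordinate is immediate: $\omega,\eta\in Q^\delta_k$ means $|\Phi(\omega)-\delta k/\sqrt{d}|<\delta$ and $|\Phi(\eta)-\delta k/\sqrt{d}|<\delta$, so the triangle inequality gives $|\Phi(\eta)-\Phi(\omega)|<2\delta$, i.e.\ $\Phi(\eta)\in\Phi(\omega)+B_{2\delta}(0)$, which is exactly $\eta\in\bd{Q}^\delta_\omega$.

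For the time coordinate, write $y=A^{-T}(\delta k/\sqrt{d})\langle p\rangle$ and $z=A^{-T}(\delta k/\sqrt{d})\langle q\rangle$ with $p,q\in\delta\cdot B_1(\ell/\sqrt{d})$; then $z-y=A^{-T}(\delta k/\sqrt{d})\langle q-p\rangle$ with $q-p\in\delta\cdot B_1(\ell/\sqrt{d})-\delta\cdot B_1(\ell/\sqrt{d})=B_{2\delta}(0)$, so $z-y\in A^{-T}(\delta k/\sqrt{d})\langle B_{2\delta}(0)\rangle$. The one nontrivial point is the inclusion $A^{-T}(\delta k/\sqrt{d})\langle B_{2\delta}(0)\rangle\subseteq v_0(\delta)\cdot A^{-T}(\Phi(\omega))\langle B_{2\delta}(0)\rangle=\bd{P}^\delta_\omega$. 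To prove it, given $v\in B_{2\delta}(0)$ set $w:=v_0(\delta)^{-1}\,A^{T}(\Phi(\omega))\,A^{-T}(\delta k/\sqrt{d})\langle v\rangle$, so that $v_0(\delta)\cdot A^{-T}(\Phi(\omega))\langle w\rangle=A^{-T}(\delta k/\sqrt{d})\langle v\rangle$ by construction. By the definition of $\phi_\tau$ in \eqref{eq:PhiDefinition} one has $A^{T}(\Phi(\omega))\,A^{-T}(\delta k/\sqrt{d})=\phi_{\delta k/\sqrt{d}}\bigl(\Phi(\omega)-\delta k/\sqrt{d}\bigr)$, and since $|\Phi(\omega)-\delta k/\sqrt{d}|<\delta$, the estimate \eqref{eq:PhiHigherDerivativeEstimate} (with $\alpha=0$) together with the control weight $v_0$ being radially increasing gives $\|\phi_{\delta k/\sqrt{d}}(\Phi(\omega)-\delta k/\sqrt{d})\|\leq v_0(\Phi(\omega)-\delta k/\sqrt{d})\leq v_0(\delta)$. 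Hence $|w|\leq v_0(\delta)^{-1}\cdot v_0(\delta)\cdot|v|=|v|<2\delta$, i.e.\ $w\in B_{2\delta}(0)$, and the inclusion follows. Combining the two coordinates yields $(z,\eta)\in(y+\bd{P}^\delta_\omega)\times\bd{Q}^\delta_\omega$, as claimed.

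I do not expect a genuine obstacle here; the whole argument is elementary once one makes the bookkeeping observation that the matrix product $A^{T}(\Phi(\omega))\,A^{-T}(\delta k/\sqrt{d})$ is precisely $\phi_{\delta k/\sqrt{d}}(\upsilon)$ with $\upsilon=\Phi(\omega)-\delta k/\sqrt{d}$ satisfying $|\upsilon|<\delta$ — which is exactly why $\phi_\tau$ is built into Definition~\ref{assume:DiffeomorphismAssumptions}. The only property of $\Phi$ beyond being a warping function that enters is \eqref{eq:PhiHigherDerivativeEstimate} for $\alpha=0$ (i.e.\ $0$-admissibility with control weight $v_0$); no smoothness of the prototype or higher-order derivative bounds are needed.
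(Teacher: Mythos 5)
Your proof is correct and follows essentially the same route as the paper's: the frequency component via the triangle inequality on $\Phi(\omega),\Phi(\eta)$ relative to $\delta k/\sqrt{d}$, and the time component via the identification $A^{T}(\Phi(\omega))\,A^{-T}(\delta k/\sqrt{d})=\phi_{\delta k/\sqrt{d}}(\Phi(\omega)-\delta k/\sqrt{d})$ bounded by $v_0(\delta)$ through \eqref{eq:PhiHigherDerivativeEstimate} with $\alpha=0$. Your closing remark is also on point: although the lemma is stated for a mere ``warping function,'' both your argument and the paper's implicitly use $0$-admissibility with control weight $v_0$, which is what gives meaning to $\bd{P}_\omega^\delta$ in the first place.
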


\begin{proof}
  Let $(\ell,k) \in \ZZ^{d} \times \Z^d$ be such that $(y,\omega) \in V_{\ell,k}^{\delta}$.
  Then $\delta k/\sqrt{d} \in \Phi(\omega)+\delta \cdot B_{1}(0)$
  and by extension of that argument,
  $Q_{k}^{\delta} \subset \Phi^{-1}(\Phi(\omega)+2\delta B_{1}(0)) = \bd{Q}_{\omega}^\delta$,
  which proves the first part of the claim.

  Next, for $(x, \xi) \in V_{\ell, k}^{\delta}$, we have
  $|A^{T}(\delta k /\sqrt{d}) \left\langle x - y\right\rangle| < 2\delta$, since
  $x, y \in A^{-T}(\delta k / \sqrt{d}) \left\langle \delta B_1(\ell/\sqrt{d})\right\rangle$.
  Hence,
  \begin{align*}
          \left| A^{T}(\Phi(\omega))\left\langle x-y\right\rangle \right|
    &=    \left|
            A^{T}(\Phi(\omega))
            A^{-T}(\delta k /\sqrt{d})
            A^{T}(\delta k / \sqrt{d})
              \left\langle x - y\right\rangle
          \right| \\
    &<    2\delta \cdot \big\| A^{T}(\Phi(\omega)) A^{-T}(\delta k / \sqrt{d}) \big\|
     =    2 \delta
          \cdot \big\|
                  \phi_{\delta k / \sqrt{d}} \bigl(\Phi(\omega) - \delta k / \sqrt{d}\bigr)
                \big\| \\
    ({\scriptstyle \text{cf.~Eq.~\eqref{eq:PhiHigherDerivativeEstimate}}})
    & \leq 2 \delta \cdot v_0 (\Phi(\omega) - \delta k /\sqrt{d})
      \leq 2 \delta \cdot v_0 (\delta),
  \end{align*}
  which shows
  $x-y \in A^{-T}(\Phi(\omega))
           \left\langle 2\delta v_0(\delta) B_1(0) \right\rangle$,
  and thus $x \in y + \bd P_\omega^{\delta}$, as desired.
\end{proof}

\begin{proposition}\label{prop:GreatSimplification}
  Let $\Phi$ be a $0$-admissible warping function with control weight $v_0$.
  Let further  $m_0 : \PhSpace \times \PhSpace \to \R^+$ be continuous and symmetric,
  with $1\leq m_0(\PhVar,\PhVarA) \leq C^{(0)} \cdot m_0(\PhVar,\PhVarC) \cdot m_0(\PhVarC,\PhVarA)$, 
  for all $\PhVar,\PhVarA,\PhVarC\in \PhSpace$ and some $C^{(0)}\geq 1$, satisfy
   \[
     m_0((y,\xi),(z,\eta)) \leq (1 + |y - z|)^p \cdot \zeta_1\bigl(\Phi(\xi) - \Phi(\eta)\bigr)
     \quad \forall \, (y,\xi),(z,\eta) \in \PhSpace. 
   \]
  Here, $p=0$ if $R_\Phi = \sup_{\xi \in D} \| \mathrm{D}\Phi (\xi) \| = \infty$
  and $p\in\NN_0$ otherwise, and $\zeta_1 : \R^d \to \R^+$ is a continuous function
  with $ \zeta_1(-\tau) = \zeta_1(\tau)$ for all $\tau\in\RR^d$.
  Define, for some arbitrary, fixed $\PhVarC\in\PhSpace$, 
   \[
    \begin{split}
    u : \quad
    \PhSpace \to \R^+,  & \quad
    \PhVar \mapsto m_0(\PhVar,\PhVarC)\qquad \text{and}\\
    v : \quad
    \PhSpace \to \R^+,  & \quad
    (y,\xi) \mapsto u(y,\xi) \cdot \max \big\{ w(\Phi(\xi)), [w(\Phi(\xi))]^{-1} \big\} 
    \end{split}
   \]
   and let $m_v$ be as in Equation~\eqref{eq:vDerivedWeight}.
   Then $u$ is $\CalV_\Phi^\delta$-moderate, for any $\delta>0$, and 
   $m_0$ and $m_v$ are $\Phi$-convolution-dominated by $(1 + |\bullet|)^p \cdot \zeta_1(\bullet)$
   and $m_v^\Phi:=(1 + |\bullet|)^p \cdot \zeta_2(\bullet)$, where $\zeta_2 = v_0^d\cdot \zeta_1$.
   In particular, items (1)-(3) of Assumption \ref{assu:CoorbitAssumptions1} are satisfied.
\end{proposition}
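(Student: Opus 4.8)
The plan is to read off the four assertions more or less in the stated order, using only: substitution $\xi = \Phi^{-1}(\sigma)$, $\eta = \Phi^{-1}(\tau)$ in the hypothesis on $m_0$; the $C^{(0)}$-triangle inequality for $m_0$; the fact that $w$ is $v_0^d$-moderate (\Cref{lem:assume_conclude}) together with \Cref{rem:InverseMaxMinOfNoderateWeights}; the cell estimates of \Cref{lem:coverings}; and the product-admissibility of $\CalV_\Phi^\delta$ from \Cref{pro:inducedcoverProps}. Throughout, $\CalU$ will be the $\Phi$-induced covering $\CalV_\Phi^\delta$ for an arbitrary fixed $\delta > 0$, and the only place needing genuine care is the moderateness estimate.

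\emph{Convolution domination.} For $m_0$ this is immediate: writing $\xi = \Phi^{-1}(\sigma)$, $\eta = \Phi^{-1}(\tau)$, the hypothesis $m_0((y,\xi),(z,\eta)) \le (1+|y-z|)^p \zeta_1(\Phi(\xi)-\Phi(\eta))$ becomes exactly \eqref{eq:M0Domination} with $m^\Phi(x,\tau) = (1+|x|)^p\zeta_1(\tau)$. For $m_v$, set $W(\xi) := \max\{w(\Phi(\xi)), [w(\Phi(\xi))]^{-1}\}$, so that $v = u\cdot W$ and hence $m_v \le m_u\cdot m_W$ pointwise, where $m_u, m_W$ are the associated weights of $u$ and $W$. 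Using the symmetry of $m_0$ and $1 \le m_0(\PhVar,\PhVarA) \le C^{(0)} m_0(\PhVar,\PhVarC) m_0(\PhVarC,\PhVarA)$ one gets $m_u(\PhVar,\PhVarA) \le C^{(0)} m_0(\PhVar,\PhVarA) \le C^{(0)}(1+|y-z|)^p\zeta_1(\Phi(\xi)-\Phi(\eta))$. Since $w$ is $v_0^d$-moderate with $v_0^d$ submultiplicative and even (\Cref{lem:assume_conclude}), and since reciprocals and pointwise maxima preserve moderateness (\Cref{rem:InverseMaxMinOfNoderateWeights}), the weight $\sigma \mapsto W(\Phi^{-1}(\sigma)) = \max\{w(\sigma),[w(\sigma)]^{-1}\}$ is $v_0^d$-moderate, whence $m_W\big((x,\Phi^{-1}(\sigma)),(y,\Phi^{-1}(\tau))\big) \le [v_0(\sigma-\tau)]^d$. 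Multiplying the two bounds shows that $m_v$ is $\Phi$-convolution-dominated by $(1+|\bullet|)^p\cdot\zeta_2(\bullet)$ with $\zeta_2 := C^{(0)}\cdot v_0^d\cdot\zeta_1$ (the constant $C^{(0)}$ being harmless and absorbed into $\zeta_2$); note that $\zeta_2$ is again continuous and even.

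\emph{$\CalV_\Phi^\delta$-moderateness of $u$.} Fix $\delta > 0$ and let $\PhVar = (x,\xi)$ and $\PhVarA = (y,\eta)$ lie in a common cell $V_{\ell,k}^\delta$. As above, $u(\PhVar)/u(\PhVarA) \le C^{(0)} m_0(\PhVar,\PhVarA) \le C^{(0)}(1+|x-y|)^p\zeta_1(\Phi(\xi)-\Phi(\eta))$. Since $V_{\ell,k}^\delta \subset \bd{V}_\PhVarA^\delta$, \Cref{lem:coverings} (with base point $\PhVarA$) gives $\xi \in \bd{Q}_\eta^\delta = \Phi^{-1}(\Phi(\eta)+B_{2\delta}(0))$ and $x-y \in \bd{P}_\eta^\delta = v_0(\delta)\,A^{-T}(\Phi(\eta))\langle B_{2\delta}(0)\rangle$; hence $|\Phi(\xi)-\Phi(\eta)| < 2\delta$, and $|x-y| \le 2\delta\,v_0(\delta)\,\|\mathrm{D}\Phi(\eta)\|$ because $A^{-T}(\Phi(\eta)) = [\mathrm{D}\Phi(\eta)]^T$ by \eqref{eq:DerivativeOfInverse}. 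If $p = 0$ the factor $(1+|x-y|)^p$ equals $1$; if $p > 0$ then $R_\Phi < \infty$ by assumption, so $|x-y| \le 2\delta\,v_0(\delta)\,R_\Phi$, a constant depending only on $d,\delta,\Phi,v_0$. Finally, continuity of $\zeta_1$ makes it bounded on $\overline{B_{2\delta}(0)}$, so all three factors are bounded uniformly in $\ell,k,\PhVar,\PhVarA$, i.e., $u(\PhVar) \le C' u(\PhVarA)$ with uniform $C'$. As $u = m_0(\bullet,\PhVarC)$ is continuous (because $m_0$ is), $u$ is continuous and $\CalV_\Phi^\delta$-moderate.

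\emph{Items (1)--(3) of \Cref{assu:CoorbitAssumptions1}.} Take $\CalU := \CalV_\Phi^\delta$. Since $\Phi$ is $0$-admissible, \Cref{pro:inducedcoverProps} shows that $\CalV_\Phi^\delta$ is topologically admissible and product-admissible, which is item (1). Item (2) is the continuity and $\CalV_\Phi^\delta$-moderateness of $u$ established above. Item (3) is immediate: $m_0$ is continuous and symmetric by hypothesis, and $m_0(\PhVar,\PhVarA) \le C^{(0)} m_0(\PhVar,\PhVarC)m_0(\PhVarC,\PhVarA) = C^{(0)} u(\PhVar) u(\PhVarA)$. I expect the moderateness step to be the only place requiring real care: one must split into the cases $p = 0$ and $R_\Phi < \infty$ to control $|x-y|$ on a covering cell despite $\|\mathrm{D}\Phi\|$ possibly being unbounded — after which localizing $\zeta_1$ to a ball of radius $2\delta$ and invoking its continuity closes the argument.
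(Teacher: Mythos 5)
Your proposal is correct and follows essentially the same route as the paper: the $C^{(0)}$-quasi-triangle inequality gives $u(\PhVar)/u(\PhVarA)\leq C^{(0)}m_0(\PhVar,\PhVarA)$, the cell geometry of $\CalV_\Phi^\delta$ (with the case split $p=0$ versus $R_\Phi<\infty$) yields moderateness, and $v_0^d$-moderateness of $\max\{w,w^{-1}\}$ supplies the extra factor $\zeta_2=v_0^d\zeta_1$ for $m_v$. The only cosmetic differences are that you route the cell estimates through Lemma~\ref{lem:coverings} rather than reading them off Definition~\ref{def:inducedcover} directly, and that you are slightly more careful than the paper in bounding $\zeta_1$ on $\overline{B_{2\delta}(0)}$ by continuity.
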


\begin{proof}
  \Cref{pro:inducedcoverProps} provides product-admissibility of $\CalV^\delta_\Phi$,
  such that item (1)  of Assumption~\ref{assu:CoorbitAssumptions1} is satisfied.
  Item (3) is a direct consequence of the symmetry of $m_0$:
  \[
    m_0(\PhVar,\PhVarA)
    \leq C^{(0)} \cdot m_0(\PhVar,\PhVarC) \cdot m_0(\PhVarC,\PhVarA)
    = C^{(0)} \cdot u(\PhVar) \cdot u(\PhVarA)
    .
  \]
  To show $\CalV^\delta_\Phi$-moderateness of $u$
  (which coincides with item (2) of Assumption~\ref{assu:CoorbitAssumptions1}), observe that 
  \begin{equation}\label{eq:EstimateUbyM0}
    \frac{u(\PhVar)}{u(\PhVarA)}
    \leq C^{(0)}\frac{m_0(\PhVar,\PhVarA)m_0(\PhVarA,\PhVarC)}{m_0(\PhVarA,\PhVarC)}
    = C^{(0)} m_0(\PhVar,\PhVarA).
  \end{equation}
  If $\PhVar = (y,\xi)$ and $\PhVarA = (z,\eta)$ are both contained in $V^\delta_{\ell,k}$,
  for some $\ell,k\in\ZZ^d$, then $|\Phi(\xi)-\Phi(\eta)| < \delta$, and
  \[
    |y-z|
    \leq \delta\cdot \|A^{-T}(\delta k/\sqrt{d})\|
    \leq \delta \cdot R_\Phi,
    \quad \text{if }  R_\Phi <\infty. 
  \]
  Hence, and
  \(
    \frac{u(y,\xi)}{u(z,\eta)}
    \leq C^{(0)} m_0((y,\xi),(z,\eta))
    \leq C^{(0)}(1 + \delta R_\Phi)^p \cdot \zeta_1(\delta),
  \)
  independent of $\ell,k\in\ZZ^d$. 
  If $R_\Phi = \infty$, then
  \(
    \frac{u(y,\xi)}{u(z,\eta)}
    \leq C^{(0)}m_0((y,\xi),(z,\eta))
    \leq C^{(0)}\zeta_1(\delta)
  \)
  instead.

  That $m_0$ is $\Phi$-convolution-dominated by $(1 + |\bullet|)^p \cdot \zeta_1(\bullet)$ is immediate.
  To prove that $m_v$ is $\Phi$-convolution-dominated by $m_v^\Phi$, observe 
  \[
    \frac{\max \big\{ w(\tau), [w(\tau)]^{-1} \big\}}{\max \big\{ w(\upsilon), [w(\upsilon)]^{-1} \big\}}
    \leq \max\left\{\frac{w(\tau)}{w(\upsilon)},\frac{w(\upsilon)}{w(\tau)}\right\}
    \leq v_0^{d}(\tau-\upsilon),  \text{ for all } \tau,\upsilon\in\RR^d. 
  \]
  Combine the above with \eqref{eq:EstimateUbyM0}, such that 
  \[
    \frac{v(y,\Phi(\xi))}{v(z,\Phi(\eta))}
    \leq C^{(0)}\cdot m_0((y,\Phi(\xi)),(z,\Phi(\eta)))\cdot v_0^{d}(\Phi(\xi)-\Phi(\eta)).
    \qedhere
  \]
\end{proof}


\section{Controlling the \texorpdfstring{$\BBm$}{𝓑ₘ}-norm of the oscillation}
\label{sec:discretewarped}

In this section, we employ the $\Phi$-induced $\delta$-fine phase-space coverings $\CalV^{\delta}_\Phi$,
constructed in the previous section, to derive conditions concerning the prototype function
$\theta$ which ensure that $\|\oscVFGd\|_{\BBm} < \infty$
with $\|\oscVFGd\|_{\BBm} \rightarrow 0$ as $\delta \rightarrow 0$. We will obtain the following result.

\begin{theorem}\label{thm:main2_discreteframes}
 Let $\Phi$ be a $(d+p+1)$-admissible warping function with control weight $v_0$,
 where $p=0$ if $R_\Phi = \sup_{\xi\in D} \|\mathrm{D}\Phi(\xi)\|=\infty$ and $p \in \NN_0$ otherwise.
 Let furthermore $m : \PhSpace \times \PhSpace \to \R^+$ be a symmetric weight that satisfies
 \begin{equation}\label{eq:m_weight_estimate3}
       m((y, \xi), (z, \eta))
       \leq 
       (1 + |y-z|)^p \cdot v_1 (\Phi(\xi) - \Phi(\eta)),
       \forall \, y,z\in\RR^d \text{ and } \xi,\eta\in D,
 \end{equation}
 for some continuous and submultiplicative weight $v_1 : \RR^\dimension \to \R^+$
 satisfying $v_1(\upsilon)=v_1(-\upsilon)$ for all $\upsilon\in\RR^d$.

 Finally, with
 \[
      w_2 : \RR^d \to \RR^+,
            \upsilon \mapsto (1+|\upsilon|)^{d+1} \cdot v_1(\upsilon) \cdot [v_0 (\upsilon)]^{9d/2+3p+3},
 \]
 assume that $\theta \in \mathcal C^{d+p+1}(\RR^d)$ and
 \[
   v_0^n \cdot \frac{\partial^{(d+p+1)-n}}{\partial \upsilon_j^{(d+p+1)-n}}\theta\in \bd L^2_{w_2}(\RR^d),
   \qquad \text{ for all } i \in \underline{d},
   \,\,\, 0 \leq n \leq d+p+1.
 \]

 Then, with
 \(
   \Gamma:
   \PhSpace \times \PhSpace \to \CC,
   \bigl((y,\omega),(z,\eta)\bigr) \mapsto e^{-2\pi i\langle y-z,\omega\rangle}
   ,
 \)
 and $\CalV_{\Phi}^{\delta} = (V_{\ell,k}^\delta)_{\ell, k \in \ZZ^d}$
 the $\Phi$-induced $\delta$-fine covering:
 \begin{equation}\label{eq:osc_kern_in_BBm}
   \| \oscVFGd\|_{\BBm} < \infty \quad \text{for all } \delta > 0
   \qquad \text{ and }\qquad
   \| \oscVFGd\|_{\BBm}
   \overset{\delta\rightarrow 0}{\rightarrow} 0.
 \end{equation}
\end{theorem}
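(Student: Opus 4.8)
The plan is to estimate $\|\oscVFGd\|_{\BBm}$ along the same three steps that, in Section~\ref{sec:coorbits}, led from the cross-Gramian $K_{\theta_1,\theta_2}$ to \Cref{thm:MR1_kernel_is_in_AAm}: reduce to an $\esssup$-integral as in \Cref{lem:CrossGramianBabyStep}, rewrite the remaining frequency-integral through Fourier integral operators of the type $L_{\tau_0}^{(\ell)}$ of \eqref{eq:defOfL} as in \Cref{pro:kern_in_theta}, and finally apply the uniform decay bound of \Cref{lem:NiceCrossGramianEstimate}. Throughout we may assume $\|\theta\|_{\lebesgue^2}=1$, so that $\mathcal{G}(\theta,\Phi)$ is a Parseval frame by \Cref{thm:orthrel} and $\oscVFGd$ is defined. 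The one genuinely new point is that the ``amplitude'' inside these Fourier integral operators is now a \emph{difference} of two shifted prototypes; the fundamental theorem of calculus turns that difference into an integral over a short segment in warped phase space, and this simultaneously produces an explicit prefactor $\varepsilon(\delta)$ with $\varepsilon(\delta)\to0$ as $\delta\to0$, and replaces $\theta$ by first partial derivatives of $\theta$ multiplied by $v_0$-controlled coefficients. This is exactly why \Cref{lem:NiceCrossGramianEstimate} was phrased so as to allow its prototypes to depend on $x,\tau,\tau_0$ (cf.\ the remark following it) and why the hypothesis of the theorem is imposed on the \emph{$v_0$-weighted} derivatives $v_0^n\cdot\frac{\partial^{(d+p+1)-n}}{\partial\upsilon_j^{(d+p+1)-n}}\theta$ rather than merely on $\frac{\partial^n}{\partial\upsilon_j^n}\theta$: the $n$ surplus copies of $v_0$ absorb precisely the coefficients generated below.

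\textbf{The mean-value step.} Concretely, for $\PhVar=(y,\omega)$, $\PhVarA=(z,\eta)$ and $\PhVarC=(z',\eta')\in\bd{V}^\delta_{(z,\eta)}$, the Plancherel computation behind \eqref{eq:firstKest} together with $\Gamma(\PhVarA,\PhVarC)=e^{-2\pi i\langle z-z',\eta\rangle}$ rewrites $K_\Psi(\PhVar,\PhVarA)-\Gamma(\PhVarA,\PhVarC)K_\Psi(\PhVar,\PhVarC)$ as
\[
  \int_D e^{-2\pi i\langle z-y,\xi\rangle}\,\frac{\overline{\theta(\Phi(\xi)-\Phi(\omega))}}{\sqrt{w(\Phi(\omega))}}\,\Big[\,\frac{\theta(\Phi(\xi)-\Phi(\eta))}{\sqrt{w(\Phi(\eta))}}-e^{2\pi i\langle z-z',\,\xi-\eta\rangle}\,\frac{\theta(\Phi(\xi)-\Phi(\eta'))}{\sqrt{w(\Phi(\eta'))}}\,\Big]\,d\xi .
\]
By \Cref{lem:coverings}, $\PhVarC=(z',\eta')\in\bd{V}^\delta_{(z,\eta)}$ forces $|\Phi(\eta')-\Phi(\eta)|<2\delta$ and $|A^T(\Phi(\eta))\langle z-z'\rangle|<2\delta\,v_0(\delta)$. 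Writing the bracket as $-\int_0^1\tfrac{d}{dt}h(t)\,dt$ with $h(t)=e^{2\pi i\langle z-z_t,\,\xi-\eta\rangle}\,w(\tau_t)^{-1/2}\theta(\Phi(\xi)-\tau_t)$, $z_t=z+t(z'-z)$, $\tau_t=\Phi(\eta)+t(\Phi(\eta')-\Phi(\eta))$, the derivative $h'(t)$ is a sum of three terms, each carrying a scalar factor of size $O(\delta\,v_0(\delta))$: $\langle z'-z,\xi-\eta\rangle$, $\langle\nabla w(\tau_t),\Phi(\eta')-\Phi(\eta)\rangle$, or $\langle\nabla\theta(\Phi(\xi)-\tau_t),\Phi(\eta')-\Phi(\eta)\rangle$. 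Here the $\nabla w$-factor is dominated by fixed powers of $v_0$ via \Cref{cor:deriv_of_w_estimate}, and the phase factor is handled by the identity $\langle z'-z,\xi-\eta\rangle=\big\langle A^T(\Phi(\eta))\langle z'-z\rangle,\ \mathrm{D}\Phi(\eta)\langle\xi-\eta\rangle\big\rangle$, the estimate $|\mathrm{D}\Phi(\eta)\langle\xi-\eta\rangle|\le v_0(\Phi(\xi)-\Phi(\eta))\,|\Phi(\xi)-\Phi(\eta)|$ (which follows from \eqref{eq:PhiHigherDerivativeEstimate} applied to $\phi_{\Phi(\eta)}$), and the bound on $A^T(\Phi(\eta))\langle z'-z\rangle$ just quoted. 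After this step the frequency-integral is, up to the prefactor $\varepsilon(\delta):=C\,\delta\cdot\sup_{|\upsilon|\le 2\delta}[v_0(\upsilon)]^{N}$ for a fixed exponent $N=N(d,p)$ (finite for every $\delta>0$, and $\to0$ as $\delta\to0$), a finite sum of integrals of exactly the shape of $L_{\tau_0}^{(\ell)}$ in \eqref{eq:defOfL}, with $\theta_\ell,\theta_{3-\ell}$ replaced by $\theta$ and by a $v_0$-controlled smooth coefficient times a partial derivative of $\theta$ of order at most one.

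\textbf{Conclusion and the limit.} For each of the finitely many resulting terms I would then transcribe the proof of \Cref{thm:MR1_kernel_is_in_AAm} more or less verbatim. \Cref{lem:CrossGramianBabyStep} and \Cref{pro:kern_in_theta}---whose derivations use only the translation-covariant structure of $\mathcal{G}(\bullet,\Phi)$ and the $\Phi$-compatibility of $m$, both available here by hypothesis and by \Cref{prop:GreatSimplification}---reduce $\|\oscVFGd\|_{\BBm}$ to $\varepsilon(\delta)\cdot\max_{\ell}\esssup_{\tau_0}\iint M(x,\tau)\cdot\big(\text{an }L_{\tau_0}^{(\ell)}\text{-type integrand built from }\theta\text{ and its first partials}\big)\,dx\,d\tau$, the passage from the supremum over $\bd{V}^\delta_{(z,\eta)}$ to product-set bounds being supplied by \Cref{lem:coverings} and \Cref{pro:inducedcoverProps}. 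Running the localization and integration-by-parts argument of \Cref{lem:NiceCrossGramianEstimate} with $k=d+p+1$, with $w_1(\upsilon)=(1+|\upsilon|)^{d+1}v_1(\upsilon)[v_0(\upsilon)]^{d/2}$, and with the $x,\tau,\tau_0$-dependent prototypes produced above---the $v_0^n$ surplus in the hypothesis being exactly what makes the same weight $w_2=(1+|\upsilon|)^{d+1}v_1(\upsilon)[v_0(\upsilon)]^{9d/2+3p+3}=w_1\cdot[v_0]^{\,d+3k}$ and the same decay $(1+|x|)^{-(d+p+1)}$ survive the extra derivative and coefficients---yields $\|\oscVFGd\|_{\BBm}\le C\cdot\varepsilon(\delta)\cdot C_{\max}<\infty$ for all $\delta>0$, with $C=C(d,p,\Phi,v_0)$ and $C_{\max}$ the finite maximum of the $\lebesgue^2_{w_2}$-norms of the $v_0$-weighted derivatives of $\theta$. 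Since $\varepsilon(\delta)\to0$, both assertions of \eqref{eq:osc_kern_in_BBm} follow.

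\textbf{The main obstacle.} The delicate part is the last-paragraph claim that the ``effective prototypes'' produced by the mean-value step still meet the hypotheses of \Cref{lem:NiceCrossGramianEstimate} with the \emph{same} control weight $v_0$ (hence the same $(d+p+1)$-admissibility) and the \emph{same} weight $w_2$: this forces one to track with care every $v_0$-factor created by the chain rule, by \Cref{cor:deriv_of_w_estimate}, by the changes of variables $x=A^T(\tau_0)\langle z-y\rangle$ and $\tau=\Phi(\omega)-\tau_0$, and by the phase-derivative term, and then to re-examine the integration-by-parts estimate of \Cref{lem:NiceCrossGramianEstimate} in this slightly more general, parameter-dependent-prototype setting (in particular one must keep the bound on $A^T(\Phi(\eta))\langle z'-z\rangle$ uniform in $\tau_0$). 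Everything else is a routine repetition of the arguments already carried out for $K_{\theta_1,\theta_2}$.
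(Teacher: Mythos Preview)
Your overall architecture---reduce $\|\oscVFGd\|_{\BBm}$ to an integral of the $L_{\tau_0}^{(\ell)}$-type Fourier integrals from Section~\ref{sec:coorbits} and then invoke \Cref{lem:NiceCrossGramianEstimate}---is precisely the paper's route, and the paper likewise glosses over the ``tedious but straightforward'' analogue of \Cref{lem:CrossGramianBabyStep}/\Cref{pro:kern_in_theta} for the oscillation kernel. So the skeleton is right.

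There is, however, a genuine derivative-counting gap in the mean-value step. Your fundamental-theorem-of-calculus decomposition of $h'(t)$ produces, in term~(c), the factor $\langle\nabla\theta(\Phi(\xi)-\tau_t),\Phi(\eta')-\Phi(\eta)\rangle$, so one of your effective prototypes is $\partial_i\theta$. To then run \Cref{lem:NiceCrossGramianEstimate} with $k=d+p+1$ (which you need for $x$-integrability against the weight $(1+|x|)^p$), you would require $\partial^n(\partial_i\theta)/\partial\upsilon_j^n\in\lebesgue^2_{w_2}$ for all $n\le d+p+1$, i.e., derivatives of $\theta$ of order $d+p+2$. But the theorem only assumes $\theta\in\mathcal C^{d+p+1}$ with $v_0^n\cdot\partial^{(d+p+1)-n}\theta\in\lebesgue^2_{w_2}$: the highest-order derivative available is $\partial^{d+p+1}\theta$. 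The ``$v_0^n$ surplus'' you cite buys extra weight, not an extra derivative; it cannot close this gap. Dropping to $k=d+p$ integrations by parts for term~(c) does not help either, since $(1+|x|)^{p-(d+p)}=(1+|x|)^{-d}$ is not integrable.

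The paper circumvents this by \emph{not} using the fundamental theorem of calculus to extract the smallness. Instead it rewrites $\widehat{g_{z,\eta}}-\Gamma\,\widehat{g_{z_0,\eta_0}}$ as the warping of a modified prototype $\tilde{\theta}_{(z,\eta),(z_0,\eta_0)}$ (\Cref{pro:difference_as_warping}), so that the oscillation becomes a genuine cross-kernel $K_{\theta,\tilde{\theta}}$. Crucially, $\tilde{\theta}$ is a linear combination of $\theta$, a translate $\bd T_{\tau'}\theta$, and a multiplication $\Eye\theta$; hence $\partial^m\tilde{\theta}$ involves only $\partial^\ell\theta$ for $\ell\le m$ (the $v_0^n$ factors in the hypothesis absorb the coefficients generated when differentiating the multiplier $\Eye$, see \Cref{lem:continuousdiffeye}(3)). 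The smallness $\|\partial^m\tilde{\theta}\|_{\lebesgue^2_{w_2}}\to0$ then comes from \emph{continuity}---of translation in $\lebesgue^2_{w_2}$ and of $(\tau,\varepsilon)\mapsto\Eye$ in the strong operator topology (\Cref{lem:continuousdiffeye}(2), \Cref{lem:convergence_of_tildetheta})---rather than from a mean-value estimate. This yields convergence to $0$ without a quantitative rate in $\delta$, but also without consuming an additional derivative of $\theta$. Your terms~(a) and~(b) are fine (their effective prototypes are $\theta$ times a smooth $v_0$-controlled coefficient), so a hybrid argument---FTC for the phase and $w$ contributions, continuity for the $\theta$-translate contribution---would work; but the pure-FTC approach you outlined requires one more derivative than the theorem provides.
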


\begin{remark}\label{rem:Main2InpliesMain1}
  The conditions of \Cref{thm:main2_discreteframes} are largely the same
  as those for \Cref{thm:MR1_kernel_is_in_AAm}.
  The only difference is the appearance of an additional factor $v_0^n$, for certain $n\in\NN_0$,
  in the conditions on $\theta$.
  Since $v_0 \geq v_0(0)$, the conditions of \Cref{thm:main2_discreteframes}
  imply those of \Cref{thm:MR1_kernel_is_in_AAm}.
\end{remark}

To prove \Cref{thm:main2_discreteframes}, we study the second component of the oscillation, i.e.,
\(
  g_{\PhVar} - \Gamma(\PhVar,\PhVarA)g_{\PhVarA},
\)
for $\PhVarA\in\CalV^\delta_{\PhVar}$.
If we can bound certain weighted $\bd L^2$-norms of this difference and its derivatives
uniformly in $\PhVar\in\PhSpace$ and $\PhVarA\in\CalV^\delta_{\PhVar}$,
then we can show that $\oscVFGd\in\BBm$ by a slight variation on \Cref{thm:MR1_kernel_is_in_AAm}.
In fact, the estimates we obtain converge to $0$ for $\delta\to 0$,
such that we naturally obtain the second part of \Cref{eq:osc_kern_in_BBm} as well.

\subsection{Local behavior of the oscillating component}
\label{ssec:localOscComp}

  In order to rely on the machinery we already developed in Section \ref{sec:coorbits},
  it will be useful to rewrite
  \(
    g_{\PhVar} - \Gamma(\PhVar,\PhVarA)g_{\PhVarA}
  \)
  as the warping of a function $\tilde{\theta}_{\PhVar, \PhVarA} \in \Ltv$
  (dependent on $\PhVar,\PhVarA\in\PhSpace$) derived from the prototype $\theta$.

\begin{proposition}\label{pro:difference_as_warping}
  For $D \subset \RR^d$ open, let $\PhSpace = \RR^d \times D$, and define
  the phase function $\Gamma$ via
  \begin{equation}
    \Gamma : \PhSpace \times \PhSpace \to \CC,
             ((y,\omega),(z,\eta)) \mapsto e^{-2\pi i\langle y-z,\omega\rangle}.
    \label{eq:PhaseFunctionDefinition}
  \end{equation}
  Let $\Phi : D \to \RR^d$ be a warping function, assume $\theta\in\Ltv (\RR^d)$
  and denote $(g_{y,\omega})_{(y,\omega)\in\PhSpace} = \mathcal G(\theta,\Phi)$ as usual.
  Then the identity
  \begin{equation}\label{eq:difference_as_warping}
    \widehat{g_{y,\omega}} - \Gamma((y,\omega),(z,\eta)) \widehat{g_{z,\eta}}
    = e^{-2\pi i\langle y,\cdot\rangle} \cdot
      \left(
          w(\Phi(\omega))^{-1/2} \cdot
          \left(\bd T_{\Phi(\omega)} \tilde{\theta}_{(y,\omega),(z,\eta)}\right)\circ \Phi
      \right),
  \end{equation}
  holds for all $(y,\omega),(z,\eta)\in\PhSpace$, with
  \begin{equation}\label{eq:tilde_theta}
    \tilde{\theta}_{(y,\omega),(z,\eta)}
    := \left(
          \theta
          - \sqrt{
              \frac{w(\Phi(\omega))}{w(\Phi(\eta))}
            }
            \cdot
            \mathbf{\operatorname{E}}_{
                                        \Phi(\omega),
                                        A^{T}(\Phi(\omega)) \langle y-z \rangle
                                      }
            \left(
              \bd T_{\Phi(\eta)-\Phi(\omega)}\theta
            \right)
       \right)
    \in \Ltv.
  \end{equation}
  The operator $\Eye$ in Equation \ref{eq:tilde_theta} is a multiplication operator defined by
  \begin{equation}\label{eq:defOfEye}
       \Eye f
       := e^{2\pi i
             \langle
                A^{-T}(\tau)\langle \eps\rangle,
                \Phi^{-1}(\cdot+\tau)-\Phi^{-1}(\tau)
             \rangle
            }
          \cdot f
       \quad \text{ for all } \quad
       f:\RR^d \to \CC
       \text{ and }
       \tau,\eps \in \RR^d.
  \end{equation}
\end{proposition}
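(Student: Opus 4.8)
\textbf{Proof plan for Proposition~\ref{pro:difference_as_warping}.}
The plan is to prove the identity \eqref{eq:difference_as_warping} by a direct computation on the Fourier side, keeping careful track of the translation/modulation structure of the warped system. First I would recall from \eqref{eq:WarpedSystemDefinition} that $\widehat{g_{y,\omega}} = e^{-2\pi i\langle y,\bullet\rangle}\cdot\widehat{\widecheck{g_\omega}} = e^{-2\pi i\langle y,\bullet\rangle}\cdot g_\omega$, with $g_\omega = w(\Phi(\omega))^{-1/2}\cdot(\bd T_{\Phi(\omega)}\theta)\circ\Phi$, extended by zero outside $D$. Plugging in $\Gamma((y,\omega),(z,\eta)) = e^{-2\pi i\langle y-z,\omega\rangle}$, the left-hand side of \eqref{eq:difference_as_warping} becomes
\[
  e^{-2\pi i\langle y,\bullet\rangle}\, g_\omega
  - e^{-2\pi i\langle y-z,\omega\rangle}\, e^{-2\pi i\langle z,\bullet\rangle}\, g_\eta
  = e^{-2\pi i\langle y,\bullet\rangle}\Big(g_\omega - e^{-2\pi i\langle z,\omega\rangle}\, e^{2\pi i\langle y,\bullet\rangle}\,e^{-2\pi i\langle z,\bullet\rangle}\, g_\eta\Big),
\]
so that after factoring out $e^{-2\pi i\langle y,\bullet\rangle}$ the bracket is $g_\omega - e^{-2\pi i\langle z-y,\bullet-\omega\rangle}\, g_\eta$ (here one uses $e^{-2\pi i\langle z,\omega\rangle}e^{2\pi i\langle y-z,\bullet\rangle} = e^{-2\pi i\langle z-y,\bullet-\omega\rangle}\cdot e^{-2\pi i\langle y,\omega\rangle}\cdot e^{2\pi i\langle y,\omega\rangle}$; I would double-check the bookkeeping of the scalar phases $e^{\pm 2\pi i\langle y,\omega\rangle}$ carefully, since this is exactly the kind of step where a sign error creeps in). Thus the whole problem reduces to showing that on $D$,
\[
  g_\omega(\xi) - e^{-2\pi i\langle z-y,\xi-\omega\rangle}\, g_\eta(\xi)
  = w(\Phi(\omega))^{-1/2}\cdot\big(\bd T_{\Phi(\omega)}\tilde\theta_{(y,\omega),(z,\eta)}\big)(\Phi(\xi)).
\]

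Next I would substitute the definitions of $g_\omega, g_\eta$ and pull out the common factor $w(\Phi(\omega))^{-1/2}$; the $g_\eta$ term carries the extra scalar $\sqrt{w(\Phi(\omega))/w(\Phi(\eta))}$, matching the prefactor in \eqref{eq:tilde_theta}. Writing $\tau := \Phi(\xi)$, we have $(\bd T_{\Phi(\omega)}\theta)\circ\Phi(\xi) = \theta(\tau - \Phi(\omega))$ and $(\bd T_{\Phi(\eta)}\theta)\circ\Phi(\xi) = \theta(\tau - \Phi(\eta)) = (\bd T_{\Phi(\eta)-\Phi(\omega)}\theta)(\tau - \Phi(\omega))$, which accounts for the $\bd T_{\Phi(\eta)-\Phi(\omega)}\theta$ inside $\tilde\theta$. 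The only remaining point is to identify the modulation: the phase $e^{-2\pi i\langle z-y,\xi-\omega\rangle}$ must be rewritten, with $\tau = \Phi(\xi)$, as the multiplication operator $\Eye$ from \eqref{eq:defOfEye} evaluated at the argument $\tau - \Phi(\omega)$. Concretely, with $\eps := A^T(\Phi(\omega))\langle y-z\rangle$ and $\upsilon := \tau - \Phi(\omega)$, I would check that
\[
  \langle A^{-T}(\Phi(\omega))\langle\eps\rangle,\, \Phi^{-1}(\upsilon+\Phi(\omega)) - \Phi^{-1}(\Phi(\omega))\rangle
  = \langle y-z,\, \xi - \omega\rangle,
\]
which is immediate once one notes $A^{-T}(\Phi(\omega))\langle\eps\rangle = A^{-T}(\Phi(\omega))A^T(\Phi(\omega))\langle y-z\rangle = y-z$ and $\Phi^{-1}(\upsilon+\Phi(\omega)) = \Phi^{-1}(\tau) = \xi$, $\Phi^{-1}(\Phi(\omega)) = \omega$. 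Hence $e^{-2\pi i\langle z-y,\xi-\omega\rangle} = e^{2\pi i\langle y-z,\xi-\omega\rangle}$ equals the multiplier of $\Eye[\Phi(\omega),\eps]$ at $\upsilon$, and collecting terms yields exactly $w(\Phi(\omega))^{-1/2}(\bd T_{\Phi(\omega)}\tilde\theta)(\Phi(\xi))$, as claimed.

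Finally I would verify the membership claim $\tilde\theta_{(y,\omega),(z,\eta)}\in\Ltv$: since $\Eye$ is a multiplication operator by a function of modulus one, it is an isometry on every weighted $\bd L^2$ space, and $\bd L^2_{\sqrt{w_0}}$ is translation-invariant by \eqref{eq:StandardTrafoAndTranslationEstimate} (using submultiplicativity of $w_0$ and $w_0$-moderateness of $w$), so $\bd T_{\Phi(\eta)-\Phi(\omega)}\theta\in\Ltv$ and the linear combination defining $\tilde\theta$ stays in $\Ltv$. I do not expect a genuine obstacle here — the result is an algebraic identity dressed up in warped coordinates; the one place demanding care is the phase-tracking in the first paragraph, where the scalar factors $e^{\pm 2\pi i\langle y,\omega\rangle}$ arising from re-centering the modulation from $\omega$ to $0$ must cancel exactly, and one must be consistent about whether $\Gamma$ multiplies $\widehat{g_{z,\eta}}$ before or after the $e^{-2\pi i\langle z,\bullet\rangle}$ factor is extracted. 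Everything else is substitution of definitions and the elementary linear-algebra identity $A^{-T}A^T = \mathrm{id}$.
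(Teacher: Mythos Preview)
Your proposal is correct and follows essentially the same approach as the paper's proof: factor out $e^{-2\pi i\langle y,\bullet\rangle}$, expand $g_\omega$ and $g_\eta$ via \eqref{eq:WarpedSystemDefinition}, pull out $w(\Phi(\omega))^{-1/2}$, and identify the remaining modulation with the $\Eye$ operator at $\tau=\Phi(\omega)$, $\eps=A^T(\Phi(\omega))\langle y-z\rangle$. Your parenthetical worry about stray scalar phases $e^{\pm 2\pi i\langle y,\omega\rangle}$ is unnecessary---the factorization $e^{-2\pi i\langle y-z,\omega\rangle}e^{2\pi i\langle y-z,\xi\rangle}=e^{2\pi i\langle y-z,\xi-\omega\rangle}$ is direct and no extra terms appear---but this does not affect the validity of the argument.
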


\begin{proof}
  To see that $\tilde{\theta}_{(y,\omega),(z,\eta)}\in\Ltv$, note that
  $\bd T_{\Phi(\eta)-\Phi(\omega)}$ and $\Eye$ are bounded operators on $\Ltv$ and that
  $\sqrt{w(\Phi(\omega))/w(\Phi(\eta))}$ is finite for all $\omega,\eta\in D$.
  Here, boundedness of $\translation_{x}$ on $\Ltv$ is a consequence of
  \eqref{eq:StandardTrafoAndTranslationEstimate}, since $w_0$ is submultiplicative.
  To prove \eqref{eq:difference_as_warping}, note that, by definition,
  \[
  \begin{split}
   \left(\widehat{g_{y,\omega}} - \Gamma((y,\omega),(z,\eta)) \widehat{g_{z,\eta}}\right)(\xi)
   & = e^{-2\pi i\langle y,\xi\rangle}
       g_\omega (\xi)
       - e^{-2\pi i\langle y-z,\omega\rangle}
         e^{-2\pi i\langle z,\xi\rangle}
         g_\eta (\xi)\\
   & = e^{-2\pi i\langle y,\xi\rangle}
       \left(
          g_\omega - e^{-2\pi i\langle y-z,\omega-\cdot\rangle}g_\eta
       \right)(\xi),
   \end{split}
  \]
  and furthermore
   \begin{align*}
   & g_\omega - e^{-2\pi i\langle y-z,\omega-\cdot\rangle}g_\eta \\
   & = w(\Phi(\omega))^{-1/2}\left(\bd T_{\Phi(\omega)}\theta\right)\circ \Phi
       - e^{-2\pi i\langle y-z,\omega-\cdot\rangle}
         w(\Phi(\eta))^{-1/2}
         \left(\bd T_{\Phi(\eta)}\theta\right) \circ \Phi \\
   & = w(\Phi(\omega))^{-1/2}
       \left(
          \bd T_{\Phi(\omega)}\theta
          - \sqrt{\frac{w(\Phi(\omega))}{w(\Phi(\eta))}}
            \cdot e^{2\pi i\langle y-z,\Phi^{-1}(\cdot)-\omega\rangle}
                  \bd T_{\Phi(\eta)}\theta
       \right)
       \circ \Phi\\
   & = w(\Phi(\omega))^{-1/2}
       \left(
          \bd T_{\Phi(\omega)}\left(
                                  \theta
                                  - \sqrt{\frac{w(\Phi(\omega))}{w(\Phi(\eta))}}
                                    \cdot e^{2\pi i
                                             \langle
                                               y-z,
                                               \Phi^{-1}(\cdot+\Phi(\omega)) - \omega
                                             \rangle}
                                    \bd T_{\Phi(\eta)-\Phi(\omega)}\theta
                              \right)
       \right)\circ \Phi\\
   & = w(\Phi(\omega))^{-1/2}
       \left(
          \bd T_{\Phi(\omega)}\tilde{\theta}_{(y,\omega),(z,\eta)}
       \right)
       \circ \Phi. \qedhere
   \end{align*}
\end{proof}

Now that we can express
\(
  g_{\PhVar} - \Gamma(\PhVar,\PhVarA)g_{\PhVarA}
\)
through $\tilde{\theta}(\PhVar,\PhVarA)$, we aim to derive conditions on $\theta$, such that
Lemma~\ref{lem:NiceCrossGramianEstimate} can be applied
with $\theta_1=\theta,\theta_2=\tilde{\theta}(\PhVar,\PhVarA)$.
In particular, we investigate the (uniform) continuity of the map $(\tau, \eps) \mapsto \Eye$,
in the next lemma.
Here, $\Eye$ is considered as an operator on $\lebesgue^q_{\tilde{w}} (\RR^d)$, for suitable weights $\tilde{w}$.

\begin{lemma}\label{lem:continuousdiffeye}
  Let $q \in [1,\infty)$ and 
  let $\tilde{w} : \RR^d \to \RR^+$ be a continuous weight function.
  Furthermore, assume that $\Phi$ is a $k$-admissible warping function with control weight $v_0$.

  The operator $\Eye ~:~ \lebesgue^q_{\tilde{w}} (\RR^d) \to \lebesgue^q_{\tilde{w}} (\RR^d)$,
  $\tau,\eps\in\RR^d$, given by \eqref{eq:defOfEye}, is well-defined and has the following properties:
  \begin{itemize}
   \item[(1)]   If $\vartheta \in \lebesgue^q_{\tilde{w}} (\RR^d)$
                with $\supp(\vartheta) \subset \overline{B_{\delta}}(0)$ for some $\delta > 0$, then
                \begin{equation}\label{eq:eyeestimate}
                  \|\vartheta-\Eye \vartheta\|_{\lebesgue^q_{\tilde{w}}}
                  \leq \sqrt{2[1-\cos\left(\pi\cdot \min\{1,2|\eps| \delta v_0(\delta)\}\right)]}
                       \cdot \|\vartheta\|_{\lebesgue^q_{\tilde{w}}}.
                \end{equation}

   \item[(2)] If $\vartheta\in \lebesgue^q_{\tilde{w}} (\RR^d)$, then
              \(
                \sup\limits_{\tau\in\RR^d}
                  \|\vartheta-\Eye \vartheta\|_{\lebesgue^q_{\tilde{w}}}
                \overset{\eps \rightarrow 0}{\rightarrow} 0
                .
              \)

   \item[(3)] If $\vartheta \in \mathcal C^{m}(\RR^d)$ for some $0 \leq m \leq k+1$,
              and if $j\in\underline{d}$ with
              \begin{equation}
                v_0^n
                \cdot \frac{\partial^{m-n}}
                           {\partial \upsilon_j^{m-n}}
                      \vartheta
                \in \lebesgue^q_{\tilde{w}} (\RR^d)
                \text{ for all } 0\leq n \leq m,
                \label{eq:EyeDerivativeAssumption}
              \end{equation}
              then
              \(
                \frac{\partial^\ell}{\partial \upsilon_j^\ell} \vartheta
                \in \lebesgue_{\tilde{w}}^q (\RR^d)
              \)
              for $0 \leq \ell \leq m$,
              $\frac{\partial^m}{\partial \upsilon_j^m}(\Eye \vartheta)\in \lebesgue^q_{\tilde{w}} (\RR^d)$
              for all $\tau,\eps\in\RR^d$, and
              \[
                \sup\limits_{\tau\in\RR^d}
                   \left\|
                       \frac{\partial^m}{\partial \upsilon_j^m}(\vartheta-\Eye \vartheta)
                   \right\|_{\lebesgue^q_{\tilde{w}}}
                \overset{\eps \rightarrow 0}{\rightarrow} 0.
              \]
              Furthermore, for each $\eps_0 > 0$, there is a constant $C_{m,\eps_0} > 0$
              satisfying for all $|\eps| \leq \eps_0$ that
              \begin{equation}
                  \sup_{\tau \in \RR^d}
                      \left\|
                         \frac{\partial^m}{\partial \upsilon_j^m} (\Eye \vartheta)
                      \right\|_{\lebesgue^q_{\tilde{w}}}
                  \leq \left\|
                         \frac{\partial^m}{\partial \upsilon_j^m} \vartheta
                       \right\|_{\lebesgue^q_{\tilde{w}}}
                       + C_{m,\eps_0}
                         \cdot |\eps|
                         \cdot \sum_{m=1}^m
                                 \left\|
                                    v_0^n
                                    \cdot \frac{\partial^{m-n}}{\partial \upsilon_j^{m-n}} \vartheta
                                 \right\|_{\lebesgue^q_{\tilde{w}}}
                  < \infty.
                  \label{eq:EDerivativeQuantitativeEstimate}
              \end{equation}
 \end{itemize}
 \end{lemma}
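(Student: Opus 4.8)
The plan is to reduce everything to the real phase
$\Psi_{\tau,\eps}(\upsilon):=\langle A^{-T}(\tau)\langle\eps\rangle,\,\Phi^{-1}(\upsilon+\tau)-\Phi^{-1}(\tau)\rangle$,
so that by \eqref{eq:defOfEye} the operator $\Eye$ is simply multiplication by the unimodular function $e^{2\pi i\Psi_{\tau,\eps}}$; this already gives well-definedness, since $\Eye$ is then a surjective isometry of $\lebesgue^q_{\tilde w}(\R^d)$ for every $\tau,\eps$. Everything rests on two properties of $\Psi_{\tau,\eps}$ that are \emph{uniform in $\tau$}: $\Psi_{\tau,\eps}(0)=0$, and — differentiating in the $\upsilon_j$-direction, applying \eqref{eq:PsiDerivativeIsPhi} once and then iterating — $\partial^a_{\upsilon_j}\Psi_{\tau,\eps}(\upsilon)=\big((\partial^{(a-1)e_j}\phi_\tau)(\upsilon)\langle\eps\rangle\big)_j$, so that \eqref{eq:PhiHigherDerivativeEstimate} gives $|\partial^a_{\upsilon_j}\Psi_{\tau,\eps}(\upsilon)|\le|\eps|\,v_0(\upsilon)$ for all $1\le a\le k+1$. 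I will also use that $v_0\ge v_0(0)\ge1$ (submultiplicative and radially increasing) and that $\Phi^{-1}\in\mathcal C^{k+1}$, so that $\Psi_{\tau,\eps}$ is $\mathcal C^{k+1}$ in $\upsilon$.

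For (1), with $\supp\vartheta\subset\overline{B_\delta}(0)$ I integrate the gradient bound along the segment $[0,\upsilon]$ and use that $v_0$ is radially increasing to obtain $|\Psi_{\tau,\eps}(\upsilon)|\le|\eps|\,\delta\,v_0(\delta)$ there. Combining this with the elementary inequality $|e^{2\pi it}-1|=2|\sin(\pi t)|\le 2\sin\!\big(\pi\min\{\tfrac12,|t|\}\big)=\sqrt{2[1-\cos(\pi\min\{1,2|t|\})]}$ and the monotonicity of $s\mapsto1-\cos s$ on $[0,\pi]$, I bound $|1-e^{2\pi i\Psi_{\tau,\eps}(\upsilon)}|$ on $\supp\vartheta$ by the scalar factor in \eqref{eq:eyeestimate}; pulling this pointwise bound out of $\|(1-e^{2\pi i\Psi_{\tau,\eps}})\vartheta\|_{\lebesgue^q_{\tilde w}}$ gives \eqref{eq:eyeestimate}, with a constant independent of $\tau$.

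For (2), since $q<\infty$ and $\tilde w$ is continuous (hence locally bounded above and below), compactly supported functions are dense in $\lebesgue^q_{\tilde w}(\R^d)$. Given $\vartheta$ and $\epsilon'>0$, choose $\vartheta_0$ supported in some ball $\overline{B_\delta}(0)$ with $\|\vartheta-\vartheta_0\|_{\lebesgue^q_{\tilde w}}<\epsilon'$; then the isometry property of $\Eye$ and the triangle inequality give $\sup_\tau\|\vartheta-\Eye\vartheta\|_{\lebesgue^q_{\tilde w}}\le2\epsilon'+\sup_\tau\|\vartheta_0-\Eye\vartheta_0\|_{\lebesgue^q_{\tilde w}}$, and the last term tends to $0$ as $\eps\to0$ by (1), with $\delta$ fixed. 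Letting $\epsilon'\to0$ finishes (2).

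The main work — and the step I expect to be the real obstacle — is (3). Since $\vartheta\in\mathcal C^m$ with $m\le k+1$ and $e^{2\pi i\Psi_{\tau,\eps}}\in\mathcal C^{k+1}$, the one-variable Leibniz rule yields $\partial^m_{\upsilon_j}(\Eye\vartheta)=\sum_{n=0}^m\binom mn\big(\partial^n_{\upsilon_j}e^{2\pi i\Psi_{\tau,\eps}}\big)\big(\partial^{m-n}_{\upsilon_j}\vartheta\big)$, and the crux is to estimate $\partial^n_{\upsilon_j}e^{2\pi i\Psi_{\tau,\eps}}$ via Faà di Bruno: it equals $e^{2\pi i\Psi_{\tau,\eps}}$ times a finite sum of monomials in $\partial^1_{\upsilon_j}\Psi_{\tau,\eps},\dots,\partial^n_{\upsilon_j}\Psi_{\tau,\eps}$ with orders summing to $n$ and, for $n\ge1$, at least one factor, so each monomial is $\le C_n|\eps|(1+|\eps|)^{n-1}v_0^n$ by the derivative bound above together with $v_0\ge1$ (the point being that exactly one power of $|\eps|$ is extracted while the surviving weight is $v_0^n$), whereas for $n=0$ the factor is unimodular. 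Hence the $n$-th Leibniz summand with $n\ge1$ is pointwise $\le C_n(1+|\eps|)^{n-1}|\eps|\,\big|v_0^n\partial^{m-n}_{\upsilon_j}\vartheta\big|$, which lies in $\lebesgue^q_{\tilde w}$ by \eqref{eq:EyeDerivativeAssumption}; since also $\partial^\ell_{\upsilon_j}\vartheta\in\lebesgue^q_{\tilde w}$ for $0\le\ell\le m$ (take $n=m-\ell$ in \eqref{eq:EyeDerivativeAssumption} and use solidity and $v_0\ge1$), this gives $\partial^m_{\upsilon_j}(\Eye\vartheta)\in\lebesgue^q_{\tilde w}$, and taking $\lebesgue^q_{\tilde w}$-norms with $(1+|\eps|)^{n-1}\le(1+\eps_0)^{m-1}$ for $|\eps|\le\eps_0$ yields \eqref{eq:EDerivativeQuantitativeEstimate}. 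For the uniform convergence, write $\partial^m_{\upsilon_j}(\vartheta-\Eye\vartheta)=(1-e^{2\pi i\Psi_{\tau,\eps}})\partial^m_{\upsilon_j}\vartheta-\sum_{n=1}^m\binom mn\big(\partial^n_{\upsilon_j}e^{2\pi i\Psi_{\tau,\eps}}\big)\big(\partial^{m-n}_{\upsilon_j}\vartheta\big)$; the first term tends to $0$ uniformly in $\tau$ by applying (2) to $\partial^m_{\upsilon_j}\vartheta\in\lebesgue^q_{\tilde w}$, and each remaining term is $\le C_n(1+|\eps|)^{n-1}|\eps|\,\|v_0^n\partial^{m-n}_{\upsilon_j}\vartheta\|_{\lebesgue^q_{\tilde w}}\to0$ as $\eps\to0$, again uniformly in $\tau$ because the constants $C_n$ do not depend on $\tau$. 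The delicate bookkeeping is precisely to keep the Faà di Bruno expansion organized so that the surviving weight never exceeds $v_0^n$ and all constants are $\tau$-independent.
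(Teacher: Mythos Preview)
Your proposal is correct and follows essentially the same route as the paper: the fundamental-theorem-of-calculus bound on the phase $\Psi_{\tau,\eps}$ combined with the trig identity $|e^{2\pi it}-1|=\sqrt{2[1-\cos(2\pi t)]}$ for part~(1), truncation to compact support plus part~(1) for part~(2), and Leibniz together with Fa\`a di Bruno and the bound $|\partial^a_{\upsilon_j}\Psi_{\tau,\eps}|\le|\eps|\,v_0$ for part~(3). The only cosmetic difference is that the paper writes the integral estimate in part~(1) directly for $A^{-1}(\tau)(\Phi^{-1}(\upsilon+\tau)-\Phi^{-1}(\tau))$ rather than for $\Psi_{\tau,\eps}$, and organizes the Fa\`a di Bruno sum in part~(3) by the number $\ell$ of blocks, obtaining $|P_{n,\tau,\eps}|\lesssim|\eps|\sum_{\ell=1}^n v_0^\ell|\eps|^{\ell-1}$, which is equivalent to your $C_n|\eps|(1+|\eps|)^{n-1}v_0^n$ after using $v_0\ge1$.
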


\begin{proof}
   Assumption \eqref{eq:EyeDerivativeAssumption} implies
   $\frac{\partial^\ell}{\partial \upsilon_j^\ell} \vartheta \in \lebesgue^q_{\tilde{w}} (\RR^d)$
   for all $0 \leq \ell \leq n$, since $v_0$ is radially increasing.
   Now, to prove (1), note for arbitrary $\upsilon \in \RR^d$ that
  \begin{equation*}
      |\vartheta(\upsilon) - (\Eye \vartheta)(\upsilon)|
      = \left|
          1 - e^{2\pi i\langle \eps,A^{-1}(\tau)(\Phi^{-1}(\upsilon+\tau)-\Phi^{-1}(\tau))\rangle}
        \right|
        \cdot |\vartheta(\upsilon)|,
  \end{equation*}
  where $\supp \vartheta \subset \overline{B_{\delta}}(0)$, such that it suffices to estimate this expression for
  $|\upsilon| \leq \delta$.
  We begin by expressing the difference $\Phi^{-1}(\upsilon+\tau)-\Phi^{-1}(\tau)$
  through the Jacobian $A = \mathrm{D}\Phi^{-1}$ of $\Phi^{-1}$ by using the directional derivative.
  This furnishes the following estimate:
  \[
   \begin{split}
     |A^{-1}(\tau)(\Phi^{-1}(\upsilon+\tau)-\Phi^{-1}(\tau))|
     &  =    \left|\int_0^1 A^{-1}(\tau)A(\tau+r\upsilon) \langle \upsilon \rangle ~dr\right|\\
     &  \leq |\upsilon|
             \cdot \max\limits_{r\in [0,1]}
                      \|A^{-1}(\tau)A(\tau+r\upsilon)\|
     \overset{\upsilon\in \overline{B_{\delta}}(0)}{\leq}
             \delta \cdot v_0(\delta),
   \end{split}
  \]
  where we used \eqref{eq:PhiHigherDerivativeEstimate} in the last step.
  Therefore,
  \(
    |\langle \eps,A^{-1}(\tau)(\Phi^{-1}(\upsilon+\tau)-\Phi^{-1}(\tau))\rangle|
    \leq |\eps| \cdot \delta \cdot v_0(\delta).
  \)

  Next, a simple calculation shows that $|1-e^{\pi i r}| = \sqrt{2[1-\cos(\pi r)]}$,
  which is an even function that is increasing on $[0,1]$ and converges to $0$ for $r\rightarrow 0$.
  Thus, we obtain
  \[
   |(\vartheta-\Eye \vartheta)(\upsilon)|
   \leq \sqrt{2 \left[ 1 - \cos(\pi \cdot 2|\eps|\delta v_0(\delta)) \right]} \cdot |\vartheta(\upsilon)|
  \]
  for all $0 \leq |\eps| \leq \frac{1}{2\delta v_0(\delta)}$.
  For $|\eps|>(2\delta v_0(\delta))^{-1}$ apply the trivial estimate
  $|1-e^{\pi i r}|\leq 2 = \sqrt{2[1-\cos(\pi)]}$ instead.
  This easily yields \eqref{eq:eyeestimate}, in fact for
  any solid Banach space $X$, and not only for $\lebesgue^q_{\tilde{w}}$.

  \medskip{}

  To prove (2), note that for a given $\vartheta\in \lebesgue^q_{\tilde{w}} (\RR^d)$,
  we have $\| \vartheta - \vartheta_n \|_{\lebesgue^q_{\tilde{w}}} \to 0$ as $n \to \infty$
  for the sequence $\vartheta_n = \vartheta \cdot \Indicator_{\overline{B_n}(0)}$,
  by the dominated convergence theorem.
%
%
  Furthermore, for every $n\in\NN$,
  \begin{equation*}
   \begin{split}
     \sup\limits_{\tau\in\RR^d}
         \|\vartheta-\Eye \vartheta\|_{\lebesgue^q_{\tilde{w}}}
     & \leq \|\vartheta-\vartheta_n\|_{\lebesgue^q_{\tilde{w}}}
       +    \sup\limits_{\tau\in\RR^d}
              \left(
                  \|\vartheta_n - \Eye \vartheta_n\|_{\lebesgue^q_{\tilde{w}}}
                  + \|\Eye \vartheta_n - \Eye \vartheta\|_{\lebesgue^q_{\tilde{w}}}
              \right) \\
     & = 2 \|\vartheta - \vartheta_n\|_{\lebesgue^q_{\tilde{w}}}
         + \sup\limits_{\tau\in\RR^d}
             \|\vartheta_n-\Eye \vartheta_n\|_{\lebesgue^q_{\tilde{w}}}.
   \end{split}
  \end{equation*}
  For any $n \in \NN$ and any $\eps_0 > 0$, we can choose $\eps_n > 0$ such that
  \[
    3 \|\vartheta\|_{\lebesgue^q_{\tilde{w}}}
    \cdot \sqrt{2[1-\cos\left(\pi\cdot \min\{1,2|\eps_n|n v_0(n)\right)\}}
    < \eps_0.
  \]
  \nicki{Noting that $\supp(\vartheta_n)\subset \overline{B_n}(0)$ by definition,
    we can now apply \eqref{eq:eyeestimate} with $\delta = n$
    and any $\eps \in \overline{B_{\eps_n}}(0)$ to obtain
    $\|\vartheta_n-\mathbf{\operatorname{E}}_{\tau,\eps} \vartheta_n\|_{\lebesgue^q_{\tilde{w}}} < \eps_0/3$,
  for all $\tau\in\RR^d$.}
  If additionally, $n\in\NN$ is such that $\|\vartheta-\vartheta_n\|_{\lebesgue^q_{\tilde{w}}} < \eps_0/3$,
  then $\|\vartheta-\Eye \vartheta\|_{\lebesgue^q_{\tilde{w}}}<\eps_0$.
  Since $\eps_0 > 0$ was arbitrary, we obtain
  \begin{equation*}
    \forall\ \eps_0 > 0
      \ \exists\ n\in \NN \text{ and } \eps_n > 0, \text{ such that }
          \eps \in \overline{B_{\eps_n}}(0)
          \text{ implies }
          \sup_{\tau\in\RR^d}
            \|\vartheta-\mathbf{\operatorname{E}}_{\tau,\eps} \vartheta\|_{\lebesgue^q_{\tilde{w}}} < \eps_0.
%
  \end{equation*}

  \medskip{}

  To prove (3), we first note that for $m=0$, all claims in this part are easy consequences
  of the definitions and of item (2).
  Therefore, we can assume $m \in \underline{k+1}$.
  Apply Leibniz's rule to obtain
  \begin{equation}
    \frac{\partial^m}{\partial \upsilon_j^m} (\Eye \vartheta)(\upsilon)
    = \sum_{n=0}^m
        \left(
           \binom{m}{n}
           \frac{\partial^n}{\partial \upsilon_j^n}
               \left(
                 e^{2\pi i \langle A^{-T}(\tau)\langle\eps\rangle, \Phi^{-1}(\cdot+\tau)-\Phi^{-1}(\tau)\rangle}
               \right)(\upsilon)
           \cdot \frac{\partial^{m-n}}{\partial \upsilon_j^{m-n}} \vartheta(\upsilon)
        \right).
    \label{eq:EyeDerivativeLeibnizApplication}
  \end{equation}
  Moreover, Faa Di Bruno's formula~\cite[Corollary~2.10]{FaaDiBrunoMultidimensional}---%
  a form of the chain rule for higher derivatives---yields
  for $n\in\underline{m}$ that
  \[
    \frac{\partial^n}{\partial \upsilon_j^n}
       \left(
         e^{2\pi i \langle A^{-T}(\tau)\langle\eps\rangle, \Phi^{-1}(\cdot+\tau)-\Phi^{-1}(\tau)\rangle}
       \right)(\upsilon)
    = e^{2\pi i \langle A^{-T}(\tau)\langle\eps\rangle, \Phi^{-1}(\upsilon+\tau)-\Phi^{-1}(\tau)\rangle}
      \cdot P_{n,\tau,\eps}(\upsilon)
    = \Eye P_{n,\tau,\eps}(\upsilon),
  \]
  where
 \begin{equation}\label{eq:PMTEest}
  \begin{split}
    P_{n,\tau,\eps}(\upsilon)
    & = \sum_{\ell=1}^n
            \left(
                (2\pi i)^\ell \cdot
                \sum_{\sigma\in (\underline{n - \ell + 1})^\ell}
                \left(
                   C_\sigma\cdot
                   \prod_{i=1}^{\ell}
                      \frac{\partial^{\sigma_i}}{\partial \upsilon_j^{\sigma_i}}
                      \left\langle
                        A^{-T}(\tau)\langle\eps\rangle,
                        \Phi^{-1}(\upsilon+\tau)-\Phi^{-1}(\tau)
                      \right\rangle
                \right)
            \right)\\
    & = \sum_{\ell=1}^n
           \left(
               (2\pi i)^\ell \cdot
               \sum_{\sigma\in (\underline{n - \ell + 1})^\ell}
               \left(
                   C_\sigma\cdot
                   \prod_{i=1}^{\ell}
                       \frac{\partial^{\sigma_i}}{\partial \upsilon_j^{\sigma_i}}
                       \left\langle
                         \eps,
                         A^{-1}(\tau)\langle \Phi^{-1}(\upsilon+\tau)\rangle
                       \right\rangle
               \right)
           \right),
  \end{split}
\end{equation}
 for suitable constants $C_\sigma \geq 0$.
 For the second equality, note that $\sigma_i\geq 1$ for all $i$,
 so that the term $\langle A^{-T}(\tau) \langle \eps \rangle ,\Phi^{-1}(\tau)\rangle$%
 ---which is constant with respect to $\upsilon$---can be ignored.
 In fact, the main statement of Faa Di Bruno's formula is exactly which
 $C_\sigma$ are nonzero and what value they attain, see also Lemma~\ref{lem:FaaDiBruno},
 but these details are not required here.
 Similar to \eqref{eq:PsiDerivativeIsPhi}, we have that
 \[
   \begin{split}
    \frac{\partial^{\sigma_i}}{\partial \upsilon_j^{\sigma_i}}
      \left\langle
          \eps,
          A^{-1}(\tau)\langle\Phi^{-1}(\upsilon+\tau)\rangle
      \right\rangle
   &= \frac{\partial^{\sigma_i - 1}}{\partial \upsilon_j^{\sigma_i - 1}}
        \left\langle
          \eps,
          A^{-1} (\tau) A(\upsilon + \tau) \langle e_i \rangle
        \right\rangle \\
   &= \frac{\partial^{\sigma_i - 1}}{\partial \upsilon_j^{\sigma_i - 1}}
        \left(
            \left[
                A^{-1} (\tau) A(\upsilon + \tau)
            \right]^{T}
            \eps
        \right)_{i}
   = \left(
        \frac{\partial^{\sigma_i-1}}{\partial \upsilon_j^{\sigma_i-1}}
           \phi_{\tau}(\upsilon) \langle \eps \rangle
      \right)_i \,\, ,
 \end{split}
 \]
 where $\phi_\tau = \left[ A^{-1} (\tau) A(\cdot + \tau) \right]^{T}$ is as in \eqref{eq:PhiDefinition}.
 By \eqref{eq:PhiHigherDerivativeEstimate}, we can estimate 
 \[
   \left|
    \left(
      \frac{\partial^{\sigma_i-1}}
           {\partial \upsilon_j^{\sigma_i-1}}
        \phi_{\tau}(\upsilon) \langle \eps \rangle
    \right)_i
  \right|
  \leq \left\|
         \frac{\partial^{\sigma_i-1}}{\partial \upsilon_j^{\sigma_i-1}} \phi_{\tau}(\upsilon)
        \right\|
        \cdot |\eps|
  \leq v_0 (\upsilon) \cdot|\eps|
  \quad \text{ and inserting this into \eqref{eq:PMTEest},}
 \]
 \[
  |P_{n,\tau,\eps}(\upsilon)|
  \leq \sum_{\ell=1}^n
          \left(
              (2\pi \cdot v_0(\upsilon)\cdot|\eps|)^\ell
              \cdot \sum_{\sigma\in (\underline{n - \ell + 1})^\ell}
                          C_\sigma
          \right)
  \leq \tilde{C} \cdot |\eps|\cdot \sum_{\ell=1}^n\left(v_0(\upsilon)^\ell\cdot|\eps|^{\ell-1}\right),
 \]
 for a suitably large $\tilde{C} = \tilde{C}(n) > 0$.
 Since we only consider $n\in\underline{m}$, we can in fact choose the same constant $\tilde{C}$
 for all values of $n$.
 Moreover, $1\leq v_0^\ell \leq v_0^n$ for all $\ell\leq n$.

 By assembling all the pieces and by separating the term $n = 0$
 in \eqref{eq:EyeDerivativeLeibnizApplication}, we thus get
 \[
  \begin{split}
  \left|
      \frac{\partial^m}{\partial \upsilon_j^m} (\Eye \vartheta)(\upsilon)
      - \Eye \left( \frac{\partial^m}{\partial \upsilon_j^m} \vartheta \right) (\upsilon)
  \right|
  & \leq \sum_{n=1}^m
            \binom{m}{n}
            \left|
              (\Eye P_{n,\tau,\eps})(\upsilon)
              \cdot \left( \frac{\partial^{m-n}}{\partial \upsilon_j^{m-n}} \vartheta \right) (\upsilon)
            \right|\\
  & = \sum_{n=1}^m
          \binom{m}{n}
          \left|
              P_{n,\tau,\eps}(\upsilon)
              \cdot \left(\frac{\partial^{m-n}}{\partial \upsilon_j^{m-n}} \vartheta\right)(\upsilon)
          \right|\\
  & \leq |\eps|
         \cdot \sum_{n=1}^m
                    \left(
                        \left|
                            \frac{\partial^{m-n}}{\partial \upsilon_j^{m-n}} \vartheta(\upsilon)
                        \right|
                        \cdot \sum_{\ell=1}^n
                                   \left(
                                       \tilde{C}
                                       \binom{m}{n}
                                       v_0(\upsilon)^\ell\cdot |\eps|^{\ell-1}
                                   \right)
                    \right)\\
  & \leq |\eps| \cdot
         \sum_{n=1}^m
           \left(
              \left|
                  \left(
                      v_0^n\cdot \frac{\partial^{m-n}}{\partial \upsilon_j^{m-n}} \vartheta(\upsilon)
                  \right)
              \right|
              \cdot \sum_{\ell=1}^n
                      \left(
                          \tilde{C}
                          \binom{m}{n}
                          \cdot|\eps|^{\ell-1}
                      \right)
           \right).
  \end{split}
 \]
 Let
  \(
    0 \leq C_{m,\eps}
      :=   \max_{n \in \underline{m}}
           \left(
             \sum_{\ell=1}^n
               \binom{m}{n}
               \cdot \tilde{C}
               |\eps|^{\ell-1}
           \right)
      <    \infty
  \)
 to obtain the estimate
 \[
  \left|
      \frac{\partial^m}{\partial \upsilon_j^m} (\Eye \vartheta)(\upsilon)
      - \Eye \left(  \frac{\partial^m}{\partial \upsilon_j^m} \vartheta \right) (\upsilon)
  \right|
  \leq C_{m,\eps} \cdot |\eps|
       \cdot\sum_{n=1}^m
          \left|
              v_0^n \cdot \frac{\partial^{m-n}}{\partial \upsilon_j^{m-n}} \vartheta(\upsilon)
          \right|.
 \]
 Since $\lebesgue^q_{\tilde{w}}$ is solid,
 we conclude
 \begin{equation}
    \left\|
        \frac{\partial^m}{\partial \upsilon_j^m} (\Eye \vartheta)
        - \Eye \left( \frac{\partial^m}{\partial \upsilon_j^m} \vartheta \right)
    \right\|_{\lebesgue^q_{\tilde{w}}}
    \leq C_{m,\eps} \cdot |\eps|
         \cdot \sum_{n=1}^m
                 \left\|
                    v_0^n\cdot \frac{\partial^{m-n}}{\partial \upsilon_j^{m-n}} \vartheta
                 \right\|_{\lebesgue^q_{\tilde{w}}}
    <    \infty.
   \label{eq:EyeDerivativeMainEstimate}
 \end{equation}
 Finally, with \nicki{$C_{m,\eps} \leq C_{m,\eps_0\cdot e_1} =: C_{m,\eps_0}$} for $|\eps| \leq \eps_0$, we obtain
 \begin{align*}
   & \sup_{\tau \in \RR^d}
        \left\|
            \frac{\partial^m}{\partial \upsilon_j^m}
              (\vartheta - \Eye \vartheta)
        \right\|_{\lebesgue^q_{\tilde{w}}} \\
   & \leq \sup_{\tau \in \RR^d}\left(
              \left\|
                  \frac{\partial^m}{\partial \upsilon_j^m} \vartheta
                  - \Eye \left( \frac{\partial^m}{\partial \upsilon_j^m} \vartheta \right)
              \right\|_{\lebesgue^q_{\tilde{w}}}
          + \left\|
                  \Eye \left( \frac{\partial^m}{\partial \upsilon_j^m} \vartheta \right)
                  - \frac{\partial^m}{\partial \upsilon_j^m}
                      (\Eye \vartheta)
              \right\|_{\lebesgue^q_{\tilde{w}}}\right)
     \xrightarrow[|\eps| \rightarrow 0]{} 0,
 \end{align*}
 as a consequence of part (2),
 and \eqref{eq:EyeDerivativeMainEstimate}.

 To prove \eqref{eq:EDerivativeQuantitativeEstimate}
 (and thus also $\frac{\partial^m}{\partial \upsilon_j^m} (\Eye \vartheta) \in \lebesgue^q_{\tilde{w}}$),
 observe $\| \Eye f \|_{\lebesgue^q_{\tilde{w}}} = \| f \|_{\lebesgue^q_{\tilde{w}}}$ for all
 $f \in \lebesgue^q_{\tilde{w}}(\RR^d)$.
 By Equation \eqref{eq:EyeDerivativeMainEstimate} the triangle inequality for norms yields
 \begin{align*}
     \sup_{\tau \in \RR^d}
         \left\|
             \frac{\partial^m}{\partial \upsilon_j^m} (\Eye \vartheta)
         \right\|_{\lebesgue^q_{\tilde{w}}}
     &\leq \sup_{\tau \in \RR^d} \left\|
              \Eye \left( \frac{\partial^m}{\partial \upsilon_j^m} \vartheta \right)
          \right\|_{\lebesgue^q_{\tilde{w}}}
          + C_{m,\eps} \cdot |\eps|
            \cdot \sum_{n=1}^m
                       \left\|
                          v_0^n\cdot \frac{\partial^{m-n}}{\partial \upsilon_j^{m-n}} \vartheta
                       \right\|_{\lebesgue^q_{\tilde{w}}} \\
     &= \left\|
          \frac{\partial^m}{\partial \upsilon_j^m} \vartheta
       \right\|_{\lebesgue^q_{\tilde{w}}}
       + C_{m,\eps} \cdot |\eps|
         \cdot \sum_{n=1}^m
                    \left\|
                       v_0^n\cdot \frac{\partial^{m-n}}{\partial \upsilon_j^{m-n}} \vartheta
                    \right\|_{\lebesgue^q_{\tilde{w}}}.
 \end{align*}
 This proves \eqref{eq:EDerivativeQuantitativeEstimate}, since $C_{m,\eps} \leq C_{m,\eps_0}$ as noted before.
\end{proof}

We now show that $\tilde{\theta}_{(y,\omega),(z,\eta)}$ uniformly converges to $0$ as $\delta \!\to\!\! 0$,
for $(y,\omega) \!\in\! \PhSpace$ and $(z,\eta) \!\in\! (y+\bd P^\delta_\omega)\times \bd Q^\delta_\omega$.
Recall that  $(y+\bd P^\delta_\omega) \times \bd Q^\delta_\omega$ was introduced in Lemma~\ref{lem:coverings}
as a simple superset to $\CalV^\delta_{(y,\omega)} = \bigcup_{V_{\ell,k} \ni (y,\omega)} V_{\ell,k}$,
appearing in the oscillation. The considered notion of convergence is in terms of the
$\lebesgue^q_{\tilde{w}}$-norm of certain derivatives of $\tilde{\theta}_{(y,\omega),(z,\eta)}$.
With Lemma \ref{lem:continuousdiffeye}, obtaining the desired estimates for $\tilde{\theta}_{(y,\omega),(z,\eta)}$
amounts to little more than an application of the triangle inequality and a somewhat elaborate three-$\eps$-argument.

 \begin{lemma}\label{lem:convergence_of_tildetheta}
   Let $q \in [1,\infty)$ and 
   let $\tilde{w} : \RR^d \to \RR^+$ be a continuous, submultiplicative weight function.
   Furthermore, assume that $\Phi$ is a $k$-admissible warping function with control weight $v_0$.
   If
   \[
     \theta \in \mathcal C^{m}(\RR^d) \text{ for some } 0 \leq m \leq k+1 ,
     \quad \text{ and }\quad
     v_0^n \cdot \frac{\partial^{m-n}}{\partial \upsilon_j^{m-n}}\theta
     \in  \lebesgue^q_{\tilde{w}} (\RR^d)
     \text{ for all } 0 \leq n \leq m,\ j\in\underline{d},
   \]
   then
   \begin{equation}\label{eq:tildetheta_in_X}
       \frac{\partial^{m}}{\partial \upsilon_j^m}
          \tilde{\theta}_{(y,\omega),(z,\eta)}
     = \frac{\partial^{m}}{\partial \upsilon_j^m}
          \left(
            \theta
            - \sqrt{\frac{w(\Phi(\omega))}{w(\Phi(\eta))}}
              \mathbf{\operatorname{E}}_{\Phi(\omega),
                                         A^{T}(\Phi(\omega))\langle y-z\rangle}
                 \left(
                    \bd T_{\Phi(\eta)-\Phi(\omega)}\theta
                \right)
          \right)
     \in \lebesgue^q_{\tilde{w}} (\RR^d)
   \end{equation}
   for all $(y,\omega),(z,\eta)\in\PhSpace$, and $j\in\underline{d}$.
   Furthermore, with
   \begin{equation}
       F_{j,m} (\delta; \theta, q, \tilde{w})
       := \sup_{(y,\omega)\in\PhSpace} \,\,
              \sup_{z\in (y+ \bd P_\omega^\delta),\eta\in \bd Q_\omega^\delta}
                \left\|
                    \frac{\partial^{m}}{\partial \upsilon_j^m} \tilde{\theta}_{(y,\omega),(z,\eta)}
                \right\|_{\lebesgue^q_{\tilde{w}}},
       \label{eq:TildeThetaFDefinition}
   \end{equation}
   where $\bd Q_\omega^\delta$  and $\bd P_\omega^\delta$ are as in Lemma~\ref{lem:coverings},
   we have
   \begin{equation}\label{eq:tildetheta_uniformconvergence}
      F_{j,m} (\delta; \theta, q, \tilde{w})
       < \infty \quad \text{ for all } \delta > 0,
       \qquad \text{ and } \quad F_{j,m} (\delta; \theta, q, \tilde{w})
       \xrightarrow[\delta \to 0]{} 0.
   \end{equation}
 \end{lemma}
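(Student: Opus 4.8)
The plan is to rewrite $\tilde{\theta}_{(y,\omega),(z,\eta)}$ in terms of a translation and the multiplication operator $\mathbf{\operatorname{E}}$, and then feed the result into the quantitative estimates of Lemma~\ref{lem:continuousdiffeye}. Fix $(y,\omega)\in\PhSpace$, $z\in y+\bd P_\omega^\delta$ and $\eta\in\bd Q_\omega^\delta$, and abbreviate $x:=\Phi(\eta)-\Phi(\omega)$, $\eps:=A^{T}(\Phi(\omega))\langle y-z\rangle$ and $c:=\sqrt{w(\Phi(\omega))/w(\Phi(\eta))}$, so that by \eqref{eq:tilde_theta} we have $\tilde{\theta}_{(y,\omega),(z,\eta)}=\theta-c\cdot\mathbf{\operatorname{E}}_{\Phi(\omega),\eps}\bigl(\bd T_{x}\theta\bigr)$. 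Unravelling the definitions of $\bd P_\omega^\delta$ and $\bd Q_\omega^\delta$ from Lemma~\ref{lem:coverings} gives the two \emph{uniform} bounds $|x|<2\delta$ and $|\eps|<2\delta\,v_0(\delta)$; moreover, since $w$ is $v_0^d$-moderate by Lemma~\ref{lem:assume_conclude}, one has $[v_0(2\delta)]^{-d/2}\le c\le[v_0(2\delta)]^{d/2}$, and writing $c^2=\det\phi_{\Phi(\eta)}\bigl(\Phi(\omega)-\Phi(\eta)\bigr)$ (cf.\ \eqref{eq:PhiDefinition}) and using $\phi_\tau(0)=\mathrm{id}$ together with the derivative bound \eqref{eq:PhiHigherDerivativeEstimate} shows $\sup|c-1|\to0$ as $\delta\to0$. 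Finally, since $\tilde w$ is submultiplicative, the argument behind \eqref{eq:StandardTrafoAndTranslationEstimate} yields $\|\bd T_x g\|_{\lebesgue^q_{\tilde w}}\le\tilde w(x)\,\|g\|_{\lebesgue^q_{\tilde w}}$ and, more generally, $\|v_0^n\cdot\bd T_x g\|_{\lebesgue^q_{\tilde w}}\le\tilde w(x)\,[v_0(x)]^n\,\|v_0^n\,g\|_{\lebesgue^q_{\tilde w}}$ for any measurable $g$; in particular the hypotheses of all parts of Lemma~\ref{lem:continuousdiffeye} hold for $\vartheta:=\bd T_x\theta$ and its $\upsilon_j$-derivatives.

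With these preparations, membership \eqref{eq:tildetheta_in_X} and finiteness of $F_{j,m}(\delta;\theta,p,\tilde w)$ are immediate: Lemma~\ref{lem:continuousdiffeye}(3) applied to $\vartheta=\bd T_x\theta$ shows $\frac{\partial^{m}}{\partial\upsilon_j^m}(\mathbf{\operatorname{E}}_{\Phi(\omega),\eps}\bd T_x\theta)\in\lebesgue^q_{\tilde w}(\RR^d)$ with the quantitative bound \eqref{eq:EDerivativeQuantitativeEstimate}, and adding $\frac{\partial^{m}}{\partial\upsilon_j^m}\theta\in\lebesgue^q_{\tilde w}(\RR^d)$ and $c<\infty$ gives \eqref{eq:tildetheta_in_X}; since for any fixed $\delta$ the parameters $x,\eps,c$ vary in bounded sets and $\tilde w,v_0$ are locally bounded, the same estimate (with $\eps_0$ chosen so that $|\eps|\le\eps_0$ on that range) provides a bound for $\|\frac{\partial^{m}}{\partial\upsilon_j^m}\tilde\theta_{(y,\omega),(z,\eta)}\|_{\lebesgue^q_{\tilde w}}$ that is uniform over the index set in \eqref{eq:TildeThetaFDefinition}, i.e.\ $F_{j,m}(\delta;\theta,p,\tilde w)<\infty$.

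The substance of the lemma is the convergence $F_{j,m}(\delta;\theta,p,\tilde w)\to0$. I would start from the splitting $\frac{\partial^{m}}{\partial\upsilon_j^m}\tilde{\theta}_{(y,\omega),(z,\eta)}=(1-c)\,\frac{\partial^{m}}{\partial\upsilon_j^m}\theta+c\bigl[\frac{\partial^{m}}{\partial\upsilon_j^m}\theta-\frac{\partial^{m}}{\partial\upsilon_j^m}(\mathbf{\operatorname{E}}_{\Phi(\omega),\eps}\bd T_x\theta)\bigr]$; the first summand has $\lebesgue^q_{\tilde w}$-norm $\le|1-c|\,\|\frac{\partial^{m}}{\partial\upsilon_j^m}\theta\|_{\lebesgue^q_{\tilde w}}\to0$ uniformly, and $c$ is bounded, so it remains to control the bracket. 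Using that $\bd T_x$ commutes with $\frac{\partial}{\partial\upsilon_j}$, I insert the two intermediate functions $\bd T_x\frac{\partial^{m}}{\partial\upsilon_j^m}\theta$ and $\mathbf{\operatorname{E}}_{\Phi(\omega),\eps}\bigl(\bd T_x\frac{\partial^{m}}{\partial\upsilon_j^m}\theta\bigr)$, which splits the bracket into three pieces: (i) $\|\frac{\partial^{m}}{\partial\upsilon_j^m}\theta-\bd T_x\frac{\partial^{m}}{\partial\upsilon_j^m}\theta\|_{\lebesgue^q_{\tilde w}}$, which tends to $0$ as $\delta\to0$ uniformly in $(y,\omega)$ because $|x|<2\delta$ and translation is strongly continuous on $\lebesgue^q_{\tilde w}(\RR^d)$ (here $q<\infty$ is used); (iii) $\|\mathbf{\operatorname{E}}_{\Phi(\omega),\eps}(\frac{\partial^{m}}{\partial\upsilon_j^m}(\bd T_x\theta))-\frac{\partial^{m}}{\partial\upsilon_j^m}(\mathbf{\operatorname{E}}_{\Phi(\omega),\eps}\bd T_x\theta)\|_{\lebesgue^q_{\tilde w}}$, which is $\le C_{m,\eps_0}\,|\eps|\,\sum_{n=1}^m\|v_0^n\,\frac{\partial^{m-n}}{\partial\upsilon_j^{m-n}}(\bd T_x\theta)\|_{\lebesgue^q_{\tilde w}}$ by \eqref{eq:EyeDerivativeMainEstimate} and hence $\to0$ since $|\eps|\le2\delta v_0(\delta)\to0$ while those norms stay bounded (first paragraph); and (ii) the difference $\|\bd T_x\frac{\partial^{m}}{\partial\upsilon_j^m}\theta-\mathbf{\operatorname{E}}_{\Phi(\omega),\eps}(\bd T_x\frac{\partial^{m}}{\partial\upsilon_j^m}\theta)\|_{\lebesgue^q_{\tilde w}}$.

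I expect piece (ii) to be the main obstacle, since the function $\vartheta:=\bd T_x\frac{\partial^{m}}{\partial\upsilon_j^m}\theta$ to which one wants to apply Lemma~\ref{lem:continuousdiffeye} \emph{itself moves with $\delta$} (through $x$), so part~(2) of that lemma — proved for a \emph{fixed} function — cannot be invoked verbatim. The remedy is a truncation combined with a three-$\eps$ argument: write $\frac{\partial^{m}}{\partial\upsilon_j^m}\theta=g_N+h_N$ with $g_N:=\Indicator_{\overline{B_N}(0)}\cdot\frac{\partial^{m}}{\partial\upsilon_j^m}\theta$; for $\delta\le\tfrac12$ the function $\bd T_x g_N$ is supported in $\overline{B_{N+1}}(0)$, so the estimate \eqref{eq:eyeestimate} bounds $\|\bd T_x g_N-\mathbf{\operatorname{E}}_{\Phi(\omega),\eps}(\bd T_x g_N)\|_{\lebesgue^q_{\tilde w}}$ by $\sqrt{2[1-\cos(\pi\min\{1,2|\eps|(N{+}1)v_0(N{+}1)\})]}\cdot\tilde w(x)\,\|g_N\|_{\lebesgue^q_{\tilde w}}$, which for \emph{fixed} $N$ tends to $0$ as $\delta\to0$ (using $|\eps|\le2\delta v_0(\delta)$ and boundedness of $\tilde w(x)$ for $|x|\le1$); while for the tail, since $\mathbf{\operatorname{E}}$ is isometric and $\bd T_x$ is bounded, $\|\bd T_x h_N-\mathbf{\operatorname{E}}_{\Phi(\omega),\eps}(\bd T_x h_N)\|_{\lebesgue^q_{\tilde w}}\le2\,\sup_{|x|\le1}\tilde w(x)\cdot\|h_N\|_{\lebesgue^q_{\tilde w}}$, which is made arbitrarily small by first choosing $N$ large (dominated convergence). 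Thus, given $\eps_0>0$, one fixes $N$ handling the tail below $\eps_0/2$ and then takes $\delta$ small enough to push the $g_N$-part below $\eps_0/2$, uniformly over $(y,\omega),(z,\eta)$. Assembling (i)--(iii) together with the uniform bounds $c\le[v_0(2\delta)]^{d/2}$ and $\sup|1-c|\to0$ gives the second assertion in \eqref{eq:tildetheta_uniformconvergence}, completing the proof.
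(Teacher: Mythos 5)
Your proposal is correct, and its skeleton (isolate the factor $|1-c|$ with $c=\sqrt{w(\Phi(\omega))/w(\Phi(\eta))}$, then control the remaining difference via Lemma~\ref{lem:continuousdiffeye} using the uniform bounds $|x|<2\delta$, $|\eps|<2\delta v_0(\delta)$) matches the paper's. The substantive difference is the order in which you telescope. The paper writes
$\theta-\Eye\translation_{-\tau}\theta=(\theta-\Eye\theta)+\Eye(\theta-\translation_{-\tau}\theta)$,
so the ``$\Eye$ versus identity'' discrepancy always acts on the \emph{fixed} function $\theta$ (and its fixed derivatives), which lets Lemma~\ref{lem:continuousdiffeye}(2)--(3) be invoked verbatim, while the translation difference sits \emph{inside} $\Eye$ and is absorbed by the quantitative estimate \eqref{eq:EDerivativeQuantitativeEstimate} plus strong continuity of translation. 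You instead insert $\translation_x\partial^m\theta$ first, so your piece (ii) applies $\Eye$ to the $\delta$-dependent family $\translation_x\partial^m\theta$; you correctly diagnose that part (2) of Lemma~\ref{lem:continuousdiffeye} does not apply verbatim there and repair it with a truncation plus three-$\eps$ argument (fixed support radius $N+1$ for the compactly supported part via \eqref{eq:eyeestimate}, isometry of $\Eye$ and uniform boundedness of $\translation_x$ for the tail). That repair is valid, but it is extra work the paper's ordering avoids entirely. Your route to $\sup|c-1|\to0$ via $c^2=\det\phi_{\Phi(\eta)}(\Phi(\omega)-\Phi(\eta))$, $\phi_\tau(0)=\mathrm{id}$ and the first-order bound from \eqref{eq:PhiHigherDerivativeEstimate} is also slightly different from the paper's (which uses Jacobi's formula for the derivative of the determinant of $A$), but both yield the same uniform estimate $|1-c^2|\lesssim\delta\,[v_0(2\delta)]^d$ and both implicitly use the $|\alpha|=1$ case of \eqref{eq:PhiHigherDerivativeEstimate}.
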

 \begin{proof}
  Since $v_0$ and $\tilde{w}$ are submultiplicative, so is
  $v_0^n \tilde{w}$, and $\lebesgue^q_{v_0^n \tilde{w}} (\RR^d)$ is translation-invariant,
  see \eqref{eq:StandardTrafoAndTranslationEstimate}.
  Hence, since
  \(
    \frac{\partial^{m-n}}{\partial \upsilon_j^{m-n}} \theta
    \in \lebesgue^q_{v_0^n \tilde{w}} (\RR^d)
    ,
  \)
  $0 \leq n \leq m$ and $i \in \underline{d}$, the same holds for arbitrary translates.
  Thus, \Cref{lem:continuousdiffeye}(3) shows
  \(
    \frac{\partial^m}{\partial \upsilon_j^m}
      \theta, \frac{\partial^m}{\partial \upsilon_j^m} \Eye (\translation_{\tau_0} \theta)
    \in \lebesgue^q_{\tilde{w}} (\RR^d)
  \)
  for all $\tau_0, \tau, \eps \in \RR^d$. This establishes \eqref{eq:tildetheta_in_X}, since
  $\frac{w(\Phi(\omega))}{w(\Phi(\eta))}<\infty$.

  Fix $\delta> 0$ and $(y,\omega)\in\PhSpace$
  and $(z,\eta) \in (y+ \bd P_\omega^\delta)\times \bd Q_\omega^\delta$.
  For brevity, set $\tau := \Phi(\omega) - \Phi(\eta)$ and
  $\eps := A^{T}(\Phi(\omega)) \langle y-z \rangle$, noting that $\tau\in B_{2\delta}(0)$ and
  \(
    \eps
    \in A^{T}(\Phi(\omega)) \langle\bd P_\omega^\delta\rangle
    = B_{2\delta v_0(\delta)}(0)
    =:  B_{\eps_\delta}(0)
    .
  \)
  In particular, $\eps_\delta \leq \eps_{\delta_0}$, for all $\delta \leq \delta_0$,
  and $\eps_\delta\rightarrow 0$ as $\delta\rightarrow 0$.
  Recall the definition of $\tilde{\theta}_{(y,\omega), (z,\eta)}$ (given in \eqref{eq:tilde_theta}),
  and apply the triangle inequality twice to obtain the estimate
  \begin{equation}
      \begin{split}
           \left\|
              \frac{\partial^{m}}{\partial \upsilon_j^m}
                \tilde{\theta}_{(y,\omega),(z,\eta)}
           \right\|_{\lebesgue^q_{\tilde{w}}} 
           & \leq \!
              \left|
                  1 \! - \! \sqrt{\frac{w(\Phi(\omega))}{w(\Phi(\eta))}}
              \right|
              \cdot
              \left\|
                  \frac{\partial^{m}}{\partial \upsilon_j^m}\theta
              \right\|_{\lebesgue^q_{\tilde{w}}} 
             + \sqrt{\frac{w(\Phi(\omega))}{w(\Phi(\eta))}} \cdot \!
                  \left\|
                      \frac{\partial^{m}}{\partial \upsilon_j^m} \!
                      \left(
                        \theta \! - \! \mathbf{\operatorname{E}}_{\Phi(\omega),\eps}\theta
                      \right)
                  \right\|_{\lebesgue^q_{\tilde{w}}}\\ 
             & \quad + \sqrt{\frac{w(\Phi(\omega))}{w(\Phi(\eta))}} \cdot \!
              \left\|
                        \frac{\partial^m}{\partial \upsilon_j^m}
                            \mathbf{\operatorname{E}}_{\Phi(\omega), \eps}
                                (\theta \! - \! \translation_{\Phi(\eta) - \Phi(\omega)} \theta)
                    \right\|_{\lebesgue^q_{\tilde{w}}}
               \! . 
      \end{split}
      \label{eq:thetatilde_intermediate_est}
  \end{equation}

  Next, Lemma~\ref{lem:continuousdiffeye}(3) yields 
  \begin{align}
      E_\delta & := \sup_{|\eps| \leq \eps_\delta} \,\, \sup_{\omega \in D}
        \left\|
            \frac{\partial^{m}}{\partial \upsilon_j^m} \!
                \left(
                    \theta - \mathbf{\operatorname{E}}_{\Phi(\omega),\eps}\theta
                \right)
        \right\|_{\lebesgue^q_{\tilde{w}}} \leq \infty,\ \text{for all}\ \delta>0, \text{ with }
      E_\delta \rightarrow 0 \ \text{ as } \delta \rightarrow 0, \text{ and}\label{eq:FirstTermEst}\\
      F_\delta & := \sup_{|\eps| \leq \eps_\delta} \,\, \sup_{\omega \in D}
         \left\|
             \frac{\partial^m}{\partial \upsilon_j^m}
                 \mathbf{\operatorname{E}}_{\Phi(\omega), \eps}
                     (\theta \! - \! \translation_{-\tau} \theta)
         \right\|_{\lebesgue^q_{\tilde{w}}} \nonumber\\
    &\leq \left\|
            \frac{\partial^m}{\partial \upsilon_j^m}
              \theta
            - \translation_{-\tau}\left(\frac{\partial^m}{\partial \upsilon_j^m} \theta\right)
          \right\|_{\lebesgue^q_{\tilde{w}}}
          + C_{m,\eps_{\delta_0}} \cdot \eps_\delta \cdot
            \sum_{n=1}^m
                \left\|
                    v_0^n \cdot \frac{\partial^{m-n}}{\partial \upsilon_j^{m-n}}
                        (\theta \! - \! \translation_{-\tau}\theta)
                \right\|_{\lebesgue^q_{\tilde{w}}}.\label{eq:SecTermEst}
  \end{align}
  Note that the first term of the right-hand side of \eqref{eq:SecTermEst} converges to $0$
  for $\delta\rightarrow 0$, since $|\tau|\leq 2\delta$ and translation is continuous
  in $\bd L^q_{\tilde{w}}$, since $\tilde{w}$ is continuous and submultiplicative.
  Furthermore, the sum over $n$ in the right-hand side of \eqref{eq:SecTermEst} is finite,
  since $\bd L^q_{\tilde{w}}$ is translation-invariant and hence,
  all summands are finite by assumption.
  Therefore, $F_\delta$ vanishes for $\delta\rightarrow 0$.
  In fact, since $|\eps|\leq \eps_\delta$ and $w$ is $v_0^d$-moderate
  with radially increasing $v_0$ (cf.\ \Cref{lem:assume_conclude}),
  $\frac{w(\Phi(\omega))}{w(\Phi(\eta))} \leq v_0^d(\eps_\delta)$,
  which settles the desired convergence of the second and third term
  in \eqref{eq:thetatilde_intermediate_est}.

  To settle convergence of the first term, we need to show that
  $\frac{w(\Phi(\omega))}{w(\Phi(\eta))}\overset{\delta\rightarrow 0}{\rightarrow} 1$,
  uniformly with respect to $\omega\in D, \eta\in \bd Q_\omega^\delta$.
  To this end, note that
  \[
      \frac{w(\Phi(\omega))}{w(\Phi(\eta))}
      = \frac{w(\Phi(\eta)) + \int_0^1 \frac{d}{dt}\big|_{t=s}
                    \left[w(\Phi(\eta)+s\tau)\right]~ds}{w(\Phi(\eta))}
      \leq 1 +\frac{\sup_{\upsilon\in B_{2\delta}(\Phi(\eta))}\nabla_{\tau}w(\upsilon)}{w(\Phi(\eta))},
  \]
  where $\nabla_{\tau}$ denotes the derivative in direction $\tau\in\RR^d$.
  We now use Jacobi's formula
  \[
    \frac{d}{dt} \det A(t) = \det A(t) \cdot \mathop{\operatorname{trace}} ([A(t)]^{-1} \cdot A'(t)),
  \]
  valid for the derivative of the determinant of any differentiable function
  $M : I \subset \RR \to \GL (\RR^d)$
  (see \cite[Section 8.3, Equation (2)]{MatrixDifferentialCalculus}),
  to obtain
  \[
    \begin{split}
    \nabla_{\tau}w(\upsilon)
    & = \sum_{j\in\underline{d}}
          \tau_j \frac{\partial}{\partial \upsilon_j} \det(A(\upsilon))
      = \det(A(\upsilon))
        \cdot \sum_{j\in\underline{d}}
                 \tau_j \cdot
                 \mathop{\operatorname{trace}}
                 \left(
                    A^{-1}(\upsilon)
                    \frac{\partial}{\partial \upsilon_j} A(\upsilon)
                 \right)\\
    & = w(\upsilon) 
        \cdot \sum_{j\in\underline{d}}
                 \tau_j \cdot
                 \mathop{\operatorname{trace}}
                 \left(
                     \left(
                     \frac{\partial}{\partial \eta_i} \bigg|_{\eta = 0}
                        A^{-1}(\upsilon)
                        A(\upsilon + \eta)
                     \right)^T
                 \right) \\
    & = w(\upsilon) 
        \cdot \sum_{j\in\underline{d}}
                 \tau_j \cdot
                 \mathop{\operatorname{trace}}
                 \left(
                     \frac{\partial}{\partial \eta_i} \bigg|_{\eta = 0} \phi_{\upsilon}(\eta)
                 \right),
    \end{split}
  \]
  with $\phi_{\upsilon}$ as in \eqref{eq:PhiDefinition}.
  Note that $\phi_{\upsilon}(0) = \mathop{\operatorname{id}}$ for all
  $\upsilon\in\RR^d$, so that \eqref{eq:PhiHigherDerivativeEstimate} yields
  $\| (\partial_i \phi_{\upsilon}) (0) \| \leq v_0(0)$.
  Additionally, the trace of a matrix $M \in \RR^{d\times d}$ can be (coarsely) estimated by
  $| \mathop{\operatorname{trace}}(M) | \leq d\|M\|$, such that
  \[
    \left|\nabla_{\tau}w(\upsilon) \right|
    \leq d \cdot w(\upsilon) \cdot \sum_{j\in\underline{d}} v_0(0) \cdot |\tau_j|
    \leq d \cdot w(\upsilon) \cdot \|\tau\|_1\cdot v_0(0)
    \leq d^{3/2} \cdot w(\upsilon) \cdot |\tau| \cdot v_0(0).
  \]
  Therefore, with $|\tau|\leq 2\delta$ and $v_0^d$-moderateness of $w$,
  \begin{equation}
      \left|
          1 - \frac{w(\Phi(\omega))}{w(\Phi(\eta))}
      \right|
      \leq |\tau| \cdot
           d^{3/2} \cdot
           v_0(0)
           \cdot \max_{r\in[0,1]}
                   \frac{w(\Phi(\eta)+r\tau)}{w(\Phi(\eta))}
      \leq 2\delta \cdot d^{3/2} \cdot v_0^d (2\delta) \cdot v_0(0) =: C^\delta < \infty.
      \label{eq:WeightQuotientConvergence}
  \end{equation}
  The final estimate is independent of $\omega\in D$, and of $\eta\in \bd Q_\omega^\delta$,
  and $C^\delta \rightarrow 0$ as $\delta \rightarrow 0$.
 \end{proof}

 We are now ready to prove \Cref{thm:main2_discreteframes}.

\subsection{Proof of Theorem~\ref{thm:main2_discreteframes}}

Recall that, by \Cref{rem:OscKernelContinuity}, $\oscVFGd$ is continuous.
Using \Cref{pro:difference_as_warping} and Parseval's formula,
we can rewrite the oscillation at $((y,\omega), (z,\eta))\in\PhSpace\times\PhSpace$, as follows:
 \begin{align}
     \oscVFGd ( (y,\omega), (z,\eta))
     &= \sup_{(z_0, \eta_0) \in \CalV_{(z,\eta)}^\delta}
          \left|
              \left\langle
                  \widehat{g_{y,\omega}},
                  \widehat{g_{z,\eta}} - \Gamma ( (z,\eta), (z_0, \eta_0)) \cdot \widehat{g_{z_0, \eta_0}}
              \right\rangle
          \right| \nonumber \\
     &= \sup_{(z_0, \eta_0) \in \CalV_{(z,\eta)}^\delta}
          \left|
              K_{\theta, \tilde{\theta}_{(z,\eta), (z_0,\eta_0)}, \Phi} ( (y,\omega), (z,\eta))
          \right| .
          \label{eq:OscillationRewritteNormalOrdering}
 \end{align}

 Based on \eqref{eq:OscillationRewritteNormalOrdering},
 \Cref{pro:kern_in_theta} provides
  \[
   \begin{split}
    \lefteqn{\left|
         K_{\theta, \tilde{\theta}_{(z,\eta), (z_0,\eta_0)}, \Phi} ( (y,\omega), (z,\eta))
    \right|}\\
    & = \sqrt{\frac{w(\Phi(\eta))}{w(\Phi(\omega))}}
      \cdot \nicki{
              \left|
                L_{\Phi(\eta)} [\theta, \tilde{\theta}_{(z,\eta), (z_0, \eta_0)}]
                (A^{T}(\Phi(\eta)) \langle z-y \rangle,\Phi(\omega)-\Phi(\eta))
              \right|
            },
    \end{split}
  \]
  where $L_{\Phi(\eta)}$ is as in \eqref{eq:defOfL}.
  If we define $\mathcal L_{\tau_0}:\RR^d\times\RR^d \rightarrow \RR^+_0$, $\tau_0\in\RR^d$, by
  \begin{equation}\label{eq:OscSupKern1}
    \mathcal L_{\tau_0}(x,\tau)
    := \sup_{z\in\RR^d} \,\,
         \sup_{(z_0,\eta_0)\in \CalV^\delta_{z,\Phi^{-1}(\tau_0)}}
          \nicki{\left|L_{\tau_0} [\theta, \tilde{\theta}_{(z,\Phi^{-1}(\tau_0)), (z_0, \eta_0)}](x,\tau)\right|},
  \end{equation}
  then, for all $(y,\omega), (z,\eta)\in\PhSpace$,
  \[
    \oscVFGd ( (y,\omega), (z,\eta))
    \leq \sqrt{\frac{w(\Phi(\eta))}{w(\Phi(\omega))}}
          \mathcal{L}_{\Phi(\eta)}(A^{T}(\Phi(\eta)) \langle z-y \rangle,\Phi(\omega)-\Phi(\eta)).
  \]

  Via a tedious, but straightforward derivation involving several changes of variable
  in a manner similar to the proof of \Cref{pro:kern_in_theta}, we obtain in particular that
  \begin{equation}\label{eqEstOscByScriptL}
    \|\oscVFGd\|_{\BBm}
    \leq \|\oscVFGd\|_{\BB_{m^\natural}}
    \leq \esssup_{\tau_0\in\RR^d}
           \int_{\RR^d}\int_{\RR^d} M(x,\tau)\mathcal L_{\tau_0}(x,\tau) ~dx~d\tau,
  \end{equation}
  where $M$ is defined as in \eqref{eq:BigWeightDefinition} and we used that
  $m$ is $\Phi$-compatible with the (symmetric) dominating weight
  $m^\Phi(x,\tau) = (1+|x|)^p\cdot v_1(\tau)$.

  By \Cref{lem:convergence_of_tildetheta}, with $\tilde{w} \!=\! w_2$,
  all functions $\tilde{\theta}_{(z,\eta),(z_0,\eta_0)}$ with $(z,\eta) \!\!\in\! \PhSpace$ and
  $(z_0,\eta_0) \!\!\in\! \CalV^\delta_{z,\eta} \!\subset\! (z + \bd P^\delta_{\eta})\times \bd Q^\delta_{\eta}$
  satisfy the conditions of \Cref{lem:NiceCrossGramianEstimate}, as does $\theta$.
  Hence, for any $z,\tau_0\in\RR^d$ and $(z_0,\eta_0)\in \CalV^\delta_{z,\Phi^{-1}(\tau_0)}$,
  \Cref{lem:NiceCrossGramianEstimate} yields
  \begin{equation}\label{eq:EstOfL}
    \nicki{|L_{\tau_0} [\theta, \tilde{\theta}_{(z,\Phi^{-1}(\tau_0)), (z_0, \eta_0)}](x,\tau)|}
    \leq C \cdot C_{\textrm{max}}
           \cdot (1+|x|)^{-(d+p+1)}
           \nicki{\cdot v_0^{4d+3p+3}(\tau)}
           \cdot [w_2(\tau)]^{-1},
  \end{equation}
  \nicki{where $C>0$ depends only on $d$, $k$ and the control weight $v_0$, and furthermore }
  \[
    \begin{split}
    C_{\textrm{max}}
    = &\,\, C_{\textrm{max}}\left(d+p+1,\theta,\tilde{\theta}_{(z,\Phi^{-1}(\tau_0)), (z_0, \eta_0)}\right)\\
    = & \max_{\substack{j \in \underline{d} \\ 0 \leq m \leq d+p+1}}
                        \bigg\|
                            \frac{\partial^m}{\partial \upsilon_j^m} \theta
                        \bigg\|_{\lebesgue^2_{w_2}(\RR^d)} \cdot
                        \max_{\substack{j \in \underline{d} \\ 0 \leq m \leq d+p+1}}
                        \bigg\|
                            \frac{\partial^m}{\partial \upsilon_j^m}
                            \tilde{\theta}_{(z,\Phi^{-1}(\tau_0)), (z_0, \eta_0)}
                        \bigg\|_{\lebesgue^2_{w_2}(\RR^d)}\\
      ({\scriptstyle \text{Lem. \ref{lem:convergence_of_tildetheta}}})
      \, \leq & \max_{\substack{j \in \underline{d} \\ 0 \leq m \leq d+p+1}}
                        \bigg\|
                            \frac{\partial^m}{\partial \upsilon_j^m} \theta
                        \bigg\|_{\lebesgue^2_{w_2}(\RR^d)}
                        \cdot \max_{\substack{j \in \underline{d} \\ 0 \leq m \leq d+p+1}}
                                F_{j,m}(\delta;\theta,2,w_2) =: D_{\textrm{max}}^\delta
    < \infty.
    \end{split}
  \]
  Note that the estimate $D_{\textrm{max}}^\delta$ is independent of $\tau_0\in\RR^D$,
  $z\in\RR^d$, and $(z_0,\eta_0)\in \CalV^\delta_{z,\Phi^{-1}(\tau_0)}$,
  such that taking $D_{\textrm{max}}^\delta$ instead of $C_{\textrm{max}}$
  in \eqref{eq:EstOfL} produces a valid upper estimate for $\mathcal L_{\tau_0}(x,\tau)$.
  Moreover, note that \Cref{lem:convergence_of_tildetheta} implies
  $D_{\textrm{max}}^\delta\rightarrow 0$ as $\delta\rightarrow 0$.

  Proving $\|\oscVFGd\|_{\BBm}<\infty$ is now analogous
  to the proof of \Cref{thm:MR1_kernel_is_in_AAm}, and $\|\oscVFGd\|_{\BB_{m}}\rightarrow 0$
  as $\delta\rightarrow 0$ follows directly from $D_{\textrm{max}}^\delta\rightarrow 0$.
\hfill\qed


 \section{Coorbit space theory of warped time-frequency systems}
\label{sec:warpedcoorbits}

We have now developed explicit sufficient conditions that ensure $K_{\theta,\Phi},
\oscVFGd\in\BBm$ and hence, by Eq.~\eqref{eq:MaxKernOscEstimate},
$\mathrm{M}_{\CalV_\Phi^\delta} K_{\theta,\Phi}\in\BBm$, since $\BBm$ is solid.
These are the crucial ingredients for applying coorbit theory
in the setting of warped time-frequency representations.

\begin{theorem}\label{cor:warped_disc_frames}
Let $\Phi$ be a $(d+p+1)$-admissible warping function with control weight $v_0$,
where $p=0$ if $R_\Phi = \sup_{\xi\in D} \|\mathrm{D}\Phi(\xi)\|=\infty$ and $p \in \NN_0$ otherwise.
Let furthermore $m_0 : \PhSpace \times \PhSpace \to \R^+$ be a symmetric weight
that satisfies $1\leq m_0(\PhVar,\PhVarA)\leq C^{(0)} m_0(\PhVar,\PhVarC)m_0(\PhVarC,\PhVarA)$
for all $\PhVar,\PhVarA,\PhVarC\in\Lambda$ and
\begin{equation}
   m_0((y, \xi), (z, \eta))
   \leq (1 + |y-z|)^p \cdot v_1 (\Phi(\xi) - \Phi(\eta)),
   \text{ for all } y,z\in\RR^d \text{ and } \xi,\eta\in D,\ \tau,\upsilon\in\RR^\dimension,
\end{equation}
for some continuous and submultiplicative weight $v_1 : \RR^\dimension \to \R^+$
with $v_1(\upsilon)=v_1(-\upsilon)$ for all $\upsilon\in\RR^d$.

Then there exist nonzero $\theta\in \lebesgue^2_{v_0^{d/2}} (\RR^d)$, such that for any rich,
solid Banach space $\BanachOne \hookrightarrow \lebesgue_{\mathrm{loc}}^1 (\PhSpace)$
with $\mathcal{B}_{m_0}(\BanachOne) \hookrightarrow \BanachOne$,
\begin{enumerate}
   \item $\Co(\mathcal G(\theta,\Phi),\BanachOne)$ is a well-defined Banach function space.

   \item There is a $\delta_0 = \delta_0(\theta,\Phi,m_0) > 0$ independent of $\BanachOne$, such that
         \[
           (g_{y_{\ell,k},\omega_{\ell,k}})_{\ell,k\in\ZZ^d}
           \subset \mathcal G(\theta,\Phi)
         \]
         is a Banach frame decomposition for $\Co(\mathcal G(\theta,\Phi),\BanachOne)$, whenever
         the points $\left((y_{\ell,k},\omega_{\ell,k})\right)_{\ell,k\in\ZZ^d}\subset \PhSpace$
         satisfy $(y_{\ell,k},\omega_{\ell,k})\in V^\delta_{\ell,k}$, where
         $\CalV_\Phi^\delta = (V_{\ell,k}^\delta )_{\ell,k\in\ZZ^d}$
         is the $\Phi$-induced $\delta$-fine covering and $\delta \leq \delta_0$.
\end{enumerate}
In particular, items (1) and (2) above hold for $\BanachOne = \lebesgue^{p,q}_{\kappa}(\PhSpace)$,
with $1\leq p,q\leq\infty$ and any weight $\kappa: \PhSpace \rightarrow [1,\infty)$ that satisfies
$m_{\kappa} \lesssim m_0$.
\end{theorem}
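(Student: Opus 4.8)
The plan is to assemble \Cref{cor:warped_disc_frames} from the machinery developed in Sections~\ref{sec:coorbits}--\ref{sec:discretewarped} together with the abstract coorbit results of Section~\ref{sec:prelims}, in the following order. First I would fix the prototype: choose any nonzero $\theta \in \mathcal{C}_c^\infty(\R^d)$. Such $\theta$ automatically satisfies $\theta \in \lebesgue_{\sqrt{w_0}}^2(\R^d)$ and the derivative-decay hypotheses of both \Cref{thm:MR1_kernel_is_in_AAm} and \Cref{thm:main2_discreteframes}, since all partial derivatives of $\theta$ are compactly supported and hence lie in every weighted $\lebesgue^2$ space; by \Cref{thm:orthrel}, $\mathcal{G}(\theta,\Phi)$ is then a continuous tight frame for $\LtDF$, and after normalizing $\theta$ so that $\|\theta\|_{\lebesgue^2} = 1$ it is a continuous Parseval frame. \Cref{prop:WarpedSystemWeaklyContinuous} gives the required continuity of $(y,\omega)\mapsto g_{y,\omega}$.

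Next I would set up the weights so that Assumption~\ref{assu:CoorbitAssumptions1} is met. Taking $m = m_0$ as the kernel weight and invoking \Cref{prop:GreatSimplification} with $\zeta_1 = v_1$ and $\PhVarC$ an arbitrary fixed point, I obtain a control weight $u(\PhVar) = m_0(\PhVar,\PhVarC)$ and a weight $v(y,\xi) = u(y,\xi)\cdot\max\{w(\Phi(\xi)),[w(\Phi(\xi))]^{-1}\}$ for which items (1)--(3) of Assumption~\ref{assu:CoorbitAssumptions1} hold, $u$ is $\CalV_\Phi^\delta$-moderate, and both $m_0$ and $m_v$ are $\Phi$-convolution-dominated (hence $\Phi$-compatible, with $p$ and $R_\Phi$ as in the statement). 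Item (4) of Assumption~\ref{assu:CoorbitAssumptions1} is the Parseval property just established. For item (5) one checks that $v$ dominates $\max\{\|\psi_\PhVar\|_{\lebesgue^2}, u(\PhVar)/w^c_{\CalV_\Phi^\delta}(\PhVar)\}$: the first term is bounded by \eqref{eq:FrameElementsBounded}, and the second by the explicit form of $w^c_{\CalV_\Phi^\delta}$ from \Cref{pro:inducedcoverProps}\ref{enu:prodAdmissibility} together with the definition of $v$. Item (6) is exactly the hypothesis $\mathcal{B}_{m_0}(\BanachOne)\hookrightarrow\BanachOne$ (using $\|K(\bullet)\|_{\BanachOne\to\BanachOne}\le\|K\|_{\BBmo}$, which is what that embedding means after \Cref{prop:NewAlgebraGoodForMixedLebesgue}). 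For item (7), with $m := \max\{m_0,m_v\}$ one applies \Cref{thm:MR1_kernel_is_in_AAm} (with $\theta_1=\theta_2=\theta$) to get $K_\Psi = K_{\theta,\theta}\in\BBm$, whence $K_\Psi\in\AAPlain_{m_v}$ since $\BBm\hookrightarrow\AAPlain_m\hookrightarrow\AAPlain_{m_v}$ and $m\ge m_v$; the maximal-kernel condition $\MaxKernel{\CalV_\Phi^\delta}K_\Psi\in\BB_{m_0}$ follows via Remark~\ref{rem:KernOscEstimate} once the oscillation estimate below is in place, choosing $\CalU = \CalV_\Phi^\delta$. This establishes part~(1) through \Cref{thm:coorbits}.

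For part~(2) I would verify Assumption~\ref{assu:CoorbitAssumptions2}. The covering $\CalV_\Phi^\delta$ is topologically admissible by \Cref{pro:inducedcoverProps}, and $\Gamma\big((y,\omega),(z,\eta)\big) = e^{-2\pi i\langle y-z,\omega\rangle}$ is continuous. The small-oscillation condition \eqref{enu:smallOscillation} is the heart of the matter: by \Cref{thm:main2_discreteframes}, applied with $m$ as above (the hypotheses on $\theta$ are met since $\theta\in\mathcal{C}_c^\infty$), one has $\|\oscVFGd\|_{\BBm}<\infty$ for every $\delta>0$ and $\|\oscVFGd\|_{\BBm}\to 0$ as $\delta\to 0$. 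Hence there is $\delta_0 = \delta_0(\theta,\Phi,m) > 0$, depending only on $\|K_\Psi\|_{\BBm}$ and the convergence rate and therefore \emph{not} on $\BanachOne$, such that $\|\oscVFGd\|_{\BBm}\,(2\|K_\Psi\|_{\BBm}+\|\oscVFGd\|_{\BBm}) < 1$ for all $\delta\le\delta_0$. \Cref{thm:CoorbitDiscretization} then yields that $(g_{y_{\ell,k},\omega_{\ell,k}})_{\ell,k}$ is a Banach frame decomposition for $\Co(\mathcal{G}(\theta,\Phi),\BanachOne)$ whenever $(y_{\ell,k},\omega_{\ell,k})\in V_{\ell,k}^\delta$. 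Finally, for $\BanachOne = \lebesgue^{p,q}_\kappa(\PhSpace)$ with $m_\kappa\lesssim m_0$, \Cref{prop:NewAlgebraGoodForMixedLebesgue} gives $\mathcal{B}_{m_0}(\BanachOne)\hookrightarrow\BanachOne$, so both conclusions apply.

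The main obstacle is bookkeeping rather than a single hard idea: one must choose the web of weights $m_0, m_v, m, u, v$ consistently so that all seven items of Assumption~\ref{assu:CoorbitAssumptions1} hold \emph{simultaneously} with the \emph{same} prototype $\theta$ and the \emph{same} covering $\CalV_\Phi^\delta$, and then confirm that the threshold $\delta_0$ extracted from \eqref{enu:smallOscillation} is genuinely independent of $\BanachOne$ — which it is, because $\|\oscVFGd\|_{\BBm}$ and $\|K_\Psi\|_{\BBm}$ are intrinsic to $(\theta,\Phi,m)$. A minor subtlety worth spelling out is the identification of Assumption~\ref{assu:CoorbitAssumptions1}(6) with the stated hypothesis on $\BanachOne$, and the fact that \Cref{thm:coorbits} (hence well-definedness of the coorbit space) is already guaranteed once Assumption~\ref{assu:CoorbitAssumptions1} holds, before the finer covering $\CalV_\Phi^\delta$ is refined for discretization.
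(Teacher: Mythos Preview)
Your proposal is correct and follows essentially the same route as the paper: choose a normalized $\theta\in\mathcal{C}_c^\infty$, invoke \Cref{prop:GreatSimplification} and \Cref{pro:inducedcoverProps} to set up the weights and coverings, verify Assumption~\ref{assu:CoorbitAssumptions1} item by item (using \Cref{thm:MR1_kernel_is_in_AAm}, \Cref{thm:main2_discreteframes}, and Remark~\ref{rem:KernOscEstimate} for item~(7)), then apply \Cref{thm:coorbits} and \Cref{thm:CoorbitDiscretization}. The only cosmetic difference is that you work with $m=\max\{m_0,m_v\}$ throughout, exactly as Assumption~\ref{assu:CoorbitAssumptions2}(3) demands, whereas the paper argues that $m_0\le m_v$ and uses $m_v$; your formulation is in fact the cleaner one.
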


\begin{proof}
  By \Cref{pro:inducedcoverProps,prop:GreatSimplification}, the $\Phi$-induced
  $\delta$-fine covering $\CalV_\Phi^\delta$ is a topologically admissible,
  product-admissible covering that satisfies items (1)-(3) of \Cref{assu:CoorbitAssumptions1}
  and item (1) of \Cref{assu:CoorbitAssumptions2}.
  Moreover, item (6) of \Cref{assu:CoorbitAssumptions1} is satisfied,
  by the assumptions of this theorem.

  Next, choose $\theta\in\Ltv(\RR^d)$, such that $\|\theta\|_{\lebesgue^2(\RR^d)} = 1$
  and the assumptions of \Cref{thm:main2_discreteframes} are satisfied with $m=m_v$ defined by
  \begin{align*}
    & m_v((y,\omega),(z,\eta))
      = \max\left\{\frac{v((y,\omega))}{v((z,\eta))},\frac{v((z,\eta))}{v((y,\omega))}\right\}, \\
    \text{with}\quad
    & v((y,\omega)) := m_0((y,\omega),(x,\xi))
                       \cdot \max\{w(\Phi(\omega)),[w(\Phi(\omega))]^{-1}\},
  \end{align*}
  for all $(y,\omega),(z,\eta)\in\PhSpace$ and some fixed, arbitrary $(x,\xi)\in\PhSpace$.
  This is always possible, since any function
  $\theta\in\mathcal{C}^\infty_c(\RR^d)\subset \lebesgue^2(\RR^d)$ with unit $\lebesgue^2$-norm
  satisfies these assumptions.
  In particular, the assumptions of \Cref{thm:main2_discreteframes}
  are also satisfied for $m=m_0\leq m_v$.
  By \Cref{prop:WarpedSystemWeaklyContinuous}, the map $(y,\omega)\mapsto g_{y,\omega}$
  is continuous and by \Cref{cor:WarpingInversion}, the warped time-frequency system
  $\mathcal G(\theta,\Phi)$ is a tight Parseval frame,
  such that item (4) of \Cref{assu:CoorbitAssumptions1} is satisfied.
  In particular, by Eq.~\eqref{eq:FrameElementsBounded},
  $\sup_{(y,\omega)\in\PhSpace} \|g_{y,\omega}\|_2 \leq \|\theta\|_{\Ltv}  < \infty$.
  Hence, with $w^c_{\CalV_\Phi^\delta}=\max\{w(\Phi(\omega)),[w(\Phi(\omega))]^{-1}\}$
  as in \Cref{pro:inducedcoverProps} and $u(\PhVar) := m_0(\PhVar,(x,\xi))$
  with the same choice of $(x,\xi)\in\PhSpace$ as above, item (5) of \Cref{assu:CoorbitAssumptions1}
  is satisfied as well.
 
  Moreover, by choice of $\theta$, and with $\Gamma$ as in \Cref{thm:main2_discreteframes}, we have
  \[
    \|K_{\theta,\Phi}\|_{\mathcal{B}_{m_v}}
    < \infty
    \quad \text{and}\quad
    \|\mathrm{M}_{\CalV_\Phi^\delta} K_{\theta,\Phi}\|_{\mathcal{B}_{m_0}}
    \leq \|K_{\theta,\Phi}\|_{\mathcal{B}_{m_0}} + \|\oscVFGd\|_{\mathcal{B}_{m_0}}
    < \infty,
  \]
  showing that the final item (7) of \Cref{assu:CoorbitAssumptions1} is satisfied.
  Hence, \Cref{assu:CoorbitAssumptions1} is fully satisfied and we can apply \Cref{thm:coorbits}
  to show that $\Co(\mathcal G(\theta,\Phi),\BanachOne)$ is a well-defined Banach function space.

  Finally, note that $\Gamma$ as in \Cref{thm:main2_discreteframes} is continuous,
  to verify that item (2) of \Cref{assu:CoorbitAssumptions2} is satisfied.
  By the same theorem, we can choose $\delta_0 > 0$, such that
  \[
    \| \oscVFGd \|_{\mathcal{B}_{m_v}}
    \cdot (2 \| K_\Psi \|_{\mathcal{B}_{m_v}} + \| \oscVFGd \|_{\mathcal{B}_{m_v}})
    < 1
  \]
  for all $\delta\leq\delta_0$, proving the second assertion.
  The proof is completed by observing that the statement about weighted, 
  mixed-norm Lebesgue spaces is a direct consequence of \eqref{eq:BBmBoundedOnLPQ}.
\end{proof}

By definition, the coorbit space $\Co (\mathcal{G}(\theta,\Phi),\BanachOne)$ depends
on both the prototype function $\theta$ and the warping function $\Phi$.
The dependence on the warping function $\Phi$ is an essential consequence
of (sufficiently) different warping functions inducing time-frequency representations
with vastly different properties. Relations between coorbit spaces associated
to different warping functions are studied in the framework of decomposition spaces~\cite{fegr85,boni07,voigtlaender2016embeddings}
in a follow-up contribution.
Here, we will show that the dependence on the generating prototype $\theta$ can be weakened,
i.e., under certain conditions on $\theta_1,\theta_2$,
the coorbit spaces $\Co (\mathcal{G}(\theta_1,\Phi),\BanachOne)$
and $\Co (\mathcal{G}(\theta_2,\Phi),\BanachOne)$ are equal,
similar to modulation spaces for the STFT.
Before we do so, however, we show that the mixed kernel associated with
two warped time-frequency systems inherits the membership in $\BBm$ (or $\AAm$)
from the kernels of the individual systems.

\begin{lemma}\label{pro:protoindep}
  Let $X\in\{\AAm,\BBm\}$, with a symmetric weight $m$
  satisfying $m(\PhVar,\PhVarA)\leq C^{(0)}m(\PhVar,\PhVarC)m(\PhVarC,\PhVarA)$,
  for some $C^{(0)}$ and all $\PhVar,\PhVarA,\PhVarC\in\PhSpace$.
  If $\theta_1,\theta_2 \in \Ltv\cap \LtRd$ are nonzero and such that
  $K_{\theta_1,\Phi},K_{\theta_2,\Phi}\in X$, then
  \begin{equation}
    K_{\theta_1,\theta_2,\Phi} := K_{\mathcal{G}(\theta_1,\Phi),\mathcal{G}(\theta_2,\Phi)} \in X.
    \label{eq:mixed_kernel_nice}
  \end{equation}
%
\end{lemma}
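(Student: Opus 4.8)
The plan is to reduce the assertion, via a reproducing-kernel factorisation of the mixed Gramian, to the fact recorded after Definition~\ref{def:NewKernelModule}: under the hypothesis $m(\PhVar,\PhVarA)\leq C^{(0)}\,m(\PhVar,\PhVarC)\,m(\PhVarC,\PhVarA)$ the spaces $\AAm$ and $\BBm$ are Banach algebras with respect to the kernel product of \eqref{eq:IntegralOperator}; both are moreover closed under $K\mapsto K^{\ast}$ since $m$ is symmetric. First I would normalise, replacing $\theta_{\ell}$ by $\theta_{\ell}/\|\theta_{\ell}\|_{\lebesgue^{2}}$: by Theorem~\ref{thm:orthrel} this turns each $\CalG(\theta_{\ell},\Phi)$ into a continuous Parseval frame for $\LtDF$ and multiplies $K_{\theta_{1},\Phi}$, $K_{\theta_{2},\Phi}$ and $K_{\theta_{1},\theta_{2},\Phi}$ only by positive constants, so neither hypotheses nor conclusion are affected.

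The key step is the identity
\[
  K_{\theta_{1},\theta_{2},\Phi}
  = K_{\theta_{1},\theta_{0},\Phi}\cdot K_{\theta_{0},\theta_{2},\Phi},
\]
valid for \emph{any} normalised auxiliary prototype $\theta_{0}\in\Ltv\cap\LtRd$. To obtain it, I would insert the weak-sense reconstruction formula of Corollary~\ref{cor:WarpingInversion} (with frame bound $1$) for the Parseval frame $\CalG(\theta_{0},\Phi)$ into $\langle\widehat{g_{z,\eta}^{[2]}},\widehat{g_{y,\omega}^{[1]}}\rangle=K_{\theta_{1},\theta_{2},\Phi}((y,\omega),(z,\eta))$ and interchange integration and inner product; the interchange is legitimate because, by \eqref{eq:FrameElementsBounded}, all Gramian and cross-Gramian kernels occurring are bounded by products of $\lebesgue^{2}_{\sqrt{w_{0}}}$-norms of the prototypes, and the defining integral of the reconstruction converges in $\LtDF$. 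Specialising $\theta_{0}=\theta_{1}$ (resp.\ $\theta_{0}=\theta_{2}$, using the other frame) yields the convenient side-forms $K_{\theta_{1},\theta_{2},\Phi}=K_{\theta_{1},\Phi}\cdot K_{\theta_{1},\theta_{2},\Phi}=K_{\theta_{1},\theta_{2},\Phi}\cdot K_{\theta_{2},\Phi}$, and hence the ``sandwich'' representation $K_{\theta_{1},\theta_{2},\Phi}=K_{\theta_{1},\Phi}\cdot K_{\theta_{1},\theta_{2},\Phi}\cdot K_{\theta_{2},\Phi}$, in which both outer factors already lie in $X$.

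It then suffices to exhibit an auxiliary $\theta_{0}$ for which \emph{both} factor kernels $K_{\theta_{1},\theta_{0},\Phi}$ and $K_{\theta_{0},\theta_{2},\Phi}$ lie in $X$; the Banach-algebra property of $X$ (together with $\BBm\hookrightarrow\AAm$, so that $\BBm$-membership of the factors suffices in either case) then finishes the proof. The natural choice is $\theta_{0}\in\mathcal C_{c}^{\infty}(\RR^{d})$ with unit $\lebesgue^{2}$-norm, placing the factor kernels in $\BBm$ by Theorem~\ref{thm:MR1_kernel_is_in_AAm}. \textbf{The main obstacle} lies precisely here: Theorem~\ref{thm:MR1_kernel_is_in_AAm} also imposes smoothness and decay on the \emph{other} prototype of the pair, so for $\theta_{1},\theta_{2}$ satisfying only the abstract hypothesis $K_{\theta_{\ell},\Phi}\in X$ one cannot simply quote it. In that generality one must instead exploit the sandwich form directly: the two outer factors are in $X$, the middle factor is uniformly bounded by $\|\theta_{1}\|_{\Ltv}\|\theta_{2}\|_{\Ltv}$, and the triangle-type hypothesis on $m$ lets one distribute the entire weight onto the outer factors; but since $\PhSpace$ carries an infinite measure, pointwise boundedness of the middle factor does not by itself make the resulting iterated integrals converge. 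One therefore first has to bootstrap $K_{\theta_{1},\theta_{2},\Phi}$ into $\AAi$ — using, beyond the uniform pointwise bound, that the integral operator with kernel $K_{\theta_{1},\theta_{2},\Phi}$ equals $V_{\theta_{1},\Phi}V_{\theta_{2},\Phi}^{\ast}$ and is hence $\lebesgue^{2}(\PhSpace)$-bounded with norm $\leq1$, and that $\|K_{\theta_{1},\theta_{2},\Phi}(\mu,\cdot)\|_{\lebesgue^{2}(\PhSpace)}\leq\|\theta_{1}\|_{\Ltv}$ uniformly in $\mu$ — and only then run the weighted estimate. Making this intermediate integrability step rigorous is the delicate part of the argument.
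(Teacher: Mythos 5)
There is a genuine gap, and you have in fact located it yourself: your argument stalls at the point where you need the factor kernels $K_{\theta_1,\theta_0,\Phi}$ and $K_{\theta_0,\theta_2,\Phi}$ (or, in the ``sandwich'' variant, the middle factor) to lie in $X$, and you concede that the ``intermediate integrability step'' needed to make this work is left open. It cannot be repaired along the route you chose: your factorization expresses the unknown mixed kernel as a product of \emph{other mixed kernels whose membership in $X$ is equally unknown}, and neither the uniform pointwise bound nor the $\lebesgue^2$-boundedness of $V_{\theta_1,\Phi}V_{\theta_2,\Phi}^{\ast}$ gives the $\AAi$-type integrability over the infinite-measure phase space that the algebra property would require. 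Also, the claim that the submultiplicativity of $m$ lets you ``distribute the entire weight onto the outer factors'' is not right as stated: iterating $m(\PhVar,\PhVarA)\leq C^{(0)}m(\PhVar,\PhVarC)m(\PhVarC,\PhVarA)$ puts a copy of $m$ on \emph{every} factor, including the middle one.

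The missing idea is to run the reproducing-kernel factorization in the opposite direction. The orthogonality relations (Theorem~\ref{thm:orthrel}) give, for the standard kernel product,
\[
  K_{\theta_1,\Phi}\cdot K_{\theta_2,\Phi}
  = \langle \theta_1,\theta_2\rangle \cdot K_{\theta_1,\theta_2,\Phi},
\]
i.e., the product of the two \emph{pure} Gramians---precisely the kernels that are assumed to lie in $X$---equals a constant times the mixed Gramian. Since $\AAm$ and $\BBm$ are algebras under the stated hypothesis on $m$, the left-hand side lies in $X$, and if $\langle\theta_1,\theta_2\rangle\neq 0$ you are done by dividing by this constant. If $\theta_1\perp\theta_2$, one picks an auxiliary $\theta_3\in\Ltv\cap\LtRd$ with $K_{\theta_3,\Phi}\in X$ that is orthogonal to neither $\theta_1$ nor $\theta_2$ (e.g.\ via Theorem~\ref{thm:MR1_kernel_is_in_AAm}); applying the identity above twice yields
$K_{\theta_1,\theta_2,\Phi}=C^{-1}(K_{\theta_1,\Phi}\cdot K_{\theta_3,\Phi})\cdot(K_{\theta_3,\Phi}\cdot K_{\theta_2,\Phi})$
with $C=\|\theta_3\|^2\langle\theta_1,\theta_3\rangle\overline{\langle\theta_2,\theta_3\rangle}\neq 0$, and again the algebra property of $X$ finishes the proof. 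Note that this also resolves the obstacle you raised concerning Theorem~\ref{thm:MR1_kernel_is_in_AAm}: one never needs to verify its hypotheses for the pair $(\theta_1,\theta_3)$ or $(\theta_3,\theta_2)$, because the mixed kernels are \emph{obtained} as products of pure kernels rather than estimated directly.
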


\begin{proof}
  We first consider the case $\langle \theta_1,\theta_2\rangle \neq 0$.
  In that case, the orthogonality relations, Theorem~\ref{thm:orthrel},
  applied to the kernel $K_{\theta_1,\Phi} \cdot K_{\theta_2,\Phi}$ yield,
  for all $(y,\omega),(z,\eta)\in\Lambda$,
  \[
    \begin{split}
    K_{\theta_1,\Phi} \cdot K_{\theta_2,\Phi}((y,\omega),(z,\eta))
    & = \int_\Lambda
          K_{\theta_1,\Phi}((y,\omega),(x,\xi))
          K_{\theta_2,\Phi}((x,\xi),(z,\eta))
        ~d(x,\xi) \\
    & = \int_\Lambda
          \overline{\langle g^{(1)}_{y,\omega}, g^{(1)}_{x,\xi}\rangle}
          \langle g^{(2)}_{z,\eta}, g^{(2)}_{x,\xi}\rangle
        ~d(x,\xi)\\
    ({\scriptstyle{\text{Def.\ of } V_{\bullet,\Phi}}})
    & = \int_\Lambda
          V_{\theta_2,\Phi}
          g^{(2)}_{z,\eta}(x,\xi)
          \overline{V_{\theta_1,\Phi}g^{(1)}_{y,\omega}(x,\xi)}
        ~d(x,\xi)
      = \langle
          V_{\theta_2,\Phi}g^{(2)}_{z,\eta},
          V_{\theta_1,\Phi}g^{(1)}_{y,\omega}
        \rangle \\
    ({\scriptstyle{\text{orth.~rel.}}})
    & = \langle g^{(2)}_{z,\eta},g^{(1)}_{y,\omega}\rangle
        \langle \theta_1,\theta_2\rangle
      = \langle \theta_1,\theta_2\rangle
        \cdot K_{\theta_1,\theta_2,\Phi}((y,\omega),(z,\eta)).
    \end{split}
  \]
  Since, under the conditions on $m$, $\AAm, \BBm$ are algebrae, this establishes \eqref{eq:mixed_kernel_nice}.

  If $\langle \theta_1,\theta_2\rangle = 0$,
  then we need an auxiliary function $\theta_3$,
  which may be any function  in $\Ltv\cap \LtRd$ such that
  $K_{\theta_3,\Phi}\in X$ and that is neither orthogonal to
  $\theta_1$ nor to $\theta_2$.
  For example, $\theta_3$ could satisfy the conditions of
  Theorem.~\ref{thm:MR1_kernel_is_in_AAm}.
  By the first part of the proof, we obtain
  \[
    (K_{\theta_1,\Phi} \cdot K_{\theta_3,\Phi})
    \cdot (K_{\theta_3,\Phi} \cdot K_{\theta_2,\Phi})
    = \langle \theta_1,\theta_3\rangle
      \overline{\langle \theta_2,\theta_3\rangle}
      \cdot \ K_{\theta_1,\theta_3,\Phi}
      \cdot K_{\theta_3,\theta_2,\Phi}\in X.
  \]
  Now, apply the argument in the first part of the proof again to obtain that
  \[
    K_{\theta_1,\theta_2,\Phi}
    = C^{-1}
      (K_{\theta_1,\Phi} \cdot K_{\theta_3,\Phi})
      \cdot (K_{\theta_3,\Phi}\cdot K_{\theta_2,\Phi})
    \qquad \text{with} \qquad
    C
    = \|\theta_3\|^2
      \langle \theta_1,\theta_3\rangle
      \overline{\langle \theta_2,\theta_3\rangle}
    .
    \qedhere
  \]
\end{proof}

\begin{remark}
  If $\theta_1, \theta_2$ satisfy the conditions of Theorem~\ref{thm:MR1_kernel_is_in_AAm},
  then the assumptions of Lemma~\ref{pro:protoindep} can be verified by applying that theorem.
  However, since Theorem~\ref{thm:MR1_kernel_is_in_AAm} only provides \emph{sufficient} conditions,
  there might be $\theta_1, \theta_2$ with $K_{\theta_1,\Phi}, K_{\theta_2,\Phi} \in \BBm$
  that do not satisfy those conditions, for which Lemma~\ref{pro:protoindep} remains valid.
\end{remark}

\begin{theorem}\label{pro:WarpedCoorbitIndependence2}
  Assume that $\Phi$, $m_0$ and both $\theta_1\in\Ltv$ and $\theta_2\in\Ltv$ jointly satisfy
  the conditions of \Cref{thm:main2_discreteframes}.

  Then, for any rich, solid Banach space
  $\BanachOne\hookrightarrow \lebesgue_{\mathrm{loc}}^1 (\PhSpace)$
  with $\mathcal{B}_{m_0}(\BanachOne)\hookrightarrow \BanachOne$, we have
  \[ \Co (\mathcal{G}(\theta_1,\Phi),\BanachOne) = \Co (\mathcal{G}(\theta_2,\Phi),\BanachOne).\]
  In particular, the statement holds for $\BanachOne = \lebesgue^{p,q}_\kappa(\mu)$,
  with $1\leq p,q\leq\infty$ and any weight $\kappa: \PhSpace \rightarrow [1,\infty)$
  that satisfies $m_\kappa \lesssim m_0$.
\end{theorem}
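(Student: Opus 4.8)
The plan is to obtain the equality of the two coorbit spaces from the prototype-independence results already prepared in the text, namely \Cref{pro:protoindep} (which transfers localization of the individual reproducing kernels to the mixed kernel) and \Cref{pro:mixedkern} (which yields $\Co(\Psi,\BanachOne) = \Co(\widetilde{\Psi},\BanachOne)$ once the mixed kernels lie in $\AAPlain_{m_v}\cap\BB_{m_0}$). First I would normalize: since $\mathcal{G}(\theta_i,\Phi)$ is a tight frame with bound $\|\theta_i\|_{\lebesgue^2}^2$ by \Cref{thm:orthrel} (and we may assume $\theta_i \neq 0$), the rescaled system $\Psi_i := \mathcal{G}(\theta_i/\|\theta_i\|_{\lebesgue^2},\Phi) = \|\theta_i\|_{\lebesgue^2}^{-1}\,\mathcal{G}(\theta_i,\Phi)$ is a continuous Parseval frame for $\LtDF$. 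Multiplying the prototype by a positive scalar merely rescales the voice transform, hence leaves $\mathcal{H}^1_v$, the reservoir, and $\Co(\mathcal{G}(\theta_i,\Phi),\BanachOne)$ unchanged, and the conditions of \Cref{thm:main2_discreteframes} are invariant under it; moreover, taking $n = d+p+1$ in the integrability hypotheses of \Cref{thm:main2_discreteframes} together with $v_0,w_2 \geq 1$ shows $\theta_i \in \Ltv \cap \LtRd$, as needed below.

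Next I would run the argument behind \Cref{cor:warped_disc_frames} simultaneously for $\Psi_1$ and $\Psi_2$, using \emph{the same} auxiliary data in both cases: the $\Phi$-induced covering $\CalV_\Phi^\delta$, the weight $u(\PhVar) := m_0(\PhVar,(x,\xi))$, the weight $v(y,\omega) := u(y,\omega)\cdot\max\{w(\Phi(\omega)),[w(\Phi(\omega))]^{-1}\}$, and the associated $m_v$. These depend only on $\Phi$ and $m_0$, not on the prototype, so \Cref{assu:CoorbitAssumptions1} holds for $\Psi_1$ and for $\Psi_2$ with one common choice of $(u,v,m_0,m_v)$; the only prototype-dependent point, the lower bound $v(\PhVar) \gtrsim \|\psi_\PhVar\|_{\lebesgue^2}$, is arranged by picking the constant $c$ small enough to cover the finite quantity $\max_{i}\|\theta_i\|_{\Ltv}$, via \eqref{eq:FrameElementsBounded}. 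In particular, the kernel-localization part of \Cref{assu:CoorbitAssumptions1}, combined with the pointwise bound $|K_{\theta_i,\Phi}| \le \mathrm{M}_{\CalV_\Phi^\delta} K_{\theta_i,\Phi}$ and the solidity of $\BB_{m_0}$, gives $K_{\theta_i,\Phi} \in \AAPlain_{m_v}\cap\BB_{m_0}$ for $i=1,2$.

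I would then apply \Cref{pro:protoindep} twice, once with $X = \AAPlain_{m_v}$ and once with $X = \BB_{m_0}$; the multiplicativity-type requirement $m(\PhVar,\PhVarA) \leq C\,m(\PhVar,\PhVarC)\,m(\PhVarC,\PhVarA)$ holds trivially (with $C=1$) for an associated weight such as $m_v$ and by hypothesis for $m_0$. This yields $K_{\theta_1,\theta_2,\Phi},\,K_{\theta_2,\theta_1,\Phi} \in \AAPlain_{m_v}\cap\BB_{m_0}$, and the same for the normalized prototypes, since the corresponding mixed kernels differ only by the positive scalar $(\|\theta_1\|_{\lebesgue^2}\|\theta_2\|_{\lebesgue^2})^{-1}$. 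Now \Cref{pro:mixedkern}, applied to the continuous Parseval frames $\Psi_1,\Psi_2$ (both satisfying \Cref{assu:CoorbitAssumptions1} with the same weights and with well-localized mixed kernels), delivers $\mathcal{H}^1_v(\Psi_1) = \mathcal{H}^1_v(\Psi_2)$ and $\Co(\Psi_1,\BanachOne) = \Co(\Psi_2,\BanachOne)$; undoing the normalization gives $\Co(\mathcal{G}(\theta_1,\Phi),\BanachOne) = \Co(\mathcal{G}(\theta_2,\Phi),\BanachOne)$. The statement for $\BanachOne = \lebesgue^{p,q}_\kappa(\mu)$ with $m_\kappa \lesssim m_0$ then follows because $\BB_{m_0}\hookrightarrow\BB_{m_\kappa}$, so $\BB_{m_0}(\BanachOne)\hookrightarrow\BB_{m_\kappa}(\BanachOne)\hookrightarrow\BanachOne$ by \Cref{prop:NewAlgebraGoodForMixedLebesgue}, while $\lebesgue^{p,q}_\kappa$ is rich, solid, and continuously embedded in $\lebesgue^1_{\loc}(\PhSpace)$.

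I expect the main obstacle to be exactly the point of keeping both frames inside one common coorbit framework: \Cref{pro:mixedkern} requires \Cref{assu:CoorbitAssumptions1} for $\Psi_1$ and $\Psi_2$ with \emph{identical} weights $(u,v,m_0,m_v)$, so some care is needed to check that the weight data built from $\Phi$ and $m_0$ (rather than from the prototype) serves both systems, and that the hypotheses on $\theta_1,\theta_2$ furnished by \Cref{thm:main2_discreteframes} are precisely those needed to place $K_{\theta_i,\Phi}$ in $\AAPlain_{m_v}\cap\BB_{m_0}$. Everything else — the localization of the individual kernels, and hence of the mixed kernels — has already been established in Sections~\ref{sec:coorbits}--\ref{sec:discretewarped}, so the remaining work is essentially bookkeeping on top of \Cref{pro:protoindep} and \Cref{pro:mixedkern}.
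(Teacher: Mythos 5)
Your proposal is correct and follows essentially the same route as the paper's proof: verify Assumptions~\ref{assu:CoorbitAssumptions1} and \ref{assu:CoorbitAssumptions2} for both systems exactly as in the proof of \Cref{cor:warped_disc_frames} (with prototype-independent weights $u,v,m_v$), invoke \Cref{pro:protoindep} to place the mixed kernel in $\AAPlain_{m_v}\cap\BB_{m_0}$, and conclude via \Cref{pro:mixedkern}, with the $\lebesgue^{p,q}_\kappa$ case following from \eqref{eq:BBmBoundedOnLPQ}. Your additional bookkeeping (normalization of the prototypes, the check that $\theta_i\in\Ltv\cap\LtRd$, and the verification of the submultiplicativity-type condition on $m_v$) is consistent with, and slightly more explicit than, what the paper leaves implicit.
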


\begin{proof}
  The same derivations as in the proof of \Cref{cor:warped_disc_frames} show that
  \Cref{assu:CoorbitAssumptions1,assu:CoorbitAssumptions2} are fully satisfied and
  consequently, by \Cref{thm:coorbits}, $\Co (\mathcal{G}(\theta_1,\Phi),\BanachOne)$ and
  $\Co (\mathcal{G}(\theta_2,\Phi),\BanachOne)$ are well-defined Banach spaces.
  By \Cref{pro:protoindep}, the mixed kernel $K_{\theta_1,\theta_2}$
  is contained in $\mathcal B_{m_v}\subset \mathcal B_{m_0}$,
  with $v$ as in the proof of \Cref{cor:warped_disc_frames}.
  Hence, we can apply \Cref{pro:mixedkern} to obtain the desired result.
  The statement about weighted, mixed-norm Lebesgue spaces is, once more,
  a direct consequence of \eqref{eq:BBmBoundedOnLPQ}.
\end{proof}


 \section{Radial warping}
\label{sec:RadialWarping}

In this section, we consider warped time-frequency representations
for which the warping of frequency space depends only
on the modulus in the frequency domain, i.e., we study maps of the form
\[
  \Phi_\vrho :
  \R^d \to \R^d,
  \xi \mapsto \xi/|\xi| \cdot \vrho(|\xi|)\, ,
\]
which we call the \textbf{radial warping function} associated
to the \textbf{radial component} $\vrho : [0,\infty) \to [0,\infty)$.
More precisely, we will provide conditions on the radial component $\vrho$
which ensure that $\Phi_{\vrho}$ is a ($k$-admissible) warping function, as
introduced in Definitions \ref{def:warpfun} and
\ref{assume:DiffeomorphismAssumptions}.
In particular, we will see that if $\vrho$ is a strictly increasing
$\mathcal C^{k+1}$ diffeomorphism which is also linear on a neighborhood of the origin,
then $\Phi_\vrho$ is a $\mathcal C^{k+1}$ diffeomorphism, with inverse
$\Phi_\vrho^{-1} = \Phi_{\vrho^{-1}}$.
Finally, under additional ``moderateness assumptions'' on the derivatives
of $\vrho^{-1}$, we will show that the diffeomorphism $\Phi_\vrho$
is a $k$-admissible warping function.
These claims will be established in
Section \ref{sub:RadialWarpingGeneralConstruction}.

\Cref{sub:SlowStartConstruction} is concerned with circumventing the somewhat unnatural restriction
that $\vrho$ is linear in a neighborhood of the origin.
Using the so-called \textbf{slow-start construction},
one can associate to a ``sufficiently well-behaved'' function $\vsig : [0,\infty) \to [0,\infty)$
a $k$-admissible radial component $\vrho : [0,\infty) \to [0,\infty)$,
which equals $\vsig$ outside an arbitrarily small neighborhood of the origin.

Finally, we discuss several examples of radial warping functions
in Section \ref{sub:RadialWarpingExamples}.

\subsection{General properties of radial warping functions}
\label{sub:RadialWarpingGeneralConstruction}

To enable a more compact notation, we will from now on denote by
$\vrhoinv:=\vrho^{-1}$ the inverse of a bijection $\vrho : \RR \to \RR$.

\begin{definition}\label{def:AdmissibleRho}
  Let $k \in \NN_0$. A function $\vrho : \RR \to \RR$ is called a
  \textbf{$k$-admissible radial component with control weight
  $v : \RR \to \RR^+$},
  if the following hold:
  \begin{enumerate}[leftmargin=0.7cm]
    \item $\vrho$ is a strictly increasing $\mathcal C^{k+1}$-diffeomorphism with inverse
          $\vrhoinv = \vrho^{-1}$.
    \item $\vrho$ is antisymmetric, that is, $\vrho(-\xi) = -\vrho(\xi)$ for all $\xi \in \RR$.
          In particular, $\vrho(0)=0$.

    \item There are $\eps > 0$ and $c > 0$ with $\vrho(\xi) = c \cdot \xi$
          for all $\xi \in (-\eps, \eps)$.

    \item The weight $v$ is continuous, submultiplicative,
          and radially increasing.
          Additionally, $\vrhoinv'$ and
          \begin{equation}
            \widetilde{\vrhoinv} : \RR \to \RR^+, 
            \quad \text{defined by} \quad
            \widetilde{\vrhoinv}(\xi) := \vrhoinv(\xi)/\xi,
            \quad \text{for} \quad \xi \neq 0,
            \quad \text{and} \quad \widetilde{\vrhoinv}(0) := c^{-1}
            \label{eq:PsiTildeDefinition}
          \end{equation}
          are $v$-moderate.

    \item There are constants $C_0, C_1 > 0$ with
          \begin{equation}
            C_0 \cdot \widetilde{\vrhoinv}(\xi)
            \leq \vrhoinv'(\xi)
            \leq C_1 \cdot (1+\xi) \cdot \widetilde{\vrhoinv}(\xi)
            \qquad \forall \, \xi \in \RR^+.
            \label{eq:AdmissibilityFirstDerivative}
          \end{equation}

    \item We have
          \begin{equation}
            |\vrhoinv^{(\ell)} (\xi)| \leq v (\xi-\eta) \cdot \vrhoinv' (\eta)
            \qquad \forall \, \eta,\xi \in [0,\infty)
                   \text{ and } \ell \in \underline{k + 1} \, .
            \label{eq:AdmissibilityHigherDerivative}
          \end{equation}
    \end{enumerate}
\end{definition}

Note that the property \eqref{eq:AdmissibilityHigherDerivative} can equivalently be exchanged
by the simpler $|\vrhoinv^{(\ell)}| \leq C \vrhoinv'$, for all $\ell \in \underline{k + 1}$
(using that $\vrho_\ast '$ is $v$-moderate and $v$ is submultiplicative),
at the cost of introducing a multiplicative constant $Cv(0)$
on the right-hand side of \eqref{eq:AdmissibilityHigherDerivative}.

\nicki{
\begin{remark}\label{rem:NotesOnRadialComponents}
  The reader may wonder why Definition~\ref{def:AdmissibleRho} prescribes properties of $\vrho$
  on the negative half-axis at all.
  These requirements are not strictly necessary, but neither are they an actual restriction:
  The existence of an odd extension of regularity $\mathcal C^{k+1}(\RR)$
  of a function $\vrho_0 \colon [0,\infty) \rightarrow [0,\infty)$
  is, in fact, necessary for the radial warping function
  $\Phi_{\vrho_0}$ induced by $\vrho_0$ to be in $\mathcal C^{k+1}(\RR^d)$.
  This is easily seen by considering the case $d=1$. 
  
  On the other hand, the third condition in Definition~\ref{def:AdmissibleRho}
  could indeed be slightly weakened, as long as $\widetilde{\vrhoinv}(\xi) = \vrhoinv(\xi)/\xi$
  has a positive, finite limit for $\xi\rightarrow 0$
  (and sufficiently many of its derivatives have a finite limit at $0$),
  and none of the other conditions are violated.
  However, the behavior of $\varrho$ in a small neighborhood of zero has comparably little effect
  on the induced warped time-frequency system.
  The slow-start construction discussed in Section~\ref{sub:SlowStartConstruction} provides a method
  to modify functions satisfying a weaker variant of Definition~\ref{def:AdmissibleRho}
  in a small neighborhood of zero, resulting in a $k$-admissible radial component.
  Concerning the (lack of) impact of the slow-start construction on the resulting coorbit spaces,
  cf.\ Remark~\ref{rem:WhySlowStart}.
\end{remark}
}

\begin{remark}\label{rem:RadialAdmissibleProperties}
  (1) An important consequence of these assumptions is that there exists a
      constant \nicki{$C_2 = C_2 (\vrho, v) > 0$} with
      \begin{equation}
        | \vrhoinv^{(\ell)} (\xi)|
        \leq C_2 \cdot (1+\xi) \cdot \widetilde{\vrhoinv}(\xi)
        \qquad \forall \, \xi \in \RR^+
                       \text{ and } \ell \in \{0\} \cup \underline{k + 1} \,.
        \label{eq:AdmissibilityHigherDerivativeConsequence}
      \end{equation}

      Indeed, for $\ell=0$
      \eqref{eq:AdmissibilityHigherDerivativeConsequence}
      is always satisfied as long as $C_2 \geq 1$, since $\vrhoinv$ is
      increasing with $\vrhoinv(0)=0$, whence
      $|\vrhoinv^{(0)} (\xi)| = \vrhoinv(\xi) = \xi \cdot \widetilde{\vrhoinv}(\xi)
       \leq (1+\xi) \cdot \widetilde{\vrhoinv}(\xi)$ for $\xi \in \RR^+$.
      Thus, it remains to verify Equation
      \eqref{eq:AdmissibilityHigherDerivativeConsequence} for
      $\ell \in \underline{k + 1}$.
      But for this case, applying
      \eqref{eq:AdmissibilityHigherDerivative} with $\eta = \xi$, we see
      that
      \[
        |\vrhoinv^{(\ell)}(\xi)| \leq v(\xi-\xi) \cdot \vrhoinv'(\xi)
        = v(0) \cdot \vrhoinv '(\xi) \, ,
      \]
      so that \eqref{eq:AdmissibilityFirstDerivative} yields
      $|\vrhoinv^{(\ell)}(\xi)| \leq v(0) \cdot \vrhoinv '(\xi)
      \leq C_1 \cdot v(0) \cdot (1+\xi) \cdot \widetilde{\vrhoinv}(\xi)$.
      Setting $C_2 := \max \{1, C_1 \cdot v(0)\}$ \nicki{and $C_1$ only depends on the radial component $\vrho$, } we have thus
      established \eqref{eq:AdmissibilityHigherDerivativeConsequence}.

  \medskip{}

  (2) To indicate that being an admissible radial component is a nontrivial
      restriction on $\vrho$, we observe that
      condition \eqref{eq:AdmissibilityFirstDerivative} entails certain
      growth restrictions on the function $\vrhoinv = \vrho^{-1}$.
      Indeed, for arbitrary $\eps > 0$ and $\xi \geq 1/\eps$, Equation
      \eqref{eq:AdmissibilityFirstDerivative} shows
      $\vrhoinv'(\xi) \leq C_1 \cdot (1+\xi) \cdot \vrhoinv(\xi) / \xi
      \leq (1+\eps)C_1 \cdot \vrhoinv(\xi)$. This implies
      \begin{align*}
        \frac{d}{d\xi} \left( e^{-(1+\eps)C_1 \xi} \cdot \vrhoinv (\xi) \right)
        & = -(1+\eps)C_1 \cdot e^{-(1+\eps)C_1 \xi} \cdot \vrhoinv(\xi)
            + e^{-(1+\eps)C_1 \xi} \cdot \vrhoinv'(\xi) \\
        & \leq -(1+\eps)C_1 \cdot e^{-(1+\eps)C_1 \xi} \cdot \vrhoinv(\xi)
                + (1+\eps)C_1 \cdot e^{-(1+\eps)C_1 \xi} \cdot \vrhoinv(\xi)
          = 0
      \end{align*}
      for all $\xi \geq 1/\eps$. For any $\xi \geq a \geq 1/\eps$, this implies
      $e^{-(1+\eps) C_1 \xi} \cdot \vrhoinv(\xi)
       \leq e^{-(1+\eps)C_1 a} \cdot \vrhoinv(a)$,
      and hence
      \begin{equation}
        \vrhoinv (\xi) \leq \frac{\vrhoinv(a)}{e^{(1+\eps)C_1 a}}
                          \cdot e^{(1+\eps)C_1 \xi}
        \qquad \forall \, \xi \geq a \geq \eps^{-1} \, ,
                          \text { for any } \eps > 0 \, .
        \label{eq:AdmissibleGrowthRestrictionUpperBound}
      \end{equation}

      Likewise, the lower bound in \eqref{eq:AdmissibilityFirstDerivative}
      implies
      \[
        \frac{d}{d\xi} \left( \xi^{-C_0} \cdot \vrhoinv(\xi) \right)
        = (-C_0) \xi^{-C_0 - 1} \cdot \vrhoinv (\xi) + \xi^{-C_0} \cdot \vrhoinv' (\xi)
        \geq (-C_0) \xi^{-C_0} \cdot \frac{\vrhoinv(\xi)}{\xi}
             + C_0 \cdot \xi^{-C_0} \cdot \frac{\vrhoinv(\xi)}{\xi}
        = 0
      \]
      for all $\xi \in \RR^+$.  Thus, for $\xi \geq a > 0$, we get
      $\xi^{-C_0} \cdot \vrhoinv(\xi) \geq a^{-C_0} \cdot \vrhoinv(a)$, and thus
      \begin{equation}
        \vrhoinv(\xi) \geq \frac{\vrhoinv(a)}{a^{C_0}} \cdot \xi^{C_0}
        \qquad \forall \, \xi \geq a > 0 \, .
        \label{eq:AdmissibleGrowthRestrictionLowerBound}
      \end{equation}

      In words, Equations \eqref{eq:AdmissibleGrowthRestrictionUpperBound}
      and \eqref{eq:AdmissibleGrowthRestrictionLowerBound} show that
      \emph{the inverse of an admissible radial component $\vrho$
      can grow at most exponentially, and has to grow at least
      like a positive (not necessarily integer) power of $\xi$.}
\end{remark}

We define (for a larger class of radial components)
the radial warping function associated with $\vrho$.

\begin{definition}\label{def:RadialWarpingFunction}
  For a diffeomorphism $\vrho : \RR \to \RR$ with $\vrho(\xi) = c \xi$ for all
  $\xi \in (-\eps , \eps)$ and suitable $\eps,c > 0$, the
  \textbf{associated radial warping function} is given by
  \begin{equation}
    \Phi_\vrho
    : \RR^d \to \RR^d,
    \xi \mapsto \widetilde{\vrho}(|\xi|) \cdot \xi,
    \quad \text{with} \quad
    \widetilde{\vrho}(t) := \vrho(t)/t
    \quad \text{for } t \in \RR \setminus \{0\} \, ,
    \quad \text{and} \quad \widetilde{\vrho}(0) := c \,.
    \label{eq:RadialWarping}
  \end{equation}
\end{definition}

Clearly, if $\vrho\in\mathcal C^k(\RR)$, then $\widetilde{\vrho}\in\mathcal C^k(\RR)$.
Our goal in this section is to show that $\Phi_\vrho$ is a $k$-admissible
warping function as per Definition \ref{assume:DiffeomorphismAssumptions},
provided that $\vrho$ is a $k$-admissible radial component.
To this end, we first show that the inverse $\Phi_\vrho^{-1}$ of $\Phi_\vrho$
is given by $\Phi_\vrho^{-1} = \Phi_{\vrho^{-1}}$, and provide a convenient expression
of the Jacobian $\mathrm{D}\Phi_\vrho^{-1}$.
The following notation
will be helpful for that purpose: For $\xi \in \RR^d \setminus \{0\}$, we define
\begin{equation}
  \xi_\circ := \xi / |\xi|,
  \qquad
  \pi_\xi : \RR^d \to \RR^d,
            \tau \mapsto \langle \tau, \xi_\circ \rangle \cdot \xi_\circ \, ,
  \qquad \text{and} \qquad
  \pi_\xi^{\perp} := \identity_{\RR^d} - \pi_\xi \, ,
  \label{eq:SpecialProjections}
\end{equation}
so that $\pi_\xi$ is the orthogonal projection on the space spanned by $\xi$,
while $\pi_\xi^{\perp}$ is the orthogonal projection on the orthogonal
complement of this space.
With these notations, the derivative of $\Phi_\vrho$ and $\Phi_\vrho^{-1}$
can be described as follows:

\begin{lemma}\label{lem:RadialWarpingExplicitInverse}
  Let $\vrho : \RR \to \RR$ be a $\mathcal C^k$-diffeomorphism with $\vrho(t) = ct$
  for all $t \in (-\eps , \eps)$ and suitable $\eps,c > 0$.
  Then $\Phi_\vrho$ is $\mathcal C^k$, and for $\xi \in \RR^d \setminus \{0\}$, we have
  \begin{equation}
    \mathrm{D}\Phi_\vrho (\xi)
    = \widetilde{\vrho} (|\xi|) \cdot \pi_{\xi}^{\perp}
      + \vrho'(|\xi|) \cdot \pi_{\xi} \, ,
    \quad \text{and} \quad
    [\mathrm{D}\Phi_\vrho (\xi)]^{-1}
    = [\widetilde{\vrho} (|\xi|)]^{-1} \cdot \pi_{\xi}^{\perp}
      + [\vrho'(|\xi|)]^{-1} \cdot \pi_{\xi} \, .
    \label{eq:RadialWarpingDerivativeExplicit}
  \end{equation}
  Furthermore, $\Phi_\vrho$ is a $\mathcal C^k$-diffeomorphism, with inverse
  $\Phi_{\vrho}^{-1} = \Phi_{\vrhoinv}$ and
  satisfies $\vrhoinv (t) = t / c$ for $t \in (-c\eps, c\eps)$.

  Finally, if $\vrho$ is a $0$-admissible radial component, then we have
  \begin{equation}
    \| [\mathrm{D}\Phi_{\vrhoinv} (\xi)]^{-1} \|
    \lesssim 1 / \widetilde{\vrhoinv} (|\xi|)
    \qquad \forall \, \xi \in \RR^d ,
    \quad \text{with } \widetilde{\vrhoinv}
    \text{ as in } \eqref{eq:PsiTildeDefinition},
    \label{eq:InverseJacobianNormEstimate}
  \end{equation}
  where the implied constant only depends on the constant
  $C_0$ in \eqref{eq:AdmissibilityFirstDerivative}.
\end{lemma}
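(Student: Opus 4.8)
The plan is to establish Lemma~\ref{lem:RadialWarpingExplicitInverse} in four stages, roughly following the order of the assertions in the statement.

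\textbf{Step 1: Smoothness of $\Phi_\vrho$ and the formula for $\mathrm{D}\Phi_\vrho$.} First I would note that $\widetilde{\vrho}(t) = \vrho(t)/t$ extends to a $\mathcal C^k$ function on $\RR$: since $\vrho$ is linear near $0$, say $\vrho(t) = ct$ on $(-\eps,\eps)$, we have $\widetilde{\vrho}(t) = c$ there, so the only possible trouble spot is $t=0$, where $\widetilde{\vrho}$ is identically $c$ on a neighborhood and hence smooth; away from $0$ it is a quotient of $\mathcal C^k$ functions with nonvanishing denominator. Consequently $\xi \mapsto \widetilde{\vrho}(|\xi|)$ is $\mathcal C^k$ on all of $\RR^d$ (the map $\xi \mapsto |\xi|$ is smooth away from $0$, and near $0$ the composition is constant), so $\Phi_\vrho(\xi) = \widetilde{\vrho}(|\xi|)\,\xi$ is $\mathcal C^k$. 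For the Jacobian, I would compute directly for $\xi \neq 0$: writing $\Phi_\vrho(\xi) = \vrho(|\xi|)\cdot \xi/|\xi| = \vrho(|\xi|)\cdot\xi_\circ$ and using $\nabla|\xi| = \xi_\circ$ and $\mathrm{D}(\xi \mapsto \xi_\circ) = |\xi|^{-1}\pi_\xi^\perp$, the product rule gives $\mathrm{D}\Phi_\vrho(\xi) = \vrho'(|\xi|)\,\xi_\circ\xi_\circ^T + \vrho(|\xi|)|\xi|^{-1}\pi_\xi^\perp = \vrho'(|\xi|)\pi_\xi + \widetilde{\vrho}(|\xi|)\pi_\xi^\perp$, which is the first formula. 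Since $\pi_\xi,\pi_\xi^\perp$ are complementary orthogonal projections and $\vrho'(|\xi|),\widetilde{\vrho}(|\xi|) > 0$ (both are positive because $\vrho$ is strictly increasing and, near $0$, equal to $c>0$; for $\widetilde\vrho$ one uses $\vrho(0)=0$ and strict monotonicity to get $\vrho(t)/t > 0$), the inverse is obtained by inverting the eigenvalues on each eigenspace, giving the second formula.

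\textbf{Step 2: $\Phi_\vrho$ is a diffeomorphism with inverse $\Phi_{\vrhoinv}$.} Here I would simply verify $\Phi_{\vrhoinv}\circ\Phi_\vrho = \identity$ by direct computation: for $\xi \neq 0$, $\Phi_\vrho(\xi)$ points in direction $\xi_\circ$ and has modulus $\vrho(|\xi|)$, so $\Phi_{\vrhoinv}(\Phi_\vrho(\xi)) = \vrhoinv(\vrho(|\xi|))\cdot(\Phi_\vrho(\xi))_\circ = |\xi|\cdot\xi_\circ = \xi$, using that $\vrhoinv = \vrho^{-1}$ and that $\Phi_\vrho$ preserves the radial direction; the case $\xi = 0$ is immediate since both maps fix $0$. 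The symmetric computation gives the other composition. Since $\vrhoinv$ is again a $\mathcal C^k$-diffeomorphism (being the inverse of one; note $\vrho' > 0$ everywhere so the inverse function theorem applies), $\Phi_{\vrhoinv}$ is $\mathcal C^k$ by Step~1 applied to $\vrhoinv$. The identity $\vrhoinv(t) = t/c$ for $|t| < c\eps$ follows by inverting $\vrho(t) = ct$ on $(-\eps,\eps)$, whose image is $(-c\eps,c\eps)$.

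\textbf{Step 3: The norm estimate \eqref{eq:InverseJacobianNormEstimate}.} For $\xi \neq 0$, applying the second formula of \eqref{eq:RadialWarpingDerivativeExplicit} to $\vrhoinv$ in place of $\vrho$ gives $[\mathrm{D}\Phi_{\vrhoinv}(\xi)]^{-1} = [\widetilde{\vrhoinv}(|\xi|)]^{-1}\pi_\xi^\perp + [\vrhoinv'(|\xi|)]^{-1}\pi_\xi$, so its operator norm is $\max\{[\widetilde{\vrhoinv}(|\xi|)]^{-1},\,[\vrhoinv'(|\xi|)]^{-1}\}$. The lower bound in \eqref{eq:AdmissibilityFirstDerivative}, namely $\vrhoinv'(\xi) \geq C_0\,\widetilde{\vrhoinv}(\xi)$, gives $[\vrhoinv'(|\xi|)]^{-1} \leq C_0^{-1}[\widetilde{\vrhoinv}(|\xi|)]^{-1}$, hence the max is $\leq \max\{1, C_0^{-1}\}\cdot[\widetilde{\vrhoinv}(|\xi|)]^{-1}$, which is the claim with implied constant depending only on $C_0$. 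The only subtlety is the point $\xi = 0$: there $\mathrm{D}\Phi_{\vrhoinv}(0) = c^{-1}\identity$ (since $\Phi_{\vrhoinv}$ is linear near $0$ with $\vrhoinv(t) = t/c$), so $\|[\mathrm{D}\Phi_{\vrhoinv}(0)]^{-1}\| = c$, while $\widetilde{\vrhoinv}(0) = c^{-1}$ by definition \eqref{eq:PsiTildeDefinition}, so the estimate holds with equality (up to the constant) there too. I do not expect any real obstacle in this lemma — it is essentially bookkeeping with the radial/tangential decomposition; the only thing requiring a little care is checking that all the pieces ($\widetilde{\vrho}$, $\widetilde{\vrhoinv}$, the Jacobians) are well-behaved at the origin, which is exactly what the ``linear near $0$'' hypothesis is there to guarantee, and making sure the continuity/positivity claims invoked ($\vrho' > 0$, $\widetilde\vrho > 0$) are justified from the definition of a ($0$-)admissible radial component.
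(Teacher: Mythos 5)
Your proof is correct and follows essentially the same route as the paper: the same radial/tangential decomposition of the Jacobian, inversion on the two eigenspaces, the same composition check for $\Phi_{\vrhoinv}\circ\Phi_\vrho=\identity$ via $|\Phi_\vrho(\xi)|=\vrho(|\xi|)$ and preservation of direction, and the same use of the lower bound in \eqref{eq:AdmissibilityFirstDerivative} to get \eqref{eq:InverseJacobianNormEstimate} with constant $\max\{1,C_0^{-1}\}$. The only (immaterial) differences are that you compute $\mathrm{D}\Phi_\vrho$ by the product rule on $\vrho(|\xi|)\,\xi_\circ$ rather than componentwise, and you treat $\xi=0$ in the final estimate explicitly rather than by continuity.
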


\begin{proof}
   Recall that $\widetilde{\vrho}\in\mathcal C^k(\RR)$,
  with $\widetilde{\vrho} \equiv c$ on $(-\eps,\eps)$, and hence
  $\Phi_{\vrho}\in \mathcal C^k(\RR^d)$.

  Now, a direct computation using the identity
  $\partial_j |\xi| = \xi_j / |\xi|$ shows for $\xi \in \RR^d \setminus \{0\}$
  that
  \begin{align*}
    \partial_j (\Phi_{\vrho})_i (\xi)
      = \partial_j \left( \xi_i \cdot \frac{\vrho(|\xi|)}{|\xi|} \right)
    & = \widetilde{\vrho}(|\xi|) \cdot \delta_{i,j}
        + \frac{\vrho'(|\xi|) - \widetilde{\vrho}(|\xi|)}{|\xi|^2}
          \cdot \xi_i \xi_j \, .
  \end{align*}
  In vector notation, and with $\xi_\circ = \xi/| \xi |$ as in
  \eqref{eq:SpecialProjections}, this means
  \[
    \mathrm{D}\Phi_{\vrho} (\xi)
    = \widetilde{\vrho}(|\xi|) \cdot \identity
      + \left(
          \vrho'(|\xi|) - \widetilde{\vrho}(|\xi|)
        \right)
        \cdot \xi_\circ \xi_\circ^T \, .
  \]
  Now, recall that $\xi_\circ \xi_\circ^T$ is the matrix representing the linear map
  $\pi_{\xi}$, and that $\identity = \pi_{\xi} + \pi_{\xi}^{\perp}$.
  Inserting these identities into the previous displayed equation establishes
  the claimed formula for $\mathrm{D}\Phi_{\vrho} (\xi)$. In particular, each $\eta \in \RR^d$
  with $\eta \perp \xi$ is mapped to $\widetilde{\vrho}(|\xi|) \cdot \eta$ by
  $\mathrm{D}\Phi_{\vrho} (\xi)$, while each $\eta \in \mathrm{span}(\xi)$ is mapped to
  $\vrho' (|\xi|) \cdot \eta$. Since $\RR^d = \xi^{\perp} \oplus \mathrm{span}(\xi)$,
  the stated formula for $[\mathrm{D}\Phi_{\vrho}(\xi)]^{-1}$ follows.

  \medskip{}

  Linearity of $\vrhoinv(t) = t/c$ for $t\in(-c\eps, c\eps)$ is clear, such that
  $\Phi_{\vrhoinv}$ is a radial warping function as per Definition \ref{def:RadialWarpingFunction}.
%
%
  Note $|\Phi_\vrho(\xi)| = \vrho(|\xi|)$ for $\xi \in \RR^d \setminus \{0\}$,
  such that $\vrhoinv(|\Phi_\vrho(\xi)|) = \vrhoinv(\vrho(|\xi|)) = |\xi|$
  and $\Phi_\vrho(\xi)/|\Phi_\vrho(\xi)|= \xi/|\xi|$.
  Together, this implies
  \(
    \Phi_{\vrhoinv} (\Phi_\vrho (\xi))
    = \xi \, ,
  \)
  for all $\xi\in\RR^d\setminus\{0\}$ and thus, by continuity, for $\xi = 0$ as well.
  Repeating this argument after interchanging $\vrhoinv$ and $\vrho$
  yields $\Phi_\vrho \circ \Phi_{\vrhoinv} = \identity$.

  \medskip{}

  To prove \eqref{eq:InverseJacobianNormEstimate}, consider
  $\xi \!\in\! \RR^d \setminus \{0\}\vphantom{\sum_j}$ and observe that
  $\| [\mathrm{D}\Phi_{\vrhoinv} (\xi)]^{-1} \|
  = \max \big\{
           [\widetilde{\vrhoinv}(|\xi|)]^{-1} , [\vrhoinv' (|\xi|)]^{-1}
         \big\}$, by \eqref{eq:RadialWarpingDerivativeExplicit}.
  Applying the lower inequality in \eqref{eq:AdmissibilityFirstDerivative}, we get
  \[
    \| [\mathrm{D}\Phi_{\vrhoinv} (\xi)]^{-1} \|
    \leq \max \{1, C_0^{-1}\} \cdot 1 / \widetilde{\vrhoinv}(|\xi|) \, .
  \]
For $\xi = 0$ the result follows by continuity.
\end{proof}

To verify Property~\eqref{eq:PhiHigherDerivativeEstimate} of
Definition~\ref{assume:DiffeomorphismAssumptions}, i.e.,
\(
  \left\Vert
    \partial^{\alpha}\phi_{\tau}\left(\upsilon\right)
  \right\Vert
  \leq v_0 (\upsilon)
\),
for all  $\tau, \upsilon \in \mathbb{R}^{d}$
and all $\alpha \in \mathbb{N}_{0}^{d},\  \left|\alpha\right|\leq k$, we need to control certain
derivatives of the (matrix-valued) function
\begin{equation}
   \phi_{\tau}\left(\upsilon\right)
  =\left(A^{-1}(\tau) \cdot A(\upsilon+\tau)\right)^T
  \quad \text{with} \quad
  A(\tau) = \mathrm{D}\Phi_\vrho^{-1} (\tau)
  \label{eq:PhiTauReminder}
\end{equation}
from \eqref{eq:PhiDefinition}. To this end, we will frequently
use \emph{Faa di Bruno's formula}, a chain rule for higher derivatives. Precisely, we will use the
following form of the formula, which is a slightly simplified (but less precise)
version of \cite[Corollary 2.10]{FaaDiBrunoMultidimensional}.
Note that, for a nonnegative multiindex $\alpha$, i.e., $\alpha \in \NN_0^{d}$, we denote the
sum of its components by $|\alpha|\geq 0$ and by $\alpha = 0$ we refer to the unique
multiindex with $|\alpha| = 0$.

\begin{lemma}\label{lem:FaaDiBruno}
  For $\alpha \in \NN_0^{d} \setminus \{0\}$ and $n \in \underline{|\alpha|}$,
  set
  \[
    \Gamma_{\alpha, n}
    := \left\{
          \gamma = (\gamma_1, \dots, \gamma_n)
          \in \Big[\NN_0^{d} \setminus \{0\}\Big]^n
          \with
          \smash{\sum_{j=1}^n}\vphantom{\sum} \gamma_j = \alpha
       \right\} \, .
  \]
  Furthermore, set
  $\Gamma := \bigcup_{\alpha \in \NN_0^d \setminus \{0\}}
               \bigcup_{n=1}^{|\alpha|}
                 \Gamma_{\alpha,n}$.

  Then, for each $\gamma \in \Gamma$, there is a constant $D_\gamma \in \RR$
  such that for any open sets $U \subset \RR^d$ and $V \subset \RR$, and any
  $\mathcal C^k$ functions $f : V \to \RR$ and $g : U \to V$, the following holds for
  any $\alpha \in \NN_0^d$ with $|\alpha| \in \underline{k}$:
  \[
    \partial^\alpha (f \circ g)(x)
    = \sum_{n=1}^{|\alpha|}
        \left[
          f^{(n)}(g(x))
          \cdot \sum_{\gamma \in \Gamma_{\alpha,n}}
                  \bigg(
                    D_{\gamma} \cdot
                    \prod_{j=1}^n
                      (\partial^{\gamma_j} g) (x)
                  \bigg)
        \right]
        \qquad \forall \, x \in U \, ,
  \]
  where $f^{(n)}$ denotes the $n$-th derivative of $f$.
\end{lemma}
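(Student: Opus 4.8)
\emph{Approach.} The statement is the standard multivariate Faà di Bruno formula with the explicit values of the constants suppressed; as indicated in the excerpt, it is contained in \cite[Corollary~2.10]{FaaDiBrunoMultidimensional}, so one option is simply to invoke that result and discard the closed-form expression for the $C_\gamma$. I sketch a self-contained proof by induction on $|\alpha|$, in the course of which the constants $C_\gamma$ are defined recursively.

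\emph{Construction and base case.} The plan is to fix, for every $\alpha \in \NN_0^d \setminus \{0\}$, the smallest index $i = i(\alpha) \in \underline{d}$ with $\alpha_i \geq 1$, and to prove the identity together with the recursive definition of the $C_\gamma$ by induction on $|\alpha|$. For $|\alpha| = 1$, say $\alpha = e_i$, the ordinary chain rule gives $\partial^{e_i}(f \circ g)(x) = f'(g(x)) \cdot \partial_i g(x)$; since $\Gamma_{e_i,1} = \{(e_i)\}$, this is precisely the claimed identity once we set $C_{(e_i)} := 1$.

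\emph{Inductive step.} Let $|\alpha| =: m+1$ with $1 \leq m < k$, put $i := i(\alpha)$ and $\beta := \alpha - e_i \in \NN_0^d$, so that $\partial^\alpha(f \circ g) = \partial_i\bigl(\partial^\beta(f \circ g)\bigr)$ by Schwarz's theorem (valid since $f \circ g \in \mathcal C^{m+1}$, as $m+1 \leq k$). Applying the induction hypothesis to $\beta$ and then differentiating with $\partial_i$ --- using linearity, the product rule, and the chain-rule identity $\partial_i\bigl[f^{(n)}(g(x))\bigr] = f^{(n+1)}(g(x)) \cdot \partial_i g(x)$ (which uses $f \in \mathcal C^{m+1}$) --- one obtains a finite sum of terms of the two shapes
\[
  f^{(n+1)}(g(x)) \cdot \partial_i g(x) \cdot \prod_{j=1}^n \partial^{\gamma_j} g(x)
  \quad\text{and}\quad
  f^{(n)}(g(x)) \cdot \partial^{\gamma_\ell + e_i} g(x) \cdot \prod_{\substack{j=1\\ j \neq \ell}}^n \partial^{\gamma_j} g(x),
\]
ranging over $n \in \underline{m}$, $\gamma \in \Gamma_{\beta,n}$, $\ell \in \underline{n}$, each multiplied by the constant $C_\gamma$ from the induction hypothesis. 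In the first shape the tuple $(\gamma_1,\dots,\gamma_n,e_i)$ has $n+1$ nonzero entries summing to $\beta + e_i = \alpha$, hence lies in $\Gamma_{\alpha,n+1}$; in the second, the tuple $(\gamma_1,\dots,\gamma_{\ell-1},\gamma_\ell + e_i,\gamma_{\ell+1},\dots,\gamma_n)$ has $n$ nonzero entries summing to $\alpha$, hence lies in $\Gamma_{\alpha,n}$. Grouping the terms by the resulting tuple $\delta \in \Gamma_{\alpha,n'}$ ($n' \in \underline{m+1}$) and summing the accompanying constants, we \emph{define} $C_\delta$ to be this (finite) sum of products of earlier constants. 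Since $\delta$ determines both $\alpha = \sum_j \delta_j$ and $n' = \operatorname{length}(\delta)$, and each $\delta \in \Gamma$ lies in exactly one $\Gamma_{\alpha,n'}$, this is unambiguous and depends only on $\delta$, not on $f$, $g$, $U$, or $V$; with these $C_\delta$ the asserted identity for $\partial^\alpha(f \circ g)$ holds. (For tuples of total degree exceeding $k$, which never occur in the formula, put $C_\gamma := 0$.) This closes the induction.

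\emph{Main point.} There is no genuine analytic difficulty; the regularity hypotheses $f, g \in \mathcal C^k$ together with $|\alpha| \leq k$ are exactly what is needed to justify every differentiation and every use of Schwarz's theorem. The only step requiring care is the bookkeeping in the inductive step: one must check that every term produced by $\partial_i$ is captured by some $\delta \in \Gamma_{\alpha,n'}$ and that the constants so obtained are functions of the combinatorial datum alone --- which is immediate, since the product and chain rules introduce only the coefficient $1$, so that all $C_\gamma$ are in fact integers.
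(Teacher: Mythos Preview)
The paper does not prove this lemma at all: it is stated as a ``slightly simplified (but less precise) version of \cite[Corollary~2.10]{FaaDiBrunoMultidimensional}'' and is simply cited, with only a remark afterwards explaining why the constants $C_\gamma$ depend on $\gamma$ alone. Your opening sentence already identifies this option, so in that sense your approach and the paper's coincide.

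Your additional self-contained induction is correct. Two minor comments: (i) the phrase ``sum of products of earlier constants'' should just be ``sum of earlier constants'', since the product and chain rules contribute only the factor~$1$; (ii) not every $\delta \in \Gamma_{\alpha,n'}$ is hit by the grouping (for instance, with $\alpha = e_1+e_2$, $i=1$, the tuple $(e_1,e_2)$ never arises, only $(e_2,e_1)$ does), so you are implicitly setting $C_\delta = 0$ for the unreached tuples via the empty-sum convention --- this is fine, since the corresponding products $\prod_j \partial^{\delta_j} g$ coincide with those of the tuples that do arise, but it would be cleaner to say so explicitly. The parenthetical about tuples of degree exceeding $k$ is unnecessary: the constants you build do not depend on $k$, and the lemma asks for universal constants working for all $k$ simultaneously.
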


\begin{rem*}
  From the statement of \cite[Corollary 2.10]{FaaDiBrunoMultidimensional},
  it might appear that the constants $D_\gamma$ also depend on $\alpha,n,d$,
  in addition to $\gamma$. But these parameters are determined by $\gamma$:
  On the one hand, we have $\gamma \in [\NN_0^d]^n$, which uniquely determines
  $n$ and $d$. On the other hand, $\alpha = \sum_{j=1}^n \gamma_j$ for
  $\gamma \in \Gamma_{\alpha,n}$.
\end{rem*}

With these preparations, we can now prove that the radial warping function
$\Phi_{\vrho}$ associated to a $k$-admissible radial component $\vrho$ is indeed
a $k$-admissible warping function.
Most significantly, the following proposition proves that Property~\eqref{eq:PhiHigherDerivativeEstimate},
cf.\ Definition~\ref{assume:DiffeomorphismAssumptions} or the discussion preceding the above lemma,
is satisfied.

\begin{proposition}\label{prop:RadialWarpingFundamental}
  Let $\vrho : \RR \to \RR$ be a $k$-admissible radial
  component with control weight $v : \RR \!\to\! \RR^+$.

  Then there is a constant $C \geq 1$, dependent on $\vrho$, $v$, $d$, and $k$, 
  such that with
  \[
    v_0 : \RR^d \to \RR^+,
          \tau \mapsto C \cdot (1+|\tau|) \cdot v(|\tau|),
  \]
  the function $\Phi_{\vrho}$ satisfies
  \eqref{eq:PhiHigherDerivativeEstimate} 
  for all $\alpha \in \NN_0^d$ with $|\alpha| \leq k$.
\end{proposition}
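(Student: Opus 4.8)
The plan is to prove the bound $\|\partial^\alpha \phi_\tau(\upsilon)\| \leq v_0(\upsilon)$ for $|\alpha| \leq k$ by combining the explicit formula for the Jacobian $A = \mathrm{D}\Phi_\vrho^{-1} = \mathrm{D}\Phi_{\vrhoinv}$ from \Cref{lem:RadialWarpingExplicitInverse} with Faa di Bruno's formula (\Cref{lem:FaaDiBruno}) and the moderateness/growth estimates collected in \Cref{def:AdmissibleRho} and \Cref{rem:RadialAdmissibleProperties}. By \eqref{eq:RadialWarpingDerivativeExplicit}, $A(\tau) = [\widetilde{\vrhoinv}(|\tau|)]^{-1} \pi_\tau^\perp + [\vrhoinv'(|\tau|)]^{-1}\pi_\tau$; since $\pi_\tau = \tau_\circ \tau_\circ^T$ depends only on the \emph{direction} $\tau_\circ$ while the scalar factors depend only on $|\tau|$, I would write $\phi_\tau(\upsilon) = (A^{-1}(\tau) A(\upsilon+\tau))^T$ and expand it into a sum of terms, each of which is a product of (i) a scalar rational function of $\widetilde{\vrhoinv}(|\tau|), \vrhoinv'(|\tau|), \widetilde{\vrhoinv}(|\upsilon+\tau|), \vrhoinv'(|\upsilon+\tau|)$ and (ii) a product of orthogonal projections $\pi_\tau, \pi_\tau^\perp, \pi_{\upsilon+\tau}, \pi_{\upsilon+\tau}^\perp$.

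The derivative $\partial^\alpha$ (which acts in the variable $\upsilon$, with $\tau$ fixed) then hits only the pieces depending on $\upsilon + \tau$. For the scalar factors, I would repeatedly invoke Faa di Bruno with outer function $t \mapsto [\widetilde{\vrhoinv}(t)]^{\pm 1}$ or $t \mapsto [\vrhoinv'(t)]^{\pm 1}$ (note these are $\mathcal{C}^{k}$ by $k$-admissibility, and strictly positive so reciprocals are fine) and inner function $\upsilon \mapsto |\upsilon + \tau|$, whose derivatives of all orders are bounded (for $|\upsilon+\tau|$ bounded away from $0$; near the origin $\vrho$ and $\vrhoinv$ are linear so everything is smooth and constant). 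The resulting derivatives of $\widetilde{\vrhoinv}$ and $\vrhoinv'$ get controlled using \eqref{eq:AdmissibilityHigherDerivative}, \eqref{eq:AdmissibilityFirstDerivative}, \eqref{eq:AdmissibilityHigherDerivativeConsequence}, and the $v$-moderateness of $\vrhoinv'$ and $\widetilde{\vrhoinv}$; crucially $v$-moderateness lets me trade $|\upsilon+\tau|$-quantities for $|\tau|$-quantities at the cost of a factor $v(|\upsilon|)$. For the projection factors, the key point is that $\pi_{\upsilon+\tau}$ is a bounded ($\leq 1$ in operator norm) smooth function of $(\upsilon+\tau)_\circ$; its $\upsilon$-derivatives are dominated by negative powers of $|\upsilon+\tau|$ (since $\partial_j (\upsilon+\tau)_\circ$ involves $1/|\upsilon+\tau|$), and I would tame these using \Cref{lem:OrthogonalProjectionDistance}, which bounds $\max\{|\tau|,|\upsilon+\tau|\}\cdot|\pi_\tau^\perp((\upsilon+\tau)_\circ)|$ by $\sqrt 2 |\upsilon|$ — this is exactly the geometric mechanism by which the potentially singular $1/|\upsilon+\tau|$ factors cancel against the (co)sines that appear when two nearly-parallel directions are compared, leaving only polynomial growth in $|\upsilon|$.

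Once both types of factors are estimated, the product rule (general Leibniz) over the finitely many terms in the expansion of $\phi_\tau(\upsilon)$ yields $\|\partial^\alpha\phi_\tau(\upsilon)\| \leq C \cdot (1+|\upsilon|)^{N} \cdot v(|\upsilon|)^{M}$ for some $N, M$ depending on $k$ and $d$; since $v$ is submultiplicative, radially increasing and satisfies $v \geq v(0) \geq 1$ (the latter argument is identical to the one for $w_1$ in the proof of \Cref{lem:NiceCrossGramianEstimate}), the right-hand side is dominated by $[C'(1+|\upsilon|)v(|\upsilon|)]^{\max\{N,M\}}$, and one can absorb the power into the constant $C$ and the definition $v_0(\tau) = C(1+|\tau|)v(|\tau|)$ by replacing $v$ with $v^{\max\{N,M\}}$ if needed (or, more carefully, by tracking that $v_0$ as stated with a large enough $C$ already dominates every term). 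I would also check the two remaining bullets of \Cref{assume:DiffeomorphismAssumptions} quickly: $\Phi_\vrho$ is a $\mathcal{C}^{k+1}$-diffeomorphism with $\det \mathrm{D}\Phi_\vrho^{-1}(\tau) = [\widetilde{\vrhoinv}(|\tau|)]^{-(d-1)}\vrhoinv'(|\tau|) > 0$ by \Cref{lem:RadialWarpingExplicitInverse} and positivity of $\widetilde{\vrhoinv}, \vrhoinv'$, and $v_0$ is continuous, submultiplicative (product of two submultiplicative radially increasing weights $(1+|\cdot|)$ and $v$) and radially increasing by construction.

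The main obstacle I anticipate is the bookkeeping in step two: controlling arbitrary-order $\upsilon$-derivatives of the projection-valued maps $\upsilon \mapsto \pi_{\upsilon+\tau}$ and $\upsilon \mapsto \pi^\perp_{\upsilon+\tau}$ \emph{uniformly in $\tau$} when $|\upsilon+\tau|$ is small relative to $|\tau|$ — i.e., when $\upsilon \approx -\tau$ — since naively each derivative costs a power of $1/|\upsilon+\tau|$ which is not bounded. The resolution is that these singular factors never appear alone: in $\phi_\tau(\upsilon) = (A^{-1}(\tau)A(\upsilon+\tau))^T$ the projections $\pi_{\upsilon+\tau}, \pi^\perp_{\upsilon+\tau}$ always occur multiplied by $\pi_\tau$ or $\pi^\perp_\tau$, and \Cref{lem:OrthogonalProjectionDistance} shows $\|\pi_\tau^\perp \pi_{\upsilon+\tau}\| = |\pi_\tau^\perp((\upsilon+\tau)_\circ)| \lesssim |\upsilon|/\max\{|\tau|,|\upsilon+\tau|\}$, so each differentiation that produces a factor $1/|\upsilon+\tau|$ is accompanied (after using $|\tau| \geq |\upsilon+\tau|$ when that is the relevant regime, or directly) by a compensating factor; a careful induction on $|\alpha|$, organizing terms by how many derivatives have landed on direction-factors versus magnitude-factors, makes this precise. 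This is routine but lengthy, and is the part of the proof I would expect to occupy the most space.
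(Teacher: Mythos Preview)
Your plan is workable and uses the right ingredients, but it differs from the paper's route in one key organizational choice. You propose to expand $A^{-1}(\tau)A(\upsilon+\tau)$ via the projection formula \eqref{eq:RadialWarpingDerivativeExplicit} into the four scalar-times-projection terms and then differentiate, which forces you to handle $\upsilon$-derivatives of $\pi_{\upsilon+\tau}$. The paper avoids this entirely: it observes that the $j$-th column of $\partial^\alpha\varphi_\tau(\upsilon)$ equals $A^{-1}(\tau)\,(\partial^{\alpha+e_j}\Phi_{\vrhoinv})(\upsilon+\tau)$, and since $\Phi_{\vrhoinv}(\xi)=\xi\cdot\widetilde{\vrhoinv}(|\xi|)$ is a \emph{scalar-times-vector} product, a single Leibniz step yields $\partial^\sigma\Phi_{\vrhoinv}(\xi)=(\partial^\sigma\widetilde{\vrhoinv}(|\xi|))\cdot\xi+v_{\sigma,\xi}$ with a harmless remainder $v_{\sigma,\xi}$. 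Projections enter only afterwards, through the fixed matrix $A^{-1}(\tau)$, and \Cref{lem:OrthogonalProjectionDistance} is invoked once to bound $|\pi_\tau^\perp(\upsilon+\tau)|\leq\sqrt{2}\,|\upsilon|$. This buys a shorter computation and delivers exactly one factor each of $(1+|\upsilon|)$ and $v(|\upsilon|)$, matching the stated $v_0$ without your parenthetical ``or, more carefully''.

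Your diagnosis of why \Cref{lem:OrthogonalProjectionDistance} is needed is slightly off. On the region $|\upsilon+\tau|\geq c\eps$ (the only region where the projection form is relevant; for $|\upsilon+\tau|<c\eps$ the map is linear and $A$ is constant), the $1/|\upsilon+\tau|$ factors from differentiating projections are already bounded by $1/(c\eps)$ and need no cancellation. The lemma is needed instead to tame the cross term $\frac{\vrhoinv'(|\upsilon+\tau|)}{\widetilde{\vrhoinv}(|\tau|)}\,\pi_\tau^\perp\pi_{\upsilon+\tau}$ at order zero: the scalar can grow like $(1+|\upsilon+\tau|)\,v(|\upsilon|)$ (which depends on $\tau$), and it is the smallness $\|\pi_\tau^\perp\pi_{\upsilon+\tau}\|\lesssim|\upsilon|/|\upsilon+\tau|$ from the lemma that kills this $\tau$-dependence. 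Finally, a small slip: $\det\mathrm{D}\Phi_\vrho^{-1}(\tau)=\vrhoinv'(|\tau|)\,[\widetilde{\vrhoinv}(|\tau|)]^{d-1}$, not with exponent $-(d-1)$.
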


\begin{proof}
  It is easy to see that $v_0$ is submultiplicative and radially increasing
  as the product of submultiplicative and radially increasing weights
  $C$, $(1+|\bullet|)$ and $v(|\bullet|)$.

 The proof is divided into five steps.
  As a preparation for these,
  recall from \Cref{lem:RadialWarpingExplicitInverse} that
  $\Phi_{\vrho}^{-1} = \Phi_{\vrhoinv} = (\bullet) \cdot \widetilde{\vrhoinv}(|\bullet|)$,
  with $\vrhoinv = \vrho^{-1}$ and $\widetilde{\vrhoinv}$ as defined in
  \eqref{eq:PsiTildeDefinition}.
  By \Cref{lem:RadialWarpingExplicitInverse}, $\widetilde{\vrhoinv}\in \mathcal C^{k+1}(\RR)$. 
  Our main goal is to estimate the derivatives of $\Phi_{\vrhoinv}$.

  \medskip{}

  \textbf{Step 1 - Estimate the derivatives of $\widetilde{\vrhoinv}$:}
  A trivial induction shows
  $\frac{d^\ell}{d t^\ell} t^{-1}
   = (-1)^\ell \cdot \ell! \cdot t^{-(1+\ell)}$. 
  With this, Leibniz's rule shows for any $n \in \underline{k+1}$
  and any $t \in [c\eps, \infty)$ that
  \begin{align}
      \left|
        \widetilde{\vrhoinv}^{(n)} (t) - \frac{\vrhoinv^{(n)}(t)}{t}
      \right|
      = \left|
            \sum_{\ell=1}^{n}
              \binom{n}{\ell}
              \cdot \frac{d^\ell t^{-1}}{d t^\ell}
              \cdot \vrhoinv^{(n-\ell)} (t)
        \right| 
      & \leq C(k) \cdot \sum_{\ell=1}^n
                              t^{-(1+\ell)} \cdot |\vrhoinv^{(n-\ell)} (t)|
        \nonumber \\
      & \overset{\eqref{eq:AdmissibilityHigherDerivativeConsequence}}{\leq} C(k)C_2 \cdot
             \sum_{\ell=1}^n
               t^{-\ell + 1}
               \cdot \left(\frac{1+t}{t}\right)^2
               \cdot \frac{\widetilde{\vrhoinv}(t)}{1+t}\nonumber \\
      ({\scriptstyle{t^{-1} \leq (c\eps)^{-1} \text{ and } (1+t)/t
      =    t^{-1}+1
      \leq (c\eps)^{-1}+1}})
      & \leq C^{(1)} \cdot \widetilde{\vrhoinv}(t)/(1+t) \, , 
    \label{eq:PsiDerivativeHardCase}
  \end{align}
  where the constant $C(k) > 0$ in the first inequality only depends on $k$, $C_2$ is as in
  \eqref{eq:AdmissibilityHigherDerivativeConsequence},
  and $C^{(1)}$ is given by \nicki{$C^{(1)} = C(k) \cdot C_2 ((c\eps)^{-1}+1)^2 \cdot (k+1) \cdot \max\{1,(c\eps)^{-k}\}$} .

  In particular,
  \(
    |\widetilde{\vrhoinv}^{(n)} (t)|
    \leq C^{(1)} \cdot \widetilde{\vrhoinv}(t)/(1+t)
         + |\vrhoinv^{(n)}(t)/t|
    ,
  \)
  such that
 \begin{equation}
    |\widetilde{\vrhoinv}^{(n)} (t)|
    \leq C^{(1)} \cdot \frac{\widetilde{\vrhoinv}(t)}{1+t}
         + \left|\frac{\vrhoinv^{(n)}(t)}{t}\right|
    \overset{\eqref{eq:AdmissibilityFirstDerivative}, \eqref{eq:AdmissibilityHigherDerivative}}{\leq}
         (C^{(1)}C_0^{-1} + v(0)) \cdot \nicki{\frac{\vrho_\ast'(t)}{t}.}
    \label{eq:PsiTildeDerivativeEstimateEasyCase2}
  \end{equation}
  Furthermore, the same estimate yields, with $(1+t)/t \leq (c\eps)^{-1}+1$
  and \eqref{eq:AdmissibilityHigherDerivativeConsequence}, 
  \begin{equation}
    |\widetilde{\vrhoinv}^{(n)} (t)|
    \leq C^{(1)} \cdot \widetilde{\vrhoinv}(t) + C_2 \cdot ((c\eps)^{-1}+1) \widetilde{\vrhoinv}(t)
    \leq 2C^{(1)} \cdot \widetilde{\vrhoinv}(t),
    \label{eq:PsiTildeDerivativeEstimateEasyCase}
  \end{equation}
  where both \eqref{eq:PsiTildeDerivativeEstimateEasyCase2}
  and \eqref{eq:PsiTildeDerivativeEstimateEasyCase} hold for all $t \in [c\eps, \infty)$
  and $n \in \underline{k+1}$.

  \medskip{}
 \nicki{\textbf{Step 2 - Estimate the partial derivatives of $\tau \mapsto |\tau|$ for $\tau\neq 0\, $:}
  It is well known that the derivative of order $n\in\NN$ of the square root function $t \mapsto t^{1/2}$
  has the form $t\mapsto c_n \cdot t^{-n + 1/2}$, for all $t\in\RR^+$ and some constant $c_n \neq 0$.
  Further, noting that $\partial^{\alpha} |\tau|^2 = 0$ unless $\alpha = i e_j$
  for $i \in \{0,1,2\}$ and $j \in \underline{d}$, it is easy to see that
  $\big| \, \partial^{\alpha} |\tau|^2 \, \big| \leq 2 \cdot |\tau|^{2-|\alpha|}$
  for all $\tau \in \RR^d \setminus \{0\}$ and $\alpha \in \NN_0^d$.

 Since $\tau \mapsto |\tau|$ equals the composition $|\bullet| = (\bullet)^{1/2}\circ |\bullet|^2$,
 Faa di Bruno's formula (see Lemma~\ref{lem:FaaDiBruno}) yields  
  \[
    \big| \partial^{\alpha} |\tau| \big|
    = \left|
        \sum_{n=1}^{|\alpha|}
          c_n |\tau|^{1-2n} \cdot
          \sum_{\gamma \in \Gamma_{\alpha,n}}
            \bigg(
              D_\gamma
              \cdot \prod_{j=1}^n
                      (\partial^{\gamma_j} |\bullet|^2) (\tau)
            \bigg)
      \right| \, ,
  \] 
  for $\alpha \in \NN_0^d \setminus \{0\}$ and $\tau \in \RR^d\setminus\{0\}$.
  But we have $\sum_{j=1}^n \gamma_j = \alpha$ for
  $\gamma \in \Gamma_{\alpha,n}$, and hence, using $n\leq |\alpha|$, we have \( \left|
      \smash{\prod_{j=1}^n} \vphantom{\prod}
        (\partial^{\gamma_j} |\bullet|^2) (\tau)
    \right|    
    \leq 2^{|\alpha|} \cdot |\tau|^{2n - |\alpha|}.
  \)
  Overall, we obtain 
  \begin{equation}
   \big| \partial^{\alpha} |\tau| \big|
    \leq C_\alpha \cdot |\tau|^{1 - |\alpha|}
    \qquad \forall \, \tau \in \RR^d \setminus \{0\}
    \text{ and } \alpha \in \NN_0^d,
    \label{eq:EuclideanNormHigherDerivatives}
  \end{equation}
  for some constants $C_\alpha$ that may also depend on $d$.
  The estimate is trivial in case of $\alpha=0$.}

  \medskip{}

  \textbf{Step 3 - Estimate the partial derivatives of
  $\zeta : \RR^d \setminus\{0\}\to\RR,
           \tau \mapsto \widetilde{\vrhoinv}(|\tau|)$:}
  Note that this map is just the composition of $\widetilde{\vrhoinv}$ with
  the map $\tau\mapsto |\tau|$ analyzed in the preceding step.
  Thus, Faa di Bruno's formula
  (see Lemma~\ref{lem:FaaDiBruno}) shows for any
  $\alpha \in \NN_0^d \setminus \{0\}$ with $|\alpha| \leq k+1$
  and $\tau \in \RR^d \setminus \{0\}$ that
  \begin{equation}
    \partial^\alpha \zeta (\tau)
    = \sum_{n=1}^{|\alpha|}
        \left[
          \widetilde{\vrhoinv}^{(n)}(|\tau|)
          \cdot \sum_{\gamma \in \Gamma_{\alpha, n}}
                \left(
                  D_\gamma
                  \cdot \smash{\prod_{j=1}^n}\vphantom{\prod} \,
                          \partial^{\gamma_j} |\tau|
                \right)
        \right] \, .
    \label{eq:PsiTildeEuclideanNormFaaDiBruno}
  \end{equation}
  In the previous step, we saw $|\partial^{\gamma_j} |\tau||
  \leq C_{\gamma_j} \cdot |\tau|^{1-|\gamma_j|}$.
  Since $\sum_{j=1}^n \gamma_j = \alpha$ for
  $\gamma = (\gamma_1, \dots, \gamma_n) \in \Gamma_{\alpha,n}$, there thus exists a
  \nicki{constant $C_\gamma>0$ that may additionally depend on $d$, such that}
  \begin{equation}
    \left| \prod_{j=1}^n \partial^{\gamma_j} |\tau| \right|
    \leq C_\gamma \cdot |\tau|^{n-|\alpha|}
    \qquad \forall \, \tau \in \RR^d \setminus \{0\}, \quad
                      \alpha \in \NN_0^d \setminus \{0\}, \quad
                      n \in \underline{| \alpha |}, \quad
                      \text{and} \quad \gamma \in \Gamma_{\alpha,n} \, .
    \label{eq:InnerFaaDiBrunoProductEstimate}
  \end{equation}

  Now, let us focus on the case $|\tau| \geq c\eps$.
  Then, if $n \in \underline{|\alpha|-1}$, the estimate
  \eqref{eq:PsiTildeDerivativeEstimateEasyCase} yields with
  \(
    |\tau|^{n-|\alpha|}
   \leq (c\eps)^{n+1-|\alpha|} \cdot |\tau|^{-1}
   \leq (c\eps)^{n+1-|\alpha|}(1 + (c\eps)^{-1}) / (1+|\tau|)
  \)
  that \nicki{
  \begin{equation}
    \left|
      \widetilde{\vrhoinv}^{(n)}(|\tau|) \cdot
      \sum_{\gamma \in \Gamma_{\alpha, n}} \!\!\!
      \left(
        D_\gamma
        \cdot \smash{\prod_{j=1}^n}\vphantom{\prod} \,
                \partial^{\gamma_j} |\tau|
      \right)
    \right|
    \lesssim \frac{\widetilde{\vrhoinv}(|\tau|)}{1 + |\tau|}
    \quad \forall \, \tau \in \RR^d\setminus\overline{B_{c\eps}}(0)
                   \text{ and } n \in \underline{| \alpha | - 1} \, .
    \label{eq:PsiTildeEuclideanNormEasyCase}
  \end{equation}
  Here, $\alpha \in \NN_0^d$ with $|\alpha| \in \underline{k+1}$. 

  Overall, by combining \eqref{eq:PsiTildeEuclideanNormFaaDiBruno}--%
  \eqref{eq:PsiTildeEuclideanNormEasyCase}
  (and noting that the case $n=|\alpha|$ is not covered by \eqref{eq:PsiTildeEuclideanNormEasyCase}),
  we get
  \begin{align}
    | \partial^\alpha \zeta (\tau) |
    = \Big| \partial^\alpha \big(\widetilde{\vrhoinv}(|\tau|)\big) \Big|
    & \lesssim 
                            \Big|\widetilde{\vrhoinv}^{(|\alpha|)}(|\tau|)\Big|
                            + \frac{\widetilde{\vrhoinv}(|\tau|)}{1+|\tau|}
                         \label{eq:PsiTildeEuclideanTotalCase1}\\
    & \overset{\eqref{eq:PsiTildeDerivativeEstimateEasyCase}}{\lesssim}
      \widetilde{\vrhoinv} (|\tau|)
    \quad
    \forall \, \alpha \in \NN_0^d \, \text{ with } \, |\alpha| \in\underline{k + 1} 
    \, \text{ and } \, \tau \in \RR^d \setminus B_{c\eps} (0) \, .
  \label{eq:PsiTildeEuclideanTotalCase2}
  \end{align}
  We note that the total implied constant between the left and right hand sides
  of Equation~\eqref{eq:PsiTildeEuclideanTotalCase2} depends on $\vrho,\ v$, $d$, and $k$.
  Note that the quantities $c,\ \eps,\ C_0,\ C_1$ and $C_2$ that are more explicitly present
  in the dependencies are themselves directly derived from $\vrho$ and $v$.
  Finally, the case $\alpha=0$ is trivial.}

  \medskip{}

  \textbf{Step 4 - Estimate
  $\partial^{\alpha} \phi_\tau (\upsilon)$ for $0 \leq |\alpha| \leq k$ and
  $\upsilon \in \RR^d \setminus B_{c\eps} (-\tau)$:}
  Recall from \eqref{eq:PhiTauReminder} the definition of
  $\phi_\tau (\upsilon) = \left( A^{-1}(\tau) \cdot A(\tau+\upsilon) \right)^T$.
  Since $\|M\| = \|M^T\|$ for all $M \in \RR^{d \times d}$,
  and since $\partial^\alpha [M(\tau)]^T = [\partial^\alpha M(\tau)]^T$ for any
  sufficiently smooth matrix-valued function $M : \RR^d \to \RR^{d \times d}$,
  it is sufficient to estimate $\| \partial^\alpha \varphi_\tau (\upsilon) \|$
  with $\varphi_\tau (\upsilon) := A^{-1}(\tau) \cdot A(\tau+\upsilon)$, where
  $A(\tau) = \mathrm{D}\Phi_{\vrhoinv} (\tau)$.

  Furthermore, $\|M\| \leq \sum_{j=1}^d | M_{\bullet,j} |$ for all $M\in\RR^{d\times d}$,
  such that it is sufficient to estimate the columns of $M$ individually.
  In the following, we denote, for $\alpha\in\NN_0^d$ and $\upsilon\in\RR^d$,
  \(
    \partial^\alpha_\upsilon
    = \frac{\partial^{|\alpha|}}
           {\partial^{\alpha_1}_{\upsilon_1}\, \cdots\,  \partial^{\alpha_d}_{\upsilon_d}}
    .
  \)
  Let us fix $\tau \in \RR^d \setminus \{0\}$.
  Then, $\partial^\alpha \varphi_\tau (\upsilon) = A^{-1}(\tau) \cdot (\partial^\alpha A) (\tau+\upsilon)$.
  We see that the $j$-th column of
  $\partial^\alpha \varphi_\tau (\upsilon)$ is simply
  \begin{equation}
     \left[ \partial^\alpha \varphi_\tau (\upsilon) \right]_{\bullet,j}
    = A^{-1}(\tau) \cdot \partial_\upsilon^\alpha [A (\tau+\upsilon)]_{\bullet,j}
    = A^{-1}(\tau) \cdot (\partial_\upsilon^{\alpha+e_j} \Phi_{\vrhoinv}) (\tau+\upsilon)
    \, .
    \label{eq:RadialWarpingProductRulePreparation}
  \end{equation}
  Now fix $j \in \underline{d}$, and set $\sigma := \alpha + e_j$ for brevity.
  Note $\sigma \in \NN_0^d \setminus \{0\}$ with $|\sigma| \in \underline{k + 1}$.

  By definition of $\Phi_{\vrhoinv}$, the $i$-th entry of
  $\Phi_{\vrhoinv} (\tau)$ is
  $[\Phi_{\vrhoinv} (\tau)]_i = \tau_i \cdot \widetilde{\vrhoinv}(|\tau|)$.
  Let $\alpha_+$, for $\alpha\in\ZZ_0^d$, be the elementwise positive part,
  i.e., $(\alpha_+)_i = \max\{0,\alpha_i\}$, $i\in\underline{d}$.
  The Leibniz rule, with $\partial^\beta \tau_i = 0$ for
  $\beta \notin \{0,e_i\}$ and $\partial_i \tau_i = 1$, yields
  \[
    [\partial^{\sigma} \Phi_{\vrhoinv} (\tau)]_i
    = \tau_i \cdot \big[\partial^{\sigma} (\widetilde{\vrhoinv}(|\tau|))\big]
      + \sigma_i \cdot \partial_\tau^{(\sigma - e_i)_+} \big(
                                               \widetilde{\vrhoinv}(|\tau|)
                                             \big)
    \qquad \forall \, i \in \underline{d} \, ,
  \]
  or in other words,
  \begin{equation}
    \partial^\sigma \Phi_{\vrhoinv} (\tau)
    = \big(
        \partial_\tau^\sigma \big[\widetilde{\vrhoinv}(|\tau|)\big]
      \big) \cdot \tau
      + v_{\sigma,\tau} \, ,
    \quad \text{with} \quad
    v_{\sigma,\tau} := \left[
                         \sigma_i \cdot \partial_\tau^{(\sigma - e_i)_+}
                               \big(\widetilde{\vrhoinv} (|\tau|)\big)
                      \right]_{i = 1,\dots,d} \, .
    \label{eq:PhiProductRuleApplication}
  \end{equation}

  Now, by \eqref{eq:PsiTildeEuclideanTotalCase2}, we have
  \nicki{$| v_{\sigma,\tau} | \lesssim \widetilde{\vrhoinv}(|\tau|)$,
  for all $\tau \in \RR^d \setminus B_{c\eps} (0)$.}
  Furthermore, Lemma \ref{lem:RadialWarpingExplicitInverse} provides the estimate
  $\|A^{-1}(\tau)\| = \|[\mathrm{D}\Phi_{\vrhoinv}(\tau)]^{-1}\|
   \leq \max\{1,C_0^{-1}\}/\widetilde{\vrhoinv}(|\tau|)$
  with $C_0$ as in \eqref{eq:AdmissibilityFirstDerivative}.
  Note that we inserted the explicit constant derived
  in the proof of Lemma~\ref{lem:RadialWarpingExplicitInverse} above. 
  Since $\widetilde{\vrhoinv}$ is $v$-moderate and $v$ is radially
  increasing, this implies
  \nicki{
    \begin{equation}
      | A^{-1}(\tau) \cdot v_{\sigma, \tau + \upsilon} |
      \lesssim \max\{1,C_0^{-1}\}\cdot \frac{\widetilde{\vrhoinv} (| \tau + \upsilon |)}
                        {\widetilde{\vrhoinv}(| \tau |)}
      \leq  \max\{1,C_0^{-1}\}
      \cdot v(|\upsilon|)
      = 
      \max\{1,C_0^{-1}\}
        \cdot v_{0}(\upsilon)
    \label{eq:RadialWarpingAdmissibleMainStepTerm1}
  \end{equation}
  }%
  for all $\upsilon \in \RR^d \setminus B_{c\eps} (-\tau)$. 
  Thus, in view of \eqref{eq:PhiProductRuleApplication},
  it remains to estimate
  \(
    \big(
                \partial_{(\tau + \upsilon)}^\sigma
                \big[\widetilde{\vrhoinv}(|\tau + \upsilon|)\big]
              \big)
        \cdot A^{-1}(\tau) \langle \tau + \upsilon\rangle
  \)
  for $\tau + \upsilon \in \RR^d \setminus B_{c\eps} (0)$.

  \medskip{}

  Lemma \ref{lem:RadialWarpingExplicitInverse} implies
  \begin{equation}
    A^{-1}(\tau) = [\mathrm{D}\Phi_{\vrhoinv} (\tau)]^{-1}
        = [\widetilde{\vrhoinv} (|\tau|)]^{-1} \cdot \pi_{\tau}^{\perp}
          + [\vrhoinv ' (|\tau|)]^{-1} \cdot \pi_{\tau} \, .
    \label{eq:RadialWarpingAdmissibleAZeroRewritten}
  \end{equation}
  \nicki{Now, we apply \eqref{eq:PsiTildeEuclideanTotalCase2},
  and $v$-moderateness of $\widetilde{\vrhoinv}$ for radially increasing $v$, to derive 
  \nicki{
  \begin{equation}
    \begin{split}
      & \big| \partial_{\upsilon}^\sigma \big(\widetilde{\vrhoinv} (|\tau + \upsilon|) \big) \big|
        \cdot \big[ \widetilde{\vrhoinv} (|\tau|) \big]^{-1}
        \cdot | \pi_{\tau}^\perp (\tau + \upsilon) | \\
      & \lesssim
             \big[ \widetilde{\vrhoinv} (|\tau|) \big]^{-1}
             \cdot \widetilde{\vrhoinv}(|\tau + \upsilon |)
             \cdot | \upsilon | \\
      & \leq v(|\upsilon|) \cdot |\upsilon| \\
      ({\scriptstyle{v_{0} \geq (1+|\bullet|)\cdot v(|\bullet|)}})
       & \leq v_{0} (\upsilon) \, .
    \end{split}
    \label{eq:RadialWarpingAdmissibleMainStepTerm2}
  \end{equation}
  }
  Here, we additionally used the straightforward estimate
  \(
    | \pi_{\tau}^\perp (\tau + \upsilon) |
    = | \pi_{\tau}^\perp (\tau) + \pi_{\tau}^\perp (\upsilon) |
    = | \pi_{\tau}^\perp (\upsilon) |
    \leq |\upsilon|
  \).}

  Finally, with the elementary estimate $| \pi_{\tau} (\upsilon+\tau)| \leq |\upsilon+\tau|$, we get
   \begin{equation}
    \begin{split}
      & \big| \partial_{\upsilon}^\sigma \big( \widetilde{\vrhoinv} (|\upsilon+\tau|) \big) \big|
        \cdot \left[ \vrhoinv' (|\tau|) \right]^{-1}
        \cdot | \pi_{\tau} (\upsilon+\tau) | \\
     ({\scriptstyle{\eqref{eq:PsiTildeEuclideanTotalCase1}, ~ \eqref{eq:AdmissibilityFirstDerivative},
                    \text{ and } \eqref{eq:PsiTildeDerivativeEstimateEasyCase2}}})~
     & \lesssim | \upsilon + \tau|
            \cdot \frac{\vrhoinv'(|\upsilon + \tau|)}
                       {|\upsilon + \tau|\cdot \vrhoinv' (|\tau|)} \\
     ({\scriptstyle{\eqref{eq:AdmissibilityHigherDerivative}}})~
     & \lesssim v(|\upsilon|) \leq C^{(11)} \cdot v_{0}(\upsilon) \, .
    \end{split}
    \label{eq:RadialWarpingAdmissibleMainStepTerm3}
  \end{equation}
  Overall, combining \eqref{eq:RadialWarpingProductRulePreparation}--%
  \eqref{eq:RadialWarpingAdmissibleMainStepTerm3}, we finally see
  \begin{align*}
     \| \partial^\alpha \phi_{\tau} (\upsilon) \|
    = \| \partial^\alpha \varphi_{\tau} (\upsilon) \|
    & \leq d \cdot \max_{j \in \underline{d}}
                      \big|
                        \big[\partial^\alpha \varphi_{\tau} (\upsilon)\big]_{\bullet,j}
                      \big| \\
   ({\scriptstyle{\eqref{eq:RadialWarpingProductRulePreparation}}})~
    & \leq d \cdot
           \max_{j \in \underline{d}}
             \big|
               A^{-1}(\tau)
               \cdot (\partial^{\alpha+e_j} \Phi_{\vrhoinv})(\tau + \upsilon)
             \big| \\
    ({\scriptstyle{\eqref{eq:PhiProductRuleApplication}}})~
    & \leq d \cdot \max_{j \in \underline{d}}
                     \left(
                       \big|
                         \nicki{\partial_{\upsilon}^{\alpha+e_j}}
                         \big( \widetilde{\vrhoinv}(|\upsilon+\tau|) \big)
                       \big|
                       \cdot | A^{-1}(\tau) \, (\upsilon+\tau) |
                       + | A^{-1}(\tau) \, v_{\alpha+e_j , \tau + \upsilon} |
                     \right) \\
    ({\scriptstyle{\eqref{eq:RadialWarpingAdmissibleMainStepTerm1}\text{--}%
                   \eqref{eq:RadialWarpingAdmissibleMainStepTerm3}}})~
    & \nicki{\lesssim v_{0} (\upsilon)}
      \quad \text{for all } \upsilon \in \RR^d \setminus B_{c\eps}(-\tau)
            \text{ and } |\alpha| \leq k \, ,
  \end{align*}
  where the implied constant between left and right hand side depends on $\vrho$, $v$, $d$, and $k$.

  \medskip{}

  \textbf{Step 5 - Estimate $\partial^\alpha \phi_\tau (\upsilon)$
  for $0 \leq |\alpha| \leq k$ and $\upsilon \in B_{c\eps}(-\tau)$:}
  By Lemma \ref{lem:RadialWarpingExplicitInverse}, 
  $\vrhoinv(t) = t / c$, and thus $\widetilde{\vrhoinv}(t) = c^{-1}$ for
  $t \in (-c\eps,c\eps)$.
  Hence, $\Phi_{\vrhoinv} (\tau) = c^{-1} \cdot \tau$ for all
  $\tau \in B_{c\eps}(0)$, so that
  $A(\tau) = \mathrm{D}\Phi_{\vrhoinv} (\tau) = c^{-1} \cdot \identity_{\RR^d}$ for
  $\tau \in B_{c\eps} (0)$.

  Hence, $\phi_\tau (\upsilon) = A^T (\tau + \upsilon) \cdot A^{-T} (\tau)
  = c^{-1} \cdot A^{-T}(\tau)$, whence
  $\| \partial^\alpha \phi_\tau (\upsilon) \| = 0 \leq v_{0}(\upsilon)$ for
  $\upsilon \in B_{c\eps} (-\tau)$ and $\alpha \in \NN_0^d$ with
  $|\alpha| \in \underline{k}$.
  For $\alpha =0$, Eq.~\eqref{eq:InverseJacobianNormEstimate}
  in \Cref{lem:RadialWarpingExplicitInverse} shows
  \nicki{
  \begin{align*}
    \| \phi_\tau (\upsilon)\|
    = c^{-1} \cdot \| A^{-T} (\tau) \|
    = c^{-1} \cdot \| [\mathrm{D}\Phi_{\vrhoinv} (\tau)]^{-1} \|
    \leq \max\{1,C_0^{-1}\} \cdot c^{-1} / \, \widetilde{\vrhoinv} (|\tau|) \, .
  \end{align*}
  }
  But since $\widetilde{\vrhoinv}$ is $v$-moderate, we have
  \(
    c^{-1}
    \!=\! \widetilde{\vrhoinv}(0)
    \leq \widetilde{\vrhoinv}(|\tau|) \cdot v(|\tau|) \, ,
  \)
  and finally $| \tau | \leq c\eps + | \upsilon |$, such that
  $v(| \tau |) \leq v(c\eps) \cdot v(|\upsilon|)$.
  \nicki{Altogether, $\| \phi_\tau (\upsilon)\| \leq 
  \max\{1,C_0^{-1}\} \cdot v(c\eps) \cdot v(|\upsilon|) \lesssim v_0(\upsilon)$, 
  for all $\tau \in \RR^d$
  and $\upsilon \in B_{c\eps}(-\tau)$.}
\end{proof}

That every radial warping function associated to a
$k$-admissible radial component $\vrho$ is indeed a $k$-admissible warping
function is now a straightforward corollary.

\begin{corollary}\label{cor:RadialWarpingIsWarping}
  Let $\vrho : \RR \to \RR$ be a $k$-admissible radial component \nicki{with 
  control weight $v$, for some
  $k \in \NN$ with $k \geq d+1$. 
  Then there is a constant $C \geq 1$, dependent on $\vrho$, $v$, $d$, and $k$, 
  such that with
  \[
    v_0 : \quad
    \RR^d \to \RR^+, \quad
    \tau \mapsto C \cdot (1+|\tau|) \cdot v(|\tau|),
  \]
  the} associated radial warping function
  $\Phi_\vrho : \RR^d \to \RR^d$ is a $k$-admissible warping function,
  with control weight $v_0$. Furthermore, the weight $w = \det(\mathrm{D}\Phi_\vrho^{-1})$ is given by
  \begin{equation}
    w(\tau)
    = \vrhoinv ' (|\tau|) \cdot [\widetilde{\vrhoinv} (|\tau|)]^{d-1}
    \, .
    \label{eq:RadialWarpingWeightExplicit}
  \end{equation}
\end{corollary}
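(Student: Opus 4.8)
The plan is to deduce Corollary~\ref{cor:RadialWarpingIsWarping} from the structural results already assembled, so almost no new computation is required. First I would verify the $k$-admissibility of $\Phi_\vrho$ in the sense of Definition~\ref{assume:DiffeomorphismAssumptions}. By \Cref{lem:RadialWarpingExplicitInverse}, since $\vrho$ is a $k$-admissible radial component it is in particular a $\mathcal C^{k+1}$-diffeomorphism that is linear near the origin, so $\Phi_\vrho$ is a $\mathcal C^{k+1}$-diffeomorphism with inverse $\Phi_\vrho^{-1}=\Phi_{\vrhoinv}$. Positivity of $\det(\mathrm{D}\Phi_\vrho^{-1})$ follows from the explicit diagonal-in-a-suitable-basis form \eqref{eq:RadialWarpingDerivativeExplicit}: on the orthogonal complement of $\tau$ the Jacobian $\mathrm{D}\Phi_{\vrhoinv}(\tau)$ acts by $\widetilde{\vrhoinv}(|\tau|)>0$, and on $\mathrm{span}(\tau)$ by $\vrhoinv'(|\tau|)>0$, since $\vrho$ (hence $\vrhoinv$) is strictly increasing; so the determinant is a product of strictly positive numbers. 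Finally, Property~\eqref{eq:PhiHigherDerivativeEstimate}---the estimate $\|\partial^\alpha\phi_\tau(\upsilon)\|\le v_0(\upsilon)$ for $|\alpha|\le k$---is exactly the content of \Cref{prop:RadialWarpingFundamental}, with the control weight $v_0(\tau)=C\cdot(1+|\tau|)\cdot v(|\tau|)$ there shown to be continuous, submultiplicative and radially increasing. This establishes that $\Phi_\vrho$ is a $k$-admissible warping function; the hypothesis $k\ge d+1$ is not needed for admissibility itself but is recorded so that the corollary feeds directly into \Cref{thm:main2_discreteframes} and \Cref{cor:warped_disc_frames}, which require a $(d+p+1)$-admissible warping function.

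Next I would compute the weight $w=\det(\mathrm{D}\Phi_\vrho^{-1})$ explicitly. From \eqref{eq:RadialWarpingDerivativeExplicit}, for $\tau\ne 0$ the matrix $A(\tau)=\mathrm{D}\Phi_{\vrhoinv}(\tau)=\widetilde{\vrhoinv}(|\tau|)\cdot\pi_\tau^\perp+\vrhoinv'(|\tau|)\cdot\pi_\tau$ is, in an orthonormal basis adapted to the decomposition $\RR^d=\tau^\perp\oplus\mathrm{span}(\tau)$, diagonal with the eigenvalue $\vrhoinv'(|\tau|)$ appearing once (on $\mathrm{span}(\tau)$) and the eigenvalue $\widetilde{\vrhoinv}(|\tau|)$ appearing $d-1$ times (on $\tau^\perp$). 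Hence
\[
  w(\tau)=\det A(\tau)=\vrhoinv'(|\tau|)\cdot[\widetilde{\vrhoinv}(|\tau|)]^{d-1},
\]
which is \eqref{eq:RadialWarpingWeightExplicit}. For $\tau=0$ the same formula holds by continuity, using $\vrhoinv'(0)=\widetilde{\vrhoinv}(0)=1/c$ from the linearity of $\vrhoinv$ near the origin; alternatively one observes that both sides are continuous functions of $\tau$ that agree for $\tau\ne 0$.

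I do not expect a genuine obstacle here: the corollary is essentially a packaging statement, and every nontrivial ingredient---the explicit inverse and Jacobian (\Cref{lem:RadialWarpingExplicitInverse}), the derivative bound (\Cref{prop:RadialWarpingFundamental}), and the submultiplicativity/monotonicity of $v_0$---has already been proved. The only mild care points are (i) being explicit that the eigenvalues of $A(\tau)$ have the stated multiplicities, which is immediate from $\pi_\tau$ and $\pi_\tau^\perp$ being complementary orthogonal projections of ranks $1$ and $d-1$, and (ii) handling the origin by continuity rather than by the formula involving $\xi_\circ$, which is not defined at $0$. If anything is ``hard'' it is purely bookkeeping: making sure the control weight promised in the corollary is literally the one produced by \Cref{prop:RadialWarpingFundamental}, so that downstream statements can quote a single, consistent $v_0$.
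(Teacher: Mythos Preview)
Your proposal is correct and follows essentially the same approach as the paper: invoke \Cref{lem:RadialWarpingExplicitInverse} for the $\mathcal C^{k+1}$-diffeomorphism property and the explicit Jacobian, read off the determinant formula \eqref{eq:RadialWarpingWeightExplicit} from the eigenvalue structure in \eqref{eq:RadialWarpingDerivativeExplicit} (handling $\tau=0$ by continuity), and cite \Cref{prop:RadialWarpingFundamental} for the remaining estimate \eqref{eq:PhiHigherDerivativeEstimate} and the properties of $v_0$. Your additional remark that the hypothesis $k\ge d+1$ is not used in the admissibility argument itself is accurate and worth noting.
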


\begin{proof}
  Lemma \ref{lem:RadialWarpingExplicitInverse} shows that
  $\Phi_\vrho : \RR^d \to \RR^d$ is a $\mathcal C^{k+1}$ diffeomorphism
  with $\Phi_\vrho^{-1} = \Phi_{\vrhoinv}$,
  and \eqref{eq:RadialWarpingDerivativeExplicit} implies that
  \(
    w(\tau)
    = \det \mathrm{D}\Phi_{\vrhoinv} (\tau)
    = \vrhoinv' (|\tau|) \cdot [\widetilde{\vrhoinv} (|\tau|)]^{d-1} > 0,
  \) for all $\tau \in \RR^d \setminus \{0\}$.
  By continuity, and since $\vrhoinv ' (0) = \widetilde{\vrhoinv} (0) = c^{-1}$
  is positive, the above formula remains true for $\tau=0$.
  The remaining properties required in Definition~\ref{def:AdmissibleRho}
  follow from Proposition~\ref{prop:RadialWarpingFundamental}.
\end{proof}

\subsection{The slow start construction for radial components}
\label{sub:SlowStartConstruction}

So far, see Definition \ref{def:AdmissibleRho}, we assumed that
a $k$-admissible radial component $\vrho$ has to be linear on a neighborhood
of the origin.
Our goal in this section is to show that if a given function $\vsig$ satisfies
(slightly modified versions of) all the other conditions from
Definition \ref{def:AdmissibleRho}, then one can modify $\vsig$
in a neighborhood of the origin so that it becomes linear there, but
all other properties are retained.
We call this the \textbf{slow start construction}.

\begin{definition}\label{def:SlowStartVariant}
 Fix some $\eps > 0$, and let $\vsig : [0,\infty) \to [0,\infty)$ be continuous
 and strictly increasing with $\vsig (0) = 0$.
 Furthermore, fix 
 $c \in \big(0, \vsig (\eps) / (2\eps)\big)$, 
 \nicki{and an even function $\Omega \in C_c^\infty(\R)$ that satisfies
   $\Omega(\xi) = 1$ for $x\in B_\eps(0)$,  $\Omega(\xi) = 0$ for $x\not\in B_{2\eps}(0)$,
   and $\Omega'(\xi) \leq 0$ for $\xi\in[0,\infty)$. 
 Then the} function
 \begin{equation}
  \vrho : \quad \RR \to \RR, \quad
  \xi \mapsto
  \begin{cases}
    c \xi \cdot \Omega(\xi)
    + \sgn (\xi) \cdot (1 - \Omega(\xi)) \cdot \vsig (|\xi|),
    & \text{if } \xi \neq 0 \, , \\
    0, & \text{if } \xi = 0
  \end{cases}
  \label{eq:SlowStartDefinition}
\end{equation}
is called a \textbf{slow start version} of $\vsig$.
\end{definition}

\nicki{
\begin{remark}\label{rem:WhySlowStart}
  The intent of the slow start construction is to establish a $k$-admissible warping function
  that only differs from a radial function derived directly from $\vsig$
  in a small neighborhood of zero.
  This raises the question whether different slow start versions of $\vsig$,
  obtained, e.g., by choosing different values of $\eps$ in \Cref{def:SlowStartVariant},
  are equivalent in the sense that they generate the same coorbit spaces.
  Although we suspect that this can be shown directly
  by verifying the conditions of Proposition~\ref{pro:mixedkern},
  instead, under fairly general conditions, we will obtain this equivalence
  as a consequence of identifying the respective coorbit spaces
  with certain decomposition spaces~\cite{fegr85,boni07,VoigtlaenderPhDThesis,voigtlaender2016embeddings}
  in a follow-up contribution.
\end{remark}
}

The following lemma summarizes the main \emph{elementary} properties of this
construction.

\begin{lemma}\label{lem:SlowStartElementary}
  Let $\vsig : [0,\infty) \to [0,\infty)$ be continuous and strictly
  increasing with $\vsig (0) = 0$.
  Let $\eps > 0$ be arbitrary, and $c \in \big(0, \vsig(\eps) / (2\eps) \big)$.
  Then, the function $\vrho$ defined in \eqref{eq:SlowStartDefinition}
  has the following properties:
  \begin{enumerate}
    \item \label{enu:SlowStartExtends}We have $\vrho(\xi) = \vsig (\xi)$
          for all $\xi \in [2\eps,\infty)$.

    \item \label{enu:SlowStartAntisymmetric}$\vrho$ is antisymmetric.

    \item \label{enu:SlowStartLinearAtStart}$\vrho(\xi) = c\xi$ for all
          $\xi \in (-\eps,\eps)$.

    \item \label{enu:SlowStartSmooth}If $\vsig |_{\RR^+}$ is $\mathcal C^k$
          for some $k \in \NN_0$, then $\vrho$ is $\mathcal C^k$.

    \item \label{enu:SlowStartIncreasing}If $\vsig|_{\RR^+}$ is $C^1$
          with $\vsig ' (\xi) > 0$ for all $\xi \in (\eps, \infty)$,
          then $\vrho' (\xi) > 0$ for all $\xi \in \RR$.

    \item \label{enu:SlowStartDiffeomorphism1}
          If $\vsig|_{\RR^+}$ is $\mathcal C^k$ with $\vsig'(\xi) > 0$ for all
          $\xi \in (\eps,\infty)$, and if furthermore
          $\vsig(\xi) \to \infty$ as $\xi \to \infty$,
          then $\vrho : \RR \to \RR$ is a $\mathcal C^k$-diffeomorphism and
          $\vsig : [0,\infty) \to [0,\infty)$ is a homeomorphism.
          Finally, we have
          \[
            \vrho^{-1} (\xi) = \vsig^{-1} (\xi)
            \qquad \forall \, \xi \in [\vsig(2\eps),\infty) \, .
          \]
  \end{enumerate}
\end{lemma}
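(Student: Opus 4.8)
The plan is to verify the seven properties of Lemma~\ref{lem:SlowStartElementary} essentially one at a time, since each is an elementary consequence of the explicit formula \eqref{eq:SlowStartDefinition} together with standard facts about the cut-off function $\Omega$. First I would record the basic properties of $\Omega$ that drive everything: since $\varphi \in C_c^\infty((\eps,2\eps))$ with $\varphi \geq 0$ and $\|\varphi\|_{\lebesgue^1} = 1$, the function $\Omega(\xi) = \int_{-\infty}^\xi \varphi(-\eta) - \varphi(\eta)\,d\eta$ is smooth, satisfies $\Omega \equiv 1$ on $(-\eps,\eps)$ (because $\varphi$ vanishes there and $\int \varphi(-\eta)\,d\eta = 1$ picks up the full mass from the negative axis before we reach $-\eps$), $\Omega \equiv 0$ on $[2\eps,\infty)$ and on $(-\infty,-2\eps]$, $\Omega$ is even (since $\varphi(-\eta)-\varphi(\eta)$ is odd), and $\Omega$ takes values in $[0,1]$. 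With these in hand, \ref{enu:SlowStartExtends} and \ref{enu:SlowStartLinearAtStart} are immediate by plugging $\Omega = 0$ resp.\ $\Omega = 1$ into \eqref{eq:SlowStartDefinition}, and \ref{enu:SlowStartAntisymmetric} follows because $c\xi\Omega(\xi)$ is odd (product of the odd $\xi$ and the even $\Omega$) and $\sgn(\xi)(1-\Omega(\xi))\vsig(|\xi|)$ is visibly odd.

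Next I would handle smoothness, \ref{enu:SlowStartSmooth}. Away from $0$ the formula is a finite combination of $\mathcal C^k$ functions (using that $\vsig|_{(0,\infty)}$ is $\mathcal C^k$ and $\Omega$ is $\mathcal C^\infty$), so the only issue is at the origin; but on $(-\eps,\eps)$ we have $\vrho(\xi) = c\xi$ by \ref{enu:SlowStartLinearAtStart}, which is smooth there, so $\vrho \in \mathcal C^k(\RR)$. For \ref{enu:SlowStartIncreasing} I would split into the regions $|\xi| < \eps$, where $\vrho'(\xi) = c > 0$, and $|\xi| \geq \eps$; on the latter, differentiating \eqref{eq:SlowStartDefinition} for $\xi > 0$ gives
\[
  \vrho'(\xi) = (1-\Omega(\xi))\,\vsig'(\xi) + \Omega(\xi)\,c + \Omega'(\xi)\,\bigl(c\xi - \vsig(\xi)\bigr),
\]
and the point is that $\Omega' = \varphi(-\cdot) - \varphi(\cdot) \leq 0$ on $(0,\infty)$ (indeed $\Omega' = -\varphi \leq 0$ there), while $c\xi - \vsig(\xi) < 0$ for $\xi \geq \eps$ — this last inequality because $\vsig$ is strictly increasing and convex-enough in the crude sense that $\vsig(\xi) \geq \vsig(\eps) > 2c\eps > c\xi$ fails for large $\xi$; more carefully, for $\xi \in [\eps,2\eps]$ one uses $\vsig(\xi) \geq \vsig(\eps) > 2c\eps \geq c\xi$ directly, and $\Omega' \equiv 0$ for $\xi \geq 2\eps$ so the sign of that term is irrelevant there. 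Hence $\Omega'(\xi)(c\xi - \vsig(\xi)) \geq 0$, and $\vrho'(\xi)$ is a sum of three nonnegative terms of which at least one ($\Omega(\xi)c$ when $\xi < 2\eps$, or $\vsig'(\xi)$ when $\xi \geq 2\eps$) is strictly positive, giving $\vrho'(\xi) > 0$; by antisymmetry the same holds for $\xi < 0$, and $\vrho'(0) = c > 0$.

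Finally, \ref{enu:SlowStartDiffeomorphism1} packages the previous items: $\vrho$ is $\mathcal C^k$ with everywhere-positive derivative, hence a local $\mathcal C^k$-diffeomorphism, and it is strictly increasing with $\vrho(\xi) = \vsig(\xi) \to \infty$ as $\xi \to \infty$ and (by antisymmetry) $\vrho(\xi) \to -\infty$ as $\xi \to -\infty$, so $\vrho : \RR \to \RR$ is a bijection and therefore a global $\mathcal C^k$-diffeomorphism; similarly $\vsig : [0,\infty) \to [0,\infty)$ is a continuous strictly increasing surjection, i.e.\ a homeomorphism. The identity $\vrho^{-1} = \vsig^{-1}$ on $[\vsig(2\eps),\infty)$ follows from \ref{enu:SlowStartExtends}: on $[2\eps,\infty)$ the two functions $\vrho$ and $\vsig$ agree, so their inverses agree on the common image $\vrho([2\eps,\infty)) = \vsig([2\eps,\infty)) = [\vsig(2\eps),\infty)$. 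I do not anticipate a genuine obstacle here; the one place that needs a little care is verifying $c\xi - \vsig(\xi) < 0$ on all of $[\eps,\infty)$ (not just near $\eps$) to make the monotonicity argument airtight — this is where the hypothesis $c < \vsig(\eps)/(2\eps)$ is used, combined with $\vsig$ strictly increasing, via $\vsig(\xi) \geq \vsig(\eps) > 2c\eps \geq c\xi$ for $\xi \in [\eps, 2\eps]$ and the fact that $\Omega'$ vanishes beyond $2\eps$ so the term does not even appear for larger $\xi$.
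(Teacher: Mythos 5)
Your argument is correct and matches the paper's proof essentially step for step: the same preliminary properties of $\Omega$, the same case split $|\xi|<\eps$ / $\eps\leq|\xi|<2\eps$ / $|\xi|\geq 2\eps$ for the monotonicity, and the same surjectivity-plus-inverse-function-theorem argument for item (6). The only tiny imprecision is in item (5): for $\xi\in(\eps,2\eps)$ it need not be the summand $\Omega(\xi)\,c$ that is strictly positive (if, say, $\supp\varphi\subset(\eps,\tfrac{3}{2}\eps)$ then $\Omega$ already vanishes on $[\tfrac{3}{2}\eps,2\eps)$); the clean statement, and the one the paper uses, is that $\Omega(\xi)\,c+(1-\Omega(\xi))\,\vsig'(\xi)$ is a convex combination of the two strictly positive numbers $c$ and $\vsig'(\xi)$ and is therefore itself strictly positive, while the remaining term $(-\Omega'(\xi))(\vsig(\xi)-c\xi)$ is nonnegative exactly as you argue.
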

\begin{rem*}
  Item (\ref{enu:SlowStartDiffeomorphism1}) above is particularly interesting, since it is
  often more important to know the properties of the
  \emph{inverse} of the warping function ($\Phi_\vrho^{-1} = \Phi_{\vrho^{-1}}$ by
  Lemma \ref{lem:RadialWarpingExplicitInverse}) than those of the warping function itself.
\end{rem*}

\begin{proof}

  \textbf{Ad (\ref{enu:SlowStartExtends}):} For $\xi \in [2\eps,\infty)$,
  we have $\Omega (\xi) = 0$.
  Therefore, $\vrho (\xi) = \sgn(\xi) \cdot \vsig (|\xi|) = \vsig (\xi)$.

  \medskip{}

  \textbf{Ad (\ref{enu:SlowStartAntisymmetric}):}
  $\Omega$ is symmetric, i.e., $\Omega (-\xi) = \Omega (\xi)$ for all $\xi \in \RR$.
  For $\xi \neq 0$, this implies
  \begin{align*}
    \vrho (-\xi)
    & = c \cdot (-\xi) \cdot \Omega(-\xi)
        + \sgn (-\xi) \cdot (1-\Omega(-\xi)) \cdot \vsig (|-\xi|) \\
    & = - \Big(
            c \xi \cdot \Omega (\xi)
            + \sgn(\xi) \cdot (1-\Omega(\xi)) \cdot \vsig (|\xi|)
          \Big)
      = - \vrho (\xi) \, .
  \end{align*}
  For $\xi=0$, we trivially have $\vrho(-\xi) = 0 = - \vrho(\xi)$.

  \medskip{}

  \textbf{Ad (\ref{enu:SlowStartLinearAtStart}):}
  By choice of $\Omega$, we have $\Omega (\xi) = 1$ for $\xi \in (-\eps,\eps)$.
  For $\xi \neq 0$, this immediately yields
  $\vrho(\xi) = c\xi$, which clearly also holds for $\xi=0$.

  \medskip{}

  \textbf{Ad (\ref{enu:SlowStartSmooth}):} Since $\Omega$ is smooth, and since
  the functions $\xi \mapsto \sgn(\xi)$ and $\xi \mapsto |\xi|$ are smooth on
  $\RR \setminus \{0\}$, it is clear that $\vrho$ is $\mathcal C^k$ on
  $\RR \setminus \{0\}$. But in the preceding point we saw that $\vrho$ is
  linear (and hence smooth) in a neighborhood of zero.
  Hence, $\vrho$ is $\mathcal C^k$.

  \medskip{}

  \textbf{Ad (\ref{enu:SlowStartIncreasing}):} On $(-\eps,\eps)$, we have
  $\vrho (\xi) \!=\! c\xi$, and thus $\vrho' (\xi) \!=\! c \!>\! 0$ on $[-\eps,\eps]$.
  Also, on $(-\infty,-2\eps) \cup (2\eps,\infty)$, we have $\Omega (\xi) = 0$,
  and hence $\vrho(\xi) = \sgn(\xi) \cdot \vsig (|\xi|)$.
  Since $\xi \mapsto |\xi|$ is smooth away from zero, with
  $\frac{d}{d\xi}|\xi| = \sgn(\xi)$, this implies
  $\vrho' (\xi) = (\sgn(\xi))^2 \cdot \vsig ' (|\xi|) > 0$ for $\xi \in \RR$ with
  $|\xi| \geq 2\eps$.

  For $\xi\in(\eps,2\eps)$, we have
  \(
    \vrho'(\xi)
    = \left[
        \Omega(\xi) \cdot c + (1 - \Omega(\xi)) \cdot \vsig'(\xi)
      \right]
      + (-\Omega '(\xi)) \cdot (\vsig (\xi) - c\xi)
    > 0
    ,
  \)
  \nicki{since all three terms are nonnegative and they cannot vanish simultaneously. 
  To see this, note that }$\Omega'(\xi) \leq 0$ for $\xi \in [0,\infty)$, $\vsig'(\xi) > 0$ for $\xi \in (\eps,\infty)$,
  and $\vsig(\xi) \geq \vsig(\eps) > 2 c \eps > c\xi$ for $\xi \in (\eps, 2\eps)$.
  For the last inequality, recall $c \in (0, \vsig(\eps) / (2\eps))$. 
  Positivity of $\vrho'$ on $(-2\eps, -\eps)$ follows from $\vrho$ being antisymmetric.

  \medskip{}

  \textbf{Ad (\ref{enu:SlowStartDiffeomorphism1}):}
  We have $\vrho(0) = 0$ and $\vrho(\xi) = \vsig (\xi)$ for $\xi \geq 2\eps$, such that
  $\vrho([0,\infty)) \supset [0,\infty)$ by the intermediate value theorem.
  Hence, $\vrho$ is surjective by (\ref{enu:SlowStartAntisymmetric}) and with $\vrho' > 0$ by
  (\ref{enu:SlowStartIncreasing}) even bijective.
  As a strictly increasing bijective $\mathcal C^k$ map with positive derivative,
  $\vrho$ is a $\mathcal C^k$-diffeomorphism by the inverse function theorem.

  Similar arguments show that $\vsig$ is a homeomorphism.
  The remaining property $\vrhoinv(\xi) = \vsiginv(\xi)$ for all $\xi \in [\vsig(2\eps), \infty)$
  is now a straightforward consequence of $\vrho(\xi) = \vsig(\xi)$ for all $\xi \in [2\eps, \infty)$.
\end{proof}

Our final goal in this subsection is to state convenient criteria on $\vsig$
which ensure that $\vrho$ is a $k$-admissible radial component.
For this, the following general lemma will be helpful.

\begin{lemma}\label{lem:ModeratenessHalfLine}
  Let $\delta > 0$, and let $\theta_1, \theta_2 : [\delta,\infty) \to [0,\infty)$
  and $u : [0,\infty)\to \RR^+$ be continuous and increasing
  with $u(\xi+\eta) \leq u(\xi)\cdot u(\eta)$ for all $\xi,\eta\in [0,\infty)$.
  Furthermore, assume that there is some $D > 0$ such that
  \begin{equation}
    D\leq \theta_2 (\eta) \cdot u(\eta)
    \qquad \text{and} \qquad
    \theta_1 (\xi) \leq \theta_2 (\eta) \cdot u(| \xi-\eta |)
    \qquad \forall \, \xi,\eta \in [\delta,\infty)\, .
   \label{eq:ModeratenessHalfLineAssumption}
  \end{equation}

  If $\beta_1 : \RR \to [0,\infty)$ and $\beta_2 : \RR \to \RR^+$ are
  continuous with $\beta_j (\xi) = \theta_j (|\xi|)$ for all $\xi \in \RR$ with
  $| \xi | \geq \delta$ and all $j \in \{1,2\}$, then there is a constant
  $C \geq 1$ with
  \[
    \beta_1 (\xi) \leq C \cdot \beta_2 (\eta) \cdot u(| \xi-\eta |)
    \qquad \forall \, \xi,\eta \in \RR \, .
  \]
\end{lemma}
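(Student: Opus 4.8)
The claim is a purely one-dimensional ``gluing'' statement: on the region $|\xi|,|\eta|\ge\delta$ the functions $\beta_1,\beta_2$ agree with $\theta_1(|\cdot|),\theta_2(|\cdot|)$, where the moderateness-type inequality \eqref{eq:ModeratenessHalfLineAssumption} already holds; the only work is to extend this to the central strip $[-\delta,\delta]$, where $\beta_1,\beta_2$ are a priori arbitrary continuous functions (positive in the case of $\beta_2$). The strategy is a straightforward case distinction on the positions of $\xi$ and $\eta$ relative to the strip $[-\delta,\delta]$, exploiting continuity (hence boundedness on compacta), strict positivity of $\beta_2$, and the two elementary facts that $u\ge 1$ (which follows from submultiplicativity and monotonicity: $u(0)\le u(0)^2$ gives $u(0)\ge 1$, and $u$ increasing gives $u\ge 1$ everywhere) and that $u(|\xi-\eta|)\ge u(0)\ge 1$ always.

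\textbf{Key steps.} First I would record the quantitative consequences of continuity: set
\[
  M_1:=\max_{|\xi|\le\delta}\beta_1(\xi)<\infty,\qquad
  m_2:=\min_{|\eta|\le\delta}\beta_2(\eta)>0,
\]
both finite and the latter strictly positive since $\beta_2$ is continuous and positive on the compact set $[-\delta,\delta]$. Also note $\beta_1(\xi)=\theta_1(|\xi|)\ge 0$ and $\beta_2(\eta)=\theta_2(|\eta|)>0$ for $|\xi|,|\eta|\ge\delta$, and that $\theta_2(\eta)\ge D/u(\eta)>0$ there. Then I would split into four cases according to whether $|\xi|\le\delta$ or $|\xi|>\delta$, and likewise for $\eta$:

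\emph{Case 1: $|\xi|\ge\delta$, $|\eta|\ge\delta$.} Here $\beta_1(\xi)=\theta_1(|\xi|)\le\theta_2(|\eta|)\cdot u(\bigl||\xi|-|\eta|\bigr|)\le\beta_2(\eta)\cdot u(|\xi-\eta|)$, using the second inequality in \eqref{eq:ModeratenessHalfLineAssumption} with arguments $|\xi|,|\eta|\in[\delta,\infty)$ and the reverse triangle inequality $\bigl||\xi|-|\eta|\bigr|\le|\xi-\eta|$ together with monotonicity of $u$. So $C=1$ suffices.

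\emph{Case 2: $|\xi|\le\delta$ (any $\eta$).} Then $\beta_1(\xi)\le M_1$. If $|\eta|\le\delta$, then $\beta_2(\eta)\ge m_2$, so $\beta_1(\xi)\le (M_1/m_2)\cdot\beta_2(\eta)\le (M_1/m_2)\cdot\beta_2(\eta)\cdot u(|\xi-\eta|)$ since $u\ge 1$. If $|\eta|>\delta$, then $\beta_2(\eta)=\theta_2(|\eta|)\ge D/u(|\eta|)$; and because $|\xi|\le\delta$ we have $|\eta|\le|\xi-\eta|+|\xi|\le|\xi-\eta|+\delta$, hence by monotonicity and submultiplicativity $u(|\eta|)\le u(|\xi-\eta|)\cdot u(\delta)$, giving $\beta_2(\eta)\cdot u(|\xi-\eta|)\ge D/u(\delta)$, so $\beta_1(\xi)\le (M_1\,u(\delta)/D)\cdot\beta_2(\eta)\cdot u(|\xi-\eta|)$.

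\emph{Case 3: $|\xi|>\delta$ but $|\eta|\le\delta$.} Then $\beta_2(\eta)\ge m_2$ and $\beta_1(\xi)=\theta_1(|\xi|)$. Choose a fixed reference point $\eta_0$ with $|\eta_0|=\delta$; since $|\xi|>\delta=|\eta_0|\ge\delta$, the second inequality of \eqref{eq:ModeratenessHalfLineAssumption} gives $\theta_1(|\xi|)\le\theta_2(\delta)\cdot u(|\xi|-\delta)$. Now $|\xi|-\delta\le|\xi-\eta|$ is \emph{not} automatic, but $|\xi|\le|\xi-\eta|+|\eta|\le|\xi-\eta|+\delta$ gives $|\xi|-\delta\le|\xi-\eta|$, so by monotonicity $u(|\xi|-\delta)\le u(|\xi-\eta|)$. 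Hence $\beta_1(\xi)\le\theta_2(\delta)\cdot u(|\xi-\eta|)\le (\theta_2(\delta)/m_2)\cdot\beta_2(\eta)\cdot u(|\xi-\eta|)$.

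Finally, take $C$ to be the maximum of $1$, $M_1/m_2$, $M_1 u(\delta)/D$, and $\theta_2(\delta)/m_2$ (all finite and the whole max $\ge 1$); this $C$ works in every case, proving the lemma.

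\textbf{Main obstacle.} There is no deep obstacle here — the content is entirely bookkeeping of triangle inequalities and compactness. The one point requiring a little care is Case 3 (and the $|\eta|>\delta$ subcase of Case 2): one must bound $\theta_2$ or $\theta_1$ at an interior-boundary point $|\cdot|=\delta$ and then transport the estimate using the inequality $|\xi|\le|\xi-\eta|+\delta$ rather than the naive $\bigl||\xi|-|\eta|\bigr|\le|\xi-\eta|$ (which is unavailable when $\eta$ lies in the strip). I would also be slightly careful that $\theta_1,\theta_2$ need not be defined on $[0,\delta)$, so every application of \eqref{eq:ModeratenessHalfLineAssumption} must have both arguments in $[\delta,\infty)$, which is exactly why the reference point $\eta_0$ with $|\eta_0|=\delta$ is used.
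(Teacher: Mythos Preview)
Your proof is correct and follows essentially the same approach as the paper: a four-case analysis according to whether $|\xi|$ and $|\eta|$ lie in $[-\delta,\delta]$ or outside, using compactness to bound $\beta_1$ above and $\beta_2$ below on the strip, the reverse triangle inequality $\bigl||\xi|-|\eta|\bigr|\le|\xi-\eta|$ in the ``both outside'' case, the lower bound $D\le\theta_2(|\eta|)u(|\eta|)$ together with $|\eta|\le|\xi-\eta|+\delta$ when only $\eta$ is outside, and the reference point $|\eta_0|=\delta$ together with $|\xi|-\delta\le|\xi-\eta|$ when only $\xi$ is outside. Your organization (merging the two ``$|\xi|\le\delta$'' subcases) and constants ($M_1,m_2$ for the paper's $c_1,c_2$) differ only cosmetically.
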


\begin{proof}
  By continuity of $\beta_1 : \RR \to [0,\infty)$ and $\beta_2 : \RR \to \RR^+$,
  there are constants $c_1, c_2 > 0$ with $\beta_1 (\xi) \leq c_1$ and
  $\beta_2(\xi) \geq c_2$ for all $\xi \in [-\delta,\delta]$.
  Further, note that the conditions on $u$ imply $u(0)\geq 1$
  and that $u(|\bullet|)$ is submultiplicative and radially increasing.
  We distinguish four cases:

  \medskip{}

  \textbf{Case 1 ($|\xi| < \delta$ and $|\eta| < \delta$):} 
  \(
    \beta_1 (\xi)
    \leq c_1
    \leq \frac{c_1}{c_2 \cdot u(0)} \cdot \beta_2 (\eta) \cdot u(| \xi-\eta |) \, .
  \)

   \medskip{}

  \textbf{Case 2 ($|\xi| \geq \delta$ and $|\eta| \geq \delta$):}
  \(
    \beta_1 (\xi)
    = \theta_1 (|\xi|)
    \leq \theta_2 (|\eta|) \cdot u\big( \big| \, |\xi| - |\eta| \, \big| \big)
    \leq \beta_2 (\eta) \cdot u(|\xi-\eta|) \, .
  \)

  \medskip{}

  \textbf{Case 3 ($|\xi| < \delta$ and $|\eta| \geq \delta$):} We have
  \(
    D
    \leq \theta_2 (|\eta|) \cdot u(|\eta|)
    \leq \theta_2 (|\eta|) \cdot u(|\eta-\xi|) \cdot u(\delta)\, ,
  \)
  since $u(|\xi|) \leq u(\delta)$.
  Hence,
  \(
    \beta_1 (\xi)
    \leq c_1
    \leq \frac{c_1 \cdot u(\delta)}{D} \cdot \theta_2 (| \eta |)\cdot u(|\eta-\xi|)
    \leq \frac{c_1 \cdot u(\delta)}{D} \cdot \beta_2 ( \eta )\cdot u(|\eta-\xi|)\, .
  \)

  \medskip{}

  \textbf{Case 4 ($| \xi | \geq \delta$ and $|\eta| < \delta$):} We have
  $\big| \, |\xi| - \delta \, \big| \leq |\xi| \leq |\xi-\eta| + |\eta| < |\xi-\eta| + \delta$.
  Hence,
  \(
    \beta_1 (\xi)
    = \theta_1 (|\xi|)\leq c_2^{-1} \cdot \beta_2(\eta) \cdot \theta_1(|\xi|)
    \leq \frac{\theta_2 (\delta)
          \cdot u(\delta)}{c_2} \cdot \beta_2(\eta) \cdot u(| \xi-\eta |)\, .
  \)

  \medskip{}

  Altogether, we have shown $\beta_1 (\xi) \leq C \beta_2(\eta) \cdot u(| \xi-\eta |)$
  for all $\xi,\eta\in\RR$, with
  \[
    C := \max \left\{
                1, \quad
                \frac{c_1}{c_2 \cdot u(0)}, \quad
                \frac{c_1 \cdot u(\delta)}{D}, \quad \frac{\theta_2(\delta)\cdot u(\delta)}{c_2}
              \right\} \, . \qedhere
  \]
\end{proof}

We now formally introduce a class of functions
$\vsig : [0,\infty) \to [0,\infty)$ for which the slow-start construction
produces a $k$-admissible radial component.
This will be proven in Proposition~\ref{prop:SlowStartFullCriterion} below.

\begin{definition}\label{def:WeaklyAdmissibleRadialComponent}
  Let $k \in \NN_0$.
  A continuous function $\vsig : [0,\infty) \to [0,\infty)$
  is called a \textbf{weakly $k$-admissible radial component with control weight
  $u : [0,\infty) \to \RR^+$}, if it satisfies the following conditions:
  \begin{enumerate}

    \item $\vsig$ is $\mathcal C^{k+1}$ on $\RR^+$, with $\vsig'(\xi) > 0$
          for all $\xi \in \RR^+$.

    \item $\vsig (0) = 0$ and $\vsig (\xi) \to \infty$ as $\xi \to \infty$.

    \item The control weight $u$ is continuous and increasing with
          $u(\xi+\eta) \leq u(\xi)\cdot u(\eta)$ for all $\xi,\eta\in [0,\infty)$.
          Furthermore, there are $\delta > 0$ and $C_0, C_1 > 0$ with the following properties:
          \begin{align}
            C_0 \cdot \frac{\vsiginv(\xi)}{\xi}
            \leq \vsiginv ' (\xi)
            \leq C_1 \cdot \vsiginv (\xi)
            & \qquad \forall \, \xi \in [\delta,\infty) \, ,
            \label{eq:SlowStartDerivativeLarge} \\
            \frac{\vsiginv (\xi)}{\xi} \leq \frac{\vsiginv (\eta)}{\eta} \cdot u(| \xi-\eta |)
            & \qquad \forall \, \xi,\eta \in [\delta,\infty) \, ,
            \label{eq:SlowStartPsiTildeModerate} \\
            |\vsiginv^{(m)} (\xi)| \leq \vsiginv'(\eta) \cdot u(| \xi-\eta |)
            & \qquad \forall \, \xi,\eta \in [\delta,\infty)
                                \text{ and } m \in \underline{k + 1} \, .
            \label{eq:SlowStartHigherDerivativesBound}
          \end{align}
  \end{enumerate}
\end{definition}

\begin{remark}\label{rem:thatoneremark}
  Properties (1) and (2) imply that $\vsig : [0,\infty) \to [0,\infty)$ is a homeomorphism, with
  inverse $\vsiginv := \vsig^{-1}$.

  In many cases, one even has the stronger condition $\vsiginv ' (\xi) \asymp \vsiginv (\xi) / \xi$
  for all $\xi \in [\delta,\infty)$ instead of \eqref{eq:SlowStartDerivativeLarge}.
  In this case, it is not necessary
  to verify condition \eqref{eq:SlowStartPsiTildeModerate}, since---after
  possibly replacing $u$ by $C \cdot u$ for some $C \geq 1$---this condition
  is implied by \eqref{eq:SlowStartHigherDerivativesBound} for $m=1$.
  Indeed, if \eqref{eq:SlowStartHigherDerivativesBound} holds, then
  \[
    \frac{\vsiginv(\xi)}{\xi} \asymp \vsiginv '(\xi)
    \leq \vsiginv ' (\eta) \cdot u(|\xi-\eta|)
    \lesssim \frac{\vsiginv (\eta)}{\eta} \cdot u(|\xi-\eta|)
    \qquad \text{for } \xi ,\eta \in [\delta, \infty ) \, .
  \]

  \emph{Overall, if $\vsiginv'(\xi) \asymp \vsiginv(\xi) / \xi$
  for $\xi \in [\delta,\infty)$, then 
  $\vsig$ is a weakly $k$-admissible radial component, if 
  $\vsig$ is $\mathcal C^{k+1}$ with $\vsig' (\xi) > 0$,
  $\vsig (0) = 0$ and with $\vsig(\xi) \to \infty$ as $\xi \to \infty$ 
  and $\vsig$ satisfies \eqref{eq:SlowStartHigherDerivativesBound}.}
\end{remark}

Our final result in this subsection shows that the slow-start construction,
applied to a weakly $k$-admissible radial component, yields a $k$-admissible
radial component.

\begin{proposition}\label{prop:SlowStartFullCriterion}
  Let $k \in \NN_0$, and let $\vsig : [0,\infty) \to [0,\infty)$ be a
  weakly $k$-admissible radial component with control weight
  $u : [0,\infty) \to \RR^+$.
  Furthermore, let $\vrho$ be a ``slow-start version'' of $\vsig$
  as in \eqref{eq:SlowStartDefinition}.
  Then there exists a constant $C := C(k) \geq 1$, such that $\vrho$ is a $k$-admissible radial component
  with control weight
  \[
    v : \RR \to \RR^+, \xi \mapsto C \cdot u(|\xi|) \, .
  \]
\end{proposition}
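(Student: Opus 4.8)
The plan is to verify, one by one, the six defining properties of a $k$-admissible radial component from Definition~\ref{def:AdmissibleRho} for the slow-start function $\vrho$, drawing on the elementary properties already collected in Lemma~\ref{lem:SlowStartElementary} and on the moderateness transfer provided by Lemma~\ref{lem:ModeratenessHalfLine}. First I would fix some $c \in \big(0, \vsig(\eps)/(2\eps)\big)$ and the cutoff data $\varphi, \Omega$ as in Definition~\ref{def:SlowStartVariant}, so that $\vrho$ is well-defined. Properties (1)--(3) of Definition~\ref{def:AdmissibleRho} are then immediate: by Lemma~\ref{lem:SlowStartElementary}\eqref{enu:SlowStartDiffeomorphism1}, $\vrho$ is a strictly increasing $\mathcal C^{k+1}$-diffeomorphism with $\vrhoinv = \vrho^{-1}$ (using that $\vsig$ is $\mathcal C^{k+1}$ on $(0,\infty)$ together with Lemma~\ref{lem:SlowStartElementary}\eqref{enu:SlowStartSmooth}); antisymmetry is Lemma~\ref{lem:SlowStartElementary}\eqref{enu:SlowStartAntisymmetric}; and $\vrho(\xi) = c\xi$ on $(-\eps,\eps)$ is Lemma~\ref{lem:SlowStartElementary}\eqref{enu:SlowStartLinearAtStart}. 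In particular $\vrhoinv(\xi) = \xi/c$ on $(-c\eps, c\eps)$, so $\widetilde{\vrhoinv}(0) = 1/c$ is the correct normalization for \eqref{eq:PsiTildeDefinition}.

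The substance of the argument is establishing properties (4)--(6), and here the guiding principle is that $\vrho$ agrees with $\vsig$ outside $[2\eps, 2\eps]$ (Lemma~\ref{lem:SlowStartElementary}\eqref{enu:SlowStartExtends}), hence $\vrhoinv = \vsiginv$ on $[\vsig(2\eps), \infty)$, so all the asymptotic/moderateness hypotheses \eqref{eq:SlowStartDerivativeLarge}--\eqref{eq:SlowStartHigherDerivativesBound} transfer to $\vrhoinv$ on $[\delta', \infty)$ for $\delta' := \max\{\delta, \vsig(2\eps)\}$, while on the compact interval $[0, \delta']$ everything is controlled by continuity and positivity. Concretely: for property (5), i.e. \eqref{eq:AdmissibilityFirstDerivative}, the upper and lower bounds $C_0 \widetilde{\vrhoinv}(\xi) \le \vrhoinv'(\xi) \le C_1(1+\xi)\widetilde{\vrhoinv}(\xi)$ hold for $\xi \ge \delta'$ directly from \eqref{eq:SlowStartDerivativeLarge} (noting $\vsiginv(\xi)/\xi = \widetilde{\vsiginv}(\xi) = \widetilde{\vrhoinv}(\xi)$ there, and $C_1 \vsiginv(\xi) = C_1 \xi \widetilde{\vrhoinv}(\xi) \le C_1(1+\xi)\widetilde{\vrhoinv}(\xi)$); for $\xi \in (0,\delta']$, both $\vrhoinv'$ and $\widetilde{\vrhoinv}$ are continuous and strictly positive on the compact set $[0,\delta']$ (with $\vrhoinv'(0) = \widetilde{\vrhoinv}(0) = 1/c > 0$), so the ratio $\vrhoinv'/\widetilde{\vrhoinv}$ is bounded above and below there, and one absorbs the constants into $C_0, C_1$. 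For property (6), \eqref{eq:AdmissibilityHigherDerivative}, one wants $|\vrhoinv^{(\ell)}(\xi)| \le v(\xi - \eta)\vrhoinv'(\eta)$ for all $\eta, \xi \in [0,\infty)$ and $\ell \in \underline{k+1}$; on $[\delta',\infty)^2$ this is \eqref{eq:SlowStartHigherDerivativesBound}, and the passage to all of $[0,\infty)^2$ (handling the mixed cases where one argument is small and the other large) is precisely the kind of four-case bookkeeping packaged in Lemma~\ref{lem:ModeratenessHalfLine}, applied with $\theta_1 = |\vrhoinv^{(\ell)}|$, $\theta_2 = \vrhoinv'$, $u$ the given control weight, and $D := \inf_{[\delta',\infty)} \vrhoinv' \cdot u(\cdot) > 0$ coming from \eqref{eq:SlowStartDerivativeLarge}; one also needs $\vrhoinv' \ge c_2 > 0$ on $[0,\delta']$ to run Case 1/3, which holds since $\vrho' > 0$ everywhere by Lemma~\ref{lem:SlowStartElementary}\eqref{enu:SlowStartIncreasing}.

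For property (4), I would verify $v$-moderateness of $\vrhoinv'$ and of $\widetilde{\vrhoinv}$ separately, again via Lemma~\ref{lem:ModeratenessHalfLine}: $v$-moderateness of $\widetilde{\vrhoinv}$ (i.e. $\widetilde{\vrhoinv}(\xi) \le \widetilde{\vrhoinv}(\eta) v(\xi-\eta)$ for all $\xi, \eta$) follows by applying that lemma with $\theta_1 = \theta_2 = \widetilde{\vsiginv}$ on $[\delta', \infty)$ (hypothesis \eqref{eq:SlowStartPsiTildeModerate}, with $D := \inf_{[\delta',\infty)} \widetilde{\vsiginv}(\cdot) u(\cdot)$, which is positive because $\widetilde{\vsiginv}$ is continuous and $\ge \vsiginv(\delta')/\delta' \cdot$ something, or one uses that $\widetilde{\vsiginv} u \ge \widetilde{\vsiginv}(\delta') u(0)$ by monotonicity-type bounds — this finiteness/positivity check is the only mildly delicate point); and $v$-moderateness of $\vrhoinv'$ follows by the same lemma using \eqref{eq:SlowStartHigherDerivativesBound} with $m=1$ on $[\delta',\infty)$ and compactness on $[0,\delta']$. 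Finally one checks that $v(\xi) := C u(|\xi|)$, for $C$ the maximum of the finitely many constants produced above, is continuous, submultiplicative and radially increasing (submultiplicativity of $u(|\bullet|)$ comes from $u(\xi+\eta) \le u(\xi)u(\eta)$ plus monotonicity of $u$, as noted in the proof of Lemma~\ref{lem:ModeratenessHalfLine}), and that all the inequalities just proved remain valid with this single weight $v$ in place of the various ad hoc weights. I expect the main obstacle to be purely organizational: tracking the dependencies of the several constants ($\delta'$, $D$, the compact-interval bounds $c_1, c_2$) and making sure the four-case analyses of Lemma~\ref{lem:ModeratenessHalfLine} are invoked with hypotheses that are actually verified — there is no deep difficulty, but the cross-referencing between $\vrho$, $\vrhoinv$, $\vsig$, $\vsiginv$, $\widetilde{\vrhoinv}$ on the ``small'' versus ``large'' regime must be done carefully.
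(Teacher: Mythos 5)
Your proposal is correct and follows essentially the same route as the paper's proof: conditions (1)--(3) via Lemma~\ref{lem:SlowStartElementary}, conditions (4)--(6) by splitting at $\delta' = \max\{\delta,\vsig(2\eps)\}$ where $\vrhoinv = \vsiginv$, and then invoking Lemma~\ref{lem:ModeratenessHalfLine} with exactly the choices of $\theta_1,\theta_2,\beta_1,\beta_2$ the paper uses. The one point you flag as delicate (positivity of $D$ for the $\widetilde{\vrhoinv}$-moderateness) is handled in the paper by extracting $D$ from \eqref{eq:SlowStartHigherDerivativesBound} with $m=1$ (resp.\ from \eqref{eq:SlowStartPsiTildeModerate} evaluated at $\xi=\delta'$), which matches the fix you sketch.
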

\begin{proof}
  Lemma~\ref{lem:SlowStartElementary} shows that $\vrho : \RR \to \RR$
  satisfies conditions (1)--(3) of Definition~\ref{def:AdmissibleRho}.
  As already observed in the proof of Lemma~\ref{lem:SlowStartElementary},
  the conditions on $u$ imply that $u(|\bullet|)$ is submultiplicative,
  such that the same holds for $v$, since $C\geq 1$.
  Note furthermore, that $\vrhoinv(\xi) = \vsiginv(\xi)$
  for all $\xi \geq \delta' := \max\{\vsig(2\eps),\delta\}$,
  with $\eps>0$ as in Lemma~\ref{lem:SlowStartElementary}
  and $\delta>0$ as in Definition~\ref{def:WeaklyAdmissibleRadialComponent}.

  \medskip{}

  We proceed to prove condition (5) of Definition~\ref{def:AdmissibleRho}:
  For $|\xi| \geq \delta'$, the inequality \eqref{eq:AdmissibilityFirstDerivative}
  (with some constants $\tilde{C}_1,\tilde{C}_2$ in place of $C_1,C_2$)
  is a direct consequence of \eqref{eq:SlowStartDerivativeLarge} and \eqref{eq:SlowStartDefinition}.

  For $|\xi|\leq \vsig(\eps)$, $\widetilde{\vrhoinv}(\xi) = \vsiginv(\xi)/\xi = c^{-1}$,
  such that $\widetilde{\vrhoinv}$ is continuous and there are $c_1,c_2,c_3,c_4>0$,
  such that for all $\xi\in[-\delta',\delta']$, $c_1 \leq \widetilde{\vrhoinv}(\xi) \leq c_2$
  and $c_3 \leq \vrhoinv'(\xi) \leq c_4$.
  Thus, with $C_1 = \min\{\tilde{C}_1,c_3/c_2\}$
  and $C_2 = \max\{\tilde{C}_2,c_4/c_1\}$, \eqref{eq:AdmissibilityFirstDerivative} is satisfied
  for all $\xi\in\RR$.
%

  \medskip{}

  To prove condition (6) of Definition~\ref{def:AdmissibleRho}, consider the following:
  For $|\xi| \geq\delta'$, the antisymmetry of $\vrho$ implies that
  $\vrhoinv(\xi) = \sgn(\xi) \cdot \vsiginv(\sgn(\xi) \cdot \xi)$.
  A straightforward induction therefore shows
  \[
    |\vrhoinv^{(m)}(\xi)|
    = \big|\vsiginv^{(m)}(\sgn(\xi) \cdot \xi)\big|
    = \big| \vsiginv^{(m)}(|\xi|) \big|
    \quad \text{for all} \quad
    m \in \underline{k+1}
    \quad \text{and} \quad
    |\xi| \geq \delta'.
  \]
  Furthermore, note that \eqref{eq:SlowStartHigherDerivativesBound} with $m=1$
  and $\xi = \delta$ and $\vsig'(\xi) > 0$ for all $\xi\in (\eps,\infty)$
  implies $0<\vsiginv'(\delta)/u(\delta) \leq \vsiginv'(\eta)u(\eta)$, since $u$ is increasing.

  Fix some $\ell \in \underline{k+1}$.
  In view of \eqref{eq:SlowStartHigherDerivativesBound},
  we can apply Lemma~\ref{lem:ModeratenessHalfLine} (with $\delta'$ instead of
  $\delta$), with $\theta_1 = \big| \vsiginv^{(\ell)}|_{[\delta',\infty)} \, \big|$,
  $\theta_2 = \vsiginv' |_{[\delta',\infty)}$,
  and with $\beta_1 = |\vrhoinv^{(\ell)}|$, $\beta_2 = \vrhoinv'$.
  Consequently, there is a constant $G_\ell \geq 1$ such that
  \begin{equation}
    | \vrhoinv^{(\ell)}(\xi) |
    = \beta_1 (\xi)
    \leq G_\ell \cdot \beta_2 (\eta) \cdot u(| \eta-\xi |)
    =    G_\ell \cdot \vrhoinv'(\eta) \cdot u(| \eta-\xi |)
    \qquad \forall \, \eta,\xi \in \RR \, .
    \label{eq:SlowStartHigherDerivativeVerification}
  \end{equation}
  Since $\ell \in \underline{k+1}$ was arbitrary, \eqref{eq:AdmissibilityHigherDerivative} is satisfied
  with $C \geq \max \{G_1,\dots,G_{k+1}\}$.

  \medskip{}

  In particular, if we set $\ell = 1$, then \eqref{eq:SlowStartHigherDerivativeVerification} implies
  that $\vrhoinv'$ is $v$-moderate with $v = C u(|\bullet|)$ and any $C\geq G_1$.
  Hence, for condition (4) in Definition \ref{def:AdmissibleRho} it only remains
  to prove that $\widetilde{\vrhoinv}$ is $v$-moderate.

  With $\theta_1 = \theta_2 = \vsiginv/|\bullet|$ (and $\delta'$ instead of $\delta$)
  the inequality \eqref{eq:ModeratenessHalfLineAssumption}
  is implied by \eqref{eq:SlowStartPsiTildeModerate}.
  Therefore, we can invoke Lemma~\ref{lem:ModeratenessHalfLine}
  with this choice of $\theta_1$, $\theta_2$ and $\beta_1 = \beta_2 = \widetilde{\vrhoinv}$.
  Note that $\beta_j (\xi) = \theta_j (|\xi|)$ for $|\xi| \geq \delta'$.
  We obtain a constant $G \geq 1$, such that $\widetilde{\vrhoinv}$ is $v$-moderate
  with $v = C u(|\bullet|)$ and any $C\geq G$.
  Altogether, condition (4) in Definition~\ref{def:AdmissibleRho} is satisfied
  with $v = C u(|\bullet|)$, for any $C\geq \max\{G_1,G\}$.
\end{proof}

\subsection{Examples of radial warping functions}
\label{sub:RadialWarpingExamples}

We now present two examples of radial
components $\vsig : [0,\infty) \to [0,\infty)$.
We show that they are \emph{weakly $k$-admissible}
as per Definition \ref{def:WeaklyAdmissibleRadialComponent}.
By Proposition \ref{prop:SlowStartFullCriterion}
and Corollary \ref{cor:RadialWarpingIsWarping},
any slow start version $\vrho$ of $\vsig$ yields a radial,
$k$-admissible warping function $\Phi_{\vrho}$.
Additionally, we provide in each case a control weight $v_0$ for $\Phi_{\vrho}$.

\begin{example}\label{exa:AlphaModulationWarpingFunctionExample}
  Let \nicki{$p \geq 1$}, and consider the function
  \[
    \vsig : [0,\infty) \to [0,\infty), \xi \mapsto (1+\xi)^{1/p} - 1 \, .
  \]
  Conditions (1)--(2) of Definition \ref{def:WeaklyAdmissibleRadialComponent} are clear. \nicki{For $p=1$, Condition (3) is easily verified with $u\equiv 1$.
  To verify Condition (3) for $p>1$}, we first show that
  \(
     \vsiginv' \asymp \vsiginv / (\bullet).
  \)
  By Remark \ref{rem:thatoneremark}, it is then sufficient to verify only
  \eqref{eq:SlowStartHigherDerivativesBound}.

  Note that $\vsiginv (\xi) = (1+\xi)^p - 1$.
  For $\xi> \delta := 1$, it is easy to see that $(1+\xi)^r - 1 \asymp (1+\xi)^r$, for any $r>0$.
  In particular, with $r=p-1$, we obtain
  \[
    \vsiginv'(\xi)
    = p \cdot (1+\xi)^{p-1}
    \asymp \frac{(1+\xi)^p}{1+\xi}
    \asymp \frac{(1+\xi)^p - 1}{\xi}
    = \frac{\vsiginv (\xi)}{\xi}
    \quad \text{for} \quad \xi \geq 1 \, .
  \]

  Note the inequality
  $1 + \xi \leq 1 + \eta + |\xi-\eta| \leq (1+\eta) \cdot (1+|\xi-\eta|)$,
  which holds for $\eta,\xi \geq 0$.
  As a direct consequence, we obtain for all $\eta,\xi \geq 0$ and $\alpha,\beta\in\RR$
%
  with $\alpha\leq \beta$ that
  \begin{equation}
    (1+\xi)^\alpha
    \leq (1+\xi)^\beta \leq (1+\eta)^\beta \cdot (1+|\eta-\xi|)^{|\beta|}\, .
    \label{eq:PolynomialWeightModerateness}
  \end{equation}

  Define $\tilde{u} = (1+(\bullet))^{|p-1|}$ and note that
  $\vsiginv^{(m)} (\xi) = C_m \cdot (1+\xi)^{p-m}$ for all $m \in \underline{k+1}$,
  for suitable constants $C_m = C_m (m,p) \in \RR$, in particular, $C_1 = p > 0$.
  Therefore,
  \[
    | \vsiginv^{(m)}(\xi) |
    \leq |C_m| \cdot (1+\xi)^{p-m}
    \overset{\eqref{eq:PolynomialWeightModerateness}}{\leq}
         |C_m| \cdot (1+\eta)^{p-1} \tilde{u}(|\xi-\eta|)
    =    \frac{|C_m|}{p} \cdot \vsiginv'(\eta) \cdot \tilde{u}(|\xi-\eta|) \, ,
  \]
  for all $\eta,\xi \geq 1$.
  This proves \eqref{eq:SlowStartHigherDerivativesBound}
  with $u = \max_{m\in\underline{k+1}}\{|C_m|/p\}\cdot \tilde{u}$.

  Hence, $\vsig$ is a weakly $k$-admissible radial component with control weight
  $u : [0,\infty) \to \RR^+$, $u(\xi) = C \cdot (1+\xi)^{|p-1|}$,
  for \nicki{any $k \in \NN_0$ and some appropriate constant $C \geq 1$, depending on $p$ and $k$. }
  By Proposition~\ref{prop:SlowStartFullCriterion} any ``slow start'' version $\vrho$
  of $\vsig$ is $k$-admissible, with control weight $v = C'\cdot u(|\bullet|)$, for some
  $C' \geq 1$. 
  Therefore, Corollary~\ref{cor:RadialWarpingIsWarping} shows that the associated
  radial warping function $\Phi_\vrho$ is indeed a $k$-admissible warping function
  with control weight $v_0 = C''\cdot (1+|\bullet|)\cdot u(|\bullet|) = C''(1+|\bullet|)^{1+| p-1 |}$,
  for constant $C''\geq 1$. 

  At this point, we conjecture that the coorbit spaces
  $\Co (\mathcal{G}(\theta, \Phi_\vrho) , \lebesgue^{p,q}_\kappa)$
  that are associated to the warping function $\Phi_\vrho$ constructed here
  coincide with certain \textbf{$\alpha$-modulation spaces}%
  \nicki{, specifically with $\alpha = p^{-1}(p-1)\in [0,1)$,
  for a proper choice of the weight $\kappa$. }
  In future work, we will verify this by identifying
  $\Co (\mathcal{G}(\theta, \Phi_\vrho) , \lebesgue^{p,q}_\kappa)$ with
  certain decomposition spaces, cf.\ \cite{boni07,fegr85}, and considering embeddings
  between the resulting decomposition spaces and
  \textbf{$\alpha$-modulation spaces}~\cite{gr92-2,fefo06,han2014alpha,daforastte08}
  using the theory developed in~\cite{VoigtlaenderPhDThesis,voigtlaender2016embeddings}.
\end{example}

\begin{example}\label{exa:LogarithmicWarping}
  Consider the function $\vsig : [0,\infty) \to [0,\infty), \xi \mapsto \ln(1+\xi)$.
  It is easy to see that conditions (1)--(2) of Definition \ref{def:WeaklyAdmissibleRadialComponent}
  are satisfied and that $\vsiginv(\xi) = \vsig^{-1}(\xi) = e^{\xi}-1$.

  We now verify condition (3) of Definition~\ref{def:WeaklyAdmissibleRadialComponent}
  by proving that the inequalities \eqref{eq:SlowStartDerivativeLarge}--
  \eqref{eq:SlowStartHigherDerivativesBound} hold with $\delta = 1$ and
  $u : [0, \infty) \to [1, \infty), \xi \mapsto e^\xi$.
  Note that $\vsiginv^{(\ell)} = u$ for all $\ell\in\NN$,
  such that \eqref{eq:SlowStartHigherDerivativesBound} clearly holds,
  even for all $\xi\in \RR^+$.

  \medskip{}

  \textbf{Ad \eqref{eq:SlowStartDerivativeLarge}:} For $\xi \geq \delta = 1$, we
  have $1 \leq e^{\xi} / e$, and thus
  $\vsiginv (\xi) = e^\xi - 1 \geq e^\xi \cdot (1-e^{-1})$. Therefore,
  \[ \frac{e^\xi - 1}{\xi}
    \leq e^\xi \leq (1-e^{-1})^{-1} \cdot \vsiginv (\xi) \, ,
  \]
  so that \eqref{eq:SlowStartDerivativeLarge} is fulfilled with
  $C_0 = 1$ and $C_1 = (1-e^{-1})^{-1} > 0$.

  \medskip{}

  \textbf{Ad \eqref{eq:SlowStartPsiTildeModerate}:} Let
  $\widetilde{\vsiginv}(\xi) := \frac{\vsiginv (\xi)}{\xi} = \frac{e^\xi - 1}{\xi}$ for
  $\xi \in \RR^+$, and note that $\widetilde{\vsiginv}$ has the power series
  expansion
  \[
    \widetilde{\vsiginv}(\xi)
    = \frac{1}{\xi} \cdot \left( \sum_{n=0}^\infty \frac{\xi^n}{n!} - 1 \right)
    = \sum_{n=1}^\infty \frac{\xi^{n-1}}{n!}
    = \sum_{\ell=0}^\infty \frac{\xi^\ell}{(\ell+1)!},
  \]
  which shows that $\widetilde{\vsiginv}$ is increasing, since each term
  of the series is increasing on $\RR^+$. Therefore, $\xi\leq \eta$ implies
  $\vsiginv(\xi) \leq \vsiginv(\eta)\leq \vsiginv(\eta)e^{|\xi-\eta|}$.

  If $0 < \eta < \xi$, then
  \[
   \frac{e^\eta-1}{\eta}\cdot e^{|\xi-\eta|}
   = \frac{e^\eta-1}{\eta}\cdot e^{\xi-\eta}
   = \frac{1-e^{-\eta}}{\eta}\cdot e^{\xi}
   \geq \frac{e^\xi-1}{\xi} \, .
  \]
  Here, the final inequality uses that $\xi\mapsto \tfrac{1-e^{-\xi}}{\xi}$ is decreasing on $\RR^+$. 
  Therefore, \eqref{eq:SlowStartPsiTildeModerate} even holds for all $\xi,\eta\in \RR^+$.
  \medskip{}

  In other words, $\vsig$ is a weakly
  $k$-admissible radial component with control weight
  $u : [0,\infty) \rightarrow \RR^+$, $u(\xi) = e^{\xi}$ (for any $k \in \N_0$).
  By \Cref{prop:SlowStartFullCriterion}, any ``slow start version'' $\vrho$
  of $\vsig$ as per \eqref{eq:SlowStartDefinition}, is a $k$-admissible
  radial component with control weight $v : \RR \to \RR^+, \xi \mapsto C \cdot e^{|\xi|}$,
  for some $C\geq 1$. 
  By \Cref{cor:RadialWarpingIsWarping}, the associated radial warping function $\Phi_\vrho$
  is a $k$-admissible warping function with control weight
  $v_0 : \RR^d\to \RR^+, \tau\mapsto C' \cdot (1+|\tau|) \cdot e^{|\tau|}$,
  for a suitable $C' \geq 1$. 

  It is likely that the coorbit spaces
  $\Co (\mathcal{G}(\theta, \Phi_\vrho) , \lebesgue^{p,q}_\kappa)$ associated
  with the warping function $\Phi_\vrho$ constructed can be embedded into certain
  \textbf{inhomogeneous Besov spaces}~\cite{TriebelTheoryOfFunctionSpaces,tr06,tr88},
  if the weight $\kappa$ is chosen properly.
  If such an embedding exists, we expect the converse to be true as well,
  possibly with a different weight $\tilde{\kappa}$ instead of $\kappa$.
  Similar to the previous examples, the interpretation of
  $\Co (\mathcal{G}(\theta, \Phi_\vrho) , \lebesgue^{p,q}_\kappa)$ as decomposition
  space will be the first step towards verifying such embeddings.
\end{example}
  

  \section{Conclusion}
  \label{sec:conclusion}

  We developed a theory of warped time-frequency systems for functions of arbitrary dimensionality.
  These systems, defined by a prototype function $\theta$ and a diffeomorphism $\Phi$,
  form tight continuous frames and admit the construction of coorbit spaces $\Co_\Phi(Y)$,
  which we have shown to be well-defined Banach spaces, provided that
  $\Phi$ is a $k$-admissible warping function and $Y$ is a suitable, solid Banach space.
  We have further shown that stable discretization, in the sense of Banach frame decompositions,
  of the continuous system $\mathcal{G}(\theta, \Phi)$ is achieved across said coorbit spaces,
  simply by sampling densely enough.
  In all cases, the results are realized by choosing the prototype $\theta$ from a class of smooth,
  localized functions that includes $\mathcal C^\infty_c(\RR^d)$.
  Moreover, the results can be invoked simultaneously for a large class
  of spaces $Y$ including, but not limited to, weighted mixed-norm Lebesgue spaces
  $\lebesgue^{p,q}_\kappa$, $1\leq p,q\leq \infty$.
  Finally, we considered radial warping functions as an important special case,
  showed how they can be constructed from (weakly) admissible radial components,
  and provided examples of radial warping functions for which we expect a relation
  to well-known smoothness spaces.
  Altogether, we have demonstrated that warped time-frequency systems,
  a vast class of translation-invariant time-frequency systems that enable the adaptation to a
  specific frequency-bandwidth relationship, can be analyzed with a unified,
  and surprisingly deep mathematical theory.

  There is an abundance of opportunities for further generalization,
  of which we mention only two:
  (1) That the weight $m$ may only depend on the time variable
  if $\sup_{\xi\in D} \|D\Phi(\xi)\| < \infty$
  (in \Cref{thm:MR1_kernel_is_in_AAm,thm:main2_discreteframes})
  remains an irritating and somewhat unnatural condition,
  but cannot be dropped if $m$ is to be majorized by the product of a time-dependent
  and another frequency-dependent weight.
  If the latter requirement is relaxed and a more general weight is considered,
  it may be possible to consider time-dependent weights if $\|D\Phi(\xi)\|$ is unbounded.
  (2) The construction analyzed in this work does not accommodate frames
  with arbitrary directional sensitivity.
  In particular, the degree of anisotropy is determined directly by the warping function
  and cannot be chosen freely.
  For example, without further modification,
  it cannot currently mimic \nicki{popular directional frames like curvelets or shearlets,
  or even isotropic wavelets; see below for the last point.}
  
  While the present article shows that the coorbit spaces $\Co_{\Phi}(Y)$
  are well-defined Banach spaces admitting a rich discretization theory,
  it does not answer all open questions regarding the structure of $\Co_{\Phi}(Y)$
  as \emph{smoothness spaces}.
  These questions concern, e.g., the description of $\Co_{\Phi}(Y)$ purely in terms
  of Fourier analysis, as well as the existence of embeddings between the spaces $\Co_{\Phi}(Y)$
  for different choices of the warping function $\Phi$ and the space $Y$,
  or between $\Co_{\Phi}(Y)$ and established smoothness spaces, such as Besov spaces,
  Sobolev spaces, $\alpha$-modulation spaces, or spaces of dominating mixed smoothness
  \cite{nikol1962boundary,nikol1963boundary,vybiral2006function}.
  In a follow-up article, we will study these questions
  in the context of decomposition spaces,
  a common generalization of Besov- and modulation spaces.
  Specifically, we will show that the spaces $\Co_\Phi(Y)$ are special decomposition spaces,
  so that the rich theory of these spaces can be employed to answer the questions posed above. 
 \nicki{In that work, we will confirm the conjectured relation to $\alpha$-modulation spaces
 (see Example \ref{exa:AlphaModulationWarpingFunctionExample}) and prove that equality
  between (inhomogeneous) Besov spaces and the coorbit spaces related to warped time-frequency systems
  can only be achieved in the one-dimensional case,
  thereby making the statement about isotropic wavelets in the previous paragraph formal.}


\appendix

\section{Formal details for making sense of the intersection \texorpdfstring{$B \cap B'$}{B ∩ B'}}
\label{sec:GelfandTripleAppendix}

\nicki{
  To make sense of the intersection $B \cap B'$ that appears in \Cref{def:BFD},
  we assume that \emph{$B$ is compatible with a suitable \textbf{Gelfand triple}}
  in the following sense:
  We assume that there exists a topological vector space $V$ of "test functions"
  which satisfies $V \hookrightarrow \Hil$, with dense image.
  For instance, in the case $\Hil = \lebesgue^2 (\R^d)$ one could choose $V = C_c^\infty (\R^d)$
  or $V = \Schwartz(\R^d)$.
  One can then identify each $h \in \Hil$ with the anti-linear functional
  (or "generalized distribution")
  \[
    \varphi_h : \quad
    V \to \CC, \quad
    v \mapsto \langle h, v \rangle_{\Hil}
    ,
  \]
  since it is easy to see that the map $\Hil \to V^\urcorner, h \mapsto \varphi_h$
  is linear and injective.
  Since $V \hookrightarrow \Hil$, we can thus consider $V$ as a subset of $V^{\urcorner}$,
  by virtue of the dual pairing coming from $\Hil$.

  Then, we say that a Banach space $B$ is compatible with the Gelfand triple $(V,\Hil,V^\urcorner)$,
  if $B$ satisfies the following properties:
  \begin{enumerate}[label=(\roman*)]
    \item $(B, \| \cdot \|_B)$ is a Banach space,

    \item $B \subset V^\urcorner$ as sets (here, one potentially has to make some (canonical) identifications,
          such as considering $\lebesgue^p (\R^d)$ as a subset of $\Schwartz^\urcorner(\R^d)$),

    \item the inclusion $B \hookrightarrow V^\urcorner$ is continuous
          (with respect to the weak-$\ast$-topology on $V^\urcorner$),

    \item $V \subseteq B$ is dense
          (this rules out spaces such as $B = \lebesgue^\infty(\R^d)$ for $\Hil = \lebesgue^2(\R^d)$
          and $V = \Schwartz(\R^d)$, but one can then instead use the closure of $V$ in $B$,
          which in this case would be equal to $C_0 (\R^d)$,
          the space of continuous functions "vanishing at infinity").
  \end{enumerate}

  For such a compatible Banach space, we then say that $b \in B \subset V^\urcorner$
  satisfies $b \in B \cap B'$, if there exists a constant $C > 0$ such that
  \[
    |b(v)| \leq C \cdot \| v \|_B \qquad \forall \, v \in V 
    .
  \]
  Since $V \subset B$ is dense, this implies that $\overline{b} \in V'$
  (given by $\langle \overline{b}, v \rangle_{V,V'} = \overline{b(v)}$)
  uniquely extends to a continuous linear functional on $B$;
  we then identify $b$ with this functional.

  Note that if $h \in B \cap \Hil$, then since we are identifying $h$ with the functional $\varphi_h$,
  we have for $v \in V$ that
  \[
    \overline{h} (v)
    = \overline{\varphi_h (v)}
    = \langle v,h \rangle_{\Hil}
    ,
  \]
  so that this interpretation of elements of $B$ as elements of $B'$ is again consistent with the
  duality pairing coming from $\Hil$.
}

\begin{Backmatter}

\paragraph{Acknowledgments}
N.H.\ is grateful for the hospitality and support of the Katholische Universität Eichstätt-Ingolstadt during his visit.
F.V.\ would like to thank the Acoustics Research Institute for the hospitality during several visits,
which were partially supported by the Austrian Science Fund (FWF): 31225--N32.

\paragraph{Funding statement}
N.H.\ was supported by the Austrian Science Fund (FWF): I\,3067--N30;
F.V.\ acknowledges support by the German Science Foundation (DFG) in the context of the Emmy Noether junior research group: VO 2594/1--1.

\paragraph{Competing interests}
None

\paragraph{Ethical standards}
The research meets all ethical guidelines, including adherence to the legal requirements of the study country.

\paragraph{Author contributions}
N.H.\ and F.V.\ collaborated on all aspects of the presented work.
N.H.\ and F.V.\ jointly drafted the manuscript,
and  contributed to and approved the submitted version.
We declare that both authors contributed equally to this work.

\paragraph{Supplementary material}
None


\end{Backmatter}

\end{document}